\documentclass[11pt,reqno]{amsart} 
\usepackage[margin=1.1in]{geometry}
\usepackage{amsmath,amsthm,amssymb,bbm}
\usepackage{xifthen} 
\usepackage{xcolor}
\usepackage{mathrsfs} 
\usepackage{pgf,tikz,pgfplots} 
\usetikzlibrary{arrows}
\usepackage{enumitem}
\usepackage{comment}
\usepackage{chngcntr}
\usepackage{mathabx}
\usepackage{bm}
\usepackage{etoolbox}
\usepackage{enumitem}
\usepackage{floatrow}
\usepackage{float}
\usepackage{pkgfile}
\usepackage{dsfont}



%
%

\usepackage[numbers, square, sort]{natbib}

\newtheorem{thm}{Theorem}[section]

\newtheorem{corollary}{Corollary}[section]

\newtheorem{prop}{Proposition}[section]

\newtheorem{lem}{Lemma}[section]

\theoremstyle{definition} 
\newtheorem{defn}{Definition}[section]
\newtheorem{remark}{Remark}[section]

%


\newcommand{\revsag}[1]{\textcolor{black}{#1}}
\newcommand{\revmark}[1]{\textcolor{black}{#1}}
%

%
%
%

\allowdisplaybreaks

\begin{document}

\title[Inference in the Hypergraph $\bm \beta$-Model]{Degree Heterogeneity in Higher-Order Networks: Inference in the Hypergraph $\bm \beta$-Model}
\author[S. Nandy and B.B. Bhattacharya]{Sagnik Nandy and Bhaswar B. Bhattacharya}
\address{Department of Statistics and Data Science\\ University of Pennsylvania\\ Philadelphia\\ PA 19104\\ United States}
\email{sagnik@wharton.upenn.edu}
\address{Department of Statistics and Data Science\\ University of Pennsylvania\\ Philadelphia\\ PA 19104\\ United States}
\email{bhaswar@wharton.upenn.edu}

\begin{abstract} 
The $\bm \beta$-model for random graphs is commonly used for representing pairwise interactions in a network with degree heterogeneity. Going beyond pairwise interactions, \citet{stasi2014beta} introduced the hypergraph $\bm \beta$-model for capturing degree heterogeneity in networks with higher-order (multi-way) interactions. In this paper we initiate the rigorous study of the hypergraph $\bm \beta$-model with multiple layers, which allows for hyperedges of different sizes across the layers. To begin with, we derive the rates of convergence of the maximum likelihood (ML) estimate\revsag{s} and establish their minimax rate optimality. We also derive the limiting distribution of the ML estimate\revsag{s} and construct asymptotically valid confidence intervals for the model parameters. Next, we consider the goodness-of-fit problem in the hypergraph $\bm \beta$-model. Specifically, we establish the asymptotic normality of the likelihood ratio (LR) test under the null hypothesis, derive its detection threshold, and also its limiting power at the threshold. Interestingly, the detection threshold of the LR test turns out to be minimax optimal, that is, all tests are asymptotically powerless below this threshold. The theoretical results are further validated in numerical experiments. In addition to developing the theoretical framework for estimation and inference for hypergraph $\bm \beta$-models, the above results fill a number of gaps in the graph $\bm \beta$-model literature, such as the minimax optimality of the ML estimates and the non-null properties of the LR test, which, to the best of our knowledge, have not been studied before. 
\end{abstract}

\maketitle

\section{Introduction} 

The $\bm \beta$-model is an exponential family distribution on graphs with the degree sequence as the sufficient statistic. Specifically, in the $\bm \beta$-model with vertex set $[n] := \{1, 2, \ldots, n \}$, the edge $(i, j)$ is present independently with probability 
\begin{align}\label{eq:probabilitygraph}
p_{ij} := \frac{e^{\beta_i + \beta_j}}{1+ e^{\beta_i + \beta_j}}, 
\end{align} 
for $1 \leq i < j \leq n$ and $\bm \beta = (\beta_1, \beta_2, \ldots, \beta_n) \in \R^n$. This model was first considered by \citet{park2004statistical} and 
can also be viewed as the undirected version of the $p_1$-model that appear\revsag{s} in the earlier work of \citet{exponential1981graphs}.  
Thereafter, the $\bm \beta$-model has been widely used for capturing degree heterogeneity in networks (see \citet{degreesequence,chen2021analysis,graham2017degreeheterogeneity,jackson2008social}, among several others). The term $\bm \beta$-model can be attributed to the seminal paper of \citet{chatterjee2011random}, which provides the theoretical foundations for parameter estimation in this model.

While random graph models, such as the $\bm \beta$-model, are important tools for understanding binary (pairwise) relational data, in many modern applications interactions occur not just between pairs, but among groups of agents. 
Examples include folksonomy \cite{ghoshal2009random}, collaboration networks \citep{networkcoauthorship,network2016discussion,patania2017shape}, complex ecosystems \citep{grilli2017higher}, biological networks \cite{michoel2012alignment,petri2014homological}, 
circuit design \cite{karypis1999multilevel}, computer vision \cite{agarwal2005beyond}, among others. 
Hypergraphs provide the natural mathematical framework for modeling such higher-order interactions \cite{battiston2020networks,battiston2021complexsystems,network2016higher}. Formally, a hypergraph $H$ is denoted by $H = (V(H), E(H))$, where $V(H)$ is the vertex set (the agents in the network) and $E(H)$ is a collection of non-empty subsets of $V(H)$ \revsag{of cardinality greater than $1$}. The elements in $E(H)$,  referred to as {\it hyperedges}, represent the interactions among groups of agents. 
Motivated by the emergence of complex relational data with higher-order structures,  there has been a slew of recent results on modeling random hypergraphs, community detection, recovery, clustering, and motif analysis, among others (see \cite{ghoshdastidar2017uniform,ghoshdastidar2017consistency,ghoshdastidar2015provable,ghoshdastidar2015spectral,pal2021community,gracious2023neural,yadati2019hypergcn,hypergraphgeometric,higherordernetwork,hypergraph2022higher,nonparametric2021,ke2019community,hu2022multiway,yuan2022testing,chien2019minimax,ahn2019community,zhang2022exact,young2021hypergraph} and the references therein).

In this paper we study the hypergraph $\bm \beta$-model, introduced by \citet{stasi2014beta}, that allows one to incorporate degree heterogeneity \revsag{in networks with higher-order structures}. Like the graph $\bm \beta$-model \eqref{eq:probabilitygraph}, this is an exponential family on hypergraphs where the (hypergraph) degrees \revsag{(as defined in \eqref{eq:dsv})} are the sufficient statistics. In its general form it allows for layered hypergraphs with hyperedges of different sizes across the layers. To describe the model formally we need a few notations: For $r \geq 2$, denote by ${[n] \choose r}$ the collection of all $r$-element subsets of $[n]: =\{1, 2, \ldots, n\}$.  A hypergraph $H = (V(H), E(H))$ is said to be $r$-uniform if every element in $E(H)$ has cardinality $r$. (Clearly, 2-uniform hypergraphs are simple graphs.) We will denote by $\cH_{n, r}$ the collection of all $r$-uniform hypergraphs with vertex set $[n]$ and \revsag{by} $\cH_{n, [r]} := \bigcup_{s=2}^{r} \cH_{n, s}$, the collection of all hypergraphs with vertex set $[n]$ where every hyperedge has size at most $r$. Then the $r$-{\it layered hypergraph $\bm \beta$-model} is a probability distribution on $\cH_{n, [r]}$ defined as follows:

\begin{defn}\cite{stasi2014beta}
\label{def:layered_beta_model} 
Fix $r \geq 2$ and parameters $\bm B := (\bm \beta_{2}, \ldots,\bm \beta_{r})$, where $\bm \beta_{s}:=(\beta_{s, v})_{v \in [n]} \in \R^n$. The $r$-{\it layered hypergraph $\bm \beta$-model} is a random hypergraph in $\cH_{n, [r]}$, denoted by $\mathsf{H}_{[r]}(n, \bm B) $, where, for every $2 \leq s \leq r$, the hyperedge $\{v_{1}, v_{2}, \ldots, v_{s}\} \in {[n] \choose s}$ is present independently with probability: 
\begin{align}\label{eq:Hlayeredbeta}
p_{v_{1}, v_{2}, \ldots, v_{s}}:=\frac{ e^{\beta_{s, v_1}+\ldots+\beta_{s, v_s}}}{1 + e^{\beta_{s, v_1}+\ldots+\beta_{s, v_s}} }. 
\end{align}
\end{defn}



This model can be expressed as an exponential family on $\cH_{n, [r]}$ with the hypergraph degrees up to order $r$ as the sufficient statistics (see \eqref{eq:gl}). Specifically, the parameter $\beta_{s, u}$ encodes the popularity of the node $u \in [n]$ to form groups of size $s$, for $2 \leq s \leq r$. Consequently, $\beta_{s, u}$ controls the local density of hyperedges of size $s$ around the around node $u$. The model \eqref{eq:Hlayeredbeta} includes as a special case the classical graph $\bm \beta$-model (when $r=2$) and also the $r$-uniform hypergraph $\bm \beta$-model, where only the hyperedges of size $r$ are present. More formally, given parameters $\bm \beta= (\beta_1, \beta_2, \ldots, \beta_n) \in \R^n$, the {\it $r$-uniform hypergraph $\bm \beta$-model} is a random hypergraph in $\cH_{n, r}$, denoted by $\mathsf{H}_r(n, \bm \beta) $, where each $r$-element hyperedge $\{v_{1}, v_{2}, \ldots, v_{r}\} \in {[n] \choose r}$ is present independently with probability: 
\begin{align}\label{eq:Huniformbeta}
p_{v_{1}, v_{2}, \ldots, v_{r}}:=\frac{ e^{\beta_{v_1}+\ldots+\beta_{v_r}}}{1 + e^{\beta_{v_1}+\ldots+\beta_{v_r}} }. 
\end{align}

It is worth noting that, since the degrees \revsag{(defined later in \eqref{eq:dsv})} are the sufficient statistics in the aforementioned models, it is enough to observe only the degree sequences (not the entire network) for inference regarding the model parameters. This feature makes the $\bm \beta$-model particularly attractive because collecting information about the entire network can often be difficult for cost or privacy reasons. For example, \citet{elmer2020students} (see also \citet{zhang2023l2}) studied social networks between a group of Swiss students before and during COVID-19 lockdown, where, for privacy reasons, only the total number of connections of each student in the network (that is, the degrees of the vertices) were released. The $\bm \beta$-model is also relevant in the analysis of aggregated relational data, where instead of asking about connections between groups of individuals directly, one collects data on the number of connections of an individual with a given feature (see, for example,  \citet{aggregated2023consistently} and the references therein).

\citet{stasi2014beta} proposed two algorithms for computing the maximum likelihood (ML) estimates for the hypergraph $\bm \beta$ models described above and reported their empirical performance. However, the statistical properties of the ML estimates in these models have remained unexplored. 

\subsection{Summary of Results} 

In this paper we develop a framework for estimation and inference in the hypergraph $\bm \beta$-model. Along the way, we obtain a number of new results on the graph \revsag{$\bm \beta$-model} as well. The following is a summary of the results: 

\begin{itemize} 

\item {\it Estimation}: In Section \ref{sec:mle} we derive the rates of convergence of the ML estimates in $r$-layered hypergraph $\bm \beta$-model \eqref{eq:Hlayeredbeta},  both in the $L_\infty$ and the $L_2$ norms.\footnote{We denote by $\| \bm x\|_\infty$ and $\| \bm x \|_2$, the $L_\infty$ and the $L_2$ norms of a vector $\bm x$, respectively. } Specifically, we show that given a sample $H_n \sim \mathsf{H}_{[r]}(n, \bm B) $ from the $r$-layered hypergraph $\bm \beta$-model, the ML estimate $\hat{\bm B} = (\hat{\bm \beta}_{2},\ldots, \hat{\bm \beta}_{r})$ of $\bm B$ satisfies: 
\begin{align}\label{eq:mledescription} \revmark{
\|\hat{\bm \beta}_{s}-\bm\beta_{s}\|_2 \le \,C_s\, \sqrt{\frac{1}{\;n^{s-2}}} \quad \text{ and } \quad 
\|\hat{\bm \beta}_{s}-\bm\beta_{s}\|_\infty \le \,C_s\, \sqrt{\frac{\log n}{\;n^{s-1}}} , }
\end{align}
for some \revmark{constant $C_s>0$ (depending on $s$ and $\| \bm \beta_s \|_\infty$), where $2 \leq s \leq r$} , with probability going to 1 (see Theorem \ref{thm:layermle}). These extend the results of \citet{chatterjee2011random} on the graph $\bm \beta$-model, where the rate of convergence of the ML estimate was derived only in the $L_\infty$ norm, to the hypergraph case. Next, in Theorem \ref{thm:maxl2lb} we show that both the rates in \eqref{eq:mledescription} are, in fact, minimax optimal (up to a $\sqrt{\log n}$ factor for the $L_\infty$ norm). To the best of our knowledge, these are the first results showing the statistical optimality of the ML estimates in the $\bm \beta$-model even for the graph case. 

\item {\it Inference}: In Section \ref{sec:mleconfidenceinterval} we derive the asymptotic distribution of the ML estimate $\hat{\bm B}$. In particular, we prove that the finite dimensional distributions of the ML estimate converges to a multivariate Gaussian distribution (see Theorem \ref{thm:central_lim_thm}). Moreover, the covariance matrix of the Gaussian distribution can be estimated consistently, using which we can construct asymptotically valid confidence sets for the model parameters (see Theorem \ref{thm:conf_int_thm}).  

\item {\it Testing}: In Section \ref{sec:goodnessoffit} we study the goodness-of-fit problem for the hypergraph $\bm \beta$-model, that is, given $\bm \gamma \in \R^n$ we wish to distinguish: 
\begin{align}\label{eq:H0description}
H_0:\bm{\beta}_s=\bm \gamma \quad \mbox{versus} \quad H_1:\bm{\beta}_s \neq \bm \gamma.
\end{align} 
We show that the likelihood ratio (LR) statistic for this problem (centered and scaled appropriately) is asymptotically normal under $H_0$ (see Theorem \ref{thm:testing_inhom_level} for details). Using this result we construct an asymptotically level $\alpha$ test for \eqref{eq:H0description}. Next, we study the power properties of this test. In particular, we show that the detection threshold for the LR test in the $L_2$ norm is $n^{-\frac{2s-3}{4}}$, that is, the LR test is asymptotically powerful/powerless in detecting $\bm \gamma' \in \R^n$ depending on whether $\|\bm \gamma'-\bm \gamma\|_2$ is asymptotically greater/smaller than $n^{-\frac{2s-3}{4}}$, respectively. We also derive the limiting power function of the LR test at the threshold $\|\bm \gamma'-\bm \gamma\|_2 = \Theta(n^{-\frac{2s-3}{4}})$ (see Theorem \ref{thm:testing_inhom_power}). Further, in Theorem \ref{thm:H0l2} we show that this detection threshold is, in fact, minimax optimal, that is all tests are asymptotically powerless when $\|\bm \gamma'-\bm \gamma\|_2$ is asymptotically smaller than $n^{-\frac{2s-3}{4}}$. In Section \ref{sec:testingmax} we also obtain the detection threshold of the LR test in the $L_\infty$ norm and establish its optimality. Again, these appear to be the first results on the non-null properties of the LR test and its optimality in the $\bm \beta$-model for the graph case itself.  

\end{itemize} 
In Section \ref{sec:simulations} we illustrate the finite-sample performances of the proposed methods in simulations.

\subsection{Related Work on the Graph $\bm \beta$-Model}

As mentioned before, \citet{chatterjee2011random} initiated the rigorous study of estimation in the graph $\bm \beta$-model. They derived, among others things, the convergence rate of the ML estimate in the $L_\infty$ norm. Thereafter, \citet{rinaldo_mle_beta_model} derived necessary and sufficient conditions for the existence of the ML estimate in terms of the polytope of the degree sequences. 
\citet{yan_xu} proved the asymptotic normality of ML estimate and later, \citet{yan2016asymptotics} derived the properties of a moment based estimator.  
\citet{karwa2016inference} studied the problem of estimation in the $\bm \beta$-model under privacy constraints. 

In the context of hypothesis testing, \citet{degreetesting} considered the problem of sparse signal detection in the $\bm \beta$-model, that is, testing whether all the node parameters are zero versus whether a (possibly) sparse subset of them are non-zero. Recently, \citet{yan2022wilks} derived the asymptotic properties of the LR test for the goodness-of-fit problem in the graph $\bm \beta$-model, under the null hypothesis. 

The graph $\bm \beta$-model has also been generalized to incorporate additional information, such as covariates, directionality, sparsity, and weights (see \citet{chen2021analysis,chen2013directed,graham2017degreeheterogeneity,stein2020sparse,directed,hillar2013maximum,yan2019statistical,wahlstrom2017beta,zhang2023l2} and the references therein). For other exponential random graph models with functions of the degrees as sufficient statistics, see \citet{mukherjee2020degeneracy} and \citet{xu2021signal}.


\subsection{Asymptotic Notation} For positive sequences $\{a_n\}_{n\geq 1}$ and $\{b_n\}_{n\geq 1}$, $a_n = O(b_n)$ means $a_n \leq C_1 b_n$ and $a_n =\Theta(b_n)$ (and equivalently, $a_n \asymp b_n$) means $C_2 b_n \leq a_n \leq C_1 b_n$, for all $n$ large enough and positive constants $C_1, C_2$. Similarly, for positive sequences $\{a_n\}_{n\geq 1}$ and $\{b_n\}_{n\geq 1}$, $a_n \lesssim b_n$ means $a_n \leq C_1 b_n$ and $a_n \gtrsim b_n$ means $a_n \geq C_2 b_n$ for all $n$ large enough and positive constants $C_1, C_2$. Moreover, subscripts in the above notation,  for example $O_\square$, $\lesssim_\square$, $\gtrsim_\square$, and $\Theta_\square$,  denote that the hidden constants may depend on the subscripted parameters. Also, for positive sequences $\{a_n\}_{n\geq 1}$ and $\{b_n\}_{n\geq 1}$, $a_n \ll b_n$ means $a_n/b_n \rightarrow 0$ and $a_n \gg b_n$ means $a_n/b_n \rightarrow \infty$, as $n \rightarrow \infty$.

\section{Maximum Likelihood Estimation in Hypergraph $\bm \beta$-Models}  
\label{sec:mle}

In this section we consider the problem of parameter estimation in the hypergraph $\bm \beta$-model using the ML method. In Section \ref{sec:mleconvergence} we derived the rates of the consistency of the ML estimate. The central limit theorem  of the ML estimate and the construction of confidence intervals for the model parameters are presented in Section \ref{sec:mleconfidenceinterval}.

\subsection{Rates of Convergence} 
\label{sec:mleconvergence}

Given a sample $H_n \sim \mathsf{H}_{n, [r]}(n, \bm B)$ from the $r$-{\it layered hypergraph $\bm \beta$-model}, the likelihood function can be written as follows: 
\begin{align}
\revmark{
L_{n}(\bm B) = \prod_{2 \leq s \leq r} \prod_{\{v_{1}, v_{2}, \ldots, v_{s}\} \in {[n] \choose s}}  \frac{ e^{(\beta_{s, v_1}+\ldots+\beta_{s, v_s})\mathbbm{1}\{\{v_{1}, v_{2}, \ldots, v_{s}\} \in E(H_n)\}} }{1 + e^{\beta_{s, v_1}+\ldots+\beta_{s, v_s}} }. }
\end{align} 
Therefore, the negative log-likelihood is given by 
\begin{align}
\label{eq:gl}
\ell_{n}(\bm B) & := - \log L_{n}(\bm B) \nonumber \\ 
& = - \sum_{s=2}^{r} \left\{\sum_{v=1}^{n} \beta_{s, v} d_s(v)-\sum_{\{v_{1}, v_{2}, \ldots, v_{s}\} \in {[n] \choose s}}\log{\left(1+\exp\left(\beta_{s, v_1}+\ldots+\beta_{s, v_s}\right)\right)}\right\} , 
\end{align}
where 
\begin{align}\label{eq:dsv}\revsag{
d_s(v) := \sum_{\bm e \in E(H_n): | \bm e | = s }\mathbbm 1\{v  \in \bm e\}, }
\end{align} 
is the $s$-degree of the vertex $v \in [n]$, that is, the number of hyperedges of size $s$ in $H_n$ passing through $v$. 
The negative log-likelihood in \eqref{eq:gl} can be re-written as: 
\begin{equation}
\label{eq:lik_tot}
\ell_{n}(\bm B)= \sum_{s=2}^{r} \ell_{n, s} (\bm \beta_{s}) , 
\end{equation}  
where 
\begin{equation}
\label{eq:lik_k}
\ell_{n, s} (\bm \beta) := \sum_{ \{v_{1}, v_{2}, \ldots, v_{s}\} \in {[n] \choose s} }\log{\left(1+\exp\left(\beta_{s, v_1}+\ldots+\beta_{s, v_s}\right)\right)} - \sum_{v=1}^{n}\beta_{s, v} d_s(v) .
\end{equation}
Note that \eqref{eq:lik_tot} is separable in $\bm \beta_{2},\ldots,\bm \beta_{r}$, hence, the ML estimate of $\bm B = (\bm \beta_{2}, \ldots,\bm \beta_{r})$ is given by $\hat{\bm B}=(\hat{\bm \beta}_{2},\ldots,\hat{\bm \beta}_{r})$, where  
\begin{equation}
\label{eq:ml}
\hat{\bm \beta}_{s} := \argmin_{\bm \beta}\ell_{n, s}(\bm \beta).
\end{equation}  
\revmark{Therefore, differentiating \eqref{eq:gl} with respect to $\bm \beta_s$ and setting the gradient to zero we can conclude } that the ML estimate $\hat{\bm \beta}_{s}$ satisfies the following set of gradient equations: For all $v \in [n]$ and $2 \leq s \leq r$, 
\begin{equation}
\label{eq:def_k_unif}
d_s(v)=\sum_{ \{v_2, \ldots, v_s\} \in {{ [n] \backslash\{v\} } \choose { s-1 }} }\frac{ e^{\hat \beta_{s, v}+\hat \beta_{s, v_2}+\ldots+\hat \beta_{s, v_s}} }{1+ e^{\hat \beta_{s, v}+\hat \beta_{s, v_2}+\ldots+\hat \beta_{s, v_s}}},
\end{equation}
where ${{ [n] \backslash\{v\} } \choose { s-1 }}$ denotes the collection of all $(s-1)$-element subsets of $ [n] \backslash\{v\}$. \citet{stasi2014beta} presented two algorithms for computing the ML estimate, \revsag{namely,} an iterative proportional scaling algorithm and a fixed point algorithm, and showed that both algorithms converge if the ML estimate exists. 

In this paper we study the asymptotic properties of the ML estimates. In the following theorem we show that the likelihood equations \eqref{eq:def_k_unif} have a unique solution with high-probability and derive its rate of convergence. Recall we denote by $\| \bm x\|_\infty$ and $\| \bm x \|_2$, the $L_\infty$ and the $L_2$ norms of a vector $\bm x$, respectively. Also, denote $\cB_M = \{ \bm x : \| \bm x \|_\infty \leq M \}$, the $L_\infty$ the ball of radius $M$. Throughout we will assume $\bm \beta_s \in \cB_M$, for all $2 \leq s \leq r$, for some constant $M > 0$. 

\begin{thm}
\label{thm:layermle} 
Suppose $H_n \sim \mathsf{H}_{n, [r]}(n, \bm B)$ is a sample from the $r$-layered hypergraph $\bm \beta$-model as defined in \eqref{eq:Hlayeredbeta}.  Then with probability $1-o(1)$ the likelihood equations \eqref{eq:def_k_unif} have a unique solution $\hat{\bm B} = ( \hat{\bm \beta}_{2}, \ldots, \hat{\bm \beta}_{r})$, that satisfies: 
\begin{align}\label{eq:mlerlayer} 
\|\hat{\bm \beta}_{s}-\bm\beta_{s}\|_2 \lesssim_{s, M} \, \sqrt{\frac{1}{\;n^{s-2}}} \quad \text{ and } \quad 
\|\hat{\bm \beta}_{s}-\bm\beta_{s}\|_\infty \lesssim_{s, M} \, \sqrt{\frac{\log n}{\;n^{s-1}}} , 
\end{align}
for $2 \leq s \leq r$. 
\end{thm}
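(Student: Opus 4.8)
The plan is to reduce to a single layer, establish a strong‑convexity estimate for the negative log‑likelihood together with two bounds on the score at the truth, deduce existence and the $L_2$ rate from a boundary argument, and then bootstrap the $L_2$ rate into the $L_\infty$ rate. Since \eqref{eq:lik_tot} is separable in $\bm\beta_2,\ldots,\bm\beta_r$, it suffices to fix $s\in\{2,\ldots,r\}$, prove both bounds for the minimizer of $\ell_{n,s}$ alone, and take a union bound over the $r-1$ layers. Write $\sigma(x)=e^x/(1+e^x)$ and let $\bm 1_e\in\{0,1\}^n$ denote the indicator of a hyperedge $e$; since the linear term in \eqref{eq:lik_k} drops out of the Hessian, $\nabla^2\ell_{n,s}(\bm\beta)=\sum_{e\in\binom{[n]}{s}}\sigma'\!\big(\langle\bm 1_e,\bm\beta\rangle\big)\,\bm 1_e\bm 1_e^{\top}$ deterministically. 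Using the elementary identity $\sum_{e\in\binom{[n]}{s}}\bm 1_e\bm 1_e^{\top}=\binom{n-2}{s-1} I_n+\binom{n-2}{s-2}\,\bm 1\bm 1^{\top}$ together with $\sigma'(x)\ge c(M',s)>0$ for $|x|\le sM'$, we obtain the uniform strong‑convexity estimate $\nabla^2\ell_{n,s}(\bm\beta)\succeq c'(M',s)\,n^{s-1} I_n$ for all $\bm\beta\in\cB_{M'}$ (the same identity shows the largest eigenvalue is also $O_{M',s}(n^{s-1})$, so $\nabla^2\ell_{n,s}$ is well‑conditioned on $\cB_{M'}$). In particular $\ell_{n,s}$ is strictly convex, so the solution of \eqref{eq:def_k_unif}, i.e.\ its minimizer, is unique whenever it exists.

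Next I control the score $\nabla\ell_{n,s}(\bm\beta_s)=\bm\mu_s-\bm d_s=:-\bm\xi$, where $\bm\mu_s=\mathbb{E}_{\bm\beta_s}[\bm d_s]$. Each coordinate $\xi_v=d_s(v)-\mathbb{E}d_s(v)$ is a sum of $\binom{n-1}{s-1}\asymp n^{s-1}$ independent, centred, $[-1,1]$‑valued summands with total variance $\asymp n^{s-1}$, so Bernstein's inequality and a union bound over $v\in[n]$ give $\|\bm\xi\|_\infty\lesssim_s\sqrt{n^{s-1}\log n}$ with probability $1-o(1)$. For the $L_2$ norm, $\mathbb{E}\|\bm\xi\|_2^2=\sum_v\operatorname{Var}(d_s(v))\asymp n^s$, and resampling a single hyperedge changes $\|\bm\xi\|_2^2$ by an amount with second moment $O_s(n^{s-1})$; summing over the $\asymp n^s$ hyperedges, the Efron--Stein inequality gives $\operatorname{Var}(\|\bm\xi\|_2^2)=O_s(n^{2s-1})=o\big((\mathbb{E}\|\bm\xi\|_2^2)^2\big)$, so by Chebyshev $\|\bm\xi\|_2\le C_s\,n^{s/2}$ with probability $1-o(1)$ for a fixed constant $C_s$.

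These ingredients yield existence and the $L_2$ rate by a boundary argument. Set $\rho=\rho_n:=C_0\,n^{-(s-2)/2}$; for $n$ large the ball $B_2(\bm\beta_s,\rho)$ lies in $\cB_{M+C_0}$, and for $\bm\beta$ on its boundary a second‑order Taylor expansion together with the Hessian estimate and the $L_2$ score bound gives
\[
\ell_{n,s}(\bm\beta)-\ell_{n,s}(\bm\beta_s)\ \ge\ -\|\bm\xi\|_2\,\rho+\tfrac12\,c'\,n^{s-1}\rho^2\ =\ \rho\Big(\tfrac12\,c'\,n^{s-1}\rho-\|\bm\xi\|_2\Big)\ >\ 0
\]
once $C_0$ is chosen large enough; for $s\ge3$ the radius $\rho_n\to0$, so $c'=c'(M+1,s)$ is a fixed constant and the choice of $C_0$ is unproblematic. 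Hence the minimum of $\ell_{n,s}$ over $\overline{B_2(\bm\beta_s,\rho)}$ is attained at an interior critical point, namely the unique global minimizer $\hat{\bm\beta}_s$, and $\|\hat{\bm\beta}_s-\bm\beta_s\|_2<\rho\lesssim_{s,M}n^{-(s-2)/2}$. To upgrade to $L_\infty$, write $\bm\delta=\hat{\bm\beta}_s-\bm\beta_s$ and apply the mean value theorem coordinatewise in \eqref{eq:def_k_unif}:
\[
\xi_v\ =\ \delta_v\,\underbrace{\sum_{e\ni v}\sigma'\!\big(\theta_e^{(v)}\big)}_{=:\,A_v}\ +\ \underbrace{\sum_{w\neq v}\delta_w\!\!\sum_{e\supseteq\{v,w\}}\!\!\sigma'\!\big(\theta_e^{(v)}\big)}_{=:\,B_v},
\]
with $\theta_e^{(v)}$ between $\langle\bm 1_e,\bm\beta_s\rangle$ and $\langle\bm 1_e,\hat{\bm\beta}_s\rangle$. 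Since $\hat{\bm\beta}_s\in\cB_{M'}$ we have $A_v\gtrsim_{s,M}n^{s-1}$, while $|B_v|\lesssim_s n^{s-2}\|\bm\delta\|_1\le n^{s-2}\sqrt n\,\|\bm\delta\|_2\lesssim_{s,M}n^{(s-1)/2}$; combined with the $L_\infty$ score bound this gives $|\delta_v|\le (|\xi_v|+|B_v|)/A_v\lesssim_{s,M}\sqrt{\log n/n^{s-1}}$ for every $v$, which is the claimed bound.

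The main obstacle is the base case $s=2$: there the $L_2$ rate is only $\|\hat{\bm\beta}_2-\bm\beta_2\|_2=O(1)$, so the boundary argument must be run on a ball of fixed radius, on which the strong‑convexity constant $c'(M+C_0,2)$ decays exponentially in $C_0$ and cannot be beaten by enlarging $C_0$; moreover the $L_\infty$ step above silently uses $\hat{\bm\beta}_s\in\cB_{M'}$, which for $s\ge3$ follows a posteriori from $\|\bm\delta\|_2\to0$ but is not automatic when $s=2$. This is precisely the difficulty treated in \citet{chatterjee2011random}, and it is resolved by a separate preliminary step showing that, with probability $1-o(1)$, the solution of \eqref{eq:def_k_unif} exists and lies in $\cB_{M'}$ for some constant $M'=M'(M)$ — for instance via the polytope characterization of \citet{rinaldo_mle_beta_model} for existence together with concentration of the degree sequence. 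Once this confinement is in hand the remaining estimates go through uniformly in $s$, and a final union bound over $2\le s\le r$ completes the proof.
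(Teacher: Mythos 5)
Your argument for the layers $s\ge 3$ is correct and in fact takes a genuinely shorter route than the paper's. The $L_2$ part is the same in spirit (a score bound plus strong convexity of $\ell_{n,s}$, cf.\ Lemmas \ref{lem:gradientH} and \ref{lem:lower_bound_hess}; your Efron--Stein/Chebyshev bound replaces the paper's $\varepsilon$-net argument, which only changes the failure probability), but your boundary argument on the $L_2$-ball of radius $\rho_n\asymp n^{-(s-2)/2}$ delivers existence of a solution of \eqref{eq:def_k_unif} directly, whereas the paper obtains existence separately through Proposition \ref{ppn:conv_hull}. The larger difference is the $L_\infty$ step: the paper runs a leave-one-out analysis (Appendix \ref{sec:layermleLinftypf}, Lemmas \ref{lm:Kbeta} and \ref{lm:leave1mlel2}), while you apply the mean value theorem coordinatewise in \eqref{eq:def_k_unif} and absorb the off-diagonal term via $|B_v|\lesssim n^{s-2}\sqrt n\,\|\bm\delta\|_2\lesssim n^{(s-1)/2}$, which is negligible against $A_v\sqrt{\log n/n^{s-1}}\asymp n^{(s-1)/2}\sqrt{\log n}$. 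Once $\|\hat{\bm\beta}_s\|_\infty=O(1)$ is available (for $s\ge3$ it follows from $\|\bm\delta\|_2\to0$), this is a complete and appreciably simpler derivation of the $L_\infty$ rate, at the price of using the $L_2$ rate as input --- which the paper's route does as well.

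The genuine gap is the layer $s=2$, which is part of the statement. You correctly diagnose why your boundary argument fails there (the strong-convexity constant on a constant-radius ball decays exponentially in the radius), but the proposed repair --- existence via the polytope characterization of \citet{rinaldo_mle_beta_model} ``together with concentration of the degree sequence'' --- does not yield what your scheme needs. The polytope condition gives existence of a finite MLE, and concentration gives $d_2(v)\asymp n$ for all $v$, but neither translates, as stated, into the confinement $\|\hat{\bm\beta}_2\|_\infty\le M'(M)$ (equivalently $\|\hat{\bm\beta}_2-\bm\beta_2\|_2=O_M(1)$): degree bounds do not control the parameters coordinatewise without a further argument, and that confinement is precisely the hard part of the graph case, i.e.\ the content of the analysis in \citet{chatterjee2011random}. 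The paper closes this at the $L_2$ stage through the refined bound $\lambda_{\min}\bigl(\nabla^2\ell_{n,2}(\bm\beta)\bigr)\gtrsim e^{-2\|\bm\beta-\bm\beta_2\|_2}\,n$ of Lemma \ref{lem:lower_bound_hess} in \eqref{eq:hessainlambdal2}, which upon integrating along the segment gives $\|\bm\delta\|_2\bigl(1-e^{-2\|\bm\delta\|_2}\bigr)\lesssim \|\nabla\ell_{n,2}(\bm\beta_2)\|_2/n=O(1)$ and hence $\|\bm\delta\|_2=O(1)$ with no a priori confinement, together with Proposition \ref{ppn:conv_hull} for existence. If you import that exponential-in-distance Hessian bound (or cite the graph-case confinement of \citet{chatterjee2011random} as a black box), your coordinatewise MVT step then also covers $s=2$ and the proof closes; as written, however, the $s=2$ case is asserted rather than proved.
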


Theorem \ref{thm:layermle} provides the rates for the \revsag{ML estimates for the parameters in a $r$-layered hypergraph $\bm \beta$-model both in the  $L_\infty$ and $L_2$ norms}.  To interpret the rates in \eqref{eq:mlerlayer} note that $s$-degree of a vertex (recall \eqref{eq:dsv}) in the $r$-layered model $\mathsf{H}_{n, [r]}(n, \bm B)$ is $O(n^{s-1})$ with high probability. This means there are essentially $O(n^{s-1})$ independent hyperedges containing information about each parameter in the $s$-th layer. Hence, each parameter in the $s$-th layer can be estimated at the rate $1/\sqrt{n^{s-1}}$. Aggregating this over the $n$ coordinates gives the rates in  \eqref{eq:mlerlayer} for the vector of parameters $\bm \beta_s$ in the $s$-th layer. 

The proof of Theorem \ref{thm:layermle} is given in Appendix \ref{mle_rate}. The following discussion provides a high-level outline of the proof. 

\begin{itemize}

\item For the rate in the $L_2$ norm we first upper bound the gradient of the log-likelihood at the true parameter value. Specifically, we show that $\|\nabla {\ell}_{n, s}(\bm{\beta}_s)\|_2^2 = O(n^{s})$ with high probability (see Lemma \ref{lem:gradientH} for details). Next, we show that the smallest eigenvalue of the Hessian matrix $\nabla^2 {\ell}_{n, s}(\bm{\beta}_s)$ is bounded below by $n^{s-1}$ (up to constants) in a neighborhood of the true parameter (see Lemma \ref{lem:lower_bound_hess}). Then a Taylor expansion of the log-likelihood around the true parameter, combined with the above estimates, imply the rate in the $L_2$ norm in \eqref{eq:mlerlayer} (see Appendix \ref{sec:layermleL2pf} for details).

\item The proof of the rate in the $L_\infty$ norm is more involved. For the graph case,   \cite{chatterjee2011random} analyzed the fixed point algorithm for solving the ML equations and developed a stability version of the Erd\H os-Gallai condition (which provides a necessary and sufficient condition for a sequence of numbers to be the degree sequence of a graph) to derive the rate of ML estimate in the $L_\infty$ norm. One of the technical challenges in dealing with the hypergraph case is the absence of Erd\H os-Gallai-type characterizations of the degree sequence. 
To circumvent this issue, we take a more analytic approach based on the `leave-one-out' technique, that appear in the analysis of ranking models \cite{chen2022partial,Chen2019SpectralMA}.  
Here the idea is to decompose, for each $u \in [n]$, the log-likelihood function of the $s$-th layer $\ell_{n, s}$  (recall \eqref{eq:lik_k}) into two parts: one depending on $\beta_{s, u}$ and the other not depending on it. 
Using the part of the log-likelihood not depending on  $\beta_{s, u}$ we first analyze the properties of the constrained leave-one-out ML estimate, which is the maximizer of the part of the log-likelihood not depending on  $\beta_{s, u}$ in a neighborhood of the leave-one-out true parameter. Then from the part of the log-likelihood depending on $\beta_{s, u}$ we obtain, by a Taylor expansion around the true parameter value $\beta_{s, u}$, the $L_\infty$ rate in \eqref{eq:mlerlayer} with an extra additive error term which depends on the constrained leave-one-out ML estimate. Using the bound on the latter obtained earlier  we show this error term is negligible compared to the $L_\infty$ rate in \eqref{eq:mlerlayer}. 
\end{itemize} 

The following corollary about the $r$-uniform model is an immediate consequence of Theorem \ref{thm:layermle}. We record it separately for ease of referencing.

\begin{corollary}
\label{cor:uniformmle} 
Suppose $H_n \sim \mathsf{H}_{n, r}(n, \bm \beta)$ is a sample from the $r$-uniform hypergraph $\bm \beta$-model as defined in \eqref{eq:Huniformbeta}. Then with probability $1-o(1)$ the ML estimate $\hat{\bm \beta}$ is unique and 
\begin{align}\label{eq:uniformmle}  
\|\hat{\bm \beta}-\bm\beta \|_2 \lesssim_{r,M}  \sqrt{\frac{1}{\;n^{r-2}}} \quad \text{and} \quad 
\|\hat{\bm \beta}-\bm\beta \|_\infty \lesssim_{r,M}  \sqrt{\frac{\log n}{\;n^{r-1}}} . 
\end{align} 
\end{corollary}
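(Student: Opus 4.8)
The plan is to obtain Corollary~\ref{cor:uniformmle} as a direct specialization of Theorem~\ref{thm:layermle} to the top layer. First I would observe that for a sample $H_n \sim \mathsf{H}_{n, r}(n, \bm\beta)$ from the $r$-uniform model the only hyperedges present are those of size $r$, so the negative log-likelihood is exactly $\ell_{n, r}(\bm\beta)$ of \eqref{eq:lik_k} (identifying $\beta_{r, v}$ with $\beta_v$), and the ML estimate $\hat{\bm\beta}$ solves the gradient equations \eqref{eq:def_k_unif} with $s = r$. On the other hand, the distribution of $\mathsf{H}_{n, r}(n, \bm\beta)$ is precisely the marginal law of the size-$r$ hyperedges under $\mathsf{H}_{n, [r]}(n, \bm B)$ whenever the $r$-th block of $\bm B$ equals $\bm\beta$, since in \eqref{eq:Hlayeredbeta} the size-$r$ hyperedges are included independently of all lower-order ones with probabilities depending only on $\bm\beta_r$. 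Consequently the $r$-th component $\hat{\bm\beta}_r$ of the layered ML estimate, which by the separability of \eqref{eq:lik_tot} minimizes $\ell_{n, r}(\bm\beta_r)$ alone and is a measurable function of the size-$r$ degrees, has the same distribution as $\hat{\bm\beta}$ in the $r$-uniform model.

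Given this identification, I would simply invoke the part of Theorem~\ref{thm:layermle} pertaining to $\hat{\bm\beta}_r$: its proof establishes existence and uniqueness of the solution to \eqref{eq:def_k_unif} for $s = r$ on an event of probability $1 - o(1)$ that depends only on the size-$r$ hyperedge randomness (via the gradient bound $\|\nabla\ell_{n, r}(\bm\beta_r)\|_2^2 = O(n^r)$ of Lemma~\ref{lem:gradientH}, the Hessian lower bound of Lemma~\ref{lem:lower_bound_hess}, and the leave-one-out analysis for the $L_\infty$ rate), and on that event the bounds \eqref{eq:mlerlayer} hold with $s = r$. Transporting this event to the $r$-uniform model through the distributional identification above yields $\|\hat{\bm\beta} - \bm\beta\|_2 \lesssim_{r, M} \sqrt{1/n^{r-2}}$ and $\|\hat{\bm\beta} - \bm\beta\|_\infty \lesssim_{r, M} \sqrt{(\log n)/n^{r-1}}$, which is exactly \eqref{eq:uniformmle}.

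Since the corollary is a pure specialization, there is no genuine obstacle; the one point worth making explicit is that the analysis of the $\bm\beta_r$-block in the proof of Theorem~\ref{thm:layermle} never uses the presence of the lower layers $s < r$ — which is immediate from the separability of the negative log-likelihood \eqref{eq:lik_tot} — so no part of the argument is lost when those layers are absent.
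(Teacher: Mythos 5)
Your proposal is correct and matches the paper's treatment: the paper derives Corollary \ref{cor:uniformmle} as an immediate specialization of Theorem \ref{thm:layermle} to the $s=r$ layer, relying on the separability of the negative log-likelihood \eqref{eq:lik_tot}, which is exactly the identification you make explicit. Your additional remark that the distribution of the size-$r$ hyperedges (and hence of $\hat{\bm\beta}_r$) does not depend on the lower layers is a correct and harmless elaboration of the same argument.
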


\subsection{Minimax Rates}

In the following theorem we establish the tightness of the rates of ML estimate obtained in the previous section by proving matching lower bounds.

\begin{thm}\label{thm:maxl2lb}
Suppose $H_n \sim \mathsf{H}_{n, [r]}(n, \bm B)$, with $\bm B = (\bm \beta_2, \ldots, \bm \beta_r)$, such that \revsag{$\bm \beta_s \in \cB_M$}, for $2 \leq s \leq r$. Given $\delta \in (0, 1)$ there exists a constant $C$ (depending on $M$, $r$, and $\delta$) such that the following holds for estimation in the $L_2$ norm: 
\begin{align}\label{eq:l2lb}
\min_{\hat{\bm \beta}}\max_{\revsag{\bm \beta_s \in \cB_M}}\mathbb{P}\left(\|\hat{\bm \beta}-\bm\beta_s \|_2 \geq C \sqrt{\frac{1}{n^{s-2}}}\right) \ge 1 - \delta .
\end{align} 
Moreover, for estimation in the $L_\infty$ norm the following holds: \revsag{there exists $n_0 \ge 0$ such that for all $n \ge n_0$}
\begin{align}\label{eq:maxlb}  
\min_{\hat{\bm \beta}}\max_{\revsag{\bm \beta_s \in \cB_M}}\mathbb{P}\left(\|\hat{\bm \beta}-\bm\beta_s \|_\infty \geq C \revsag{\sqrt{\frac{\log n}{n^{s-1}}}}\right) \ge 1 - \delta . 
\end{align} 
\end{thm}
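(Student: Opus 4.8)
The plan is to establish both minimax lower bounds via a reduction to a single-parameter testing problem together with Le Cam's two-point method (or equivalently a mixture-vs-point argument). For the $L_2$ bound \eqref{eq:l2lb}, first observe that since the layers are independent and separable, it suffices to lower bound the estimation risk of $\bm \beta_s$ alone, treating the other layers as known. I would construct a family of competing parameters of the form $\bm \beta_s = \bm 0$ versus $\bm \beta_s = \bm \varepsilon$, where $\bm \varepsilon \in \{-c n^{-(s-1)/2}, +c n^{-(s-1)/2}\}^n$ ranges over sign patterns, so that $\|\bm \varepsilon\|_2 = c\sqrt{n}\cdot n^{-(s-1)/2} = c\,n^{-(s-2)/2}$, which matches the target rate. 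The key computation is to bound the KL divergence (or chi-square divergence) between $\mathsf{H}_{n,r}(n, \bm 0)$ and the mixture over sign patterns: each hyperedge of size $s$ contributes a KL term of order $\varepsilon^2 = c^2 n^{-(s-1)}$, and there are $\binom{n}{s} = \Theta(n^s)$ such hyperedges, so the total KL divergence is $\Theta(c^2 n^s \cdot n^{-(s-1)}) = \Theta(c^2 n)$ — which is too large for a naive two-point bound. This is the main obstacle: the signal is spread across $n$ coordinates, so one must use Assouad's lemma rather than Le Cam's two-point method. With Assouad, each coordinate $v$ carries signal $\pm c\,n^{-(s-1)/2}$, the per-coordinate KL is controlled by the $\Theta(n^{s-1})$ hyperedges through $v$ each contributing $\Theta(n^{-(s-1)})$, giving a per-coordinate KL of $\Theta(c^2)$ which is $O(1)$ for small $c$; Assouad's lemma then yields that the $L_2$ risk is $\gtrsim \sqrt{n}\cdot n^{-(s-1)/2} \cdot \text{(const)} = \Theta(n^{-(s-2)/2})$ with the stated probability guarantee, after converting the expected-risk bound to a high-probability statement via Markov (reversed) or by keeping the argument at the level of testing error probabilities.

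For the $L_\infty$ bound \eqref{eq:maxlb}, a single-coordinate two-point argument suffices: fix $\bm \beta_s = \bm 0$ versus $\bm \beta_s = c\,n^{-(s-1)/2} \bm e_1$ (perturbing only the first coordinate). The KL divergence between the two hypergraph laws is dominated by the $\Theta(n^{s-1})$ hyperedges containing vertex $1$, each contributing $\Theta(n^{-(s-1)})$ to the KL, for a total of $\Theta(c^2)$, which is $O(1)$ for $c$ small. Le Cam's lemma then gives that no estimator can distinguish the two parameters with probability better than $1 - \delta$ (choosing $c = c(\delta)$), so $\|\hat{\bm\beta} - \bm\beta_s\|_\infty \geq \tfrac{c}{2} n^{-(s-1)/2}$ with probability at least $1-\delta$ under one of the two parameters, giving \eqref{eq:maxlb}.

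The careful bookkeeping steps are: (i) verifying the per-hyperedge KL bound, namely that for $|\eta|, |\eta'| \leq 2M$ one has $\mathrm{KL}(\mathrm{Bern}(\sigma(\eta)) \,\|\, \mathrm{Bern}(\sigma(\eta'))) \lesssim_M (\eta - \eta')^2$ where $\sigma(x) = e^x/(1+e^x)$, which follows from smoothness of $\sigma$ and its boundedness away from $0$ and $1$ on compact sets; (ii) summing these over the appropriate family of hyperedges (all $\binom{n}{s}$ for the Assouad bound tensorized over coordinates, or the $\binom{n-1}{s-1}$ through vertex $1$ for the $L_\infty$ two-point bound); and (iii) translating the resulting bound on testing error into the claimed high-probability lower bound on the estimation error, using the standard reduction that an estimator achieving small $L_2$ (resp.\ $L_\infty$) error with high probability would yield a test with small error. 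I expect step (i)–(ii), i.e.\ getting the constants in the KL computation to line up so that the total divergence stays $O(1)$ with the right choice of perturbation magnitude $c = c(M, r, \delta)$, to be the only place requiring genuine care; the Assouad/Le Cam machinery is then routine. Choosing $c$ small enough as a function of $\delta$ (and $M, r$) gives the probability lower bound $1 - \delta$.
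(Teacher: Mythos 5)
Your $L_\infty$ bound \eqref{eq:maxlb} is argued exactly as in the paper: a two-point Le Cam reduction with a single-coordinate perturbation of size $\asymp n^{-(s-1)/2}$, whose KL divergence is $\asymp c^2$ because only the ${n-1 \choose s-1} \asymp n^{s-1}$ hyperedges through vertex $1$ contribute, each of order $\varepsilon^2$; your bookkeeping steps (i)--(ii) are precisely the computations in Appendix \ref{sec:maxlbpf}, so this half is fine.

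The $L_2$ half has a genuine gap at your step (iii). Assouad's lemma lower bounds the \emph{expected} loss (equivalently, the expected number of misestimated coordinates under the uniform prior on sign patterns), whereas \eqref{eq:l2lb} demands $\mathbb{P}\bigl(\|\hat{\bm \beta}-\bm\beta_s\|_2 \geq C n^{-(s-2)/2}\bigr) \geq 1-\delta$ for \emph{every} $\delta \in (0,1)$, i.e.\ a probability arbitrarily close to one. Neither of your proposed conversions delivers this. Reverse Markov needs an almost-sure bound on the loss: even after projecting $\hat{\bm \beta}$ onto $\cB(M)$ the $L_2$ error can be of order $M\sqrt{n}$, so the inequality $\mathbb{P}(L \geq t) \geq (\E[L]-t)/(2M\sqrt{n})$ with $\E[L] \gtrsim n^{-(s-2)/2}$ yields a probability of order $n^{-(s-1)/2}$, not $1-\delta$. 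Keeping Assouad ``at the level of testing errors'' only controls the \emph{sum} over coordinates of per-coordinate testing errors, i.e.\ again an expectation; concluding that a constant fraction of coordinates is wrong \emph{with probability} at least $1-\delta$ would require concentration of the dependent per-coordinate error indicators, which Assouad does not provide. The paper avoids this entirely by using Fano's inequality over a Varshamov--Gilbert packing: it takes $J \geq 2^{n/8}$ points $\bm\gamma^{(j)} = \varepsilon_n \bm\omega^{(j)}$ with $\bm\omega^{(j)} \in \{0,1\}^n$ pairwise at Hamming distance $\geq n/8$ and $\varepsilon_n \asymp C n^{-(s-1)/2}$, so that the pairwise $L_2$ separation is $\gtrsim C n^{-(s-2)/2}$ while each KL to the origin is $\lesssim C^2 n \leq \delta \log J$; the Fano bound $\frac{\sqrt{J}}{1+\sqrt{J}}\bigl(1-2\alpha-\sqrt{2\alpha/\log J}\bigr)$ then tends to $1-2\alpha \geq 1-\delta$ as $J \to \infty$. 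Your KL estimates, perturbation magnitude, and separation scaling are all correct and match the paper's, so replacing the Assouad step by this Fano/Gilbert--Varshamov argument (your sign-pattern family already contains the right packing) is what is needed to obtain the stated high-probability bound.
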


\revsag{This result shows that the ML estimate is minimax rate optimal in both the $L_2$ and the $L_\infty$ metrics. The proof of Theorem \ref{thm:maxl2lb} is given in Appendix \ref{sec:estimationlbpf}. The bounds in  \eqref{eq:l2lb} and \eqref{eq:maxlb} are proved using Fano's lemma. For the bound in \eqref{eq:l2lb} we construct $2^{\Theta(n)}$ well-separated points in the parameter space which have `small' average Kulbeck-Leibler (KL) divergence with the origin (see Appendix \ref{sec:l2lbpf}) and for the bound in \eqref{eq:maxlb} we construct $n$ well separated points which have `small' KL-divergence with the origin (see Appendix \ref{sec:maxlbpf})}


\subsection{Central Limit Theorems and Confidence Intervals} 
\label{sec:mleconfidenceinterval}

The results obtained in the previous section show that the vector ML estimates are consistent in the $L_\infty$-norm. However, for conducting asymptotically precise inference on the individual model parameters, we need to understand the limiting distribution of the ML estimates. In Theorem \ref{thm:central_lim_thm} below we show that the finite dimensional distributions of the ML estimates (appropriately scaled) converge to a multivariate Gaussian distribution. Using this result in Theorem \ref{thm:conf_int_thm} we construct joint confidence sets for any finite collection of parameters. Towards this, for $H_n \sim \mathsf{H}_{n, [r]}(n, \bm B)$ denote the variance of the $s$-degree of the node $v \in [n]$ as:                                       
\begin{align}\label{eq:sigmadegree}
\sigma_{s}(v)^2 := \Var[d_s(v)] = \sum_{ \{v_2, \ldots, v_s\} \in {{ [n] \backslash\{v\} } \choose { s-1 }} }\frac{ e^{\beta_{s, v}+\beta_{s, v_2}+\ldots+\beta_{s, v_s}} }{(1+ e^{\beta_{s, v}+\beta_{s, v_2}+\ldots+\beta_{s, v_s}})^2} . 
\end{align}
Then we have the following result: 
\begin{thm}
\label{thm:central_lim_thm} 
Suppose $H_n \sim \mathsf{H}_{n, [r]}(n, \bm B)$ is a sample from the $r$-layered hypergraph $\bm \beta$-model as defined in \eqref{eq:Hlayeredbeta}. 
\revmark{ Fix any collection of integers $a_1, a_2, \ldots, a_r \geq 1$ and any subsets of nodes $J_1, J_2, \ldots, J_r$ with cardinalities $a_1, a_2, \ldots, a_r$, respectively. } 
Then as $n \rightarrow \infty$, 
\begin{align}\label{eq:central_lim}
\begin{pmatrix} 
 [ \bm D_{2} (\hat{\bm \beta}_2 - \bm \beta_{2})]_{J_2} \\ 
 [ \bm D_{3} (\hat{\bm \beta}_3 - \bm \beta_{3})]_{J_3}  \\
\vdots \\ 
 [ \bm D_{r} (\hat{\bm \beta}_r - \bm \beta_{r})]_{J_r}
\end{pmatrix} 
\overset{D}{\rightarrow} \cN_{\sum_{s=2}^r a_s}(\bm 0, \bm I ) , 
\end{align}
where $\bm D_s = \mathrm{diag}\,(\sigma_s(v))_{v \in [n]}$, 
for $2 \leq s \leq r$ and for any vector $\bm x \in \R^n$, $[\bm x]_{J_s} = (x_{v})_{v \in [J_s]}^\top$. 
\end{thm}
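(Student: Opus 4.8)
The plan is to exploit the factorization \eqref{eq:lik_tot}: hyperedges of distinct sizes are mutually independent, so $\hat{\bm\beta}_2,\dots,\hat{\bm\beta}_r$ are functions of disjoint families of independent Bernoulli variables and hence mutually independent. It therefore suffices to prove, for each fixed $2\le s\le r$ and each finite index set $J_s$, that $[\bm D_s(\hat{\bm\beta}_s-\bm\beta_s)]_{J_s}\overset{D}{\rightarrow}\cN_{a_s}(\bm 0,\bm I)$; the joint statement across layers and the $\cN_{\sum_s a_s}(\bm 0,\bm I)$ limit in \eqref{eq:central_lim} then follow. Throughout, write $X_{\bm e}\in\{0,1\}$ for the indicator that a hyperedge $\bm e$ is present, $p_{\bm e}$ for its probability in \eqref{eq:Hlayeredbeta}, and $\mu_s(v):=\mathbb{E}[d_s(v)]=\sum_{\bm e\ni v,\,|\bm e|=s}p_{\bm e}$. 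The heart of the argument is the first-order expansion
\begin{equation}\label{eq:clt_expansion_plan}
\sigma_s(v)\,(\hat\beta_{s,v}-\beta_{s,v})=\frac{d_s(v)-\mu_s(v)}{\sigma_s(v)}+o_{\mathbb{P}}(1),
\end{equation}
uniformly over the finitely many $v\in J_s$. Granting \eqref{eq:clt_expansion_plan}, the theorem reduces to a multivariate CLT for the normalized degrees $\bigl(\tfrac{d_s(v)-\mu_s(v)}{\sigma_s(v)}\bigr)_{v\in J_s,\,2\le s\le r}$.

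That CLT I would prove via the Cram\'er--Wold device. For fixed coefficients $\bm c=(c_{s,v})$, since $d_s(v)=\sum_{\bm e\ni v}X_{\bm e}$, the linear combination $\sum_{s,v}c_{s,v}\tfrac{d_s(v)-\mu_s(v)}{\sigma_s(v)}$ rewrites as a sum $\sum_{s}\sum_{|\bm e|=s}w_{\bm e}(X_{\bm e}-p_{\bm e})$ of \emph{mutually independent}, uniformly bounded, centered summands, with weights $w_{\bm e}=\sum_{v\in\bm e\cap J_s}c_{s,v}/\sigma_s(v)$. Its variance equals $\sum_{s}\sum_{|\bm e|=s}w_{\bm e}^2\,p_{\bm e}(1-p_{\bm e})$; the ``diagonal'' part contributes exactly $\|\bm c\|_2^2$, because $\sum_{\bm e\ni v,\,|\bm e|=s}p_{\bm e}(1-p_{\bm e})=\sigma_s(v)^2$, while the within-edge cross terms are $O(1/n)$, since two fixed vertices lie in only $\binom{n-2}{s-2}=\Theta(n^{s-2})$ common size-$s$ hyperedges whereas $\sigma_s(v)^{-1}\sigma_s(v')^{-1}=\Theta(n^{-(s-1)})$. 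As each summand is $O(n^{-(s-1)/2})\to0$ and the total variance tends to $\|\bm c\|_2^2$, Lindeberg's condition holds trivially, so the linear combination is asymptotically $\cN(0,\|\bm c\|_2^2)$; Cram\'er--Wold then gives convergence of the whole vector to $\cN(\bm 0,\bm I)$. In particular the $\Theta(1/n)$ correlations between distinct coordinates wash out, which (together with the across-layer independence noted above) is exactly the claimed asymptotic independence.

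The remaining task, and the main obstacle, is \eqref{eq:clt_expansion_plan}. By Theorem \ref{thm:layermle}, with probability $1-o(1)$ the estimator $\hat{\bm\beta}_s$ exists, is unique, and obeys $\|\hat{\bm\beta}_s-\bm\beta_s\|_\infty\lesssim_{s,M}\varepsilon_n:=\sqrt{\log n/n^{s-1}}$ and $\|\hat{\bm\beta}_s-\bm\beta_s\|_2\lesssim_{s,M}n^{-(s-2)/2}$. A mean-value expansion of the stationarity equation $\nabla\ell_{n,s}(\hat{\bm\beta}_s)=\bm 0$ around $\bm\beta_s$ yields $\hat{\bm\beta}_s-\bm\beta_s=\bar{\bm H}_s^{-1}(\bm d_s-\bm\mu_s)$, where $\bar{\bm H}_s=\int_0^1\nabla^2\ell_{n,s}(\bm\beta_s+t(\hat{\bm\beta}_s-\bm\beta_s))\,dt$ and $\bm d_s,\bm\mu_s$ collect the $d_s(v),\mu_s(v)$. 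Now split $\bar{\bm H}_s^{-1}=\bm S+(\bm V^{-1}-\bm S)+(\bar{\bm H}_s^{-1}-\bm V^{-1})$, where $\bm V=\nabla^2\ell_{n,s}(\bm\beta_s)$ is the Fisher information (diagonal entries $\sigma_s(v)^2=\Theta(n^{s-1})$, off-diagonal entries $\Theta(n^{s-2})$) and $\bm S=\mathrm{diag}(1/\sigma_s(v)^2)$; the term $\bm S(\bm d_s-\bm\mu_s)$ is precisely the leading term of \eqref{eq:clt_expansion_plan}. The subtle point is that bounding the two remainders by the crude tail bound $\|\bm d_s-\bm\mu_s\|_\infty=O(\sqrt{n^{s-1}\log n})$ combined with entrywise bounds on $\bm V^{-1}-\bm S$ and on $\bar{\bm H}_s-\bm V$ loses a spurious $\sqrt{\log n}$ factor and is \emph{not} enough. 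Instead, $[(\bm V^{-1}-\bm S)(\bm d_s-\bm\mu_s)]_v$ should be controlled by a second-moment estimate: to leading order it equals $\sigma_s(v)^{-2}\sum_{w\ne v}V_{vw}(\hat\beta_{s,w}-\beta_{s,w})$, and using that $V_{vw}=\Theta(n^{s-2})$ with limited variation in $w$, together with the near-zero mean and weak dependence of the coordinates of $\hat{\bm\beta}_s-\bm\beta_s$ (made rigorous through the leave-$v$-out estimator already built in the proof of Theorem \ref{thm:layermle}, which is independent of the hyperedges through $v$), one finds $\sum_w(\hat\beta_{s,w}-\beta_{s,w})=O_{\mathbb{P}}(n^{1-s/2})$, so this error is $O_{\mathbb{P}}(n^{-s/2})=o_{\mathbb{P}}(n^{-(s-1)/2})$. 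For the Hessian-perturbation remainder, each entry of $\bar{\bm H}_s-\bm V$ is bounded by a constant multiple of the corresponding entry of $\bm V$ times $\|\hat{\bm\beta}_s-\bm\beta_s\|_\infty$ (smoothness of $x\mapsto e^x/(1+e^x)^2$ and boundedness of the parameters), and combining this with the $L_2$ rate and the eigenvalue lower bound $\lambda_{\min}(\bar{\bm H}_s)\gtrsim_{s,M} n^{s-1}$ underlying Lemma \ref{lem:lower_bound_hess} shows $[(\bar{\bm H}_s^{-1}-\bm V^{-1})(\bm d_s-\bm\mu_s)]_v=o_{\mathbb{P}}(n^{-(s-1)/2})$ as well, establishing \eqref{eq:clt_expansion_plan}. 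The crux is thus that the off-diagonal block of the (inverse) Fisher information is not entrywise negligible; only after exploiting its near-constant structure and the weak dependence among the coordinates of $\hat{\bm\beta}_s$ does the remainder drop below the $n^{-(s-1)/2}$ scale.
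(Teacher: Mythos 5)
Your overall architecture matches the paper's: reduce to one layer by independence, linearize $\hat{\bm\beta}_s-\bm\beta_s$ around the stationarity equation, replace the inverse Hessian by the diagonal matrix $\mathrm{diag}(1/\sigma_s(v)^2)$, and finish with a Lindeberg/Cram\'er--Wold CLT for the centered degrees (your explicit computation that within-layer cross-covariances are $\Theta(1/n)$ and wash out is fine, and is if anything more careful than the paper's closing sentence). You also correctly identify the crux: the naive bound $\|(\bm V^{-1}-\bm S)(\bm d_s-\bm\mu_s)\|_\infty\le n\|\bm V^{-1}-\bm S\|_{\max}\|\bm d_s-\bm\mu_s\|_\infty$ is off by a $\sqrt{\log n}$ factor. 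But precisely at that point your argument has a genuine gap. You never establish any quantitative bound on $\bm V^{-1}-\bm S$; instead you rewrite the error as $-\sigma_s(v)^{-2}\sum_{w\ne v}V_{vw}(\hat\beta_{s,w}-\beta_{s,w})$ (which, incidentally, already drops the quadratic remainder $\bm R_{n,s}$ of the Taylor expansion, whose contribution through $\bm V^{-1}-\bm S$ also needs an entrywise bound on that matrix) and invoke two unproven claims: that $V_{vw}$ has ``limited variation in $w$'' and that $\sum_w(\hat\beta_{s,w}-\beta_{s,w})=O_{\mathbb P}(n^{1-s/2})$. The first is only true up to constant factors (the parameters range over $[-M,M]$, so the $V_{vw}$ differ by constants, not by $1+o(1)$), and with constant-factor variation the bound you get is $\sigma_s(v)^{-2}\cdot n^{s-2}\cdot\sqrt n\,\|\hat{\bm\beta}_s-\bm\beta_s\|_2\asymp n^{-(s-1)/2}$ --- exactly the borderline order, not $o_{\mathbb P}(n^{-(s-1)/2})$. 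The second claim asserts a $\sqrt n$ cancellation in $\sum_w(\hat\beta_{s,w}-\beta_{s,w})$ beyond what Cauchy--Schwarz and the $L_2$ rate give; its natural justification is that each coordinate is approximately $(d_s(w)-\mu_s(w))/\sigma_s(w)^2$ with weak cross-dependence, which is precisely the linearization \eqref{eq:clt_expansion_plan} you are trying to prove, so as stated the argument is circular, and the appeal to the leave-one-out construction is not developed into an actual estimate. A similar issue appears in your treatment of $(\bar{\bm H}_s^{-1}-\bm V^{-1})(\bm d_s-\bm\mu_s)$ at $s=2$, where the operator-norm bookkeeping you describe again lands at $\sqrt{\log n}\,n^{-1/2}$ rather than $o(n^{-1/2})$.

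The paper closes this gap with a dedicated deterministic lemma (Lemma \ref{lem:approxvariance}): writing $\bm\Delta_{n,s}=\bm\Gamma_{n,s}-\bm\Sigma_{n,s}^{-1}$, a min/max balancing argument on the identity $\bm\Delta_{n,s}=\bm\Delta_{n,s}\bm Z_{n,s}-\bm\Theta_{n,s}$ gives the entrywise bound $\|\bm\Delta_{n,s}\|_{\max}=O(n^{-s})$, and the exact covariance identity $\Cov[\bm\Delta_{n,s}(\bm d_s-\E\bm d_s)]=\bm\Delta_{n,s}\bm\Sigma_{n,s}\bm\Delta_{n,s}^\top=\bm\Delta_{n,s}-\bm\Theta_{n,s}$ then yields, via Chebyshev, that each coordinate of $\bm\Delta_{n,s}(\bm d_s-\E\bm d_s)$ is $O_{\mathbb P}(n^{-s/2})$, which is what makes the diagonal replacement legitimate (and also controls $\bm\Delta_{n,s}\bm R_{n,s}$). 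Some result of this type --- a genuine entrywise approximation of $\bm\Sigma_{n,s}^{-1}$ by its diagonal, or an equivalent second-moment identity --- is the missing ingredient; without it your program does not close, even though the rest of the proposal (the degree CLT and the layer-wise independence) is correct.
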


\revsag{
\begin{remark}
Observe that since $\bm \beta_s \in \cB_M$ for all $2 \le s \le r$, $O(\sigma_{s}(v)^2) \asymp n^{s-1}$ for all $v \in [n]$. Therefore, the convergence rate of $ [ (\hat{\bm \beta}_s - \bm \beta_{s})]_{J_s} $ is of order $n^{(s-1)/2}$ for all $2 \le s \le r$.
\end{remark}
}

The proof of Theorem \ref{thm:central_lim_thm} is given in Appendix \ref{sec:distributionpf}. The idea of the proof is to linearize $\hat \beta_{s, v} - \beta_{s, v}$ in terms of the $s$-degree of the node $v \in [n]$. Since the $s$-degree of a node is the sum of independent random variables, applying Lindeberg's CLT gives the result in \eqref{eq:central_lim}.   
In the special case of the $r$-uniform model, Theorem \ref{thm:central_lim_thm} can be written in the following simpler form: 

\begin{corollary}
\label{cor:central_lim_thm} 
Suppose $H_n \sim \mathsf{H}_{n, r}(n, \bm \beta)$ is a sample from the $r$-uniform hypergraph $\bm \beta$-model as defined in \eqref{eq:Huniformbeta}. For all $v \in [n]$, let 
\[
\revsag{\sigma(v)^2 :=  \sum_{\{v_2, \ldots, v_r\} \in {{ [n] \backslash\{v\} } \choose { r-1 }} }\frac{ e^{\beta_{v}+\beta_{v_2}+\ldots+\beta_{v_r}} }{1+ e^{\beta_{v}+\beta_{v_2}+\ldots+\beta_{v_r}}}} . 
\] 
Then for any collection of $a \geq 1$ indices $J:=\{v_{1}, \cdots, v_{a}\} \in {[n] \choose a}$, 
as $n \rightarrow \infty$, 
\[
 [ \bm D]_{J}([\hat{\bm \beta}]_{J}-[\bm \beta]_{J}) 
\dto  \cN_{a}(\bm 0, \bm I ) , 
\]
where $\bm D = \mathrm{diag}\,(\sigma(v))_{v \in [n]}$, $[\bm D]_{J} = \mathrm{diag} \,(\sigma(v))_{v \in J}$, \revsag{$[\hat{\bm \beta}]_{J} = (\hat\beta_{v})_{v \in [J]}^\top$, and $[\bm \beta]_{J} = (\beta_{v})_{v \in [J]}^\top$}.  
\end{corollary}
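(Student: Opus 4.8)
The plan is to obtain Corollary \ref{cor:central_lim_thm} by specializing the argument behind Theorem \ref{thm:central_lim_thm} to a single layer. Although the $r$-uniform model $\mathsf{H}_{n, r}(n, \bm \beta)$ is not literally a sub-model of the $r$-layered model (it keeps only the hyperedges of size $r$), the layered negative log-likelihood \eqref{eq:lik_tot} is separable across layers, so the layer-$r$ estimate $\argmin_{\bm \beta}\ell_{n, r}(\bm \beta)$ has exactly the same summand \eqref{eq:lik_k}, the same gradient equations \eqref{eq:def_k_unif} (with $s=r$), and the same degree-variance $\sigma_r(v)^2 = \Var[d_r(v)]$ from \eqref{eq:sigmadegree} in the two models. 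Consequently the portion of the proof of Theorem \ref{thm:central_lim_thm} that establishes the CLT for the $s=r$ block applies verbatim with $J_r = J$ and $a_r = a$; since $\bm D_r$ is diagonal, $[\bm D_r(\hat{\bm \beta}_r - \bm \beta_r)]_J = [\bm D_r]_J([\hat{\bm \beta}]_J - [\bm \beta]_J)$, which is precisely the stated $a$-dimensional statement (with $\sigma(v)^2$ read as $\sigma_r(v)^2$).

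For completeness I would record the short mechanism underlying this reduction. By Corollary \ref{cor:uniformmle}, on an event of probability $1-o(1)$ the ML estimate $\hat{\bm \beta}$ is unique and satisfies $\|\hat{\bm \beta}-\bm \beta\|_\infty \lesssim_{r, M} \sqrt{\log n / n^{r-1}}$. On this event, Taylor-expanding the gradient equation $\nabla \ell_{n, r}(\hat{\bm \beta}) = \bm 0$ about the truth gives $\bm 0 = \nabla \ell_{n, r}(\bm \beta) + \nabla^2 \ell_{n, r}(\tilde{\bm \beta})(\hat{\bm \beta} - \bm \beta)$ for some $\tilde{\bm \beta}$ on the segment, where the $v$-th coordinate of $-\nabla \ell_{n, r}(\bm \beta)$ is $d_r(v) - \mathbb{E}[d_r(v)]$. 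The Hessian is diagonally dominant: each diagonal entry concentrates around $\sigma_r(v)^2 = \Theta(n^{r-1})$ while the $\binom{n-2}{r-2} = O(n^{r-2})$ off-diagonal entries in a row are each $O(1)$; an approximate-inverse (Neumann-type) estimate then yields the linearization
\[
\sigma_r(v)\,(\hat\beta_v - \beta_v) = \frac{d_r(v) - \mathbb{E}[d_r(v)]}{\sigma_r(v)} + \varepsilon_v, \qquad \max_{v \in [n]} |\varepsilon_v| = o_P(1) .
\]
Since $d_r(v)$ is a sum of $\binom{n-1}{r-1}$ independent, uniformly bounded Bernoulli indicators, Lindeberg's CLT gives coordinatewise asymptotic normality; and for a finite index set $J$ the Cramér--Wold device applies because any two degrees $d_r(u), d_r(v)$ with $u \neq v$ share only $\binom{n-2}{r-2} = O(n^{r-2})$ hyperedges out of $\Theta(n^{r-1})$, so their normalized covariance vanishes, producing the limit $\cN_a(\bm 0, \bm I)$.

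The main obstacle is the uniform bound $\max_v |\varepsilon_v| = o_P(1)$ on the linearization error, which needs, simultaneously: (i) a uniform-in-$v$ control of the second-order Taylor remainder through the $L_\infty$ rate of Corollary \ref{cor:uniformmle}; (ii) an operator-norm bound on the deviation of the random Hessian from its mean; and (iii) an argument that the weak off-diagonal mass of the inverse Hessian (a diagonally dominant matrix with $\Theta(n^{r-2})$ small off-diagonal entries per row) does not accumulate. All three ingredients are exactly what is carried out inside the proof of Theorem \ref{thm:central_lim_thm} in Appendix \ref{sec:distributionpf}, so for the corollary I would simply invoke that theorem and state the single-layer reduction described above.
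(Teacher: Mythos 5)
Your proposal is correct and matches the paper's route: the corollary is obtained exactly by specializing Theorem \ref{thm:central_lim_thm} (whose proof in Appendix \ref{sec:distributionpf} supplies the linearization, diagonal approximation of $\bm\Sigma_{n,s}^{-1}$, and CLT you sketch) to the single layer $s=r$, using separability of the likelihood. No further argument is needed.
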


To use the above results to construct confidence sets for the parameters, we need to consistently estimate the elements of the matrix $\bm D_s$. Note that the natural plug-in estimate of  $\sigma_{s}(v)$ is 
\begin{align}\label{eq:sigmadegreeestimate}
\hat \sigma_{s}(v)^2 := \sum_{\{v_2, \ldots, v_s\} \in {{ [n] \backslash\{v\} } \choose { s-1 }} }\frac{ e^{\hat \beta_{s, v}+ \hat \beta_{s, v_2}+\ldots+ \hat \beta_{s, v_s}} }{(1+ e^{\hat \beta_{s, v}+ \hat \beta_{s, v_2}+\ldots+ \hat \beta_{s, v_s}})^2 } . 
\end{align} 
This estimate turns out to be consistent for $\sigma_{s}(v)$, leading to the following result (see Appendix \ref{sec:distributionestimationpf} for the proof):

\begin{thm}
\label{thm:conf_int_thm} 
Suppose $H_n \sim \mathsf{H}_{n, [r]}(n, \bm B)$ is a sample from the $r$-layered hypergraph $\bm \beta$-model as defined in \eqref{eq:Hlayeredbeta}. \revmark{ Fix any collection of integers $a_1, a_2, \ldots, a_r \geq 1$ and any subsets of nodes $J_1, J_2, \ldots, J_r$ with cardinalities $a_1, a_2, \ldots, a_r$, respectively.} 
\revsag{Then, for all $\alpha \in (0,1)$, }
\begin{align}\label{eq:confidenceinterval}
\lim_{n \rightarrow \infty} \mathbb P \left(
\left\{
\sum_{s=2}^r ([(\hat{\bm \beta}_s - \bm \beta_{s})]_{J_s})^\top [ \hat{\bm D}_{s}^2]_{J_s} ([(\hat{\bm \beta}_s - \bm \beta_{s})]_{J_s}) \leq \chi^{2}_{\sum_{s=2}^r a_s, 1 - \alpha} \right\} \right) = 1-\alpha , 
\end{align}
where $\hat{\bm D}_s^2 = \mathrm{diag}(\hat \sigma_s(v)^2)_{v \in [n]}$, $[ \hat{\bm D}_s^2]_{J_s} = \mathrm{diag} ( \hat \sigma_s(v)^2)_{v \in J_s}$, for $2 \leq s \leq r$, and for $a \geq 1$, $\chi^{2}_{a, 1 - \alpha}$ is the $(1-\alpha)$-th quantile of the chi-squared distribution with $a$ degrees of freedom. 
\end{thm}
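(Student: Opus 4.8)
The plan is to combine the joint central limit theorem of Theorem~\ref{thm:central_lim_thm} with the consistency of the plug-in variance estimates $\hat\sigma_s(v)^2$, pass to the limit via Slutsky's theorem, and read off the limiting coverage from the continuity of the chi-squared distribution function. Since the index sets $J_s$ are fixed and finite, it suffices to control $\hat\sigma_s(v)^2$ for each individual coordinate $v$, so the one substantive ingredient is a pointwise consistency statement; everything else is bookkeeping.

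First I would establish that, for each $2 \le s \le r$ and each fixed $v$, the ratio $\hat\sigma_s(v)^2/\sigma_s(v)^2 \overset{P}{\to} 1$. Two observations drive this. (i) The function $g(x) := e^x/(1+e^x)^2$ is globally Lipschitz (indeed $|g'(x)| \le 1$), so term by term in \eqref{eq:sigmadegree}--\eqref{eq:sigmadegreeestimate},
\begin{align*}
\left| \frac{e^{\hat\beta_{s,v} + \cdots + \hat\beta_{s,v_s}}}{(1+e^{\hat\beta_{s,v}+\cdots+\hat\beta_{s,v_s}})^2} - \frac{e^{\beta_{s,v}+\cdots+\beta_{s,v_s}}}{(1+e^{\beta_{s,v}+\cdots+\beta_{s,v_s}})^2}\right| \le s\,\|\hat{\bm \beta}_{s} - \bm \beta_{s}\|_\infty ,
\end{align*}
and summing over the $\binom{n-1}{s-1}$ subsets $\{v_2,\ldots,v_s\}$ gives $|\hat\sigma_s(v)^2 - \sigma_s(v)^2| \lesssim_{s} n^{s-1}\,\|\hat{\bm \beta}_{s} - \bm \beta_{s}\|_\infty$. (ii) Since $\bm \beta_s \in \cB_M$, each summand of $\sigma_s(v)^2$ lies in $[g(sM), \tfrac14]$, whence $\sigma_s(v)^2 = \Theta_{s,M}(n^{s-1})$. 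Plugging the $L_\infty$ rate $\|\hat{\bm \beta}_{s} - \bm \beta_{s}\|_\infty \lesssim_{s,M}\sqrt{\log n / n^{s-1}}$ from Theorem~\ref{thm:layermle} into (i) and dividing by $\sigma_s(v)^2$ yields $|\hat\sigma_s(v)^2/\sigma_s(v)^2 - 1| \lesssim_{s,M} \sqrt{\log n/n^{s-1}} = o(1)$ with probability $1-o(1)$.

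Next I would rewrite the quadratic form appearing in \eqref{eq:confidenceinterval} as
\begin{align*}
Q_n := \sum_{s=2}^r \sum_{k=1}^{a_s} \hat\sigma_s(v_{s,k})^2 \big(\hat\beta_{s,v_{s,k}} - \beta_{s,v_{s,k}}\big)^2 = \sum_{s=2}^r \sum_{k=1}^{a_s} \frac{\hat\sigma_s(v_{s,k})^2}{\sigma_s(v_{s,k})^2}\,\big(\sigma_s(v_{s,k})(\hat\beta_{s,v_{s,k}}-\beta_{s,v_{s,k}})\big)^2 .
\end{align*}
By Theorem~\ref{thm:central_lim_thm}, the vector $\big(\sigma_s(v_{s,k})(\hat\beta_{s,v_{s,k}}-\beta_{s,v_{s,k}})\big)_{s,k}$ converges in distribution to $\cN_d(\bm 0, \bm I)$ with $d = \sum_{s=2}^r a_s$, so the vector of its squared entries converges to $(Z_1^2,\ldots,Z_d^2)$ for $\bm Z \sim \cN_d(\bm 0, \bm I)$. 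Combining this with $\hat\sigma_s(v_{s,k})^2/\sigma_s(v_{s,k})^2 \overset{P}{\to} 1$ from the previous step, Slutsky's theorem gives $Q_n \dto \sum_{i=1}^d Z_i^2 \sim \chi^2_d$. Because the $\chi^2_d$ distribution function is continuous at $\chi^2_{d,1-\alpha}$, it follows that $\lim_{n\to\infty}\mathbb P(Q_n \le \chi^2_{d,1-\alpha}) = \mathbb P(\chi^2_d \le \chi^2_{d,1-\alpha}) = 1-\alpha$, which is exactly \eqref{eq:confidenceinterval}.

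The main obstacle is the variance-consistency step in the second paragraph; after that, the result is a routine Slutsky/continuous-mapping argument on top of Theorem~\ref{thm:central_lim_thm}. Within that step the only real point of care is that the $n^{s-1}$ factor coming from the number of hyperedges through a vertex is cancelled exactly by the $n^{s-1}$ order of $\sigma_s(v)^2$, so that one is left with a genuinely vanishing $\sqrt{\log n/n^{s-1}}$ relative error; here it is essential that we control all $s$ of the coordinate errors $\hat\beta_{s,v}-\beta_{s,v}, \hat\beta_{s,v_2}-\beta_{s,v_2},\ldots$ simultaneously, which is precisely what the $L_\infty$ bound of Theorem~\ref{thm:layermle} — as opposed to merely the $L_2$ bound — provides.
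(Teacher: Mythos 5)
Your proposal is correct and follows essentially the same route as the paper: consistency of the plug-in variances $\hat\sigma_s(v)^2$ deduced from the $L_\infty$ rate of Theorem \ref{thm:layermle}, combined with the joint CLT of Theorem \ref{thm:central_lim_thm} and a Slutsky/continuous-mapping step, with continuity of the $\chi^2$ quantile giving the coverage. The only difference is cosmetic (you normalize by the ratio $\hat\sigma_s(v)^2/\sigma_s(v)^2$, the paper splits the quadratic form additively into the true-variance part plus a vanishing remainder), and your explicit $n^{s-1}$ bookkeeping in the variance-consistency bound is in fact slightly more careful than the paper's.
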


\section{Goodness-of-Fit: Asymptotics of the Likelihood Ratio Test and Minimax Detection Rates}
\label{sec:goodnessoffit}

In this section we consider the problem of testing for goodness-of-fit in the hypergraph $\bm \beta$-model. In particular, given $\bm \gamma \in \R^n$ and a sample $H_n \sim \mathsf{H}_{n, [r]}(n, \bm B)$, with $\bm B = (\bm \beta_2, \ldots, \bm \beta_r)$, we consider the following hypothesis testing problem: For $2 \leq s \leq r$, 
\begin{align}\label{eq:H0gamma}
H_0:\bm{\beta}_s=\bm \gamma \quad \mbox{versus} \quad H_1:\bm{\beta}_s \neq \bm \gamma.
\end{align} 
This section is organized as follows: In Section \ref{sec:asymptoticH0} we derive the asymptotic distribution and detection rates of the likelihood ratio (LR) test for the problem \eqref{eq:H0gamma}.  In Section \ref{sec:minimaxl2} we show that the detection rate of the LR test is minimax optimal for testing in $L_2$ norm. Rates for testing in $L_\infty$ norm are derived in Section \ref{sec:testingmax}.  

\subsection{Asymptotics of the Likelihood Ratio Test}
\label{sec:asymptoticH0}

Consider the LR statistic for the testing problem \eqref{eq:H0gamma}: 
\begin{align}\label{eq:lambda}
\log \Lambda_{n, s} = \ell_{n, s}(\bm \gamma)-\ell_{n, s}(\hat{\bm \beta}_s) ,   
\end{align} 
where $\ell_{n, s}$ is the \revsag{negative} log-likelihood function \eqref{eq:lik_k} and $\hat{\bm \beta}_s$ is the ML estimate \eqref{eq:ml}. The following theorem proves the limiting distribution of the LR statistic \eqref{eq:lambda} under $H_0$. 

\begin{thm}
\label{thm:testing_inhom_level}
Suppose \revsag{$\bm \gamma \in \cB_M $}. 
Then under $H_0$, 
\begin{equation}
\label{eq:lambdaH0}
  \lambda_{n, s} := \frac{2\log\Lambda_{n, s} - n}{\sqrt{2n}} \dto  \cN(0,1) , 
\end{equation} 
for $\log\Lambda_{n, s}$ as defined in \eqref{eq:lambda}. 
\end{thm}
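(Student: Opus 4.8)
The plan is to Taylor-expand the log-likelihood $\ell_{n,s}$ around the true parameter (which under $H_0$ equals $\bm\gamma$) to second order, controlling the cubic remainder. Writing $\hat{\bm\delta} := \hat{\bm\beta}_s - \bm\gamma$, and using that $\nabla\ell_{n,s}(\bm\gamma) = \bm d_s - \E[\bm d_s] =: -\bm Z$ (the centered $s$-degree vector) while $\nabla^2\ell_{n,s}(\bm\gamma) =: \bm F_s$ is the Fisher information matrix, we get
\begin{align*}
2\log\Lambda_{n,s} = 2\big(\ell_{n,s}(\bm\gamma) - \ell_{n,s}(\hat{\bm\beta}_s)\big) = -2\bm Z^\top\hat{\bm\delta} - \hat{\bm\delta}^\top\bm F_s\hat{\bm\delta} + R_n,
\end{align*}
wait — let me be careful with signs: expanding $\ell_{n,s}(\bm\gamma)$ around $\hat{\bm\beta}_s$ where $\nabla\ell_{n,s}(\hat{\bm\beta}_s)=\bm 0$ gives $2\log\Lambda_{n,s} = \hat{\bm\delta}^\top\bm F_s(\tilde{\bm\beta})\hat{\bm\delta}$ for some $\tilde{\bm\beta}$ on the segment, and then I would replace $\bm F_s(\tilde{\bm\beta})$ by $\bm F_s(\bm\gamma)$ up to a controlled error. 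The first main step is therefore to show $2\log\Lambda_{n,s} = \hat{\bm\delta}^\top\bm F_s\hat{\bm\delta} + o_P(\sqrt n)$, which requires (i) the $L_2$ rate $\|\hat{\bm\delta}\|_2 \lesssim n^{-(s-2)/2}$ and $L_\infty$ rate from Theorem \ref{thm:layermle}, (ii) bounds on the third derivatives of $\ell_{n,s}$ (each third partial is a sum of $O(n^{s-1})$ bounded terms), and (iii) a perturbation bound showing $\|\bm F_s(\tilde{\bm\beta}) - \bm F_s(\bm\gamma)\|$ is small on the relevant neighborhood.

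Next I would linearize the ML estimate: from the gradient equation $\nabla\ell_{n,s}(\hat{\bm\beta}_s)=\bm 0$ and a first-order expansion, $\hat{\bm\delta} = \bm F_s^{-1}\bm Z + \bm r_n$ where the remainder $\bm r_n$ is quadratic in $\hat{\bm\delta}$ (this is essentially the linearization already invoked in the proof of Theorem \ref{thm:central_lim_thm}). Substituting,
\begin{align*}
2\log\Lambda_{n,s} = \bm Z^\top\bm F_s^{-1}\bm Z + (\text{lower-order terms}).
\end{align*}
The Fisher information $\bm F_s$ has a concrete form: its diagonal entries are $\sigma_s(v)^2 = \Theta(n^{s-1})$ and its off-diagonal entries are $\Theta(n^{s-2})$, so $\bm F_s = \bm G_s + $ (a rank-structured correction of smaller order), where $\bm G_s = \mathrm{diag}(\sigma_s(v)^2)$. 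I would show $\bm Z^\top\bm F_s^{-1}\bm Z = \bm Z^\top\bm G_s^{-1}\bm Z + o_P(\sqrt n)$ using the Sherman–Morrison–Woodbury-type approximation of $\bm F_s^{-1}$ by $\bm G_s^{-1}$ plus a correction, exactly analogous to the matrix $\bm S = \bm G^{-1} + $ correction used in the $\bm\beta$-model literature (Yan–Xu, Yan et al.). This reduces the problem to the asymptotic normality of the quadratic form $W_n := \bm Z^\top\bm G_s^{-1}\bm Z = \sum_{v\in[n]} Z_v^2/\sigma_s(v)^2$.

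Finally, I would establish $(W_n - n)/\sqrt{2n} \dto \cN(0,1)$. Here $Z_v = d_s(v) - \E d_s(v)$ is a sum of $\binom{n-1}{s-1}$ independent centered Bernoulli-type terms, so each $Z_v^2/\sigma_s(v)^2$ has mean $1$ and variance $\to 2$; but the $Z_v$ are \emph{not} independent across $v$ because hyperedges are shared. The key observation is that $W_n = \sum_{\bm e, \bm e'} c_{\bm e, \bm e'}\, \xi_{\bm e}\xi_{\bm e'}$ is a degree-two polynomial in the independent hyperedge indicators $\{\xi_{\bm e}\}$, so I would prove a CLT for it via the martingale CLT (filtering the hyperedges in some order) or, cleanly, via a fourth-moment / martingale-difference argument: write $W_n - \E W_n$ as a sum of martingale differences $\sum_{\bm e} \Delta_{\bm e}$, verify the conditional-variance condition $\sum_{\bm e}\E[\Delta_{\bm e}^2 \mid \mathcal F_{<\bm e}] \to_P 1$ (after scaling by $\sqrt{2n}$) and the Lindeberg condition, using that $|\xi_{\bm e}| \le 1$, $\sigma_s(v)^2 = \Theta(n^{s-1})$, and that each hyperedge touches only $s$ vertices. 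A cross-term contribution of the form $\sum_{v\ne w} Z_v Z_w / (\sigma_s(v)^2\sigma_s(w)^2) \times (\text{shared-edge count})$ must be shown to be $o_P(\sqrt n)$; since two vertices $v\ne w$ share $\binom{n-2}{s-2} = \Theta(n^{s-2})$ hyperedges, this term has variance of order $n^2\cdot n^{2(s-2)}/n^{2(s-1)} = O(1)$, hence is negligible — but keeping precise track of these moment bounds (and verifying that the $\chi^2_n$-type first-order term genuinely has mean $n$ after all the approximations) is where the bulk of the careful work lies.

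I expect the main obstacle to be the two nested approximation steps done \emph{simultaneously at the $\sqrt n$ scale}: replacing $\bm F_s$ by its diagonal $\bm G_s$ and replacing $\hat{\bm\delta}$ by $\bm F_s^{-1}\bm Z$ must each be shown to contribute only $o_P(\sqrt n)$, which is delicate because the leading term $W_n$ is of order $n$ with fluctuations of order $\sqrt n$ — so every error term needs to be controlled to relative precision $n^{-1/2}$, which forces one to use the sharp $L_\infty$ rate from Theorem \ref{thm:layermle} (not just the $L_2$ rate) together with careful bounds on the operator norm of $\bm F_s^{-1}$ and on the third-order derivative tensor of $\ell_{n,s}$.
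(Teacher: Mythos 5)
Your route is essentially the paper's: expand the log-likelihood to second order, linearize the ML estimate as $\hat{\bm \beta}_s-\bm \gamma=\bm\Sigma_{n,s}^{-1}(\bm d_s-\E_{\bm\gamma}[\bm d_s])+\text{remainder}$, replace the inverse Fisher information by its diagonal surrogate $\bm\Gamma_{n,s}$ (the paper's Lemma \ref{lem:approxvariance} together with the mean--variance/Chebyshev argument of Lemma \ref{lm:deltasigma}, which is exactly your ``cross-term has $O(1)$ variance'' computation), and prove a CLT for $\sum_u (d_s(u)-\E_{\bm\gamma}[d_s(u)])^2/\sigma_s(u)^2$ by writing it as a martingale-difference sum in the independent hyperedge indicators (Proposition \ref{ppn:lambdaH0}). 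One small caveat on the middle step: $\bm\Sigma_{n,s}$ is not exactly diagonal-plus-low-rank when $\bm\gamma$ is heterogeneous, so a literal Sherman--Morrison--Woodbury identity is not available; the paper instead proves the entrywise bound $\|\bm\Gamma_{n,s}-\bm\Sigma_{n,s}^{-1}\|_\infty=O(n^{-s})$ by a bespoke argument, which is where real work hides, but your order-of-magnitude accounting for this correction is right.

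The one concrete gap is the case $s=2$, which the theorem covers. Your remainder control (replace the Hessian at the intermediate point by the Hessian at $\bm\gamma$, or equivalently bound the cubic term) gives, with the rates of Theorem \ref{thm:layermle}, an error of order
\begin{equation*}
n^{s-1}\,\|\hat{\bm \beta}_s-\bm \gamma\|_\infty\,\|\hat{\bm \beta}_s-\bm \gamma\|_2^2 \;\asymp\; n^{\frac{3-s}{2}}\sqrt{\log n}
\qquad\text{(similarly } n^{s}\|\hat{\bm \beta}_s-\bm \gamma\|_\infty^3\asymp n^{\frac{3-s}{2}}(\log n)^{3/2}\text{)},
\end{equation*}
which is $o_P(\sqrt n)$ only for $s\ge 3$; for $s=2$ it sits exactly at the $\sqrt n$ fluctuation scale (up to logarithms), so the step as you describe it fails for the graph layer. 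The paper runs into the same obstruction (its Lemma \ref{lm:conditiongradient} is proved only for $s\ge 3$) and resolves $s=2$ by a higher-order argument following \citet{yan2022wilks}: expand to fourth order, show the third-order term evaluated at the true parameter is $o_P(\sqrt n)$ by exploiting cancellation/moment structure rather than a crude norm bound, and show the fourth-order term at the intermediate point is $o_P(\sqrt n)$. Your proposal would need this additional refinement (or an equivalent one) to cover $s=2$; for $s\ge 3$ it matches the paper's proof.
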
 

The proof of Theorem \ref{thm:testing_inhom_level} is given in Appendix \ref{sec:hypothesisdistributionpf}. To prove the result we first 
expand $\log \Lambda_{n, s}$ around the null parameter $\bm \gamma$ and derive an asymptotic expansion of $\lambda_{n, s}$ in terms of the sum of squares of the $s$-degree sequence $(d_s(1), d_s(2), \ldots, d_s(n))^\top$ (see \eqref{eq:degreequadratic}). Since the degrees are asymptotically independent (recall Theorem \ref{thm:central_lim_thm}), we can show that the sum of squares of the degrees (appropriately centered and scaled) is asymptotically normal (see Proposition \ref{ppn:lambdaH0}), establishing the result in \eqref{eq:lambdaH0}.

Theorem \ref{thm:testing_inhom_level} shows that the LR test 
\begin{align}\label{eq:testlambda}
\phi_{n, s} := \mathbbm 1 \left\{ |\lambda_{n, s}| >z_{\alpha/2} \right\} , 
\end{align}  
where $z_{\alpha/2}$ is the $(1-\alpha/2)$-th quantile of the standard normal distribution, has asymptotic level $\alpha$. To study the power of this test consider the following testing problem: 
\begin{align}\label{eq:H1gamma}
H_0:\bm{\beta}_s=\bm \gamma \quad \mbox{versus} \quad H_1:\bm{\beta}_s =\bm \gamma' , 
\end{align} 
where $\bm \gamma' \neq \bm \gamma$ is such that  $\|\bm \gamma - \bm \gamma' \|_2=O(1)$. Recall that $\bm d_s = (d_s(1), d_s(2), \ldots, d_s(n))^\top$ is the vector of $s$-degrees. Also, $\Cov_{\bm \gamma}[\bm d_s]$ will denote the covariance matrix of the vector of $s$-degrees (see \eqref{eq:v_s}). 

\begin{thm}
\label{thm:testing_inhom_power} 
Suppose \eqref{eq:lambdaH0} holds and $\bm \gamma'$ \revsag{be} as in \eqref{eq:H1gamma}, \revsag{then} the asymptotic power of the test $\phi_{n, s}$ defined in \eqref{eq:testlambda} satisfies: 
\begin{equation}\label{eq:asymptoticH1}
    \lim_{n \rightarrow \infty} \E_{\bm \gamma'}[\phi_{n, s}] = \begin{cases}
    \alpha & \mbox{if\; $\|\bm \gamma'-\bm \gamma\|_2 \ll n^{-\frac{2s-3}{4}}$,}\\
    1 & \mbox{if \;$\|\bm \gamma'-\bm \gamma\|_2 \gg n^{-\frac{2s-3}{4}}$.}
    \end{cases} 
\end{equation} 
Moreover, if $n^{\frac{2s-3}{4}} \|\bm \gamma'-\bm \gamma\|_2 \rightarrow \tau \in (0, \infty)$, then there exists $\eta \in (0, \infty)$ depending on $\tau$ such that 
$$\eta = \lim_{n \rightarrow \infty} \frac{(\bm \gamma'-\bm \gamma)^\top \Cov_{\bm \gamma}[\bm d_s] (\bm \gamma'-\bm \gamma)}{\sqrt n},$$ \revsag{where the limit always exists along a subsequence}, and 
\begin{equation}\label{eq:asymptoticgamma}
    \lim_{n \rightarrow \infty} \E_{\bm \gamma'}[\phi_{n, s}] = \P\left(\left| \cN(-\tfrac{\eta}{\sqrt 2}, 1)\right| > z_{\alpha/2}\right) . 
\end{equation}   
\end{thm}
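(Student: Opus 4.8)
The plan is to decompose $2\log\Lambda_{n,s}$ by inserting the alternative parameter $\bm\gamma'$, and then combine a concentration estimate for the resulting likelihood-ratio-like term with the null central limit theorem of Theorem~\ref{thm:testing_inhom_level} applied at $\bm\gamma'$. Work throughout under $\P_{\bm\gamma'}$; write $\bm\Delta:=\bm\gamma'-\bm\gamma$, $\bm V_s:=\Cov_{\bm\gamma}[\bm d_s]$, and let $\mathrm{KL}_n:=\mathrm{KL}(\P_{\bm\gamma'}\,\|\,\P_{\bm\gamma})$ be the Kullback--Leibler divergence of the layer-$s$ marginals. Start from the exact identity
\[
2\log\Lambda_{n,s}-n \;=\; \underbrace{2\bigl(\ell_{n,s}(\bm\gamma)-\ell_{n,s}(\bm\gamma')\bigr)}_{=:\,A_n}\;+\;\underbrace{2\bigl(\ell_{n,s}(\bm\gamma')-\ell_{n,s}(\hat{\bm\beta}_s)\bigr)-n}_{=:\,B_n}.
\]
Here $B_n/\sqrt{2n}$ is exactly the statistic $\lambda_{n,s}$ of \eqref{eq:lambda}--\eqref{eq:lambdaH0} with the null parameter $\bm\gamma$ replaced by $\bm\gamma'$; since $\|\bm\Delta\|_2=O(1)$ one has $\bm\gamma'\in\cB(M')$ for a fixed constant $M'$, and the proof of Theorem~\ref{thm:testing_inhom_level} (the expansion \eqref{eq:degreequadratic} and the CLT for the quadratic form in the $s$-degrees, Proposition~\ref{ppn:lambdaH0}) is uniform over $\cB(M')$, so $B_n/\sqrt{2n}\ \dto\ \cN(0,1)$ under $\P_{\bm\gamma'}$, and in particular $B_n=O_{\P}(\sqrt n)$. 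The term $A_n$ does not involve the MLE: using $\ell_{n,s}(\bm\beta)=\sum_{\bm e:\,|\bm e|=s}\bigl(\log(1+e^{\sum_{v\in\bm e}\beta_{s,v}})-Y_{\bm e}\sum_{v\in\bm e}\beta_{s,v}\bigr)$, where $Y_{\bm e}\in\{0,1\}$ indicates the presence of the size-$s$ hyperedge $\bm e$, we get $\ell_{n,s}(\bm\gamma)-\ell_{n,s}(\bm\gamma')=\sum_{\bm e}(c_{\bm e}+b_{\bm e}Y_{\bm e})$ with $c_{\bm e}$ deterministic and $b_{\bm e}:=\sum_{v\in\bm e}\Delta_v$; this is a sum of ${n\choose s}$ independent bounded summands with mean $\mathrm{KL}_n$ and variance $\asymp_{s,M}\sum_{\bm e}b_{\bm e}^2$, so by Chebyshev $A_n=2\mathrm{KL}_n+O_{\P}\bigl(\sqrt{\mathrm{KL}_n}\bigr)$.

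It remains to identify $\mathrm{KL}_n$. Since $\{\P_{\bm\gamma}\}$ is an exponential family in $\bm\gamma$ with sufficient statistic $\bm d_s$ and Fisher information $\bm V_s$, a second-order Taylor expansion of the divergence (around $\bm\gamma'$, then replacing $\Cov_{\bm\gamma'}[\bm d_s]$ by $\bm V_s$) gives
\[
\mathrm{KL}_n \;=\; \tfrac12\,\bm\Delta^\top\bm V_s\,\bm\Delta \;+\; O_{s,M}\!\bigl(n^{s-1}\|\bm\Delta\|_2^3\bigr).
\]
By Lemma~\ref{lem:lower_bound_hess} and a row-sum bound, $n^{s-1}\lesssim_{s,M}\lambda_{\min}(\bm V_s)\le\lambda_{\max}(\bm V_s)\lesssim_{s,M}n^{s-1}$, so $\bm\Delta^\top\bm V_s\bm\Delta\asymp_{s,M}n^{s-1}\|\bm\Delta\|_2^2$ and the remainder is $O_{s,M}\bigl(\|\bm\Delta\|_2\cdot\bm\Delta^\top\bm V_s\bm\Delta\bigr)$. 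Consequently $\mathrm{KL}_n\asymp_{s,M}n^{s-1}\|\bm\Delta\|_2^2$; moreover, when $\|\bm\Delta\|_2\to 0$ the remainder is $o$ of the quadratic term, and when $\|\bm\Delta\|_2\lesssim n^{-(2s-3)/4}$ it is $o(\sqrt n)$, so in either of these cases $2\mathrm{KL}_n/\sqrt{2n}=\eta_n/\sqrt2+o(1)$, where $\eta_n:=\bm\Delta^\top\bm V_s\bm\Delta/\sqrt n=(\bm\gamma'-\bm\gamma)^\top\Cov_{\bm\gamma}[\bm d_s](\bm\gamma'-\bm\gamma)/\sqrt n$.

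Now combine: $\lambda_{n,s}=(A_n+B_n)/\sqrt{2n}=2\mathrm{KL}_n/\sqrt{2n}+O_{\P}\bigl(\sqrt{\mathrm{KL}_n}/\sqrt n\bigr)+B_n/\sqrt{2n}$, and recall $\mathrm{KL}_n\asymp_{s,M}n^{s-1}\|\bm\Delta\|_2^2$, so Slutsky's theorem yields the three regimes. (i) If $\|\bm\Delta\|_2\ll n^{-(2s-3)/4}$, then $\mathrm{KL}_n\ll\sqrt n$, hence $2\mathrm{KL}_n/\sqrt{2n}\to0$ and $\sqrt{\mathrm{KL}_n}/\sqrt n\to0$, so $\lambda_{n,s}\dto\cN(0,1)$ and $\E_{\bm\gamma'}[\phi_{n,s}]\to\P(|\cN(0,1)|>z_{\alpha/2})=\alpha$. (ii) If $\|\bm\Delta\|_2\gg n^{-(2s-3)/4}$, then $\mathrm{KL}_n\gg\sqrt n$, hence $2\mathrm{KL}_n/\sqrt{2n}\to\infty$; since $\sqrt{\mathrm{KL}_n}/\sqrt n=o(\mathrm{KL}_n/\sqrt n)$ and $B_n/\sqrt{2n}=O_{\P}(1)$, we get $\lambda_{n,s}=\tfrac{2\mathrm{KL}_n}{\sqrt{2n}}(1+o_{\P}(1))+O_{\P}(1)\to+\infty$ in probability, so $\E_{\bm\gamma'}[\phi_{n,s}]\to1$. (iii) If $n^{(2s-3)/4}\|\bm\Delta\|_2\to\tau\in(0,\infty)$, then $\eta_n\asymp_{s,M}\bigl(n^{(2s-3)/4}\|\bm\Delta\|_2\bigr)^2\asymp\tau^2$ is bounded and bounded away from $0$, so any subsequence has a further subsequence along which $\eta_n\to\eta\in(0,\infty)$; along it $2\mathrm{KL}_n/\sqrt{2n}\to\eta/\sqrt2$, $\sqrt{\mathrm{KL}_n}/\sqrt n=O(n^{-1/4})\to0$, and $\lambda_{n,s}\dto\cN(\eta/\sqrt2,1)$, whence $\E_{\bm\gamma'}[\phi_{n,s}]\to\P\bigl(|\cN(\eta/\sqrt2,1)|>z_{\alpha/2}\bigr)=\P\bigl(|\cN(-\eta/\sqrt2,1)|>z_{\alpha/2}\bigr)$ by the symmetry of $|\cN(m,1)|$ in $m$; this is the asserted formula \eqref{eq:asymptoticgamma}.

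The main obstacle is the first step: the claim $B_n/\sqrt{2n}\dto\cN(0,1)$ under $\P_{\bm\gamma'}$ is the null CLT of Theorem~\ref{thm:testing_inhom_level}, but it is applied to a null parameter $\bm\gamma'$ that varies with $n$, so one must verify that the constants in the Wilks expansion \eqref{eq:degreequadratic} and in Proposition~\ref{ppn:lambdaH0} depend on the null parameter only through its $\ell_\infty$-norm (equivalently, that the hypothesis ``\eqref{eq:lambdaH0} holds'' is to be read uniformly over $\cB(M')$). Everything else---the second-order expansion of $\mathrm{KL}_n$, the two-sided $\asymp n^{s-1}$ eigenvalue bounds for $\Cov_{\bm\gamma}[\bm d_s]$ (Lemma~\ref{lem:lower_bound_hess} plus a row-sum bound), and the Chebyshev bound for $A_n$---is routine.
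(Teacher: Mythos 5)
Your proposal is correct and follows the same skeleton as the paper's argument: both insert $\bm\gamma'$ between $\bm\gamma$ and $\hat{\bm\beta}_s$, treat $\ell_{n,s}(\bm\gamma')-\ell_{n,s}(\hat{\bm\beta}_s)$ by the null Wilks expansion and Proposition \ref{ppn:lambdaH0} applied at the (possibly $n$-dependent) parameter $\bm\gamma'$ under $\P_{\bm\gamma'}$, and show the remaining term contributes a deterministic drift of order $(\bm\gamma'-\bm\gamma)^\top\bm\Sigma_{n,s}(\bm\gamma'-\bm\gamma)$ plus a linear fluctuation that is $o_P(\sqrt n)$. Where you differ is in the packaging of that remaining term: the paper Taylor-expands $\ell_{n,s}(\bm\gamma')-\ell_{n,s}(\bm\gamma)$ in the degree statistics and controls the linear-in-degrees term by its variance, while you read $A_n$ as an empirical log-likelihood ratio, identify its mean as $2\,\mathrm{KL}(\P_{\bm\gamma'}\,\|\,\P_{\bm\gamma})$, control its fluctuation by Chebyshev, and then relate the KL divergence to the Mahalanobis distance via the Bregman/second-order expansion; these are the same estimates in different clothing, but your version makes the sign of the drift transparent. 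Indeed you obtain $\lambda_{n,s}\dto\cN(+\eta/\sqrt2,1)$ in the critical regime, whereas the paper's display \eqref{eq:lgammaasymptoticH1} carries the drift with the opposite sign and concludes $\cN(-\eta/\sqrt2,1)$ (a sign slip in its expansion of $\ell_{n,s}(\bm\gamma')-\ell_{n,s}(\bm\gamma)$, which is actually an expansion of $\ell_{n,s}(\bm\gamma)-\ell_{n,s}(\bm\gamma')$); since the test \eqref{eq:testlambda} is two-sided, the power formula \eqref{eq:asymptoticgamma} is unaffected, as you note. Two small points to tighten: (i) in the regime $\|\bm\gamma'-\bm\gamma\|_2\asymp 1$ your small-$\bm\Delta$ expansion with remainder $O(n^{s-1}\|\bm\Delta\|_2^3)$ does not by itself give the lower bound $\mathrm{KL}_n\gtrsim n^{s-1}\|\bm\Delta\|_2^2$; use instead the exact mean-value form $\mathrm{KL}_n=\tfrac12\bm\Delta^\top\nabla^2\ell\text{-type Hessian at an intermediate point}\,\bm\Delta$ together with Lemma \ref{lem:lower_bound_hess} (or per-hyperedge Bernoulli-KL bounds with edge probabilities bounded away from $0$ and $1$), which gives the two-sided $\asymp n^{s-1}\|\bm\Delta\|_2^2$ with no smallness assumption; (ii) the uniformity of the null CLT over $\bm\gamma'\in\cB(M')$ that you flag is genuinely needed but is exactly what the paper also uses without comment (its proof applies Proposition \ref{ppn:lambdaH0} under $\P_{\bm\gamma'}$), and it holds because all constants in the supporting lemmas depend on the parameter only through its $L_\infty$ bound.
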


The proof of Theorem \ref{thm:testing_inhom_power} is given in Appendix \ref{sec:asymptoticH1pf}. It entails analyzing the asymptotic distribution of the scaled LR statistic $\lambda_{n, s}$ under $H_1$ as in \eqref{eq:H1gamma}. Specifically, we show that when $\|\bm \gamma'-\bm \gamma\|_2 \ll n^{-\frac{2s-3}{4}}$, then $\lambda_{n, s} \dto \cN(0, 1)$, hence the LR test \eqref{eq:lambdaH0} is asymptotically powerless in detecting $H_1$. On the other hand,  if $\|\bm \gamma'-\bm \gamma\|_2 \gg n^{-\frac{2s-3}{4}}$, then the $\lambda_{n, s}$ diverges to infinity, hence the LR test is asymptotically powerful. In other words, $n^{-\frac{2s-3}{4}}$ is the detection threshold in the $L_2$ norm of the LR test. 
We also derive the limiting power function of the LR test at the threshold $n^{\frac{2s-3}{4}} \|\bm \gamma'-\bm \gamma\|_2 \rightarrow \tau \in (0, \infty)$. In this case, $\lambda_{n, s} \dto \cN(-\eta/\sqrt 2, 1)$, where `effective signal strength' $\eta$ is the limit of the scaled Mahalanobis distance between $\bm \gamma$ and $\bm 
\gamma'$, where the dispersion matrix is the covariance matrix of the degrees. \revmark{Here, the Mahalanobis distance between two vectors $\bm \gamma$ and $\bm \gamma'$ with the dispersion matrix $\bm\Sigma$ refers to the quantity $(\bm\gamma-\bm\gamma')^\top\bm\Sigma(\bm \gamma-\bm \gamma')$.}
In the next section we will show that this detection rate is, in fact, minimax optimal.

\subsection{Minimax Detection Rate in the $L_2$ Norm} 
\label{sec:minimaxl2}

In this section we will show that the detection threshold of the LR test obtained in Theorem \ref{thm:testing_inhom_power} is information-theoretically tight. 
To formalize this consider the testing problem: For $\varepsilon > 0$ and \revsag{$\bm \gamma \in \cB_M$}, 
\begin{align}\label{eq:gammal2}
H_0:\bm{\beta}_s=\bm \gamma \quad \mbox{versus} \quad H_1: \|\bm{\beta}_s -\bm \gamma \|_2 \geq \varepsilon . 
\end{align}
The worst-case risk of a test function $ \psi_n$ for the testing problem \eqref{eq:gammal2} is defined as: 
\begin{align}\label{eq:Rpsi}
\cR (\psi_n, \bm \gamma ) =  \P_{H_0}( \psi_n=1)   + \sup_{ \revsag{\bm \gamma' \in \cB_M} : \| \bm \gamma' - \bm \gamma \|_2 \geq \varepsilon} \P_{\bm \gamma'}( \psi_n=0), 
\end{align} 
which is the sum of the Type I error and the maximum possible Type II error of the test function $ \psi_n$. 
Given $H_n \sim \mathsf{H}_{n, s}(n, \bm \beta_s)$, for some $\revsag{\bm \beta_s \in \cB_M}$, and $\varepsilon = \varepsilon_n$ (depending on $n$), a sequence of test functions $\psi_n$ is said to be {\it asymptotically powerful} for \eqref{eq:Rpsi}, if for all $\bm \gamma \in \cB_M$ $\lim_{n \rightarrow \infty }\cR( \psi_n, \bm \gamma)=0$. On the other hand, a sequence of test functions $\psi_n$ is said to be {\it asymptotically powerless} for \eqref{eq:Rpsi}, if there exists $\revsag{\bm \gamma \in \cB_M}$ such that $\lim_{n \rightarrow \infty }\cR( \psi_n, \bm \gamma )=1$. 

\begin{thm}\label{thm:H0l2}
Given $H_n \sim \mathsf{H}_{n, s}(n, \bm \beta_s)$ and $\revsag{\bm \gamma \in \cB_M}$, consider the testing problem \eqref{eq:gammal2}. Then the following hold: 

\begin{itemize} 

\item[(a)] The LR test \eqref{eq:testlambda} is asymptotically powerful for \eqref{eq:gammal2}, when $\varepsilon \gg n^{-\frac{2s-3}{4}}$. 

\item[(b)] On the other hand, all tests are asymptotically powerless for \eqref{eq:gammal2}, when $\varepsilon \ll n^{-\frac{2s-3}{4}}$. 

\end{itemize}

\end{thm}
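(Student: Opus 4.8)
The plan is to handle the two parts by different machinery: part (a) by a uniform version of the second‑order likelihood‑ratio expansion already used to prove Theorem~\ref{thm:testing_inhom_power}, and part (b) by a Le~Cam mixture (two‑point) argument with a Rademacher prior and a $\chi^2$‑divergence bound. Throughout, for a hyperedge $\bm e$ write $\bm x_{\bm e}\in\{0,1\}^n$ for its incidence vector, so that $\langle\bm\beta,\bm x_{\bm e}\rangle=\sum_{v\in\bm e}\beta_v$, and recall that the (data‑free) Hessian satisfies $\nabla^2\ell_{n,s}(\bm\beta)=\sum_{\bm e}p_{\bm e}(1-p_{\bm e})\bm x_{\bm e}\bm x_{\bm e}^\top=\Cov_{\bm\beta}[\bm d_s]$; on $\cB(M)$ the factors $p_{\bm e}(1-p_{\bm e})$ are bounded away from $0$ and $1$, and from the explicit structure of $\bm W:=\sum_{\bm e}\bm x_{\bm e}\bm x_{\bm e}^\top$ ($\binom{n-1}{s-1}$ on the diagonal, $\binom{n-2}{s-2}$ off it) one gets $c_{s,M}\,n^{s-1}\bm I\preceq\nabla^2\ell_{n,s}(\bm\beta)\preceq C_{s,M}\,n^{s-1}\bm I$, consistent with Lemma~\ref{lem:lower_bound_hess}.

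\textbf{Part (a).} Suppose $H_n\sim\mathsf H_{n,s}(n,\bm\beta_s)$ with $\bm\beta_s\in\cB(M)$ and $\|\bm\beta_s-\bm\gamma\|_2\ge\varepsilon$. I would decompose
\[
2\log\Lambda_{n,s}-n=2\bigl(\ell_{n,s}(\bm\gamma)-\ell_{n,s}(\bm\beta_s)\bigr)+\Bigl(2\bigl(\ell_{n,s}(\bm\beta_s)-\ell_{n,s}(\hat{\bm\beta}_s)\bigr)-n\Bigr).
\]
The second bracket is exactly the null likelihood‑ratio quantity of \eqref{eq:lambdaH0} with null value $\bm\beta_s\in\cB(M)$, hence $O_{\mathbb{P}}(\sqrt n)$ by Theorem~\ref{thm:testing_inhom_level}. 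For the first bracket, the only randomness in $\ell_{n,s}$ enters the linear term $-\langle\bm\beta,\bm d_s\rangle$, so a Taylor expansion of its mean around $\bm\beta_s$ gives $\mathbb{E}_{\bm\beta_s}[\ell_{n,s}(\bm\gamma)-\ell_{n,s}(\bm\beta_s)]=\tfrac12(\bm\gamma-\bm\beta_s)^\top\nabla^2\ell_{n,s}(\tilde{\bm\beta})(\bm\gamma-\bm\beta_s)$ for some $\tilde{\bm\beta}$ on the segment from $\bm\gamma$ to $\bm\beta_s$, which stays in $\cB(M)$ by convexity; thus the mean is $\gtrsim n^{s-1}\|\bm\gamma-\bm\beta_s\|_2^2\ge n^{s-1}\varepsilon^2$, while the variance $(\bm\gamma-\bm\beta_s)^\top\Cov_{\bm\beta_s}[\bm d_s](\bm\gamma-\bm\beta_s)\lesssim n^{s-1}\|\bm\gamma-\bm\beta_s\|_2^2$ is of smaller order than the square of the mean uniformly (their ratio is $\lesssim n^{-(s-1)}\varepsilon^{-2}\to0$ since $\varepsilon\gg n^{-(2s-3)/4}$ and $s\ge2$). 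Chebyshev then gives $2(\ell_{n,s}(\bm\gamma)-\ell_{n,s}(\bm\beta_s))\gtrsim n^{s-1}\varepsilon^2$ with probability $1-o(1)$ uniformly over the alternative; since $n^{s-1}\varepsilon^2\gg\sqrt n$ this yields $\lambda_{n,s}\gtrsim n^{s-3/2}\varepsilon^2\to\infty$ uniformly. Taking a threshold $t_n\to\infty$ with $t_n=o(n^{s-3/2}\varepsilon^2)$ (possible as $n^{s-3/2}\varepsilon^2\to\infty$), the test $\bm 1\{|\lambda_{n,s}|>t_n\}$ has Type~I error $\to0$ by \eqref{eq:lambdaH0} and worst‑case Type~II error $\to0$, i.e.\ $\cR(\cdot,\bm\gamma)\to0$.

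\textbf{Part (b).} I would take $\bm\gamma=\bm 0$, let $\mathbb{P}_0$ be the law of $H_n\sim\mathsf H_{n,s}(n,\bm 0)$, and use a Rademacher prior: with $\bm\xi=(\xi_1,\dots,\xi_n)$ i.i.d.\ $\pm1$, set $\bm\gamma'(\bm\xi):=\tfrac{\varepsilon}{\sqrt n}\bm\xi$, which has $\|\bm\gamma'(\bm\xi)\|_2=\varepsilon$ and $\|\bm\gamma'(\bm\xi)\|_\infty=\varepsilon/\sqrt n\to0$, so $\bm\gamma'(\bm\xi)\in\cB(M)$ for large $n$; let $\mathbb{P}_\pi:=\mathbb{E}_{\bm\xi}[\mathbb{P}_{\bm\gamma'(\bm\xi)}]$. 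Bounding the worst‑case Type~II risk in \eqref{eq:Rpsi} by the $\pi$‑average and applying Le~Cam's inequality, $\cR(\psi_n,\bm 0)\ge1-\mathrm{TV}(\mathbb{P}_0,\mathbb{P}_\pi)\ge1-\tfrac12\sqrt{\chi^2(\mathbb{P}_\pi\,\|\,\mathbb{P}_0)}$ for every test $\psi_n$; since the trivial test attains $\cR=1$, it suffices to show $\chi^2(\mathbb{P}_\pi\,\|\,\mathbb{P}_0)\to0$. Tensorizing over the independent hyperedge indicators (each $\mathrm{Ber}(1/2)$ under $\mathbb{P}_0$) gives the exact identity
\[
1+\chi^2(\mathbb{P}_\pi\,\|\,\mathbb{P}_0)=\mathbb{E}_{\bm\xi,\bm\xi'}\prod_{\bm e}\bigl(1+4\,\delta_{\bm e}(\bm\xi)\,\delta_{\bm e}(\bm\xi')\bigr),\qquad \delta_{\bm e}(\bm\xi):=\sigma\!\bigl(\tfrac{\varepsilon}{\sqrt n}\langle\bm\xi,\bm x_{\bm e}\rangle\bigr)-\tfrac12,
\]
with $\bm\xi'$ an independent copy and $\sigma(t)=e^t/(1+e^t)$. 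Since $|\langle\bm\xi,\bm x_{\bm e}\rangle|\le s$, Taylor expanding $\sigma$ at $0$ gives $\delta_{\bm e}(\bm\xi)=\tfrac14\tfrac{\varepsilon}{\sqrt n}\langle\bm\xi,\bm x_{\bm e}\rangle+O(\varepsilon^3n^{-3/2})$ uniformly in $\bm e,\bm\xi$, hence $\sum_{\bm e}4\delta_{\bm e}(\bm\xi)\delta_{\bm e}(\bm\xi')=\tfrac{\varepsilon^2}{4n}\bm\xi^\top\bm W\bm\xi'+O(\varepsilon^4n^{s-2})$, and $\varepsilon^4n^{s-2}\to0$ because $\varepsilon\ll n^{-(2s-3)/4}$. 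Using $1+y\le e^y$ (valid as $|4\delta_{\bm e}(\bm\xi)\delta_{\bm e}(\bm\xi')|\to0$), then $\mathbb{E}[e^{a\xi}]=\cosh a\le e^{a^2/2}$ coordinate‑wise after conditioning on $\bm\xi'$, together with $\|\bm W\|_{\mathrm{op}}\lesssim_s n^{s-1}$ and $\|\bm\xi'\|_2^2=n$, I obtain
\[
1+\chi^2(\mathbb{P}_\pi\,\|\,\mathbb{P}_0)\le e^{o(1)}\,\mathbb{E}_{\bm\xi,\bm\xi'}\exp\!\Bigl(\tfrac{\varepsilon^2}{4n}\bm\xi^\top\bm W\bm\xi'\Bigr)\le e^{o(1)}\exp\!\Bigl(\tfrac{\varepsilon^4\|\bm W\|_{\mathrm{op}}^2}{32\,n}\Bigr)\lesssim e^{o(1)}\exp\!\bigl(C\varepsilon^4n^{2s-3}\bigr)\to1,
\]
since $\varepsilon^4n^{2s-3}\to0$. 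Hence $\chi^2(\mathbb{P}_\pi\,\|\,\mathbb{P}_0)\to0$, so $\inf_{\psi_n}\cR(\psi_n,\bm 0)\to1$, which is part (b).

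\textbf{Main obstacle.} Part (a) is essentially a uniform rerun of the expansion behind Theorem~\ref{thm:testing_inhom_power}, so the crux is part (b). The delicate points there are (i) choosing a perturbation that is simultaneously $\varepsilon$‑separated in $\ell_2$ yet only an $O(\varepsilon/\sqrt n)$‑perturbation of the link parameters (so the alternative edge probabilities stay a controlled perturbation of $1/2$), and (ii) handling the nonlinearity of the logistic map so that the $\chi^2$‑divergence is governed by the quadratic form $\tfrac{\varepsilon^2}{n}\bm\xi^\top\bm W\bm\xi'$, whose exponential moment is $\exp\bigl(O(\varepsilon^4n^{2s-3})\bigr)$ — the exponent being precisely what makes the threshold $n^{-(2s-3)/4}$ sharp. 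Keeping the link‑nonlinearity remainder negligible (here of size $O(\varepsilon^4n^{s-2})$, crucially using that $\langle\bm\xi,\bm x_{\bm e}\rangle$ is bounded by $s$) is the main technical step.
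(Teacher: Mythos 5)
Your proposal is correct, and part (b) follows essentially the same route as the paper: the paper also takes $\bm \gamma = \bm 0$, puts an i.i.d.\ Rademacher prior $\gamma'_u = \eta_u\,\varepsilon/\sqrt n$ on the alternative, and shows the second moment of the integrated likelihood ratio tends to $1$ (Lemma \ref{lm:lb}), the exponent $\varepsilon^4 n^{2s-3}$ appearing exactly as in your computation. The only difference in (b) is executional: the paper Taylor-bounds the logistic factors, applies H\"older, and splits the resulting bilinear Rademacher form into its diagonal part $\sum_u \eta_u\eta'_u$ and the product $(\sum_u\eta_u)(\sum_v\eta'_v)$, bounding each MGF via $\cosh x \le e^{x^2}$, whereas you bound the MGF of $\bm\xi^\top\bm W\bm\xi'$ in one shot by conditioning and using $\|\bm W\bm\xi'\|_2^2 \le \|\bm W\|_{\mathrm{op}}^2 n$ — slightly cleaner, same outcome. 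For part (a) you genuinely diverge from the paper, which disposes of it in one line as a consequence of Theorem \ref{thm:testing_inhom_power}; your decomposition $2\log\Lambda_{n,s}-n = 2(\ell_{n,s}(\bm\gamma)-\ell_{n,s}(\bm\beta_s)) + \{2(\ell_{n,s}(\bm\beta_s)-\ell_{n,s}(\hat{\bm\beta}_s))-n\}$, with the deterministic-mean/Chebyshev control of the first term via the uniform Hessian bounds on $\cB(M)$, gives a self-contained argument that is uniform over the composite alternative and does not need the $\|\bm\gamma'-\bm\gamma\|_2 = O(1)$ restriction under which Theorem \ref{thm:testing_inhom_power} is stated — a genuine (minor) gain in scope. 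Two small points to tighten: invoking Theorem \ref{thm:testing_inhom_level} gives the $O_{\mathbb P}(\sqrt n)$ control of the second bracket only pointwise in $\bm\beta_s$, so for the supremum in \eqref{eq:Rpsi} you should note that the moment bounds underlying that theorem (MLE rates, Hessian bounds, the variance of the degree quadratic form) hold with constants depending only on $s,M$, hence uniformly over $\cB(M)$; and your replacement of the fixed cutoff $z_{\alpha/2}$ by a slowly diverging threshold $t_n$ changes the test slightly, but it is the natural reading needed to make the risk in \eqref{eq:Rpsi} tend to $0$ (with the fixed cutoff the risk tends to $\alpha$), so this is a reasonable — arguably more careful — interpretation of the statement.
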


The result in Theorem \ref{thm:H0l2} (a) is a direct consequence of Theorem \ref{thm:testing_inhom_power}. The proof of Theorem \ref{thm:H0l2} (b) is given in Appendix \ref{sec:H0l2pf}.  For this we chose $\bm \gamma = \bm 0 \in \R^n$ and randomly perturb (that is, randomly add or subtract $\varepsilon/\sqrt n$) the coordinates of $\bm \gamma$ to construct $\revsag{\bm \beta_s \in \cB_M}$ satisfying $\|\bm{\beta}_s -\bm \gamma \|_2 \geq \varepsilon$. Then a second-moment calculation of the likelihood ratio shows that detecting these two models is impossible for $\varepsilon \ll n^{-\frac{2s-3}{4}}$. These results combined show that $n^{-\frac{2s-3}{4}}$ is the minimax detection rate for the testing problem \eqref{eq:gammal2} and the LR test attain the minimax rate.


\begin{remark} (Comparison between testing and estimation rates.) Recall from \eqref{eq:mlerlayer} and \eqref{eq:l2lb} that the minimax rate of estimating $\hat {\bm \beta}_s$ in the $L_2$ norm is $n^{-\frac{s-2}{2}}$. On the other hand, Theorem \ref{thm:H0l2} shows that the minimax rate of testing in the $L_2$ norm is $n^{-\frac{2s-3}{4}} \ll n^{-\frac{s-2}{2}}$. For example, in the graph case (where $s=2$), the estimation rate is $\Theta(1)$ whereas the rate of testing is $n^{-\frac{1}{4}}$. This is an instance of the well-known phenomenon that high-dimensional estimation is, in general, harder that testing in the squared-error loss. 
\end{remark}

\subsection{Testing in the $L_\infty$ Norm} 
\label{sec:testingmax}

In this section we consider the goodness-of-fit problem when \revsag{the} separation is measured in the $L_\infty$ norm. This complements our results on estimation in \revsag{the} $L_\infty$ norm in Theorem \ref{thm:layermle}. Towards this, as in \eqref{eq:gammal2}, consider the testing problem: For $\varepsilon > 0$ and $\revsag{\bm \gamma \in \cB_M}$, 
\begin{align}\label{eq:H0maximum}
H_0:\bm{\beta}_s=\bm \gamma \quad \mbox{versus} \quad H_1: \|\bm{\beta}_s -\bm \gamma \|_\infty \geq \varepsilon . 
\end{align} 
In this case the minimax risk of a test function is defined as in \eqref{eq:Rpsi} with the $L_2$ norm $\| \bm \gamma' - \bm \gamma \|_2$ replaced by the $L_\infty$ norm $\| \bm \gamma' - \bm \gamma \|_\infty$. Then consider the test: 
\begin{align}\label{eq:test_max}
\phi_{n, s}^{\max} := \mathbbm 1\left\{ \|\hat{\bm \beta}_{s} - \bm \gamma \|_\infty \geq \,2C \sqrt{\frac{\log n}{n^{s-1}}} \right\},
\end{align} 
where $C: = C(s, M) > 0$ is chosen according to \eqref{eq:mlerlayer} such that 
$$\P_{\bm \kappa} \left( \|\hat{\bm \beta}_{s} - \bm \kappa \|_\infty \leq \, C \sqrt{\frac{\log n}{n^{s-1}}} \right) \rightarrow 1, $$ 
for all $\revsag{\bm \kappa \in \cB_M}$. This implies,  $\E_{\bm \gamma}[\phi_{n, s}^{\max}] \rightarrow 0$. Also, for $\revsag{\bm \gamma' \in \cB_M}$ such that $\| \bm \gamma - \bm \gamma' \|_\infty \geq \varepsilon$, 
\begin{align}\label{eq:lmaxpower}
\E_{\bm \gamma'}[\phi_{n, s}^{\max}] = \P_{\bm \gamma'}\left( \|\hat{\bm \beta}_{s} - \bm \gamma \|_\infty \geq \,2C \sqrt{\frac{\log n}{n^{s-1}}} \right) \geq 
\P_{\bm \gamma'} \left( \|\hat{\bm \beta}_{s} - \bm \gamma' \|_\infty \leq \, C \sqrt{\frac{\log n}{n^{s-1}}} \right)
\rightarrow 1 , 
\end{align}
whenever $\varepsilon \gg \sqrt{\log n/n^{s-1}}$. This is because $\|\hat{\bm \beta}_{s} - \bm \gamma' \|_\infty \leq\, C \sqrt{\log n/n^{s-1}}$ implies,   
$$ \| \hat{\bm \beta}_{s} - \bm \gamma \|_\infty \geq \|\bm \gamma - \bm \gamma' \|_\infty - \|\hat{\bm \beta}_{s} - \bm \gamma' \|_\infty \geq  \varepsilon - C \sqrt{\frac{\log n}{n^{s-1}}} \geq 2C \sqrt{\frac{\log n}{n^{s-1}}}, $$ 
whenever $\varepsilon \gg \sqrt{\log n/n^{s-1}}$. This implies that the test 
$\phi_{n, s}^{\max}$ in \eqref{eq:test_max} is asymptotically powerful for \eqref{eq:H0maximum} whenever $\varepsilon \gg \sqrt{\log n/n^{s-1}}$. The following result shows that this rate is optimal (up to a factor of $\sqrt{\log n}$) for testing in the $L_\infty$ norm.

\begin{thm}\label{thm:H0lmax}
Given $H_n \sim \mathsf{H}_{n, s}(n, \bm \beta_s)$ and $\revsag{\bm\gamma, \bm \beta_s \in \cB_M}$, consider the testing problem \eqref{eq:H0maximum}. Then the following hold: 

\begin{itemize} 

\item[(a)] The test $\phi_{n, s}^{\max}$ in \eqref{eq:lmaxpower} is asymptotically powerful for \eqref{eq:H0maximum}, when $\varepsilon \gg \sqrt{\frac{\log n}{n^{s-1}}}$. 

\item[(b)] On the other hand, all tests are asymptotically powerless for \eqref{eq:H0maximum}, when \revsag{$\varepsilon \ll \sqrt{\frac{\log n}{n^{s-1}}}$}. 

\end{itemize}

\end{thm}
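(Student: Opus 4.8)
Part (a) has already been fully established in the discussion preceding the theorem statement, via the estimation guarantee \eqref{eq:mlerlayer}: the test $\phi_{n, s}^{\max}$ has vanishing Type I error because $\hat{\bm \beta}_s$ concentrates around $\bm \gamma$ in $L_\infty$, and vanishing Type II error by the triangle inequality whenever $\varepsilon \gg \sqrt{\log n / n^{s-1}}$. So the only thing to prove is part (b), the impossibility claim. The plan is to use Le Cam's two-point method: it suffices to exhibit, for each $n$, a single alternative parameter $\bm \beta_s = \bm \beta_s^{(n)} \in \cB(M)$ with $\|\bm \beta_s^{(n)} - \bm \gamma\|_\infty \geq \varepsilon$ such that the total variation distance between $\mathsf{H}_{n, s}(n, \bm \gamma)$ and $\mathsf{H}_{n, s}(n, \bm \beta_s^{(n)})$ tends to $0$. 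By the standard reduction, $\inf_{\psi_n} \cR(\psi_n, \bm \gamma) \geq 1 - \mathrm{TV}(\P_{\bm \gamma}, \P_{\bm \beta_s^{(n)}})$, so TV vanishing forces the worst-case risk to $1$.

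The natural choice is to perturb a \emph{single} coordinate. Take $\bm \gamma = \bm 0$ without loss of generality (the argument is uniform over $\bm \gamma \in \cB(M)$ by the same computation, or one reduces to it; note the boundedness of $\bm \gamma$ and $\bm \beta_s$ keeps all edge probabilities bounded away from $0$ and $1$), and set $\bm \beta_s^{(n)} = \varepsilon \bm e_1$, i.e. perturb only the first node's parameter by $\varepsilon$. Then $\|\bm \beta_s^{(n)} - \bm 0\|_\infty = \varepsilon$, and for $n$ large this lies in $\cB(M)$ since $\varepsilon \to 0$. Under these two hypotheses the only hyperedges whose inclusion probabilities differ are those of size $s$ containing node $1$, of which there are $\binom{n-1}{s-1} = \Theta(n^{s-1})$; each such probability moves by $O(\varepsilon)$. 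The plan is to bound the chi-squared (or KL) divergence between the two product measures: since the two product distributions differ only in the $\Theta(n^{s-1})$ coordinates indexed by $s$-sets through node $1$, and each coordinate contributes a squared-Hellinger (equivalently, up to constants depending on $M$, a KL) term of order $\varepsilon^2$, we get $\mathrm{KL}\big(\P_{\bm \beta_s^{(n)}} \,\|\, \P_{\bm 0}\big) \lesssim_{s,M} n^{s-1} \varepsilon^2$. When $\varepsilon \ll n^{-(s-1)/2}$ this bound is $o(1)$, so by Pinsker $\mathrm{TV} \to 0$ and all tests are asymptotically powerless.

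The main obstacle — and the reason the stated threshold in (b) is $\sqrt{1/n^{s-1}}$ rather than the $\sqrt{\log n / n^{s-1}}$ appearing in (a) — is that the single-coordinate perturbation is information-theoretically indistinguishable only up to scale $n^{-(s-1)/2}$, leaving the $\sqrt{\log n}$ gap that the theorem explicitly acknowledges. There is nothing subtle in the divergence computation itself; the care needed is (i) checking that the $O(\varepsilon)$ bound on each perturbed probability, together with the bounded-away-from-$\{0,1\}$ property guaranteed by $\bm \gamma, \bm \beta_s \in \cB(M)$, indeed yields a per-coordinate divergence of order $\varepsilon^2$ with constants depending only on $M$ (a Taylor expansion of the Bernoulli KL divergence in the parameter gap suffices), and (ii) confirming that the count of affected hyperedges is exactly $\binom{n-1}{s-1} = \Theta_s(n^{s-1})$, so the total divergence is $\Theta_{s,M}(n^{s-1}\varepsilon^2)$. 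Once these are in place, Pinsker's inequality and the two-point lower bound close the argument.
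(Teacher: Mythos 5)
Your proposal is correct and takes essentially the same approach as the paper: part (a) is exactly the discussion preceding the theorem, and for part (b) the paper likewise uses a two-point construction with $\bm \gamma = \bm 0$ and a single-coordinate perturbation $\gamma'_1 = \varepsilon$, exploiting that only the $\Theta_s(n^{s-1})$ hyperedges through vertex $1$ are affected, each contributing an $O(\varepsilon^2)$ divergence. The only cosmetic difference is the closing inequality: the paper bounds the second moment $\E_{H_0}[L_n^2] = 1 + o(1)$ and applies Lemma \ref{lm:lb}, whereas you bound the KL divergence and invoke Pinsker plus the total-variation two-point bound --- interchangeable routes to the same $\varepsilon \ll n^{-(s-1)/2}$ threshold.
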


The proof of Theorem \ref{thm:H0lmax} (b) is given in Appendix \ref{sec:H0lmaxpf}. Note that in this case minimax rates of estimation and testing are the same, since the effect of high-dimensional aggregation does not arise when separation is measured in the $L_\infty$ norm.

\section{Numerical Experiments}
\label{sec:simulations}

In  this section we study the performance of the ML estimates and the LR tests discussed above in simulations. To begin with we simulate a 3-uniform hypergraph $\bm \beta$-model $\mathsf{H}_{3}(n, \bm \beta)$, with $n=400$ vertices and $\bm \beta = \bm 0 \in \R^n$.  
Figure \ref{fig:LRestimation}(a) shows the quantile-quantile (QQ) plot (over 200 iterations) of the first coordinate of the ML estimate $[\bm D]_1 ([\hat {\bm \beta} - \bm \beta]_1)$ (where $\hat {\bm \beta}$ is computed using the fixed point algorithm described in \cite{stasi2014beta}. \revsag{Here} $\bm D$ is as defined in Corollary \ref{cor:central_lim_thm}). 
We observe that the empirical quantiles closely follow the quantiles of the standard normal distribution, validating the result in Corollary \ref{cor:central_lim_thm}.

\begin{figure}[h!]
\begin{minipage}[b]{0.3\linewidth}
\centering
\includegraphics[width=\textwidth]{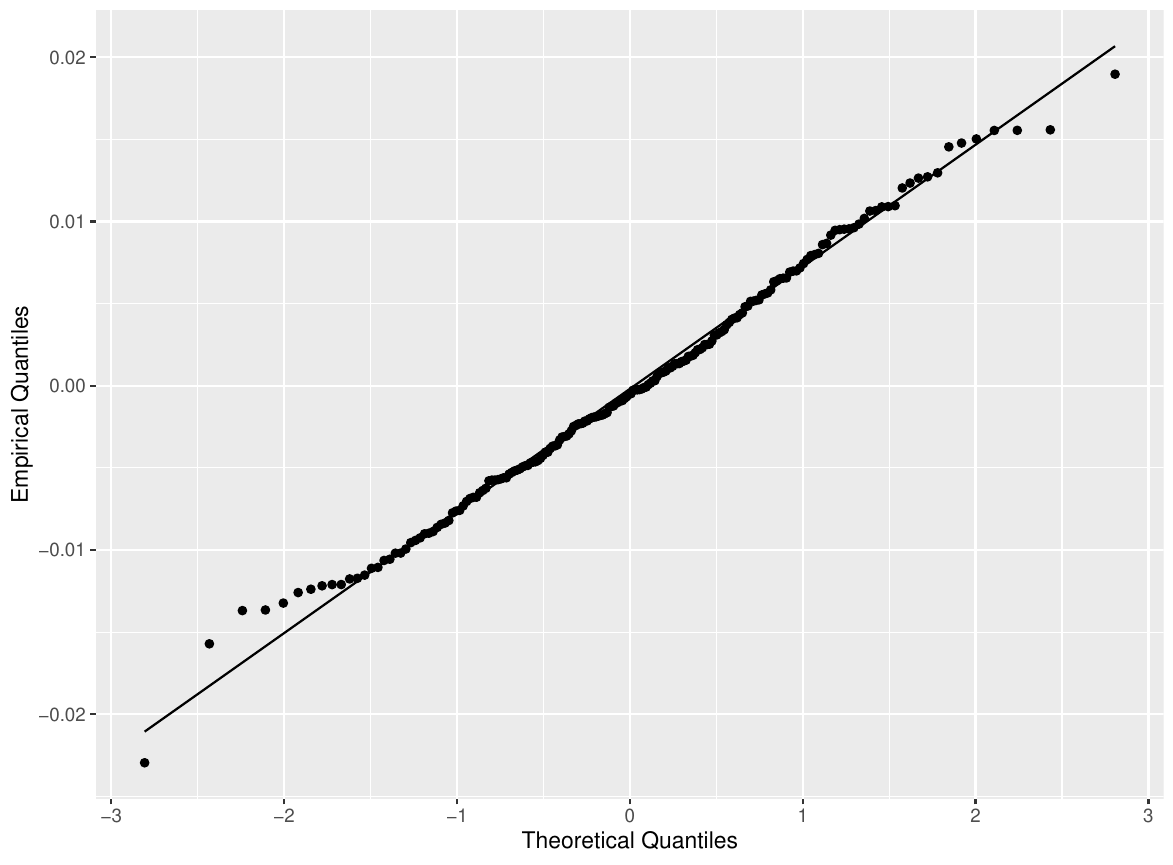}
{ \small{ (a) } }
\end{minipage}
\begin{minipage}[b]{0.3\linewidth}
\centering
\includegraphics[width=\textwidth]{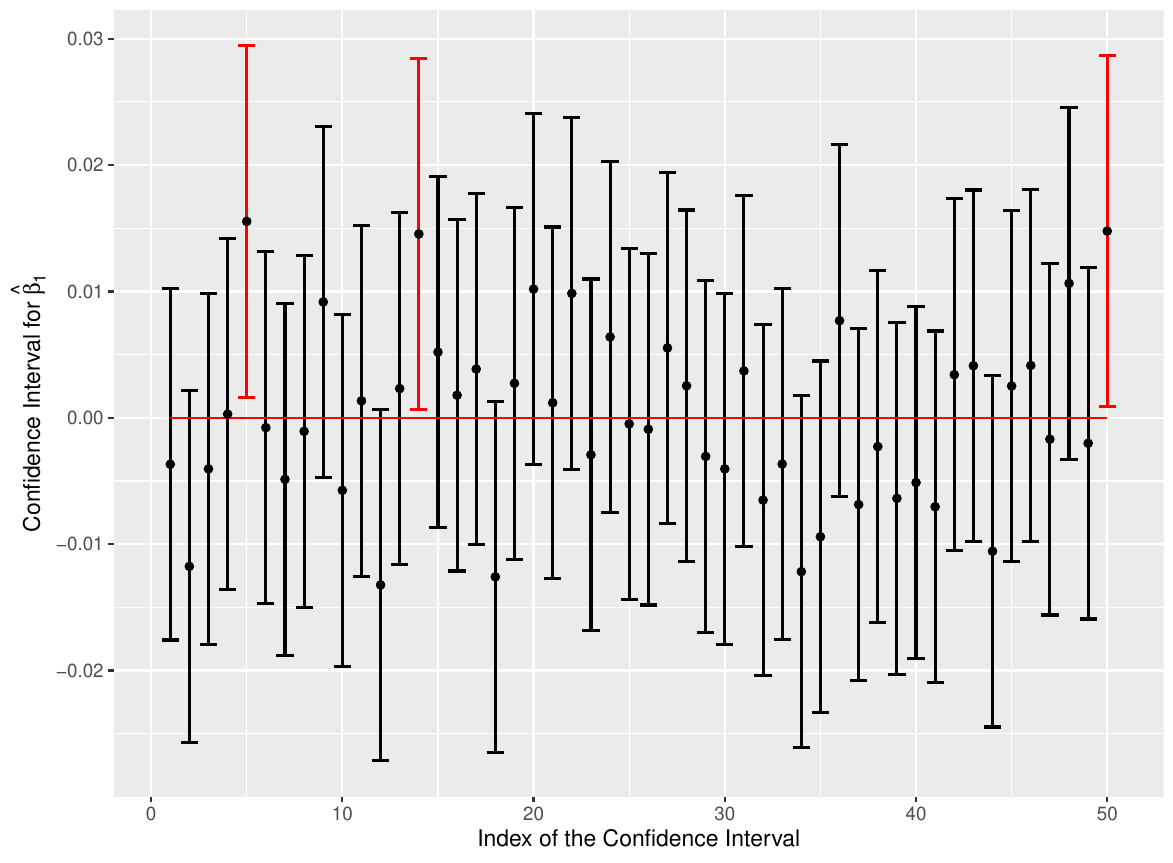}
{ \small{ (b) } }
\end{minipage} 
\begin{minipage}[b]{0.3\linewidth}
\centering
\includegraphics[width=\textwidth]{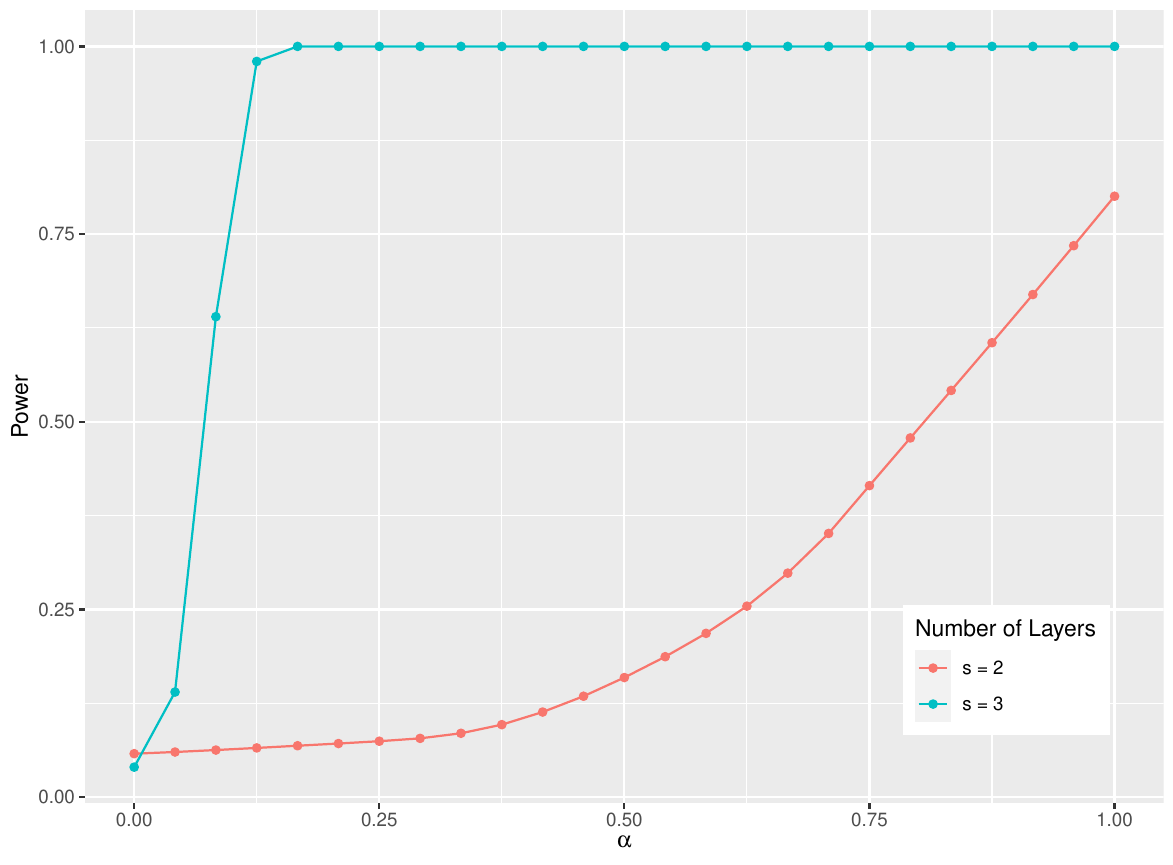}
{ \small{ (c) } }
\end{minipage} 
 \caption{\small{(a) QQ plot of the ML estimate $\hat \beta_1$, (b) confidence intervals for $\beta_1$, and (c) power of the LR test for the goodness of fit problem \eqref{eq:H01s}. \revsag{The first two plots correspond  to the 3-uniform hypergraph $\bm \beta$-model whereas plot (c) corresponds to $s$-uniform hypergraph $\bm \beta$-model for $s=2,3$.}}} 
\label{fig:LRestimation} 
\end{figure}

In the same setup as above, Figure \ref{fig:LRestimation}(b) shows the 95\% confidence interval for  $[\bm \beta]_1$ over 50 iterations. Specifically, we plot the intervals  
$$\left[ [\hat {\bm \beta}]_1 - \frac{1.96}{[\hat{ \bm D}]_1}, [\hat {\bm \beta}]_1 + \frac{1.96}{[\hat{\bm D}]_1} \right], $$ 
where $\hat {\bm D}$ is the estimate of $\bm D$ as defined in Theorem \ref{thm:conf_int_thm}. 
This figure shows that 47 out of 50 of intervals cover the true parameter, which gives an empirical coverage of $47/50= 0.94$. 

Next, we consider the goodness of fit problem in $s$-uniform hypergraph $\bm \beta$-model: 
\begin{align}
\label{eq:H01s}
\revsag{
H_0:\bm \beta_s=0 \quad \mbox{ versus } \quad H_1:\bm \beta_s \neq 0,}
\end{align}
for $s=2,3$. For this we simulate \revsag{$H_n \sim \mathsf{H}_{s}(n, \bm \gamma)$}, with $n=250$ and $\bm \gamma  = \alpha \cdot \bm u$, where $\bm u$ is chosen uniformly at random from the $n$-dimensional unit sphere and $\alpha \in [0, 1]$.  Figure \ref{fig:LRestimation}(c) shows the empirical power of the LR test \eqref{eq:testlambda} (over 50 iterations) as $\alpha$ varies over a grid of $25$ uniformly spaced values in $[0,1]$, for $s=2, 3$. 
In both cases, as expected, the power increases with $\alpha$, which, in this case, determines the signal strength. Also, the LR test is more powerful in the 3-uniform case compared to the 2-uniform case. This aligns with conclusions of Theorem \ref{thm:testing_inhom_power}, which shows that the detection threshold of the LR test in the 3-uniform case is $n^{-\frac{3}{4}}$, while for 2-uniform case it is $n^{-\frac{1}{4}}$. Hence, one expects to see more power at lower signal strengths (smaller $\alpha$) for $s=3$ compared to $s=2$.

\small 

\subsection*{Acknowledgements} 
B. B. Bhattacharya was supported by NSF CAREER grant DMS 2046393, NSF grant DMS 2113771, and a Sloan Research Fellowship. The authors thank the anonymous referees for their insightful comments which improved the quality and the presentation of the paper. The authors also thank Rui Feng for pointing out an error in a previous draft.

\small 
\bibliographystyle{abbrvnat}
\bibliography{Hypergraph_beta_model}

\begin{thebibliography}{60}
\providecommand{\natexlab}[1]{#1}
\providecommand{\url}[1]{\texttt{#1}}
\expandafter\ifx\csname urlstyle\endcsname\relax
  \providecommand{\doi}[1]{doi: #1}\else
  \providecommand{\doi}{doi: \begingroup \urlstyle{rm}\Url}\fi

\bibitem[Agarwal et~al.(2005)Agarwal, Lim, Zelnik-Manor, Perona, Kriegman, and
  Belongie]{agarwal2005beyond}
S.~Agarwal, J.~Lim, L.~Zelnik-Manor, P.~Perona, D.~Kriegman, and S.~Belongie.
\newblock Beyond pairwise clustering.
\newblock In \emph{2005 IEEE Computer Society Conference on Computer Vision and
  Pattern Recognition (CVPR'05)}, volume~2, pages 838--845. IEEE, 2005.

\bibitem[Ahn et~al.(2019)Ahn, Lee, and Suh]{ahn2019community}
K.~Ahn, K.~Lee, and C.~Suh.
\newblock Community recovery in hypergraphs.
\newblock \emph{IEEE Transactions on Information Theory}, 65\penalty0
  (10):\penalty0 6561--6579, 2019.

\bibitem[Balasubramanian(2021)]{nonparametric2021}
K.~Balasubramanian.
\newblock Nonparametric modeling of higher-order interactions via
  hypergraphons.
\newblock \emph{The Journal of Machine Learning Research}, 22\penalty0
  (1):\penalty0 6464--6498, 2021.

\bibitem[Battiston et~al.(2020)Battiston, Cencetti, Iacopini, Latora, Lucas,
  Patania, Young, and Petri]{battiston2020networks}
F.~Battiston, G.~Cencetti, I.~Iacopini, V.~Latora, M.~Lucas, A.~Patania, J.-G.
  Young, and G.~Petri.
\newblock Networks beyond pairwise interactions: structure and dynamics.
\newblock \emph{Physics Reports}, 874:\penalty0 1--92, 2020.

\bibitem[Battiston et~al.(2021)Battiston, Amico, Barrat, Bianconi, Ferraz~de
  Arruda, Franceschiello, Iacopini, K{\'e}fi, Latora, Moreno,
  et~al.]{battiston2021complexsystems}
F.~Battiston, E.~Amico, A.~Barrat, G.~Bianconi, G.~Ferraz~de Arruda,
  B.~Franceschiello, I.~Iacopini, S.~K{\'e}fi, V.~Latora, Y.~Moreno, et~al.
\newblock The physics of higher-order interactions in complex systems.
\newblock \emph{Nature Physics}, 17\penalty0 (10):\penalty0 1093--1098, 2021.

\bibitem[Benson et~al.(2016)Benson, Gleich, and Leskovec]{network2016higher}
A.~R. Benson, D.~F. Gleich, and J.~Leskovec.
\newblock Higher-order organization of complex networks.
\newblock \emph{Science}, 353\penalty0 (6295):\penalty0 163--166, 2016.

\bibitem[Blitzstein and Diaconis(2010)]{degreesequence}
J.~Blitzstein and P.~Diaconis.
\newblock A sequential importance sampling algorithm for generating random
  graphs with prescribed degrees.
\newblock \emph{Internet Mathematics}, 6:\penalty0 489--522, 2010.

\bibitem[Breza et~al.(2023)Breza, Chandrasekhar, Lubold, McCormick, and
  Pan]{aggregated2023consistently}
E.~Breza, A.~G. Chandrasekhar, S.~Lubold, T.~H. McCormick, and M.~Pan.
\newblock Consistently estimating network statistics using aggregated
  relational data.
\newblock \emph{Proceedings of the National Academy of Sciences}, 120\penalty0
  (21):\penalty0 e2207185120, 2023.

\bibitem[Brown(1971)]{brownclt}
B.~M. Brown.
\newblock {Martingale Central Limit Theorems}.
\newblock \emph{The Annals of Mathematical Statistics}, 42\penalty0
  (1):\penalty0 59--66, 1971.

\bibitem[Chatterjee et~al.(2011)Chatterjee, Diaconis, and
  Sly]{chatterjee2011random}
S.~Chatterjee, P.~Diaconis, and A.~Sly.
\newblock Random graphs with a given degree sequence.
\newblock \emph{The Annals of Applied Probability}, 21\penalty0 (4):\penalty0
  1400--1435, 2011.

\bibitem[Chen et~al.(2021)Chen, Kato, and Leng]{chen2021analysis}
M.~Chen, K.~Kato, and C.~Leng.
\newblock Analysis of networks via the sparse $\beta$-model.
\newblock \emph{Journal of the Royal Statistical Society Series B: Statistical
  Methodology}, 83\penalty0 (5):\penalty0 887--910, 2021.

\bibitem[Chen and Olvera-Cravioto(2013)]{chen2013directed}
N.~Chen and M.~Olvera-Cravioto.
\newblock Directed random graphs with given degree distributions.
\newblock \emph{Stochastic Systems}, 3\penalty0 (1):\penalty0 147--186, 2013.

\bibitem[Chen et~al.(2022)Chen, Gao, and Zhang]{chen2022partial}
P.~Chen, C.~Gao, and A.~Y. Zhang.
\newblock Partial recovery for top-$k$ ranking: Optimality of mle and
  suboptimality of the spectral method.
\newblock \emph{The Annals of Statistics}, 50\penalty0 (3):\penalty0
  1618--1652, 2022.

\bibitem[Chen et~al.(2019)Chen, Fan, Ma, and Wang]{Chen2019SpectralMA}
Y.~Chen, J.~Fan, C.~Ma, and K.~Wang.
\newblock Spectral method and regularized mle are both optimal for top-{$K$}
  ranking.
\newblock \emph{The Annals of Statistics}, 47 4:\penalty0 2204--2235, 2019.

\bibitem[Chien et~al.(2019)Chien, Lin, and Wang]{chien2019minimax}
I.~E. Chien, C.-Y. Lin, and I.-H. Wang.
\newblock On the minimax misclassification ratio of hypergraph community
  detection.
\newblock \emph{IEEE Transactions on Information Theory}, 65\penalty0
  (12):\penalty0 8095--8118, 2019.

\bibitem[Elmer et~al.(2020)Elmer, Mepham, and Stadtfeld]{elmer2020students}
T.~Elmer, K.~Mepham, and C.~Stadtfeld.
\newblock Students under lockdown: Comparisons of students? social networks and
  mental health before and during the covid-19 crisis in switzerland.
\newblock \emph{Plos One}, 15\penalty0 (7):\penalty0 e0236337, 2020.

\bibitem[Ghoshal et~al.(2009)Ghoshal, Zlati{\'c}, Caldarelli, and
  Newman]{ghoshal2009random}
G.~Ghoshal, V.~Zlati{\'c}, G.~Caldarelli, and M.~E. Newman.
\newblock Random hypergraphs and their applications.
\newblock \emph{Physical Review E}, 79\penalty0 (6):\penalty0 066118, 2009.

\bibitem[Ghoshdastidar and
  Dukkipati(2015{\natexlab{a}})]{ghoshdastidar2015provable}
D.~Ghoshdastidar and A.~Dukkipati.
\newblock A provable generalized tensor spectral method for uniform hypergraph
  partitioning.
\newblock In \emph{International Conference on Machine Learning}, pages
  400--409. PMLR, 2015{\natexlab{a}}.

\bibitem[Ghoshdastidar and
  Dukkipati(2015{\natexlab{b}})]{ghoshdastidar2015spectral}
D.~Ghoshdastidar and A.~Dukkipati.
\newblock Spectral clustering using multilinear {SVD}: Analysis, approximations
  and applications.
\newblock In \emph{Proceedings of the AAAI Conference on Artificial
  Intelligence}, volume~29, 2015{\natexlab{b}}.

\bibitem[Ghoshdastidar and
  Dukkipati(2017{\natexlab{a}})]{ghoshdastidar2017consistency}
D.~Ghoshdastidar and A.~Dukkipati.
\newblock Consistency of spectral hypergraph partitioning under planted
  partition model.
\newblock \emph{The Annals of Statistics}, 45\penalty0 (1):\penalty0 289--315,
  2017{\natexlab{a}}.

\bibitem[Ghoshdastidar and
  Dukkipati(2017{\natexlab{b}})]{ghoshdastidar2017uniform}
D.~Ghoshdastidar and A.~Dukkipati.
\newblock Uniform hypergraph partitioning: Provable tensor methods and sampling
  techniques.
\newblock \emph{The Journal of Machine Learning Research}, 18\penalty0
  (1):\penalty0 1638--1678, 2017{\natexlab{b}}.

\bibitem[Gracious et~al.(2023)Gracious, Gupta, and
  Dukkipati]{gracious2023neural}
T.~Gracious, A.~Gupta, and A.~Dukkipati.
\newblock Neural temporal point process for forecasting higher order and
  directional interactions.
\newblock \emph{arXiv:2301.12210}, 2023.

\bibitem[Graham(2017)]{graham2017degreeheterogeneity}
B.~S. Graham.
\newblock An econometric model of network formation with degree heterogeneity.
\newblock \emph{Econometrica}, 85\penalty0 (4):\penalty0 1033--1063, 2017.

\bibitem[Grilli et~al.(2017)Grilli, Barab{\'a}s, Michalska-Smith, and
  Allesina]{grilli2017higher}
J.~Grilli, G.~Barab{\'a}s, M.~J. Michalska-Smith, and S.~Allesina.
\newblock Higher-order interactions stabilize dynamics in competitive network
  models.
\newblock \emph{Nature}, 548\penalty0 (7666):\penalty0 210--213, 2017.

\bibitem[Hillar and Wibisono(2013)]{hillar2013maximum}
C.~Hillar and A.~Wibisono.
\newblock Maximum entropy distributions on graphs.
\newblock \emph{arXiv:1301.3321}, 2013.

\bibitem[Holland and Leinhardt(1981)]{exponential1981graphs}
P.~W. Holland and S.~Leinhardt.
\newblock An exponential family of probability distributions for directed
  graphs.
\newblock \emph{Journal of the American Statistical association}, 76\penalty0
  (373):\penalty0 33--50, 1981.

\bibitem[Hu and Wang(2022)]{hu2022multiway}
J.~Hu and M.~Wang.
\newblock Multiway spherical clustering via degree-corrected tensor block
  models.
\newblock In \emph{International Conference on Artificial Intelligence and
  Statistics}, pages 1078--1119. PMLR, 2022.

\bibitem[Jackson et~al.(2008)]{jackson2008social}
M.~O. Jackson et~al.
\newblock \emph{Social and {E}conomic {N}etworks}, volume~3.
\newblock Princeton University Press, 2008.

\bibitem[Ji and Jin(2016)]{networkcoauthorship}
P.~Ji and J.~Jin.
\newblock Coauthorship and citation networks for statisticians.
\newblock \emph{The Annals of Applied Statistics}, pages 1779--1812, 2016.

\bibitem[Karwa and Petrovi{\'c}(2016)]{network2016discussion}
V.~Karwa and S.~Petrovi{\'c}.
\newblock Discussion of ``coauthorship and citation networks for
  statisticians".
\newblock \emph{The Annals of Applied Statistics}, 10\penalty0 (4):\penalty0
  1827--1834, 2016.

\bibitem[Karwa and Slavkovic(2016)]{karwa2016inference}
V.~Karwa and A.~Slavkovic.
\newblock Inference using noisy degrees: Differentially private $\beta$-model
  and synthetic graphs.
\newblock \emph{The Annals of Statistics}, 44\penalty0 (1):\penalty0 87--112,
  2016.

\bibitem[Karypis and Kumar(1999)]{karypis1999multilevel}
G.~Karypis and V.~Kumar.
\newblock Multilevel $k$-way hypergraph partitioning.
\newblock In \emph{Proceedings of the 36th Annual ACM/IEEE Design Automation
  Conference}, pages 343--348, 1999.

\bibitem[Ke et~al.(2019)Ke, Shi, and Xia]{ke2019community}
Z.~T. Ke, F.~Shi, and D.~Xia.
\newblock Community detection for hypergraph networks via regularized tensor
  power iteration.
\newblock \emph{arXiv preprint arXiv:1909.06503}, 2019.

\bibitem[Lotito et~al.(2022)Lotito, Musciotto, Montresor, and
  Battiston]{hypergraph2022higher}
Q.~F. Lotito, F.~Musciotto, A.~Montresor, and F.~Battiston.
\newblock Higher-order motif analysis in hypergraphs.
\newblock \emph{Communications Physics}, 5\penalty0 (1):\penalty0 79, 2022.

\bibitem[Lotito et~al.(2023)Lotito, Musciotto, Montresor, and
  Battiston]{higherordernetwork}
Q.~F. Lotito, F.~Musciotto, A.~Montresor, and F.~Battiston.
\newblock Hyperlink communities in higher-order networks.
\newblock \emph{arXiv:2303.01385}, 2023.

\bibitem[Lunag{\'o}mez et~al.(2017)Lunag{\'o}mez, Mukherjee, Wolpert, and
  Airoldi]{hypergraphgeometric}
S.~Lunag{\'o}mez, S.~Mukherjee, R.~L. Wolpert, and E.~M. Airoldi.
\newblock Geometric representations of random hypergraphs.
\newblock \emph{Journal of the American Statistical Association}, 112\penalty0
  (517):\penalty0 363--383, 2017.

\bibitem[Michoel and Nachtergaele(2012)]{michoel2012alignment}
T.~Michoel and B.~Nachtergaele.
\newblock Alignment and integration of complex networks by hypergraph-based
  spectral clustering.
\newblock \emph{Physical Review E}, 86\penalty0 (5):\penalty0 056111, 2012.

\bibitem[Mukherjee et~al.(2018)Mukherjee, Mukherjee, and Sen]{degreetesting}
R.~Mukherjee, S.~Mukherjee, and S.~Sen.
\newblock Detection thresholds for the $\beta$-model on sparse graphs.
\newblock \emph{The Annals of Statistics}, 46\penalty0 (3):\penalty0
  1288--1317, 2018.

\bibitem[Mukherjee(2020)]{mukherjee2020degeneracy}
S.~Mukherjee.
\newblock Degeneracy in sparse {ERGM}s with functions of degrees as sufficient
  statistics.
\newblock \emph{Bernoulli}, 26\penalty0 (2):\penalty0 1016--1043, 2020.

\bibitem[Pal and Zhu(2021)]{pal2021community}
S.~Pal and Y.~Zhu.
\newblock Community detection in the sparse hypergraph stochastic block model.
\newblock \emph{Random Structures \& Algorithms}, 59\penalty0 (3):\penalty0
  407--463, 2021.

\bibitem[Park and Newman(2004)]{park2004statistical}
J.~Park and M.~E. Newman.
\newblock Statistical mechanics of networks.
\newblock \emph{Physical Review E}, 70\penalty0 (6):\penalty0 066117, 2004.

\bibitem[Patania et~al.(2017)Patania, Petri, and Vaccarino]{patania2017shape}
A.~Patania, G.~Petri, and F.~Vaccarino.
\newblock The shape of collaborations.
\newblock \emph{EPJ Data Science}, 6:\penalty0 1--16, 2017.

\bibitem[Petri et~al.(2014)Petri, Expert, Turkheimer, Carhart-Harris, Nutt,
  Hellyer, and Vaccarino]{petri2014homological}
G.~Petri, P.~Expert, F.~Turkheimer, R.~Carhart-Harris, D.~Nutt, P.~J. Hellyer,
  and F.~Vaccarino.
\newblock Homological scaffolds of brain functional networks.
\newblock \emph{Journal of The Royal Society Interface}, 11\penalty0
  (101):\penalty0 20140873, 2014.

\bibitem[Rinaldo et~al.(2013)Rinaldo, Petrović, and
  Fienberg]{rinaldo_mle_beta_model}
A.~Rinaldo, S.~Petrović, and S.~E. Fienberg.
\newblock Maximum lilkelihood estimation in the $\beta$-model.
\newblock \emph{The Annals of Statistics}, 41\penalty0 (3):\penalty0
  1085--1110, 2013.

\bibitem[Stasi et~al.(2014)Stasi, Sadeghi, Rinaldo, Petrovic, and
  Fienberg]{stasi2014beta}
D.~Stasi, K.~Sadeghi, A.~Rinaldo, S.~Petrovic, and S.~Fienberg.
\newblock $\beta$ models for random hypergraphs with a given degree sequence.
\newblock \emph{Proceedings 21st International Conference on Computational
  Statistics}, pages 593--600, 2014.

\bibitem[Stein and Leng(2020)]{stein2020sparse}
S.~Stein and C.~Leng.
\newblock A sparse $\beta$-model with covariates for networks.
\newblock \emph{arXiv:2010.13604}, 2020.

\bibitem[Tsybakov(2009)]{MR2724359}
A.~B. Tsybakov.
\newblock \emph{Introduction to Nonparametric Estimation}.
\newblock Springer Series in Statistics. Springer, New York, 2009.

\bibitem[Vershynin(2010)]{vershynin2010introduction}
R.~Vershynin.
\newblock Introduction to the non-asymptotic analysis of random matrices.
\newblock \emph{arXiv:1011.3027}, 2010.

\bibitem[Wahlstr{\"o}m et~al.(2017)Wahlstr{\"o}m, Skog, La~Rosa, H{\"a}ndel,
  and Nehorai]{wahlstrom2017beta}
J.~Wahlstr{\"o}m, I.~Skog, P.~S. La~Rosa, P.~H{\"a}ndel, and A.~Nehorai.
\newblock The $\beta$-model--maximum likelihood, {C}ram{\'e}r--{R}ao bounds,
  and hypothesis testing.
\newblock \emph{IEEE Transactions on Signal Processing}, 65\penalty0
  (12):\penalty0 3234--3246, 2017.

\bibitem[Xu and Mukherjee(2021)]{xu2021signal}
Y.~Xu and S.~Mukherjee.
\newblock Signal detection in degree corrected {ERGM}s.
\newblock \emph{arXiv preprint arXiv:2108.09255}, 2021.

\bibitem[Yadati et~al.(2019)Yadati, Nimishakavi, Yadav, Nitin, Louis, and
  Talukdar]{yadati2019hypergcn}
N.~Yadati, M.~Nimishakavi, P.~Yadav, V.~Nitin, A.~Louis, and P.~Talukdar.
\newblock Hypergcn: a new method of training graph convolutional networks on
  hypergraphs.
\newblock In \emph{Proceedings of the 33rd International Conference on Neural
  Information Processing Systems}, pages 1511--1522, 2019.

\bibitem[Yan and Xu(2013)]{yan_xu}
T.~Yan and J.~Xu.
\newblock A central limit theorem in the $\beta$-model for undirected random
  graphs with a diverging number of vertices.
\newblock \emph{Biometrika}, 100\penalty0 (2):\penalty0 519--524, 2013.

\bibitem[Yan et~al.(2016{\natexlab{a}})Yan, Leng, and Zhu]{directed}
T.~Yan, C.~Leng, and J.~Zhu.
\newblock Asymptotics in directed exponential random graph models with an
  increasing bi-degree sequence.
\newblock \emph{The Annals of Statistics}, 44\penalty0 (1):\penalty0 31--57,
  2016{\natexlab{a}}.

\bibitem[Yan et~al.(2016{\natexlab{b}})Yan, Qin, and Wang]{yan2016asymptotics}
T.~Yan, H.~Qin, and H.~Wang.
\newblock Asymptotics in undirected random graph models parameterized by the
  strengths of vertices.
\newblock \emph{Statistica Sinica}, 26:\penalty0 273--293, 2016{\natexlab{b}}.

\bibitem[Yan et~al.(2019)Yan, Jiang, Fienberg, and Leng]{yan2019statistical}
T.~Yan, B.~Jiang, S.~E. Fienberg, and C.~Leng.
\newblock Statistical inference in a directed network model with covariates.
\newblock \emph{Journal of the American Statistical Association}, 114\penalty0
  (526):\penalty0 857--868, 2019.

\bibitem[Yan et~al.(2022)Yan, Li, Xu, Yang, and Zhu]{yan2022wilks}
T.~Yan, Y.~Li, J.~Xu, Y.~Yang, and J.~Zhu.
\newblock Wilks' theorems in the $\beta $-model.
\newblock \emph{arXiv:2211.10055}, 2022.

\bibitem[Young et~al.(2021)Young, Petri, and Peixoto]{young2021hypergraph}
J.-G. Young, G.~Petri, and T.~P. Peixoto.
\newblock Hypergraph reconstruction from network data.
\newblock \emph{Communications Physics}, 4\penalty0 (1):\penalty0 135, 2021.

\bibitem[Yuan et~al.(2022)Yuan, Liu, Feng, and Shang]{yuan2022testing}
M.~Yuan, R.~Liu, Y.~Feng, and Z.~Shang.
\newblock Testing community structure for hypergraphs.
\newblock \emph{The Annals of Statistics}, 50\penalty0 (1):\penalty0 147--169,
  2022.

\bibitem[Zhang and Tan(2022)]{zhang2022exact}
Q.~Zhang and V.~Y. Tan.
\newblock Exact recovery in the general hypergraph stochastic block model.
\newblock \emph{IEEE Transactions on Information Theory}, 69\penalty0
  (1):\penalty0 453--471, 2022.

\bibitem[Zhang et~al.(2023)Zhang, Wang, Zhang, Yan, and Luo]{zhang2023l2}
Y.~Zhang, Q.~Wang, Y.~Zhang, T.~Yan, and J.~Luo.
\newblock L-2 regularized maximum likelihood for $\beta$-model in large and
  sparse networks.
\newblock \emph{arXiv:2110.11856}, 2023.

\end{thebibliography}


\normalsize 

\appendix

\section{Proof of Theorem \ref{thm:layermle}}
\label{mle_rate}

\subsection{Convergence Rate in the $L_2$ Norm} 
\label{sec:layermleL2pf} 

As mentioned in the Introduction, the proof of Theorem \ref{thm:layermle} involves showing the following: (1) a concentration bound on the gradient of negative log-likelihood $\ell_{n, s}$ (recall \eqref{eq:lik_k}) at the true parameter value \revsag{$\bm B = (\bm \beta_2, \ldots, \bm \beta_r)$}, and (2) the strong convexity of $\ell_{n, s}$ in a neighborhood of the true parameter. We begin with the concentration of the gradient $\nabla {\ell}_{n, s}$ in both the $L_2$ and the $L_\infty$ norms: 

\begin{lem}
\label{lem:gradientH}  
Suppose the assumptions of Theorem \ref{thm:layermle} hold. Then for each $2 \leq s \leq r$, there exists a constant $C>0$ (depending on $r$ and $M$) such that the following hold:  
\begin{align}\label{eq:gradientloglikelihood}
\|\nabla {\ell}_{n, s}(\bm{\beta}_s)\|_2^2 \le C\,n^{s} \quad \text{ and } \quad \|\nabla {\ell}_{n, s}(\bm{\beta}_s)\|^2_\infty \le C \,n^{s-1}\log n , 
\end{align} 
with probability $1-O\left( \frac{1}{n^2} \right)$. 
\end{lem}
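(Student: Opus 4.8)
The plan is to recognize that the gradient of $\ell_{n,s}$ at the true parameter is, up to sign, the centered $s$-degree vector, and then to control this vector in the two norms separately. Differentiating \eqref{eq:lik_k} and comparing with the likelihood equations \eqref{eq:def_k_unif}, for each $u\in[n]$ one has
\[
[\nabla\ell_{n,s}(\bm\beta_s)]_u = \sum_{\{v_2,\dots,v_s\}\in\binom{[n]\setminus\{u\}}{s-1}}\frac{e^{\beta_{s,u}+\beta_{s,v_2}+\dots+\beta_{s,v_s}}}{1+e^{\beta_{s,u}+\beta_{s,v_2}+\dots+\beta_{s,v_s}}} - d_s(u) = \E[d_s(u)] - d_s(u),
\]
so that $\nabla\ell_{n,s}(\bm\beta_s) = -(\bm d_s - \E\bm d_s)$ with $\bm d_s = (d_s(1),\dots,d_s(n))^\top$. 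Since $\bm\beta_s\in\cB_M$, every probability $p_{\bm e}$ in \eqref{eq:Hlayeredbeta} lies in a compact subinterval of $(0,1)$ depending only on $s$ and $M$, and there are $\binom{n-1}{s-1}=\Theta(n^{s-1})$ hyperedges of size $s$ through any vertex, so $\Var(d_s(u)) = \sum_{\bm e\ni u,\,|\bm e|=s} p_{\bm e}(1-p_{\bm e}) = \Theta_{s,M}(n^{s-1})$ for every $u$. Both bounds in \eqref{eq:gradientloglikelihood} are thus statements about the fluctuations of a sum of independent Bernoulli variables.

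For the $L_\infty$ bound I would apply Bernstein's (or Hoeffding's) inequality to the single sum $d_s(u)$, giving $\P(|d_s(u)-\E d_s(u)|\ge t)\le 2\exp(-c_{s,M}\,t^2/n^{s-1})$ for $t\lesssim n^{s-1}$; taking $t = C\sqrt{n^{s-1}\log n}$ with $C=C(s,M)$ large makes this at most $2n^{-3}$, and a union bound over the $n$ vertices yields $\|\nabla\ell_{n,s}(\bm\beta_s)\|_\infty^2 = \max_u(d_s(u)-\E d_s(u))^2 \le C^2 n^{s-1}\log n$ with probability $1-2n^{-2}$.

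The hard part is the $L_2$ bound at the sharp rate $n^s$, because bounding $\|\cdot\|_2^2$ coordinatewise via the previous step loses a $\log n$ factor, and Markov's inequality on $\E\|\nabla\ell_{n,s}(\bm\beta_s)\|_2^2 = \sum_u\Var(d_s(u)) = \Theta(n^s)$ only yields a constant-order tail, whereas we need $1-O(n^{-2})$. So I would treat $\|\nabla\ell_{n,s}(\bm\beta_s)\|_2^2 = \|BY\|_2^2 = Y^\top(B^\top B)Y$ as a quadratic form, where $Y = (\bm 1\{\bm e\in E(H_n)\} - p_{\bm e})_{|\bm e| = s}$ has independent, mean-zero, bounded coordinates and $B$ is the $n\times\binom{n}{s}$ vertex–hyperedge incidence matrix. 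A short computation gives $(B^\top B)_{\bm e,\bm e'} = |\bm e\cap\bm e'|$ and $BB^\top = \binom{n-2}{s-1}I_n + \binom{n-2}{s-2}\bm 1\bm 1^\top$, whence $\|B^\top B\|_{\mathrm{op}} = \|BB^\top\|_{\mathrm{op}} = \Theta(n^{s-1})$, while summing $|\bm e\cap\bm e'|^2$ over $\bm e'$ for fixed $\bm e$ is $\Theta_s(n^{s-1})$, so $\|B^\top B\|_F^2 = \sum_{\bm e,\bm e'}|\bm e\cap\bm e'|^2 = \Theta_s(n^{2s-1})$. The Hanson–Wright inequality then gives, for $t\asymp n^s$,
\[
\P\big(\,\big|\|\nabla\ell_{n,s}(\bm\beta_s)\|_2^2 - \E\|\nabla\ell_{n,s}(\bm\beta_s)\|_2^2\big| > t\,\big) \le 2\exp\!\Big(-c\min\Big\{\tfrac{t^2}{n^{2s-1}},\tfrac{t}{n^{s-1}}\Big\}\Big) = 2\exp(-c'n),
\]
so choosing $C$ above the constant hidden in $\E\|\nabla\ell_{n,s}(\bm\beta_s)\|_2^2 = \Theta(n^s)$ yields $\|\nabla\ell_{n,s}(\bm\beta_s)\|_2^2 \le Cn^s$ off an event of probability $o(n^{-2})$; intersecting with the $L_\infty$ event completes the proof.

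If one prefers to avoid Hanson–Wright, the same $L_2$ estimate follows from McDiarmid's bounded-differences inequality restricted to the good event of the $L_\infty$ step (equivalently, applied to the truncated statistic $\sum_u(\widetilde d_s(u)-\E d_s(u))^2$ with each $\widetilde d_s(u)$ clipped to $\E d_s(u)\pm C\sqrt{n^{s-1}\log n}$): on that event each of the $\Theta(n^s)$ edge indicators perturbs the statistic by $O(\sqrt{n^{s-1}\log n})$, which forces a deviation of order $n^s$ to have probability $\exp(-\Omega(n/\log n))$. Either way, the crux — and the only non-routine step — is obtaining concentration of the quadratic form $Y^\top(B^\top B)Y$ at the rate $n^s$, which works precisely because the spectral and Frobenius norms of $B^\top B$ are far smaller than $n^s$.
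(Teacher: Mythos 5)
Your proposal is correct, and for the key estimate it takes a genuinely different route from the paper. The identification $\nabla\ell_{n,s}(\bm\beta_s)=\E[\bm d_s]-\bm d_s$ and the $L_\infty$ bound via Hoeffding plus a union bound over the $n$ vertices coincide with the paper's argument. For the $L_2$ bound, the paper reduces $\|\nabla\ell_{n,s}(\bm\beta_s)\|_2$ to a maximum of linear statistics over a $\tfrac{1}{2}$-net of the unit ball (of cardinality $e^{O(n)}$) and applies Hoeffding to each linear form $\sum_v b_v\left(\E[d_s(v)]-d_s(v)\right)$, viewed as a sum of independent, bounded hyperedge contributions; you instead concentrate the quadratic form $Y^\top (B^\top B) Y$ directly via Hanson--Wright, after computing $\|B^\top B\|_{\mathrm{op}}=\Theta(n^{s-1})$ and $\|B^\top B\|_F^2=\Theta_s(n^{2s-1})$ for the vertex--hyperedge incidence matrix $B$ (your $BB^\top$ is exactly the matrix $\bm L$ that the paper uses later to control the Hessian in Lemma \ref{lem:lower_bound_hess}, so the two computations are consistent). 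Both routes give failure probability $e^{-cn}$ (your McDiarmid variant gives $e^{-\Omega(n/\log n)}$), far stronger than the stated $1-O(n^{-2})$; the paper's covering argument is more elementary, needing only Hoeffding and a standard covering-number bound, while yours avoids the net entirely and makes explicit the structural reason the rate $n^{s}$ is attainable, namely that the spectral and Frobenius norms of $B^\top B$ are of strictly smaller order than $n^{s}$. The only point to tidy in the McDiarmid alternative is the clipping step: you should note that the clipped statistic agrees with the true one on the high-probability $L_\infty$ event and that its expectation is still $\Theta(n^{s})$, both of which are routine since the bad event has polynomially small probability and the statistic is polynomially bounded.
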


The next step is to establish the strong convexity of $\ell_{n, s}$. Towards this we need to show that the smallest eigenvalue $\lambda_{\min}(\nabla^2 \ell_{n, s})$ of the Hessian matrix $\nabla^2 \ell_{n, s}$ (appropriately scaled) is bounded away from zero in a neighborhood of the true value $\bm \beta_s$. This is the content of the following lemma, which also establishes a matching upper bound on the largest eigenvalue $\lambda_{\max}(\nabla^2 \ell_{n, s})$ of the Hessian matrix $\nabla^2 \ell_{n, s}$.

\begin{lem}
\label{lem:lower_bound_hess} 
Suppose the assumptions of Theorem \ref{thm:layermle} hold. Fix $2 \leq s \leq r$ and a constant $K > 0$. 
Then there exists constants $C_1', C_2' >0$ (depending on $r$ and $M$) such that the following hold: 
\begin{align}\label{eq:hessainlambdal2}
{
C_1' e^{-s \| \bm \beta - \bm \beta_s \|_\infty} n^{s-1} \le  \lambda_{\min}(\nabla^2{\ell}_{n, s}(\bm{\beta})) \le \lambda_{\max}(\nabla^2{\ell}_{n, s}(\bm{\beta})) \le {C}_2' n^{s-1}.}
\end{align}
As a consequence, there exists a constants $C_1, C_2 >0$ (depending on $r$, $K$, and $M$) such that the following hold:  
\begin{align}\label{eq:hessainlambda}
C_1 n^{s-1} \le \inf_{\bm \beta: \| \bm \beta - \bm \beta_s \|_2 \leq K}\lambda_{\min}(\nabla^2{\ell}_{n, s}(\bm{\beta})) \le \sup_{\bm \beta: \| \bm \beta - \bm \beta_s \|_2 \leq K} \lambda_{\max}(\nabla^2{\ell}_{n, s}(\bm{\beta})) \le {C}_2 n^{s-1}.
\end{align}
\end{lem}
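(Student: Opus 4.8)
The plan is to prove Lemma \ref{lem:lower_bound_hess} by first writing down the Hessian explicitly and recognizing it as a weighted sum of rank-one projections, then controlling its eigenvalues via a combination of a diagonal-dominance argument and a uniform lower bound on the edge variances. Differentiating \eqref{eq:lik_k} twice, for $v, w \in [n]$,
\begin{align*}
[\nabla^2 \ell_{n, s}(\bm \beta)]_{v, w} = \sum_{\substack{\{v_1, \ldots, v_s\} \in {[n] \choose s} \\ v, w \in \{v_1, \ldots, v_s\}}} p_{v_1, \ldots, v_s}(\bm \beta)(1 - p_{v_1, \ldots, v_s}(\bm \beta)),
\end{align*}
where $p_{v_1, \ldots, v_s}(\bm \beta) = e^{\beta_{v_1} + \cdots + \beta_{v_s}}/(1 + e^{\beta_{v_1} + \cdots + \beta_{v_s}})$. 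Equivalently, $\nabla^2 \ell_{n, s}(\bm \beta) = \sum_{\bm e \in {[n] \choose s}} w_{\bm e}(\bm \beta)\, \bm 1_{\bm e} \bm 1_{\bm e}^\top$, where $\bm 1_{\bm e}$ is the indicator vector of the set $\bm e$ and $w_{\bm e}(\bm \beta) = p_{\bm e}(\bm \beta)(1 - p_{\bm e}(\bm \beta)) \in (0, \tfrac14]$. The upper bound $\lambda_{\max} \le C_2' n^{s-1}$ is then immediate: each row has at most ${n-1 \choose s-1} = O(n^{s-1})$ nonzero entries, each bounded by $\tfrac14$, so by Gershgorin (or by $\bm 1_{\bm e}^\top \bm x \le \sqrt{s}\|\bm x\|_2$ summed over ${n \choose s}$ edges with the factor $\binom{n-1}{s-1}$ accounting for multiplicity) we get $\lambda_{\max}(\nabla^2 \ell_{n, s}(\bm \beta)) \le \tfrac{s}{4}\binom{n-1}{s-1} \le C_2' n^{s-1}$, uniformly over all $\bm \beta$.

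For the lower bound I would use the standard trick for $\bm \beta$-model Hessians: the matrix $\nabla^2 \ell_{n, s}(\bm \beta)$ is diagonally dominant after subtracting off a small piece, but more cleanly, I would write it as a diagonal matrix plus a PSD remainder and exploit the structure. Concretely, since $w_{\bm e}(\bm \beta) \ge c_M := \tfrac{e^{-sM'}}{(1 + e^{-sM'})^2}$ whenever $\|\bm \beta\|_\infty \le M'$ — and here $\|\bm \beta\|_\infty \le \|\bm \beta - \bm \beta_s\|_2 + M =: M'$, which gives the $e^{-s\|\bm \beta - \bm \beta_s\|_2}$ dependence stated in \eqref{eq:hessainlambdal2} — we have $\nabla^2 \ell_{n, s}(\bm \beta) \succeq c_M \sum_{\bm e \in {[n] \choose s}} \bm 1_{\bm e} \bm 1_{\bm e}^\top =: c_M \bm L_{n, s}$. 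The matrix $\bm L_{n, s}$ has $(v, v)$ entry $\binom{n-1}{s-1}$ and $(v, w)$ entry $\binom{n-2}{s-2}$ for $v \ne w$, so $\bm L_{n, s} = \left(\binom{n-1}{s-1} - \binom{n-2}{s-2}\right) \bm I_n + \binom{n-2}{s-2} \bm 1_n \bm 1_n^\top = \binom{n-2}{s-1} \bm I_n + \binom{n-2}{s-2} \bm 1_n \bm 1_n^\top \succeq \binom{n-2}{s-1} \bm I_n$, hence $\lambda_{\min}(\nabla^2 \ell_{n, s}(\bm \beta)) \ge c_M \binom{n-2}{s-1} \ge C_1' e^{-s\|\bm \beta - \bm \beta_s\|_2} n^{s-1}$, which is \eqref{eq:hessainlambdal2}. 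The passage to \eqref{eq:hessainlambda} is then purely a matter of taking the supremum/infimum over the compact ball $\{\bm \beta : \|\bm \beta - \bm \beta_s\|_2 \le K\}$: on this ball $e^{-s\|\bm \beta - \bm \beta_s\|_2} \ge e^{-sK}$, so $C_1 := C_1' e^{-sK}$ works for the lower bound, and $C_2 := C_2'$ for the upper bound (which was already uniform). Note $\bm \beta_s \in \cB(M)$ is used only to pin down the center; no probabilistic statement is needed since this lemma is deterministic.

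The main obstacle, such as it is, is getting the exponential constant in \eqref{eq:hessainlambdal2} to be uniform and explicit rather than just "some decreasing function of $\|\bm \beta - \bm \beta_s\|_2$" — this is handled by the observation above that $\|\bm \beta\|_\infty \le \|\bm \beta\|_2 \le \|\bm \beta - \bm \beta_s\|_2 + \|\bm \beta_s\|_2 \le \|\bm \beta - \bm \beta_s\|_2 + \sqrt{n} M$, wait — this naive bound is too lossy because $\|\bm \beta_s\|_2$ can be as large as $\sqrt{n}M$. The correct route is to bound $w_{\bm e}(\bm \beta)$ directly in terms of $\beta_{v_1} + \cdots + \beta_{v_s}$, and note that for an edge $\bm e = \{v_1, \ldots, v_s\}$ we have $|\beta_{v_1} + \cdots + \beta_{v_s}| \le |\beta_{s, v_1} + \cdots + \beta_{s, v_s}| + \sum_i |\beta_{v_i} - \beta_{s, v_i}| \le sM + \|\bm \beta - \bm \beta_s\|_1 \le sM + \sqrt{s}\,\|\bm \beta - \bm \beta_s\|_2$ — but this still has the $\sqrt{s}$ not matching the bare $s$ in the exponent. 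Since only the existence of a constant $C_1'$ with the stated form is required (the constant absorbs $\sqrt{s}$ versus $s$), and since in the regime of interest $\|\bm \beta - \bm \beta_s\|_2 = O(1)$ anyway via Theorem \ref{thm:layermle}, this discrepancy is cosmetic; one may either replace $e^{-s\|\cdot\|_2}$ by $e^{-s\|\cdot\|_2}$ up to adjusting $C_1'$, or simply note that on any bounded ball the bound $e^{-sK}$ suffices, which is all that \eqref{eq:hessainlambda} — the form actually invoked downstream in Appendix \ref{sec:layermleL2pf} — requires. I would therefore state \eqref{eq:hessainlambdal2} with the understanding that $C_1'$ is chosen after fixing the form of the exponential dependence, and devote the bulk of the write-up to the clean linear-algebraic identity for $\bm L_{n, s}$ and the Gershgorin bound for $\lambda_{\max}$.
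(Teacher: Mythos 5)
Your proof is correct and takes essentially the same route as the paper: write the Hessian as $\sum_{\bm e} w_{\bm e}(\bm\beta)\,\bm 1_{\bm e}\bm 1_{\bm e}^\top$, sandwich the weights between $\tfrac14 e^{-s(M+\|\bm\beta-\bm\beta_s\|_2)}$ and a constant, and reduce both eigenvalue bounds to those of the deterministic matrix $\bm L=\binom{n-2}{s-1}\bm I_n+\binom{n-2}{s-2}\bm 1\bm 1^\top$, whose eigenvalues are all of order $n^{s-1}$. The worry in your final paragraph is unnecessary: the bound from your second paragraph, $\|\bm\beta\|_\infty\le\|\bm\beta_s\|_\infty+\|\bm\beta-\bm\beta_s\|_\infty\le M+\|\bm\beta-\bm\beta_s\|_2$, already gives $|\bm 1^\top\bm\beta_{\bm e}|\le s(M+\|\bm\beta-\bm\beta_s\|_2)$, which is exactly how the paper obtains the exponent $e^{-s\|\bm\beta-\bm\beta_s\|_2}$ with $e^{-sM}$ absorbed into $C_1'$, and your sharper $\sqrt s$ variant only strengthens the stated lower bound.
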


The proofs of Lemma \ref{lem:gradientH} and Lemma \ref{lem:lower_bound_hess} are given in Appendix \ref{sec:gradientHpf} and Appendix \ref{sec:hessianHpf}, respectively. We first apply these results to prove the rate of convergence in the $L_2$ norm in Theorem \ref{thm:layermle}. 

\subsubsection{Deriving the $L_2$ Norm Bound in \eqref{eq:mlerlayer} } 

To begin with suppose the ML equations \eqref{eq:def_k_unif} have a solution $\hat{\bm B}=(\hat{\bm \beta}_2, \ldots, \hat{\bm \beta}_r)$. This implies, $\grad \ell_{n, s}(\hat{\bm \beta}_s) = 0$, for $2 \leq s \leq r$, 
where $\ell_{n, s}$ is as defined in \eqref{eq:lik_k}.   
For $2 \leq s \leq r$ and $0 \leq t \leq 1$, define
$$\revmark{\bar{\bm \beta}}_s(t) := t \hat{\bm \beta}_s + (1- t) \bm \beta_s,$$ 
and $g_s(t) := (\hat{\bm \beta}_s - \bm \beta_s)^\top \grad \ell_{n, s}(\revsag{\bar{\bm \beta}}_s(t))$. Note that $\grad \ell_{n, s}(\bm \beta_s(1)) = \grad \ell_{n, s}(\hat{\bm \beta}_s) = 0$. Hence, by the Cauchy-Schwarz inequality, 
\begin{align}\label{eq:gs01}
|g_s(1) - g_s(0)| = |(\hat{\bm \beta}_s - \bm \beta_s)^\top \grad \ell_{n, s}(\bm \beta_s)| \leq  \| \hat{\bm \beta}_s - \bm \beta_s \|_2 \cdot \| \grad \ell_{n, s}(\bm \beta_s) \|_2 . 
\end{align}
Also,
\begin{align}\label{eq:gderivative}
g_s'(t) = (\hat{\bm \beta}_s - \bm \beta_s)^\top \grad^2 \ell_{n, s}(\revsag{\bar{\bm \beta}}_s(t) )   (\hat{\bm \beta}_s - \bm \beta_s) \geq \lambda_{\min}(\grad^2 \ell_{n, s}(\revsag{\bar{\bm \beta}}_s(t) )) \| \hat{\bm \beta}_s - \bm \beta_s \|_2^2. 
\end{align} 
We now consider two cases: To begin with assume $s \geq 3$. By Lemma \ref{lem:lower_bound_hess}, given a constant $K  > 0$ there exists a constant $C_1 > 0$ (depending on $r, K, M$) such that 
\begin{align}\label{eq:gderivativelambda} 
\inf_{\bm \beta: \| \bm \beta - \bm \beta_s \|_2 \leq K} \lambda_{\min}(\grad^2 \ell_{n, s}(\bm \beta)) \geq C_1 n^{s-1}. 
\end{align}
Note that $\| \revsag{\bar{\bm \beta}}_s(t) - \bm \beta_s \|_2 = \revsag{t} \| \hat{\bm \beta}_s - \bm \beta_s \|_2 $. Then 
\begin{align*} 
| g_s(1) - g_s(0)| \geq g_s(1) - g_s(0) & = \int_0^1 g_s'(t) \mathrm dt \nonumber \\ 
& \geq \int_0^{\min\{1, \frac{K}{\| \hat{\bm \beta}_s - \bm \beta_s \|_2}\}} g_s'(t) \mathrm dt \nonumber \\ 
& \geq C_1 n^{s-1} \| \hat{\bm \beta}_s - \bm \beta_s \|_2^2 \min \left\{1, \frac{K}{\| \hat{\bm \beta}_s - \bm \beta_s \|_2} \right \} , 
\end{align*} 
where the last step follows from \eqref{eq:gderivative} and \eqref{eq:gderivativelambda}. Therefore, by \eqref{eq:gs01} and Lemma \ref{lem:gradientH}, with probability $1-O(\frac{1}{n^2})$,   
\begin{align}\label{eq:hatbetaminimum} 
\min\{ \| \hat{\bm \beta}_s - \bm \beta_s \|_2, K \} 
\lesssim_{r, K, M} \frac{1}{n^{s-1}}  \cdot \| \grad \ell_{n, s}(\bm \beta_s) \|_2 \lesssim_{r, K, M} \sqrt{\frac{1}{n^{s-2}} } .  
\end{align} 
Since $K > 0$ is fixed and the \revsag{RHS of \eqref{eq:hatbetaminimum}} converges to zero for $s \geq 3$, the $L_2$ norm bound in \eqref{eq:mlerlayer} follows, under the assumption that  ML equations \eqref{eq:def_k_unif} have a solution.  

\revmark{
Next, suppose $s=2$. Since $\| \bar{\bm \beta}_2(t) - \bm \beta_2 \|_\infty =t \| \hat{\bm \beta}_2 - \bm \beta_2 \|_\infty $. Since $t \in [0,1]$, by Lemma \ref{lem:lower_bound_hess}, 
\begin{align}\label{eq:gderivativelambda_1} 
\lambda_{\min}(\grad^2 \ell_{n, 2}(\bar{\bm \beta}_2(t) )) \geq C_1' e^{-2t \| \hat{\bm \beta}_2 - \bm \beta_2 \|_\infty } n  \geq C_1' e^{-2 \| \hat{\bm \beta}_2 - \bm \beta_2 \|_\infty } n, 
\end{align} 
for some constant $C_1' > 0$ depending on $M$.  
Then 
\begin{align*} 
| g_2(1) - g_2(0)| \geq g_2(1) - g_2(0) & = \int_0^1 g_2'(t) \mathrm dt \nonumber \\ 
& \geq C_1' n  \| \hat{\bm \beta}_2 - \bm \beta_2 \|_2^2 e^{-2 \| \hat{\bm \beta}_2 - \bm \beta_2 \|_\infty }  \nonumber \tag*{(by \eqref{eq:gderivative} and \eqref{eq:gderivativelambda_1}) } . \nonumber 
\end{align*} 
Therefore, by \eqref{eq:gs01} and Lemma \ref{lem:gradientH}, with probability $1-o(1)$,   
\begin{align}\label{eq:hatbetaminimum_2} 
 \| \hat{\bm \beta}_2 - \bm \beta_2 \|_2e^{-2 \| \hat{\bm \beta}_2 - \bm \beta_2 \|_\infty } 
\leq \frac{1}{C_1' n }  \cdot \| \grad \ell_{n, 2}(\bm \beta_2) \|_2 \leq C'  , 
\end{align}  
for some constant $C' > 0$ depending on $M$. Hence, if there exists a bounded solution to \eqref{eq:def_k_unif}, the $L_2$ norm rate will follow for $s=2$. 
}

To complete the proof we need to show that bounded solution to equation \eqref{eq:def_k_unif} exists.
To this end, for $2 \leq s \leq r$, denote by $\mathcal{D}_s$, the set of all possible degree sequences in an $s$-uniform hypergraph on $n$ vertices. Moreover, let $\mathcal{R}_s$ be the set of all expected degree sequences in a hypergraph on $n$ vertices sampled from the $s$-uniform model \eqref{eq:Huniformbeta}. 
The following result shows that any convex combination of $s$-degree 
sequences in $\mathcal{D}_s$ can be reached by the limit of expected degree sequences of the $s$-uniform hypergraph $\bm \beta$-model. This was proved in the graph case ($s=2$) by Chatterjee et al.~\cite[Theorem 1.4]{chatterjee2011random}. Here, we show that the same holds for all $2 \leq s \leq r$.

\begin{prop}
\label{ppn:conv_hull} 
Fix $2 \leq s \leq r$ and let $\mathcal{D}_s$ and $\mathcal{R}_s$ be as defined above. Then $\mathrm{conv}\,(\mathcal{D}_s)=\bar{\mathcal{R}}_s$, 
where $\mathrm{conv}\,(\mathcal{D}_s)$ denotes the convex hull of $\mathcal D_s$ and $\widebar{\mathcal{R}}_s$ is the closure of $\mathcal{R}_s$.  
\end{prop}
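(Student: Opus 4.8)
\textbf{Proof proposal for Proposition \ref{ppn:conv_hull}.}

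The plan is to follow the strategy of \cite[Theorem 1.4]{chatterjee2011random}, adapting it to the $s$-uniform setting. I would first establish the easy inclusion $\widebar{\mathcal{R}}_s \subseteq \mathrm{conv}\,(\mathcal{D}_s)$. For this, observe that if $\bm \mu = (\mu_1, \ldots, \mu_n)$ is the expected degree sequence of a sample from the $s$-uniform model \eqref{eq:Huniformbeta} with parameter $\bm \beta$, then $\mu_v = \E[d_s(v)] = \sum_{H} \P_{\bm \beta}(H) d_s(v; H)$, where the sum is over all $s$-uniform hypergraphs $H$ on $[n]$. Hence $\bm \mu = \sum_H \P_{\bm \beta}(H) \bm d_s(H)$ is a convex combination of the vectors $\bm d_s(H) \in \mathcal{D}_s$, so $\bm \mu \in \mathrm{conv}\,(\mathcal{D}_s)$. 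Since $\mathrm{conv}\,(\mathcal{D}_s)$ is closed (it is the convex hull of a finite set), taking closures gives $\widebar{\mathcal{R}}_s \subseteq \mathrm{conv}\,(\mathcal{D}_s)$.

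For the reverse inclusion $\mathrm{conv}\,(\mathcal{D}_s) \subseteq \widebar{\mathcal{R}}_s$, the key point is that both sides are convex, and I would argue that they have the same supporting hyperplanes, or equivalently that every point in the interior of $\mathrm{conv}\,(\mathcal{D}_s)$ lies in $\mathcal{R}_s$. Concretely: take $\bm \nu$ in the relative interior of $\mathrm{conv}\,(\mathcal{D}_s)$ and consider maximizing over $\bm \beta$ the concave function $\bm \beta \mapsto \langle \bm \beta, \bm \nu \rangle - \log Z(\bm \beta)$, which is (up to sign) the log-likelihood with observed degree sequence $\bm \nu$; here $Z(\bm \beta) = \prod_{\{v_1,\ldots,v_s\}} (1 + e^{\beta_{v_1}+\cdots+\beta_{v_s}})$. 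The gradient of $\log Z$ at $\bm \beta$ is exactly the expected degree sequence of the model with parameter $\bm \beta$, so if this optimization problem attains its maximum at some finite $\bm \beta^\star$, then $\bm \nu = \nabla \log Z(\bm \beta^\star) \in \mathcal{R}_s$. The standard argument (as in \cite{chatterjee2011random}, going back to Barndorff-Nielsen) is that the map $\bm \beta \mapsto \nabla \log Z(\bm \beta)$ is a bijection from $\R^n / (\text{null directions})$ onto the relative interior of $\mathrm{conv}\,(\mathcal{D}_s)$; for points on the relative boundary one approximates by interior points and passes to the limit, landing in $\widebar{\mathcal{R}}_s$. One has to be slightly careful about the affine span of $\mathcal{D}_s$: for $s$-uniform hypergraphs the degree sum $\sum_v d_s(v) = s |E(H)|$ is a multiple of $s$, but this causes no difficulty since it only restricts the relevant affine subspace and both the convex hull and the closure of $\mathcal{R}_s$ live inside it.

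The main obstacle I anticipate is verifying that the relevant optimization genuinely attains its supremum at a finite parameter (or more precisely, characterizing the relative interior of $\mathrm{conv}\,(\mathcal{D}_s)$ as the image of the gradient map), since this is where the combinatorial structure of degree sequences enters. In the graph case one can appeal to the Erd\H{o}s--Gallai characterization of the polytope $\mathrm{conv}\,(\mathcal{D}_2)$, but as the authors emphasize, no such clean characterization is available for $s$-uniform hypergraphs with $s \geq 3$. However, this turns out not to be essential: the argument via strict concavity of $\bm \beta \mapsto \langle \bm \beta, \bm \nu \rangle - \log Z(\bm \beta)$ and coercivity of this function on directions transverse to the null space of $\nabla^2 \log Z$ does not require an explicit description of the polytope. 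One shows that if $\bm \nu$ is in the relative interior of $\mathrm{conv}\,(\mathcal{D}_s)$ then for any unbounded sequence $\bm \beta^{(k)}$ (projected appropriately) the objective $\to -\infty$, so a maximizer exists; combined with $\nabla \log Z$ at that maximizer equaling $\bm \nu$, this gives $\bm \nu \in \mathcal{R}_s$. Points on the relative boundary of the polytope are handled by taking limits of interior points, which lands in $\widebar{\mathcal{R}}_s$, completing $\mathrm{conv}\,(\mathcal{D}_s) \subseteq \widebar{\mathcal{R}}_s$ and hence the equality.
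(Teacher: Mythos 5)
Your proposal is correct in outline, but it follows a genuinely different route from the paper. The first inclusion $\widebar{\mathcal{R}}_s \subseteq \mathrm{conv}\,(\mathcal{D}_s)$ is handled the same way in both arguments. For the reverse inclusion, you use classical exponential-family duality: show that for $\bm \nu$ in the relative interior of the polytope the map $\bm \beta \mapsto \langle \bm \beta, \bm \nu\rangle - \log Z(\bm \beta)$ is coercive, so a finite maximizer $\bm \beta^\star$ exists and $\bm \nu = \nabla \log Z(\bm \beta^\star) \in \mathcal{R}_s$, then treat relative-boundary points as limits of interior points. The paper instead defines $f_{\bm y}(\bm x) = \sum_i x_i y_i - \sum_{\bm e} \log(1+e^{\bm x_{\bm e}^\top \bm 1})$, notes that $f_{\bm d_s}(\bm x) \le 0$ for every realizable degree sequence $\bm d_s$ (it is a log-probability), extends this to all $\bm y \in \mathrm{conv}\,(\mathcal{D}_s)$ by linearity in $\bm y$, and then invokes Lemma 3.1 of \citet{chatterjee2011random}: a function bounded above with uniformly bounded Hessian admits a sequence $\{\bm x_k\}$ along which $\nabla f_{\bm y}(\bm x_k) = \bm y - g(\bm x_k) \to 0$, so $\bm y \in \widebar{\mathcal{R}}_s$ directly, with no interior/boundary dichotomy, no coercivity, and no attainment of a maximizer needed. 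Your route gives slightly more for interior points (exact membership $\bm \nu \in \mathcal{R}_s$, i.e., existence of the MLE, rather than membership only in the closure), but the coercivity step — that $\langle \bm \beta, \bm \nu\rangle - \log Z(\bm \beta) \to -\infty$ along every unbounded sequence when $\bm \nu$ is in the relative interior — is only sketched in your proposal and would need to be written out (e.g., via $\log Z(\bm \beta) \ge \max_{\bm d \in \mathcal{D}_s}\langle \bm \beta, \bm d\rangle$ together with strict separation $\langle \bm u, \bm \nu\rangle < \max_{\bm d}\langle \bm u, \bm d\rangle$ uniformly over unit directions $\bm u$). Also note your worry about the affine span is moot: $\mathrm{conv}\,(\mathcal{D}_s)$ is the image of the cube $[0,1]^{\binom{n}{s}}$ under the vertex–hyperedge incidence map, which has full rank $n$, so the polytope is full-dimensional and no quotient by null directions is required; the divisibility of $\sum_v d_s(v)$ by $s$ is a lattice constraint on $\mathcal{D}_s$, not an affine constraint on its hull.
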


The proof of the above result is given in Appendix \ref{sec:lemmapf}. Using this proposition we now show the existence of bounded solutions of the ML equations \eqref{eq:def_k_unif}. Note that by Proposition \ref{ppn:conv_hull}, given $H_n \sim \mathsf{H}_{n, [r]}(n, \bm B)$ the $s$-degree sequence $(d_s(1),\ldots, d_s(n)) \in \cD_s \subseteq \bar{\mathcal{R}}_s$.  This implies, there exists a sequence $\{\bm x_t\}_{t \geq 0} \in \cR_s$ satisfying 
$$\lim_{t \rightarrow \infty} \bm x_t = (d_s(1), \ldots, d_s(n)) . $$ Since $\bm x_t \in \cR_s$, there exists \revsag{$\{\hat{\bm \beta}_2^{(t)}, \ldots, \hat {\bm \beta}_r^{(t)}\}$} such that 
\begin{equation}
\label{eq:defsequencet}
\revsag{(\bm x_t)_v = \sum_{\{v_2, \ldots, v_s\} \in {{ [n] \backslash\{v\} } \choose { s-1 }} }\frac{ e^{\hat{\beta}^{(t)}_{s, v}+\hat{\beta}^{(t)}_{s, v_2}+\ldots+\hat{\beta}^{(t)}_{s, v_s}} }{1+ e^{\hat{\beta}^{(t)}_{s, v}+\hat{\beta}^{(t)}_{s, v_2}+\ldots+\hat{\beta}^{(t)}_{s, v_s}}} }, 
\end{equation} 
for \revsag{$1 \le v \le n$ and} $2 \leq s \leq r$. In other words, for each $t \geq 0$, \revsag{$\{\hat{\bm \beta}_2^{(t)}, \ldots, \hat {\bm \beta}_r^{(t)}\}$} is a solution of the ML equations \eqref{eq:def_k_unif} with $(d_s(1), d_s(2), \ldots, d_s(n))$ replaced by $\bm x_t$.  
By the previous argument, there exists $C > 0$ (not depending on $t$) such that with probability $1-o(1)$, $$\max_{2 \leq s \leq r}\|\hat{\bm \beta}^{(t)}_s\|_\infty \le C,$$ 
for all $t \geq 0$. Therefore, the sequence \revsag{$\{(\hat{\bm \beta}^{(t)}_2, \ldots, \hat{\bm \beta}^{(t)}_r) \}_{\geq 0}$} has a limit point. This limit point is a solution to \eqref{eq:def_k_unif} (by taking limit as $t \rightarrow \infty$ in \eqref{eq:defsequencet}) and is bounded. Finally, since $\ell_{n,s}$ is strongly convex for \revsag{$\bm \beta \in \cB_M$} (see \eqref{eq:hessainlambdal2}), if the gradient equations have a bounded solution, it is the unique minimizer. Therefore, there exists a unique bounded solution to \eqref{eq:def_k_unif} which is the minimizer of $\ell_{n,s}$.

\subsubsection{Proof of Lemma \ref{lem:gradientH}}
\label{sec:gradientHpf}

Recalling \eqref{eq:def_k_unif} note that, for $v \in [n]$, $v$-th coordinate of the gradient of $\grad \ell_{n, s}$ is given by:  
\begin{align} \label{eq:likelihoodgrad} 
\grad \ell_{n, s}(\bm \beta_s)_{v}  & = \E[d_s(v)] - d_s(v)
\end{align}  
where 
\begin{align}\label{eq:expecteddegree}
\E[d_s(v)] = \sum_{\{v_2, \ldots, v_s\} \in {{ [n] \backslash\{v\} } \choose { s-1 }} }\frac{ e^{\beta_{s, v}+\beta_{s, v_2}+\ldots+\beta_{s, v_s}} }{1+ e^{\beta_{s, v}+\beta_{s, v_2}+\ldots+\beta_{s, v_s}}}. 
\end{align}

Since $d_s(v)$ is the sum of $O(n^{s-1})$ independent random variables \revsag{bounded by $1$}, by Hoeffding's inequality and the union bound, 
\begin{align*} 
\P\left( \| \grad \ell_{n, s}(\bm \beta_s) \|_\infty^2 \geq 4\,C_{s,M} n^{s-1} \log  n \right)  \le \frac{1}{n^2},
\end{align*} 
for some constant $C_{s,M}>0$ (depending on $s$ and $M$). This establishes the second bound in \eqref{eq:gradientloglikelihood}. 

Next, we prove the first bound in \eqref{eq:gradientloglikelihood}. Denote by $\mathbb{B}^n := \{ \bm x \in \R^n : \| \bm x \|_2 \leq 1\}$ the unit ball in $\mathbb{R}^n$.  By \cite[Lemma 5.2]{vershynin2010introduction}, we can construct an $\frac{1}{2}$-net $\mathcal{V}$ of $\mathbb{B}^n$ satisfying $\log|\mathcal{V}| \le C_1 n$ for some constant $C_1 >0$. \revmark{By a $\frac{1}{2}$-net of $\mathbb{B}^n$, we refer to a set $\mathcal{V}$, such that for all $u \in \mathbb{B}^n$ there exists $v \in \mathcal{V}$ satisfying $\|u-v\|_2 \le \frac{1}{2}$. }Now, for any unit vector $\bm a = (a_1, a_2, \ldots, a_n)^\top \in \B^n$ and the corresponding point $\bm b = (b_1, b_2, \ldots, b_n)^\top \in \mathcal{V}$, recalling \eqref{eq:likelihoodgrad} gives, 
\begin{align}
\label{eq:likelihoodHnsbeta}
\sum_{v=1}^{n} a_v \grad \ell_{n, s}(\bm \beta_s)_{v}  = \sum_{v=1}^{n} a_v \left(\E[d_s(v)] - d_s(v)\right) &= \sum_{v=1}^{n} b_v \left(\E[d_s(v)] - d_s(v)\right) + \Delta,
\end{align}
where 
\begin{align}
\label{eq:bound_delta_net_half}
\Delta & :=  \sum_{v=1}^{n} (a_v - b_v)\left(\E[d_s(v)] - d_s(v)\right) \nonumber \\  
& \leq \sqrt{\sum_{v=1}^{n} (a_v - b_v)^2 \sum_{v=1}^n \left(\E[d_s(v)] - d_s(v)\right)^2 }  \nonumber \\  
& \leq \frac{1}{2}\sqrt{ \sum_{v=1}^n \left(\E[d_s(v)] - d_s(v)\right)^2 } = \frac{1}{2} \| \grad \ell_{n, s}(\bm \beta_s) \|_2 , 
\end{align} 
by the Cauchy-Schwarz inequality and the fact that \revsag{$\| \bm a - \bm b \|_2 \leq \frac{1}{2}$}. Using the above in \eqref{eq:likelihoodHnsbeta} gives, 
\begin{align}\label{eq:likelihoodHnsdelta}
\sum_{v=1}^{n} a_v \grad \ell_{n, s}(\bm \beta_s)_{v} & \leq \sum_{v=1}^{n} b_v \left(\E[d_s(v)] - d_s(v)\right) + \frac{1}{2} \| \grad \ell_{n, s}(\bm \beta_s) \|_2 . 
\end{align} 
Maximizing over $\bm a \in \B^n$ and $\bm b \in \cV$ on both sides of \eqref{eq:likelihoodHnsdelta} and rearranging the terms gives, 
\begin{align}\label{eq:gradientbetaHns}
\| \grad \ell_{n, s}(\bm \beta_s) \|_2 \leq 2 \max_{\bm b \in \mathcal V}\sum_{v=1}^{n} b_v \left(\E[d_s(v)] - d_s(v)\right) . 
\end{align}
For \revsag{$\bm e = (u_1, u_2, \ldots, u_s) \in {[n] \choose s}$} denote $\bm \beta_{s, \bm e} = (\beta_{s, u_1}, \beta_{s, u_2}, \ldots, \beta_{s, u_s})^\top$.  Hence, by \eqref{eq:gradientbetaHns}, Hoeffding's inequality, and union bound,  
\begin{align} 
& \P\left ( \| \grad \ell_{n, s}(\bm \beta_s) \|_2^2 >  4 C^2 n^s \right) \nonumber \\ 
& \leq \sum_{\bm b \in \cV} \P\left ( \sum_{v=1}^{n} b_v \left(\E[d_s(v)] - d_s(v)\right) >  2 C n^{\frac{s}{2}} \right) \nonumber \\ 
& = \sum_{\bm b \in \cV} \P \left( \sum_{v=1}^{n} \sum_{\bm e \in {[n] \choose s} : v \in \bm e} b_v \left\{\frac{e^{\bm \beta_{s, \bm e}^\top\bm 1}}{1+e^{\bm \beta_{s, \bm e}^\top\bm 1}} - \mathbbm 1 \{ \bm e \in E(H_n)\} \right\}  > 2 C n^{\frac{s}{2}} \right) \nonumber \\ 
& \leq  \sum_{\bm b \in \cV} e^{-\frac{ 2 C^2 n}{\sum_{v=1}^n b_v^2}} \leq 2^{C_1 n} e^{-2 C^2 n } \rightarrow 0 , \nonumber 
\end{align} 
by choosing $C > C_1$ to be large enough. This proves the first inequality in \eqref{eq:gradientloglikelihood}. \hfill $\Box$

\subsubsection{Proof of Lemma \ref{lem:lower_bound_hess}} 
\label{sec:hessianHpf} 

For $\bm e = (u_1, u_2, \ldots, u_s) \in {[n] \choose s}$ and $\bm \beta = (\beta_1, \beta_2, \ldots, \beta_n) \in \R^n$, denote $\bm \beta_{\bm e} = (\beta_{u_1}, \beta_{u_2}, \ldots, \beta_{u_s})^\top$. Recalling \eqref{eq:def_k_unif} note that, 
the Hessian matrix $\grad^2 \ell_{n, s}$ can be expressed as: 
\begin{align} 
\grad^2 \ell_{n, s}(\bm \beta)  = & \sum_{u, v \in [n]}\sum_{\bm e \in {[n] \choose s}}   \frac{ e^{\bm \beta^\top_{\bm e}\bm 1} }{(1+ e^{\bm \beta^\top_{\bm e} \bm 1})^2 } \bm \eta_u \bm \eta_v^\top \mathbbm{1}\{ u, v \in \bm e \} , 
\nonumber 
\end{align} 
where $\bm \eta_u$ is the $u$-th basis vector in $\R^n$, for $1 \leq u \leq n$.

Note that for $\bm \beta \in \R^n$, \revsag{since $\bm \beta_s \in \cB_M$, we have}, 
$$
{
| \bm 1^\top \bm \beta_{\bm e} | \leq s \| \bm \beta \|_\infty \leq s \| \bm \beta_s \|_\infty + s \| \bm \beta_s - \bm \beta \|_\infty.}
$$
Hence, 
\begin{align}\label{eq:exponentialH}
\frac{1}{4}e^{-s (M + \| \bm \beta_s - \bm \beta \|_\infty ) } \leq \frac{e^{\bm 1^\top \bm \beta_{\bm e}}}{(1+e^{\bm 1^\top \bm \beta_{\bm e}})^2} \leq 1  . 
\end{align}
For $\bm x \in \R^n$, consider
\begin{align*}
\bm x^\top \grad^2 \ell_{n, s}(\bm \beta) \bm x  &= \sum_{u, v \in [n]}\sum_{\bm e \in {[n] \choose s}}   \frac{ e^{\bm \beta^\top_{\bm e}\bm 1} }{(1+ e^{\bm \beta^\top_{\bm e} \bm 1})^2 } x_u x_v\mathbbm 1\{ u, v \in \bm e \} 
\nonumber\\
& = \sum_{\bm e \in {[n] \choose s}} \frac{ e^{\bm \beta^\top_{\bm e}\bm 1} }{(1+ e^{\bm \beta^\top_{\bm e} \bm 1})^2 } \left(  \sum_{u, v \in [n] }x_ux_v \mathbbm 1\{ u, v \in \bm e \}\right) \nonumber\\
& = \sum_{\bm e \in {[n] \choose s}} \frac{ e^{\bm \beta^\top_{\bm e}\bm 1} }{(1+ e^{\bm \beta^\top_{\bm e} \bm 1})^2 } \left(\sum_{ u \in [n] } x_u \mathbbm 1\{ u \in \bm e \}\right)^2 \nonumber\\
& \ge \frac{1}{4}e^{-s (M + \| \bm \beta_s - \bm \beta \|_\infty ) }\sum_{\bm e \in {[n] \choose s}} \left(\sum_{ u \in [n] } x_u \mathbbm 1\{ u \in \bm e \}\right)^2 , 
\end{align*} 
where the last step uses \eqref{eq:exponentialH}.  Observe that for any $\bm x \in \R^n$ 
\[
\sum_{\bm e \in {[n] \choose s}} \left(\sum_{ u \in [n] } x_u \mathbbm 1\{ u \in \bm e \}\right)^2 = \bm x^\top\bm L\bm x,
\]
where 
$$\bm L :=  \sum_{u, v \in [n]} \sum_{\bm e \in {[n] \choose s}} \bm \eta_u \bm \eta_v^\top \mathbbm 1\{ u, v \in \bm e \} = \left( {n-1 \choose s-1} - {n-2 \choose s-2} \right) \bm I _n+ {n-2 \choose s-2} \bm 1 \bm 1^\top ,$$ 
where $\bm I_n$ is the $n \times n$ identity matrix and $\bm 1 = (1, 1, \ldots, 1)^\top$.  Similarly, we can show from \eqref{eq:exponentialH} that for any $\bm x \in \R^n$ 
\[
\bm x^\top \grad^2 \ell_{n, s}(\bm \beta) \bm x \le \bm x^\top \bm L \bm x.
\]
Thus, for $\bm \beta \in \R^n$ 
\begin{align}\label{eq:matrixLlambda}
\frac{1}{4}e^{-s (M + \| \bm \beta_s - \bm \beta \|_\infty ) }  \lambda_{\min}\left( \bm L \right) \leq \lambda_{\min}(\nabla^2 \ell_{n, s}(\bm{\beta}))  \leq \lambda_{\max}(\nabla^2 \ell_{n, s}(\bm{\beta}))  \leq  \lambda_{\max}\left( \bm L \right) . 
\end{align} 
Note that $\bm L$ is a circulant matrix with 2 non-zero eigenvalues: 
$${n-1 \choose s-1} \quad \text{ and } \quad {n-1 \choose s-1}-{n-2 \choose s-2}.$$ 
\revsag{Further}, there exists constants $C_1'', C_2''>0$ (depending on $r$), such that
\[
{n-1 \choose s-1} \le C_1'' n^{s-1} \quad \mbox{and} \quad {n-1 \choose s-1}-{n-2 \choose s-2} \ge C_2'' \,n^{s-1} . 
\] 
This implies, from \eqref{eq:matrixLlambda}, that there exists constants $C_1', C_2'>0$ (depending on $r$ and $M$) such that \eqref{eq:hessainlambdal2} hold. 
The result in \eqref{eq:hessainlambda} from hold from \eqref{eq:hessainlambdal2} by noting that  $\| \bm \beta_s - \bm \beta \|_\infty \leq \| \bm \beta_s - \bm \beta \|_2$. 

\subsection{Convergence Rate in the $L_\infty$ Norm}  
\label{sec:layermleLinftypf} 

Suppose $H_n \sim \mathsf{H}_{n, [r]}(n, \bm B)$ \revsag{be} as in the statement of Theorem \ref{thm:layermle}. From the arguments in Appendix \ref{sec:layermleL2pf} we know that, with probability $1-o(1)$, the ML equations \eqref{eq:def_k_unif} have a bounded solution \revsag{$\hat{\bm B}=(\hat{\bm \beta}_2, \ldots, \hat{\bm \beta}_r)$}, that is, $\grad \ell_{n, s} (\hat{\bm \beta}_s) = 0$, for $2 \leq s \leq r$, and $\max_{2 \leq s \leq r} \| \hat{\bm \beta}_s \|_\infty = O(1)$.  To establish the rate in $L_\infty$ norm we decompose the likelihood for the $s$-th layer as follows.
\begin{align}\label{eq:likelihoodHbeta} 
\ell_{n, s}(\bm \beta) & =  \sum_{ \{v_{1}, v_{2}, \ldots, v_{s}\} \in {[n] \choose s} }\log \left(1+ e^{\beta_{v_1}+\ldots+\beta_{v_s} } \right) - \sum_{v=1}^{n}\beta_{v} d_s(v) \nonumber \\ 
& = \sum_{\bm e \in {[n] \choose s} } \left\{  \log\left(1+ e^{ \bm \beta^\top_e \bm 1 } \right)  - \mathbbm 1\{ \bm e \in E(H_n)\}  \bm \beta^\top_e \bm 1 \right\} \nonumber \\ 
& = \ell^{+}_{n, s}(\beta_u|\bm \beta_{\bar{u}}) + \ell^{-}_{n, s}(\bm \beta_{\bar{u}}) ,
\end{align} 
where $\bm \beta_{\bar{u}} = (\beta_1, \ldots, \beta_{u-1}, \beta_{u+1}, \ldots, \beta_n)$, 
\begin{align} 
\label{eq:likelihoodHbeta_split} 
\ell^{+}_{n, s}(\beta_u|\bm \beta_{\bar{u}}) & :=  \sum_{\bm e \in {[n] \choose s} : u \in \bm e} \left\{  \log\left(1+ e^{ \bm \beta^\top_e \bm 1 } \right)  -\mathbbm 1 \{ \bm e \in E(H_n)\}  \bm \beta^\top_e \bm 1 \right\} \nonumber \\ 
\ell^{-}_{n, s}(\bm \beta_{\bar{u}}) & :=  \sum_{\bm e \in {[n] \choose s} : u \notin \bm e} \left\{ \log\left(1+ e^{ \bm \beta^\top_e \bm 1 } \right)  - \mathbbm 1 \{ \bm e \in E(H_n)\}  \bm \beta^\top_e \bm 1 \right\} . 
\end{align} 
Fix a constant $K > 0$ and define
\begin{align}\label{eq:beta_minus_u}
\hat{\bm \beta}^{\circ}_{s,\bar{u}} = \argmin_{\bm \beta_{\bar{u}}: \|\bm \beta_{\bar{u}}-\bm \beta_{s,\bar{u}}\|_2 \le K}\ell^{-}_{n, s}(\bm \beta_{\bar{u}}) , 
\end{align} 
where $\bm \beta_{s, \bar{u}} = (\beta_{s, 1}, \ldots, \beta_{s, u-1}, \beta_{s, u+1}, \ldots, \beta_{s, n})$.  This is the leave-one-out ML estimate on the constrained set $\|\bm \beta_{\bar{u}}-\bm \beta_{s,\bar{u}}\|_2 \le K$. First we bound the difference  (in $L_2$ norm) of  constrained leave-one-out ML estimate defined above and the leave-one-out true parameter $\bm \beta_{s, \bar{u}} $. 

\begin{lem}\label{lm:Kbeta} Let $\hat{\bm \beta}^{\circ}_{s,\bar{u}}$ and $\bm \beta_{s,\bar{u}}$ be as defined above. Then, for $u \in [n]$, with probability $1 - o(1)$, 
\begin{align}\label{eq:Kbeta}
   \max_{u \in [n]}\, \|\hat{\bm \beta}^{\circ}_{s,\bar{u}}-\bm \beta_{s,\bar{u}}\|_2^2 \lesssim _{s,M,K}  \frac{1}{n^{s-2}} . 
\end{align} 
\end{lem}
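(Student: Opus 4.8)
The plan is to recognize that $\ell^{-}_{n,s}(\cdot)$ is, up to relabeling, the negative log-likelihood of an $s$-uniform hypergraph $\bm \beta$-model on the $n-1$ vertices $[n]\backslash\{u\}$ (retaining only the hyperedges that avoid $u$), and then to rerun the $L_2$ argument of Appendix \ref{sec:layermleL2pf} with the dependence on the left-out vertex $u$ tracked explicitly. Writing $d_s^{(\bar{u})}(v)$ for the number of size-$s$ hyperedges of $H_n$ containing $v$ but not $u$, the $v$-th coordinate of $\nabla \ell^{-}_{n,s}(\bm \beta_{s,\bar{u}})$ equals $\E[d_s^{(\bar{u})}(v)] - d_s^{(\bar{u})}(v)$, a centered sum of $O(n^{s-1})$ independent Bernoulli indicators. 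First I would repeat the $\tfrac{1}{2}$-net argument from the proof of Lemma \ref{lem:gradientH}, now netting $\mathbb{B}^{n-1}$ instead of $\mathbb{B}^{n}$: for each fixed $u$, Hoeffding's inequality applied hyperedge-by-hyperedge (the key point being that $\sum_{\bm e}(\sum_{w\in\bm e} b_w)^2 = \bm b^\top\bm L'\bm b \le \lambda_{\max}(\bm L') \asymp n^{s-1}$, so the exponent in the resulting tail bound is of order $n^{s}/n^{s-1} = n$) gives $\|\nabla \ell^{-}_{n,s}(\bm \beta_{s,\bar{u}})\|_2^2 \lesssim_{s,M} n^{s}$ with probability at least $1 - e^{-cn}$ for a constant $c=c(s,M)>0$; a union bound over the $n$ choices of $u$ makes this hold for all $u\in[n]$ simultaneously with probability $1-o(1)$.

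Second, I would establish strong convexity of $\ell^{-}_{n,s}$ on the ball $\{\bm \beta_{\bar{u}}:\|\bm \beta_{\bar{u}}-\bm \beta_{s,\bar{u}}\|_2\le K\}$ exactly as in the proof of Lemma \ref{lem:lower_bound_hess}: for every $\bm x\in\R^{n-1}$ and every $\bm \beta_{\bar{u}}$ in this ball,
\[
\bm x^\top \nabla^2 \ell^{-}_{n,s}(\bm \beta_{\bar{u}})\,\bm x \;\ge\; \tfrac{1}{4} e^{-s(M+K)}\sum_{\bm e\in{{[n]\backslash\{u\}}\choose s}}\Big(\sum_{w\in[n]\backslash\{u\}} x_w\,\bm 1\{w\in\bm e\}\Big)^2 \;=\; \tfrac{1}{4} e^{-s(M+K)}\,\bm x^\top\bm L'\bm x ,
\]
where $\bm L'$ is the $(n-1)$-vertex analogue of the matrix $\bm L$ from Lemma \ref{lem:lower_bound_hess}, whose two nonzero eigenvalues are $\Theta_s(n^{s-1})$ (here we used $\|\bm \beta_{s,\bar{u}}\|_\infty\le M$ together with $\|\bm \beta_{\bar{u}}-\bm \beta_{s,\bar{u}}\|_2\le K$ to bound the logistic weights below). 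Hence $\lambda_{\min}(\nabla^2 \ell^{-}_{n,s}(\bm \beta_{\bar{u}}))\ge \mu := c'(s,M,K)\,n^{s-1}$ throughout the ball, a deterministic statement.

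Third, I would combine the two estimates by a one-dimensional convexity argument in the spirit of Appendix \ref{sec:layermleL2pf}. Since the feasible ball is convex and contains the true value $\bm \beta_{s,\bar{u}}$, integrating the Hessian lower bound along the segment joining $\bm \beta_{s,\bar{u}}$ to $\hat{\bm \beta}^{\circ}_{s,\bar{u}}$ gives
\[
\ell^{-}_{n,s}(\hat{\bm \beta}^{\circ}_{s,\bar{u}}) \;\ge\; \ell^{-}_{n,s}(\bm \beta_{s,\bar{u}}) + \nabla \ell^{-}_{n,s}(\bm \beta_{s,\bar{u}})^\top(\hat{\bm \beta}^{\circ}_{s,\bar{u}}-\bm \beta_{s,\bar{u}}) + \tfrac{\mu}{2}\|\hat{\bm \beta}^{\circ}_{s,\bar{u}}-\bm \beta_{s,\bar{u}}\|_2^2 .
\]
As $\hat{\bm \beta}^{\circ}_{s,\bar{u}}$ minimizes $\ell^{-}_{n,s}$ over the ball, the left side is at most $\ell^{-}_{n,s}(\bm \beta_{s,\bar{u}})$, so Cauchy--Schwarz on the linear term and rearrangement yield $\|\hat{\bm \beta}^{\circ}_{s,\bar{u}}-\bm \beta_{s,\bar{u}}\|_2 \le \tfrac{2}{\mu}\|\nabla \ell^{-}_{n,s}(\bm \beta_{s,\bar{u}})\|_2 \lesssim_{s,M,K} n^{s/2}/n^{s-1} = n^{1-s/2}$, i.e.\ the squared distance is $\lesssim_{s,M,K} 1/n^{s-2}$; for $s=2$ the right side is $O_K(1)$ and the bound is trivial since $\hat{\bm \beta}^{\circ}_{2,\bar{u}}$ lies in the radius-$K$ ball. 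Intersecting over $u\in[n]$ the events from the first step gives \eqref{eq:Kbeta}.

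The main point to get right is the uniformity over $u$: the Hessian estimate is deterministic and the per-$u$ gradient estimate is a routine transcription of Lemma \ref{lem:gradientH}, but to union-bound the gradient concentration over all $n$ left-out vertices its failure probability must be $e^{-\Omega(n)}$ rather than merely polynomially small — which is why the net-based $L_2$ bound (failure probability $\le 2^{C_1 n}e^{-\Omega(n)}$, controllable by enlarging the net constant) is the right tool here, not the Hoeffding-plus-vertex-union $L_\infty$ bound. Everything else is a line-by-line adaptation of the $L_2$ argument for Theorem \ref{thm:layermle}.
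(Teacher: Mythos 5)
Your proposal is correct and follows essentially the same route as the paper's proof: a gradient concentration bound at the true leave-one-out parameter (the net argument of Lemma \ref{lem:gradientH}), the Hessian lower bound of Lemma \ref{lem:lower_bound_hess} on the constrained ball, a strong-convexity/Cauchy--Schwarz step, and a union bound over $u$. Your only (harmless) deviation is insisting on an $e^{-\Omega(n)}$ per-$u$ failure probability; the paper's $O(1/n^2)$ bound already survives the union over the $n$ choices of $u$.
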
 

\begin{proof}
To begin with, observe that 
\begin{align}
    \ell^{-}_{n, s}(\bm \beta_{s,\bar{u}}) & \ge \ell^{-}_{n, s}(\hat{\bm \beta}^{\circ}_{s,\bar{u}}) \nonumber \\
    & = \ell^{-}_{n, s}(\bm \beta_{s,\bar{u}}) + (\hat{\bm \beta}^{\circ}_{s,\bar{u}}-\bm \beta_{s,\bar{u}})^\top\nabla \ell^{-}_{n, s}(\bm \beta_{s,\bar{u}}) + \frac{1}{2}(\hat{\bm \beta}^{\circ}_{s,\bar{u}}-\bm \beta_{s,\bar{u}})^\top\nabla^2 \ell^{-}_{n, s}(\tilde{\bm \beta})(\hat{\bm \beta}^{\circ}_{s,\bar{u}}-\bm \beta_{s,\bar{u}}), \nonumber 
\end{align}
where $\|\tilde{\bm \beta} - \bm \beta_{s,\bar{u}}\|_2 \le \|\hat{\bm \beta}^{\circ}_{s,\bar{u}}-\bm \beta_{s,\bar{u}}\|_2 \leq K$. This implies, 
\begin{align}
\label{eq:likeh_del_one}
 \| \hat{\bm \beta}^{\circ}_{s,\bar{u}}-\bm \beta_{s,\bar{u}} \|_2 \cdot \|\nabla \ell^{-}_{n, s}(\bm \beta_{s,\bar{u}})\|_2 & \geq  -  (\hat{\bm \beta}^{\circ}_{s,\bar{u}}-\bm \beta_{s,\bar{u}})^\top\nabla \ell^{-}_{n, s}(\bm \beta_{s,\bar{u}}) \nonumber \\ 
 & \ge  \frac{1}{2}(\hat{\bm \beta}^{\circ}_{s,\bar{u}}-\bm \beta_{s,\bar{u}})^\top\nabla^2 \ell^{-}_{n, s}(\tilde{\bm \beta})(\hat{\bm \beta}^{\circ}_{s,\bar{u}}-\bm \beta_{s,\bar{u}}) . 
\end{align} 
By Lemma \ref{lem:lower_bound_hess}, 
\[
(\hat{\bm \beta}^{\circ}_{s,\bar{u}}-\bm \beta_{s,\bar{u}})^\top\nabla^2 \ell^{-}_{n, s}(\tilde{\bm \beta})(\hat{\bm \beta}^{\circ}_{s,\bar{u}}-\bm \beta_{s,\bar{u}}) \gtrsim_{ s,M,K } 
\|\hat{\bm \beta}^{\circ}_{s,\bar{u}}-\bm \beta_{s,\bar{u}}\|^2 n^{s-1} . 
\]
Also, by Lemma \ref{lem:gradientH}, $\|\nabla \ell^{-}_{n, s}(\bm \beta_{s,\bar{u}})\|^2_2 \lesssim _{ s,M,K }  n^s$ with probability $1 - O(\frac{1}{n^2})$. Plugging in the above inequalities in \eqref{eq:likeh_del_one},  and using the union bound we get \eqref{eq:Kbeta}. 
\end{proof}

Next, we bound the difference between the  constrained leave-one-out ML estimate $\hat{\bm \beta}^{\circ}_{s,\bar{u}}$ and the (unconstrained) leave-one-out ML estimate $\hat{\bm \beta}_{s, \bar{u}} = (\hat \beta_{s, 1}, \ldots, \hat \beta_{s, u-1}, \hat \beta_{s, u+1}, \ldots, \hat \beta_{s, n})$. 

\begin{lem}\label{lm:leave1mlel2} Let $\hat{\bm \beta}^{\circ}_{s,\bar{u}}$ and $\hat{\bm \beta}_{s,\bar{u}}$ be as defined above. Then, with probability $1-o(1)$,  
\begin{align}\label{eq:leave1max2}
    \max_{u \in [n]}\|\hat{\bm \beta}^{\circ}_{s,\bar{u}}-\hat{\bm \beta}_{s, \bar{u}}\|^2_2 &\lesssim _{ s,M,K }  \frac{1}{n} 
   + \frac{\|\hat{\bm \beta}_s-\bm \beta_s\|^2_\infty}{n^{s-1}} .
\end{align} 
\end{lem} 

\begin{proof}
\revsag{[Please read this proof a bit....there may be some mistakes....I made some major changes.]}
By the definition of $\hat{\bm \beta}^{\circ}_{s,\bar{u}}$ (recall \eqref{eq:beta_minus_u}) 
\begin{align*}
    \ell^{-}_{n, s}(\hat{\bm \beta}_{s,\bar{u}}) & \ge \ell^{-}_{n, s}(\hat{\bm \beta}^{\circ}_{s,\bar{u}}) \nonumber \\ 
    & = \ell^{-}_{n, s}(\hat{\bm \beta}_{s, \bar{u}}) + (\hat{\bm \beta}^{\circ}_{s,\bar{u}}-\hat{\bm \beta}_{s, \bar{u}})^\top\nabla \ell^{-}_{n, s}(\hat{\bm \beta}_{s,\bar{u}}) + (\hat{\bm \beta}^{\circ}_{s,\bar{u}}-\hat{\bm \beta}_{s, \bar{u}})^\top\nabla \ell^{-}_{n, s}(\bar{\bm \beta})(\hat{\bm \beta}^{\circ}_{s,\bar{u}}-\hat{\bm \beta}_{s,\bar{u}}),
\end{align*}
where $\|\bar{\bm \beta}-\hat{\bm \beta}_{s,\bar{u}}\|_2 \le \|\hat{\bm \beta}^{\circ}_{s,\bar{u}}-\hat{\bm \beta}_{s,\bar{u}}\|_2$. Note that $\|\hat{\bm \beta}^{\circ}_{s,\bar{u}}-\hat{\bm \beta}_{s,\bar{u}}\|_2  = O(1)$, since $\| \hat{\bm \beta}_s \|_\infty = O(1)$ and $\|\hat{\bm \beta}^{\circ}_{s,\bar{u}}\| = O(1)$. \revsag{Therefore, we have $K>0$ such that} by Lemma \ref{lem:lower_bound_hess}, 
\begin{align}\label{eq:betaumlel2}
    \|\hat{\bm \beta}^{\circ}_{s, \bar{u}}-\hat{\bm \beta}_{s, \bar{u}}\|^2_2 \lesssim _{s,M,K}  \frac{\|\nabla \ell^{-}_{n, s}(\hat{\bm \beta}_{s,\bar{u}})\|^2_2 }{n^{2(s-1)}}. 
\end{align} 
Since $\nabla \ell_{n, s}(\hat{\bm \beta}_s)=0$, that is, $\frac{\partial}{\partial \beta_v} \ell_{n,s}(\hat{\bm \beta}_s)=0$, for $v \in [n]$. Hence, we have from \eqref{eq:likelihoodHbeta}, 
\begin{align*}
    \frac{\partial}{\partial \beta_v} \ell^{-}_{n, s}(\hat{\bm \beta}_{s, \bar{u}})  = - \frac{\partial}{\partial \beta_v}  \ell^{+}_{n, s}(\hat{\beta}_{s, u}|\hat{\bm \beta}_{s,\bar{u}}) & = - \sum_{\bm e \in {[n] \choose s}: \{u, v\} \in \bm e} \{ X_{\bm e} - \psi( \bm 1^\top \hat{\bm \beta}_{s, \bm e})\} ,
\end{align*} 
where $\psi(x):= \frac{e^x}{1+e^x}$ and \revsag{$X_{\bm e}=\mathbbm 1\{\bm e \in E(H_n)\}$ for $\bm e \in {[n] \choose s}$}. 
This implies, 
\begin{align}\label{eq:leave1mlel2}
    & \|\nabla \ell^{-}_{n, s}(\hat{\bm \beta}_{s,\bar{u}})\|^2_2 \nonumber \\ 
    & = \sum_{v \in [n]\backslash\{ u \}}\left(\sum_{\bm e \in {[n] \choose s}: \{u, v\} \in \bm e} \{X_{\bm e} - \psi(\hat{\bm \beta}^\top_{s, \bm e}1)\}\right)^2 \nonumber \\
    & \lesssim  \sum_{v \in [n]\backslash\{ u \}} \left[ \left(\sum_{\bm e \in {[n] \choose s}: \{u, v\} \in \bm e} \{X_{\bm e} - \psi( \bm 1^\top\bm \beta_{s, \bm e})\}\right)^2  + \left(\sum_{\bm e \in {[n] \choose s}: \{u, v\} \in \bm e} \{\psi(\bm 1^\top \hat{\bm \beta}_{s, \bm e}) - \psi( \bm 1^\top \bm \beta_{s, \bm e})\}\right)^2 \right] \nonumber \\
    & \lesssim_{r} \sum_{v \in [n]\backslash\{ u \}}\left(\sum_{\bm e \in {[n] \choose s}: \{u, v\} \in \bm e} \{X_{\bm e} - \psi(\bm 1^\top\bm \beta_{s, \bm e})\}\right)^2 + \revsag{n^{2s-3}} \|\hat{\bm \beta}_s-\bm \beta_s\|^2_\infty  ,  
\end{align} 
using 
$$|\psi(\bm 1^\top \hat{\bm \beta}_{s, \bm e}) - \psi(\bm 1^\top \bm \beta_{s, \bm e})| \lesssim  |\bm 1^\top \hat{\bm \beta}_{s, \bm e} - \bm 1^\top \bm \beta_{s, \bm e} | \lesssim_r \|\hat{\bm \beta}_s-\bm \beta_s\|^2_\infty.$$ 

By \eqref{eq:betaumlel2} and \eqref{eq:leave1mlel2}, to prove the result in \eqref{eq:leave1max2}  it suffices show the following holds with probability $1-o(1)$, 
\begin{align}\label{eq:leave1mle}
\max_{1 \leq u \leq n}\sum_{v \in [n]\backslash\{u\} } \left(\sum_{\bm e \in {[n] \choose s}: u, v \in \bm e } \left\{ \mathbbm 1\{ \bm e \in E(H_n)\} -  \psi(\bm \beta_{s, \bm e}^\top \bm 1) \right\}  \right)^2 \lesssim  n^{s-1} . 
\end{align} 
This is proved in Appendix \ref{sec:leave1mlepf}. 
\end{proof}

We now apply the above lemmas to derive the bound in the $L_\infty$ norm. To begin with note that since $\ell_{n,s}(\hat{\bm\beta}_s)=\min_{\bm \beta_s}\ell_{n,s}(\bm \beta_s)$, 
\[
{\ell}^{+}_{n, s}(\beta_{s, u}|\hat{\bm \beta}_{s, \bar u}) + {\ell}^{-}_{n, s}(\hat {\bm \beta}_{s, \bar u}) \ge \ell_{n,s}(\hat{\bm \beta}_s) = {\ell}^{+}_{n, s}(\hat \beta_{s, u}|\hat{\bm \beta}_{s, \bar u}) + {\ell}^{-}_{n, s}(\hat {\bm \beta}_{s, \bar u}) . 
\]

The above inequality implies 
\begin{align}
    {\ell}^{+}_{n, s}& (\beta_{s, u}|\hat{\bm \beta}_{s, \bar u}) \nonumber \\ 
    & \ge {\ell}^{+}_{n, s}(\hat{\beta}_{s, u}|\hat{\bm \beta}_{s, \bar u}) \nonumber \\ 
    & = {\ell}^{+}_{n, s}(\beta_{s, u}|\hat{\bm \beta}_{s, \bar u}) + (\hat{\beta}_{s, u}-\beta_{s, u})\frac{\partial}{\partial \beta_u}{\ell}^{+}_{n, s}(\beta_{s, u}|\hat{\bm \beta}_{s, \bar u})  + \frac{1}{2}(\hat{\beta}_{s, u}-\beta_{s, u})^2\frac{\partial^2}{\partial \beta_u^2}{\ell}^{+}_{n, s}( \tilde \beta |\hat{\bm \beta}_{s, \bar u}) , \nonumber 
\end{align} 
where $\tilde \beta$ is a convex combination of $\hat{\beta}_{s, u}$ and $\beta_{s, u}$. Therefore,  
\begin{align}\label{eq:l2linfty} 
(\hat{\beta}_{s, u}-\beta_{s, u})^2 \leq \frac{4 |\frac{\partial}{\partial \beta_u}{\ell}^{+}_{n, s}(\beta_{s, u} | \hat{\bm \beta}_{s, \bar{u}})|^2}{|\frac{\partial^2}{\partial \beta_u^2}{\ell}^{+}_{n, s}(\tilde \beta|\hat{\bm \beta}_{s, \bar{u}})|^2 } . 
\end{align} 
From arguments in Appendix \ref{sec:layermleL2pf} we know that with probability $1-o(1)$, $\|\hat{\bm \beta}_s-\bm \beta_{s}\|_\infty \leq \|\hat{\bm \beta}_s-\bm \beta_{s}\|_2 \lesssim  1$. Note that for $\bm \beta \in \R^n$ such that $\| \bm \beta - \bm \beta_s \|_\infty \lesssim  1$, we have $\| \bm \beta \|_\infty \lesssim  1$ and hence, $| \bm 1^\top \bm \beta_{\bm e} | \lesssim  1$. 
This implies, $\psi(\bm 1^\top \bm \beta_{\bm e, s})(1-\psi(\bm 1^\top \bm \beta_{\bm e, s})) \gtrsim 1$ and hence,  
\[
\frac{\partial^2}{\partial \beta_u^2}{\ell}^{+}_{n, s}(\tilde \beta|\hat{\bm \beta}_{s, \bar{u}}) = \sum_{\bm e \in {[n] \choose s}: u \in \bm e} \psi(\bm 1^\top \bar{\bm \beta}_{\bm e, s})(1-\psi(\bm 1^\top \bar{\bm \beta}_{\bm e, s})) \gtrsim  n^{s-1} ,  
\] 
where $\bar{\bm \beta}_{s} = (\hat \beta_{s, 1}, \ldots, \hat \beta_{s, u-1}, \beta_{s, u}, \hat \beta_{s, u+1}, \ldots, \hat \beta_{s, n})^\top$. Hence, \eqref{eq:l2linfty} implies, 
\begin{align}
\label{eq:one_gradient}
    (\hat{\beta}_{s, u}-\beta_{s, u})^2 \lesssim  \frac{|\frac{\partial}{\partial \beta_u}{\ell}^{+}_{n, s}(\beta_{s, u}|\hat{\bm \beta}_{s, \bar u})|^2}{n^{2s-2}}.
\end{align} 
Now, we bound $|\frac{\partial}{\partial \beta_u}{\ell}^{+}_{n, s}(\beta_{s, u}|\hat{\bm \beta}_{s, \bar u})|^2$.  For this define $$\bar{\bm \beta}_{s}^\circ = ([\hat {\bm \beta}_{s, \bar u}^\circ]_1, \ldots, [\hat {\bm \beta}_{s, \bar u}^\circ]_{u-1}, \beta_{s, u}, [\hat {\bm \beta}^\circ_{s, \bar u}]_{u+1}, \ldots, [\hat{\bm \beta}_{s, \bar u}^\circ]_n)^\top.$$ 
Then we have
\begin{align}\label{eq:gradientln1}
    \left|\frac{\partial}{\partial \beta_u}{\ell}^{+}_{n, s}(\beta_{s, u}|\hat{\bm \beta}_{s, \bar u})\right| & = \Bigg|\sum_{\bm e \in {[n] \choose s}: u \in \bm e}\{X_{\bm e}-\psi(\bm 1^\top \bar{\bm \beta}_{\bm e, s})\}\Bigg| \leq T_1(u) + T_2(u) + T_3(u), 
    \end{align} 
    where
    \begin{align*}
    T_1(u) &:=\Bigg|\sum_{\bm e \in {[n] \choose s}: u \in \bm e}\{X_{\bm e}-\psi(\bm 1^\top\bm \beta_{\bm e, s})\}\Bigg|, \quad    T_2(u) := \Bigg|\sum_{\bm e \in {[n] \choose s}: u \in \bm e}\{\psi(\bm 1^\top\bar{\bm \beta}^\circ_{\bm e, s})-\psi(\bm 1^\top\bm \beta_{\bm e, s})\}\Bigg| , 
    \end{align*}
    and 
\begin{align*}
    T_3(u) &:= \Bigg|\sum_{\bm e \in {[n] \choose s}: u \in \bm e}\{\psi(\bm 1^\top\bar{\bm \beta}_{\bm e, s}^\circ)-\psi(\bm 1^\top\bar{\bm \beta}_{\bm e, s})\}\Bigg|.
\end{align*} 
Note that since $\{X_{\bm e}\}_{\bm e \in {[n] \choose s}}$ are independent and bounded random variables, using Hoeffding's inequality and union bound gives $$\max_{u \in [n]} T_1(u) \lesssim  \sqrt{n^{s-1} \log n } , $$ 
with probability $1-o(1)$.  Next, we consider $T_2(u)$. By Lemma \ref{lm:Kbeta}, with probability $1-o(1)$, 
\begin{align*}
   \max_{u \in [n]} \,T_2(u) \lesssim \max_{u \in [n]} \, \sum_{\bm e \in {[n] \choose s}:u \in \bm e}\Bigg\{\sum_{v \in \bm e}|\beta_{s, v}-[\hat{\bm \beta}^{\circ}_{s,\bar u}]_v|\Bigg\} & =\max_{u \in [n]} \, \sum_{ v \in [n]\backslash\{u\} } n^{s-2}|\beta_{s, v} - [\hat{\bm \beta}^{\circ}_{s,\bar u}]_v | \nonumber \\ 
    & \lesssim n^{s-\frac{3}{2}}\max_{u \in [n]}  \,\|\bm \beta_{s, \bar u} - \hat{\bm \beta}^{\circ}_{s, \bar u}\|_2 \lesssim  \sqrt{n^{s - 1}} .  
\end{align*}   
  A similar argument shows that, with probability $1-o(1)$, $\max_{u \in [n]} \,T_3(u) \lesssim n^{s-\frac{3}{2}} \|\hat{\bm \beta}_{s, \bar u}- \hat{\bm \beta}^{\circ}_{s, \bar u}\|_2$. 
Combining the bounds on $T_1$, $T_2$ and $T_3$ with \eqref{eq:one_gradient} and \eqref{eq:gradientln1} gives, with probability $1-o(1)$, 
\begin{align} \label{eq:infinity_norm_bound}
    \|\hat{\bm \beta}_{s}-\bm \beta_s\|_\infty & \lesssim   \sqrt{\frac{\log n}{n^{s - 1}}} + \frac{\max_{u \in [n]} \,\|\hat{\bm \beta}_{s, \bar u}- \hat{\bm \beta}^{\circ}_{s, \bar u}\|_2}{\sqrt n} . 
    \end{align} 
Applying \eqref{eq:infinity_norm_bound} in \eqref{eq:leave1max2} now gives,  with probability $1-o(1)$, 
    \begin{align*}
    \max_{u \in [n]}\|\hat{\bm \beta}^{\circ}_{s,\bar{u}}-\hat{\bm \beta}_{s, \bar{u}}\|_2 & \lesssim _{s,M,K}  \sqrt{\frac{1}{n^{s-1}} } 
    + \frac{\|\hat{\bm \beta}_s-\bm \beta_s\|_\infty}{\revsag{\sqrt{n}}} \nonumber \\ 
    & \lesssim_{s,M,K}  \sqrt{\frac{1}{n^{s-1}} } + \frac{ \max_{u \in [n]}\|\hat{\bm \beta}^{\circ}_{s,\bar{u}}-\hat{\bm \beta}_{s, \bar{u}}\|_2}{\revsag{n}} \lesssim \sqrt{\frac{1}{n^{s-1}} } . 
\end{align*} 
Using this inequality with \eqref{eq:infinity_norm_bound} gives, with probability $1-o(1)$, 
\[
\|\hat{\bm \beta}_s - \bm \beta_s \|_\infty \lesssim_{s,M,K} \sqrt{\frac{\log n}{n^{s-1}}} , 
\] 
establishing the desired bound in \eqref{eq:mlerlayer}.

\subsubsection{Proof of \eqref{eq:leave1mle}} 
\label{sec:leave1mlepf}

\begin{proof} 
Denote by $\mathbb{B}^{n-1}=\{\bm x  \in \mathbb R^{n-1}: \|\bm x\|_2 \le 1\}$. Using \cite[Lemma 5.2]{vershynin2010introduction}, we can construct an $\frac{1}{2}$-net $\mathcal{V}_1$ of $\mathbb{B}^{n-1}$ (defined in Section \ref{sec:gradientHpf}) satisfying $\log|\mathcal{V}_1| \le C_2 n$ for some constant $C_2 >0$. Now, for any $u \in [n]$, any unit vector $\tilde{\bm a} = (\tilde{a}_1, \ldots, \tilde{a}_{u-1}, \tilde{a}_{u+1}, \ldots, \tilde{a}_{n})^\top \in \B^{n-1}$ and the corresponding point $\tilde{\bm b} = (\tilde{b}_1, \ldots, \tilde{b}_{u-1}, \tilde{b}_{u+1}, \ldots, \tilde{b}_{n})^\top \in \mathcal{V}_1$, 
\begin{align}
\label{eq:delte_u_net_half}
& \sum_{v \in [n]\backslash\{ u \}}\tilde{a}_v\left\{\sum_{\bm e \in {[n] \choose s}: u, v \in \bm e }\left\{X_{\bm e}-\frac{e^{\bm 1^\top \bm \beta_{\bm e}}}{1+e^{\bm 1^\top \bm \beta_{\bm e}}}\right\}\right\} \nonumber \\ 
& = \sum_{v \in [n]\backslash\{ u \}}\tilde{b}_v\left\{\sum_{ \bm e \in {[n] \choose s}, u, v \in \bm e }\left\{X_{\bm e}-\frac{e^{\bm 1^\top \bm \beta_{\bm e}}}{1+e^{\bm 1^\top \bm \beta_{\bm e}}}\right\}\right\}+\Delta_{u},
\end{align}
where
\[
\Delta_{u} := \sum_{v \in [n]\backslash \{u\} }(\tilde{a}_v-\tilde{b}_v)\left\{\sum_{ \bm e \in {[n] \choose s} : u, v \in \bm e }\left\{X_{\bm e}-\frac{e^{\bm 1^\top \bm \beta_{\bm e}}}{1+e^{\bm 1^\top \bm \beta_{\bm e}}}\right\}\right\} . 
\]
Proceeding as in \eqref{eq:bound_delta_net_half}, for all $u \in [n]$, we can show
\[
|\Delta_{u}| \le \frac{1}{2}\sqrt{\sum_{v \in [n]\backslash\{ u \}}\left\{\sum_{\bm e \in {[n] \choose s} : u, v \in \bm e}\left\{X_{\bm e}-\frac{e^{\bm 1^\top \bm \beta_{\bm e}}}{1+e^{\bm 1^\top \bm \beta_{\bm e}}}\right\}\right\}^2}.
\]
Maximizing over $\tilde{\bm a} \in \mathbb{B}^{n-1}$ and $\tilde{\bm b} \in \mathcal{V}_1$ on both sides of \eqref{eq:delte_u_net_half} we get
\[
\sqrt{\sum_{v \in [n]\backslash\{ u \}}\left\{\sum_{\bm e \in {[n] \choose s} : u, v \in \bm e}\left\{X_{\bm e}-\frac{e^{\bm 1^\top \bm \beta_{\bm e}}}{1+e^{\bm 1^\top \bm \beta_{\bm e}}}\right\}\right\}^2} \le 2 \max_{\tilde{\bm b} \in \mathcal{V}_1}\sum_{v \in [n]\backslash\{ u \}}\tilde{b}_v\left\{\sum_{\bm e \in {[n] \choose s} : u, v \in \bm e}\left\{X_{\bm e}-\frac{e^{\bm 1^\top \bm \beta_{\bm e}}}{1+e^{\bm 1^\top \bm \beta_{\bm e}}}\right\}\right\}.
\]
As the above relation holds for all $u \in [n]$ we get
\begin{align}
\label{eq:one_delted_max}
& \sqrt{\max_{u \in [n]}\sum_{ v \in [n]\backslash\{ u \} }\left\{\sum_{\bm e \in {[n] \choose s} : u, v \in \bm e}\left\{X_{\bm e}-\frac{e^{\bm 1^\top \bm \beta_{\bm e}}}{1+e^{\bm 1^\top \bm \beta_{\bm e}}}\right\}\right\}^2} \nonumber \\ 
& \le 2\,\max_{u \in [n]}\max_{\tilde{\bm b} \in \mathcal{V}_1}\sum_{ v \in [n]\backslash\{ u \} }\tilde{b}_v\left\{\sum_{\bm e \in {[n] \choose s} : u, v \in \bm e}\left\{X_{\bm e}-\frac{e^{\bm 1^\top \bm \beta_{\bm e}}}{1+e^{\bm 1^\top \bm \beta_{\bm e}}}\right\}\right\} . 
\end{align}
Hence, using \eqref{eq:one_delted_max}, Hoeffding Inequality and union bound we get
\begin{align} 
& \P\left ( \max_{u \in [n]}\sum_{ v \in [n]\backslash\{ u \} }\left\{\sum_{\bm e \in {[n] \choose s} : u, v \in \bm e}\left\{X_{\bm e}-\frac{e^{\bm 1^\top \bm \beta_{\bm e}}}{1+e^{\bm 1^\top \bm \beta_{\bm e}}}\right\}\right\}^2 >  4 K^2 n^{s-1} \right) \nonumber \\ 
& \leq \sum_{u=1}^{n}\sum_{\tilde{\bm b} \in \mathcal{V}_1} \P\left (\sum_{ v \in [n]\backslash\{ u \} }\tilde{b}_v\left\{\sum_{\bm e \in {[n] \choose s} : u, v \in \bm e}\left\{X_{\bm e}-\frac{e^{\bm 1^\top \bm \beta_{\bm e}}}{1+e^{\bm 1^\top \bm \beta_{\bm e}}}\right\}\right\} >  2 K n^{\frac{s-1}{2}} \right) \nonumber \\ 
& \le  \sum_{u=1}^{n}\sum_{\tilde{\bm b} \in \mathcal{V}_1}e^{-\frac{2K^2n}{\sum_{v=1}^{n-1}\tilde{b}^2_v}}\nonumber \\
& \le n\;2^{C_2n}e^{-2K^2n} \rightarrow 0, \nonumber 
\end{align} 
for $K$ large enough. 
\end{proof}

\section{Estimation Lower Bounds: Proof of Theorem \ref{thm:maxl2lb}} 
\label{sec:estimationlbpf}

The lower bound in the $L_2$ norm is proved in Appendix \ref{sec:l2lbpf} and the lower bound in the $L_\infty$ norm is proved Appendix \ref{sec:maxlbpf}.

\subsection{Estimation Lower Bound in the $L_2$ Norm: Proof of \eqref{eq:l2lb}} 
\label{sec:l2lbpf}

For $\bm \gamma \in \R^n$, denote the probability distribution of $s$-uniform model $\mathsf{H}_{s}(n, \bm \gamma)$ by $\P_{\bm \gamma}$. To prove the result \eqref{eq:l2lb} recall \revsag{the following version of  Fano's lemma}: 

\begin{thm}[{\cite[Theorem 2.5]{MR2724359}}] 
Suppose there exists $\bm \gamma^{(0)}, \cdots,\bm  \gamma^{(J)} \in \mathbb R^n$, with $\|\bm \gamma^{(j)}\| \in \cB_M$ for all $0 \leq j \leq J$, such that
\begin{enumerate}
    \item[$(1)$] $\|\bm \gamma^{(j)}-\bm \gamma^{(\ell)}\|_2 \ge 2 s  \,>0$ for all $ 0 \leq j \neq \ell \leq J$, 
    \item[$(2)$] $\frac{1}{J}\sum_{j=1}^{J} \revmark{\mathsf{KL}(\P_{\bm \gamma^{(j)}}\|\,\P_{\bm \gamma^{(0)}})} \le \alpha \log J$,
\end{enumerate}
where $\alpha \in (0, 1/8)$. Then 
\begin{align}\label{eq:fl}
\min_{\hat{\bm \gamma}}\max_{\bm \gamma}\mathbb{P}\left(\|\hat{\bm \gamma}-\bm\gamma\|_2 \ge s \right) \ge \frac{\sqrt{J}}{\sqrt{J}+1}\left(1-2 \alpha -\sqrt{\frac{2 \alpha }{\log J}}\right).
\end{align}
\label{thm:fl}
\end{thm}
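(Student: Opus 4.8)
The plan is the classical reduction from estimation to multiple testing followed by an information‑theoretic (Fano‑type) lower bound on the testing error; the inequality \eqref{eq:fl} is a standard consequence of this scheme, so the real work is in setting up the testing problem and controlling the relevant mutual information.

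\emph{Reduction to testing.} Let $\hat{\bm\gamma}$ be any estimator and let $\psi \in \{0,1,\dots,J\}$ be an index at which $\|\hat{\bm\gamma} - \bm\gamma^{(j)}\|_2$ is minimised (ties broken arbitrarily). If the data are generated from $\P_{\bm\gamma^{(j)}}$ and $\psi$ returns some $\ell \ne j$, then by the triangle inequality and the separation condition $(1)$,
\[
\|\hat{\bm\gamma} - \bm\gamma^{(j)}\|_2 \;\ge\; \tfrac12\,\|\bm\gamma^{(j)} - \bm\gamma^{(\ell)}\|_2 \;\ge\; s,
\]
so that $\{\psi \ne j\} \subseteq \{\|\hat{\bm\gamma}-\bm\gamma^{(j)}\|_2 \ge s\}$ under $\P_{\bm\gamma^{(j)}}$. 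Taking the worst case over the $J+1$ hypotheses and bounding the maximum below by the uniform average gives
\[
\max_{\bm\gamma \in \cB_M}\P_{\bm\gamma}\big(\|\hat{\bm\gamma}-\bm\gamma\|_2 \ge s\big) \;\ge\; \frac{1}{J+1}\sum_{j=0}^{J}\P_{\bm\gamma^{(j)}}(\psi \ne j) \;=:\; \bar p_{e,J},
\]
and since $\hat{\bm\gamma}$ was arbitrary, the left side of \eqref{eq:fl} is at least $\inf_{\psi}\bar p_{e,J}$, the minimum average error over all (possibly randomised) tests valued in $\{0,\dots,J\}$.

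\emph{Bounding the average testing error.} Put a uniform prior on the index $\theta \in \{0,\dots,J\}$ and let $X$ denote the observed hypergraph, so $X \mid \theta=j \sim \P_{\bm\gamma^{(j)}}$. I would invoke the sharp (Birgé) form of Fano's inequality for the Markov chain $\theta \to X \to \psi$: whenever $I(\theta;X) \le \alpha\log J$ with $\alpha<1/8$ and $J \ge 1$, one has
\[
\inf_{\psi}\bar p_{e,J} \;\ge\; \frac{\sqrt J}{\sqrt J + 1}\Big(1 - 2\alpha - \sqrt{\tfrac{2\alpha}{\log J}}\Big),
\]
which is exactly the inequality underlying \cite[Theorem 2.5]{MR2724359}. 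It then remains to verify its hypothesis $I(\theta;X)\le\alpha\log J$. Using the variational identity $I(\theta;X) = \min_Q \frac{1}{J+1}\sum_{j=0}^{J}\mathsf{KL}(\P_{\bm\gamma^{(j)}},Q)$ (the minimiser being the mixture $\frac{1}{J+1}\sum_j \P_{\bm\gamma^{(j)}}$) and choosing $Q = \P_{\bm\gamma^{(0)}}$,
\[
I(\theta;X) \;\le\; \frac{1}{J+1}\sum_{j=1}^{J}\mathsf{KL}(\P_{\bm\gamma^{(j)}},\P_{\bm\gamma^{(0)}}) \;\le\; \frac{J}{J+1}\,\alpha\log J \;\le\; \alpha\log J,
\]
where the middle step is condition $(2)$. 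Combining the last three displays yields \eqref{eq:fl}.

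\emph{Main obstacle.} There is no conceptual difficulty; the one point requiring care is matching the exact constants in \eqref{eq:fl}. The crude Fano bound $\bar p_{e,J} \ge 1 - (I(\theta;X)+\log 2)/\log(J+1)$ is too lossy to produce the factor $\sqrt J/(\sqrt J+1)$ and the $\sqrt{2\alpha/\log J}$ correction; obtaining these requires the refined non‑asymptotic inequality of Birgé, which optimises over the comparison distribution and carefully handles the $\log(J+1)$ versus $\log J$ discrepancy. Since \eqref{eq:fl} is quoted verbatim from \cite[Theorem 2.5]{MR2724359}, I would cite that inequality rather than reprove it, and reserve the substantive effort for the ensuing application, namely constructing a packing $\bm\gamma^{(0)},\dots,\bm\gamma^{(J)}$ with $J = 2^{\Theta(n)}$ that simultaneously satisfies conditions $(1)$ and $(2)$.
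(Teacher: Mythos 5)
The paper does not actually prove this statement---it is imported verbatim from Tsybakov's book via the citation---and your proposal, which reduces estimation to a $(J+1)$-ary testing problem by selecting the closest $\bm\gamma^{(j)}$ and then invokes the refined Birg\'e/Fano bound after verifying its information condition from hypothesis $(2)$, is exactly the standard proof of that cited result; like the paper, you ultimately defer the sharp constants to the citation, so the two treatments agree. The only cosmetic difference is that you phrase the information condition in terms of mutual information rather than the average Kullback--Leibler divergence to $\P_{\bm\gamma^{(0)}}$ appearing in Tsybakov's argument, which is harmless here since the variational identity you state shows the mutual information is bounded by that average divergence, so condition $(2)$ still suffices.
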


To obtain $\bm \gamma^{(0)}, \ldots,\bm  \gamma^{(J)} \in \mathbb R^n$ as in the above lemma we will invoke the Gilbert-Varshamov Theorem (see \cite[Lemma 2.9]{MR2724359}) which states that there exists $\bm \omega^{(0)}, \ldots, \bm \omega^{(J)} \in \{0,1\}^n$, with $J \ge 2^{n/8}$, 
such that $\bm \omega^{(0)}=(0,\cdots,0)^\top $ and
\begin{align}\label{eq:jl}
\|\bm \omega^{(j)} - \bm \omega^{(\ell)}\|_1 \ge \frac{n}{8} , 
\end{align}
for all $0 \le j \neq \ell \le J$. For $\bm \omega^{(0)}, \ldots, \bm \omega^{(J)} \in \{0,1\}^n$ as above and $\delta \in (0, 1/8)$  define, 
$$\bm \gamma^{(j)} = \varepsilon_n \bm \omega^{(j)}, \quad \text{ for } 0 \leq j \leq J , $$ 
where $\varepsilon_n = 16 C n^{-\frac{s-1}{2}}$, with $C= C(\delta, s) \,> 0$ a constant depending on $\delta$ and $s$ to be chosen later. 
By \eqref{eq:jl} we have
\[
\| \bm \gamma^{(j)} - \bm \gamma^{(\ell)} \|_2 \geq 2 C n^{-\frac{s-2}{2}} .
\]  
Now,
\begin{align} 
    & \revsag{\mathsf{KL}(\P_{\bm \gamma^{(j)}}\|\, \P_{\bm \gamma^{(0)}})} \nonumber \\ 
    & = \sum_{t=0}^{s} {\|\bm \omega^{(j)}\|_1 \choose t}{n-\|\bm \omega^{(j)}\|_1 \choose s-t}  \Big\{\psi\left(t \varepsilon_n\right) \log\left(2\psi\left(t \varepsilon_n\right)\right)+\left(1-\psi\left(t \varepsilon_n\right)\right) \log\left(2\left(1-\psi\left(t \varepsilon_n\right)\right)\right)\Big\}, \nonumber 
\end{align} 
where 
 $\psi(x)= \frac{e^x}{1 + e^x}$ is the logistic function defined in Lemma \ref{lm:leave1mlel2}. By  a Taylor expansion, 
 for small enough $x>0$,  
\[
\psi(x)\log(2\psi(x))+(1-\psi(x))\log(2(1-\psi(x))) = \frac{x^2}{8} + O(x^3) .
\]
Hence, using ${\|\bm \omega^{(j)}\|_1 \choose t}{n-\|\bm \omega^{(j)}\|_1 \choose s-t} \lesssim_s n^{s}$ gives \[
 \frac{1}{J}\sum_{j=1}^{J} \revsag{\mathsf{KL}(\P_{\bm \gamma^{(j)}}\| \P_{\bm \gamma^{(0)}})} \lesssim_s n^s \varepsilon^2_n  \lesssim_s C^2 n \le \delta \log J , 
\] 
for $C = C(\delta, s)$ chosen appropriately. 
Hence, applying Theorem \ref{thm:fl} and taking $J \rightarrow \infty$ in \eqref{eq:fl} gives 
$$\min_{\hat{\bm \gamma}}\max_{\bm \gamma}\mathbb{P}\left(\|\hat{\bm \gamma}-\bm\gamma\|_\infty \ge C n^{- \frac{s-2}{2}} \right) \ge 1- 2 \delta . 
$$ 
This completes the proof of \eqref{eq:l2lb}.

\subsection{Estimation Lower Bound in $L_\infty$ Norm: Proof of \eqref{eq:maxlb}} 
\label{sec:maxlbpf}
\revmark{Let us consider $n+1$ points $\bm \gamma^{(0)},\bm \gamma^{(1)},\ldots,\bm \gamma^{(n)} \in \R^n$ defined as follows:
\[
\bm \gamma^{(j)} =\begin{cases}
2\varepsilon \bm{e}_j & \mbox{if $j \in \{1,\ldots,n\}$,}\\
\bm 0 & \mbox{if $j=0$,}
\end{cases}
\]
where $\bm{e}_j$ is the $j$-th unit vector for $j \in [n]$ and $\varepsilon=\tilde{C}\sqrt{\frac{\log n}{n^{s-1}}}$ for a constant $\tilde C = \tilde{C}(\delta,s)>0$ (depending on $\delta$ and $s$) to be chosen later. By definition, we have
\[
\|\bm \gamma^{(j)}-\bm \gamma^{(\ell)}\|_\infty \ge 2\tilde{C}\sqrt{\frac{\log n}{n^{s-1}}}.
\]
Denote the probability distribution of the $s$-uniform models $\mathsf{H}_{s}(n, \bm \gamma^{(j)})$ by $\P_{\bm \gamma^{(j)}}$. Observe that for all $j \in \{1,\ldots,n\}$
\begin{align}\label{eq:Pgamma}
   \mathsf{KL}(\P_{\bm \gamma^{(j)}}\| \P_{\bm \gamma^{(0)}}) & = \frac{1}{2}\sum_{\bm e \in {[n] \choose s}: j \in \bm e} \left[ \log\left\{\frac{(1+e^{2\varepsilon} ) } {2\,e^{2\varepsilon}} \right\}  + \log\left\{\frac{1}{2}(1+e^{2\varepsilon}) \right\} \right] .  
\end{align} 
By Taylor's theorem, we get 
$$\log\left\{\frac{(1+e^{2\varepsilon} ) } {2\,e^{2\varepsilon}} \right\}  + \log\left\{\frac{1}{2}(1+e^{2\varepsilon}) \right\} = 4\varepsilon^2 + O(\varepsilon^3)= \frac{4\tilde{C}^2\log n}{n^{s-1}} + O\left( \frac{(\log n)^{\frac{3}{2}}}{n^{\frac{3}{2}(s-1)} } \right) .$$ 
Hence, from \eqref{eq:Pgamma}, 
\[
\mathsf{KL}(\P_{\bm \gamma^{(j)}}\| \P_{\bm \gamma^{(0)}})= L_s \tilde{C}^2\log n + O\left( \frac{(\log n)^{\frac{3}{2}}}{n^{\frac{1}{2}(s-1)} } \right), 
\]
for some constant $L_s$ depending on $s$. Therefore, we have
\[
 \frac{1}{n}\sum_{j=1}^{n} \mathsf{KL}(\P_{\bm \gamma^{(j)}}\| \P_{\bm \gamma^{(0)}}) \lesssim_s L_s \tilde{C}^2\log n \le \delta \log n , 
\] 
for $\tilde{C} = \tilde{C}(\delta, s)$ chosen appropriately to ensure $\delta \in (0,1/8)$. Hence, applying Theorem \ref{thm:fl} (with the $L_2$ norm replaced by the $L_\infty$ norm) and taking large enough $n$ in \eqref{eq:fl} gives 
$$\min_{\hat{\bm \gamma}}\max_{\bm \gamma}\mathbb{P}\left(\|\hat{\bm \gamma}-\bm\gamma\|_\infty \ge \tilde{C}\sqrt{\frac{\log n}{n^{s-1}}} \right) \ge 1- 2 \delta . 
$$ 
This completes the proof of \eqref{eq:maxlb}. }

\section{Proof of Theorem \ref{thm:central_lim_thm} and Theorem \ref{thm:conf_int_thm}}
\label{sec:CLT} 

We begin with the proof of Theorem \ref{thm:central_lim_thm} in Section \ref{sec:distributionpf}. The proof of Theorem \ref{thm:conf_int_thm} is given in Section \ref{sec:distributionestimationpf}.

\subsection{Proof of Theorem \ref{thm:central_lim_thm}} 
\label{sec:distributionpf} 

Recall that, for $2 \leq s \leq r$, $\bm d_s = (d_s(1), d_s(2), \ldots, d_s(n))^\top$ is the vector of $s$-degrees. The first step in the proof of Theorem \ref{thm:central_lim_thm} is to derive a linearization of $\hat{\bm \beta}_{s}$ in terms of the $s$-degrees as in Proposition \ref{ppn:betadegree} below. The proof is given in Appendix \ref{sec:betadegreepf}. 

\begin{prop}
\label{ppn:betadegree} 
Fix $2 \leq s \leq r$. Then under the assumptions of Theorem \ref{thm:central_lim_thm}, with probability $1-o(1)$ as $n \rightarrow \infty$, 
\begin{align}\label{eq:betadegreeexpansion}
\| \hat{\bm \beta}_{s}-\bm \beta_{s} - \bm\Sigma_{n, s}^{-1}(\bm d_s-\mathbb{E}[\bm d_s]) \|_\infty = O \left( \frac{\log n}{n^{s-1}} \right) , 
\end{align} 
where  $\bm \Sigma_{n, s}=((\sigma_{s}(u, v)))_{u, v \in [n]}$ is a $n \times n$ matrix with  
\begin{equation}
\label{eq:v_s}
\sigma_{s}(u, v):=\sum_{\bm e \in {[n] \choose s}: u, v \in \bm e}\frac{e^{\bm 1^\top \bm \beta_{s, \bm e}}}{(1+e^{\bm 1^\top \bm \beta_{s, \bm e}})^2} \text{ and } \sigma_{s}(u, u):= \sigma_s(u)^2= \sum_{ \bm e \in {[n] \choose s}: u \in \bm e}\frac{e^{\bm 1^\top \bm \beta_{s, \bm e}}}{(1+e^{\bm 1^\top \bm \beta_{s, \bm e}})^2} , 
\end{equation} 
where $\sigma_s(u)^2$ is also defined in \eqref{eq:sigmadegree}. 
\end{prop}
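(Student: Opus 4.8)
\emph{Proof proposal.} The plan is to linearize the likelihood equation $\grad \ell_{n, s}(\hat{\bm \beta}_s) = \bm 0$ around the true parameter. Recall from \eqref{eq:likelihoodgrad} that the $v$-th coordinate of $\grad \ell_{n, s}(\bm \beta_s)$ equals $\E[d_s(v)] - d_s(v)$, and that $\grad^2 \ell_{n, s}(\bm \beta_s)$ coincides with the matrix $\bm \Sigma_{n, s}$ of \eqref{eq:v_s} — which in turn equals $\Cov[\bm d_s]$, since the hyperedges are independent (so $\sigma_s(u,v) = \sum_{\bm e \ni u, v} p_{\bm e}(1 - p_{\bm e})$). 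By Lemma \ref{lem:lower_bound_hess} (taking $\bm \beta = \bm \beta_s$ in \eqref{eq:hessainlambdal2}), $\bm \Sigma_{n, s}$ is invertible with $\| \bm \Sigma_{n, s}^{-1} \|_{\mathrm{op}} \lesssim_{s, M} n^{-(s-1)}$. Writing $\bar{\bm H} := \int_0^1 \grad^2 \ell_{n, s}\big(\bm \beta_s + t(\hat{\bm \beta}_s - \bm \beta_s)\big)\,\mathrm d t$ for the averaged Hessian along the segment joining $\bm \beta_s$ and $\hat{\bm \beta}_s$, the integral form of Taylor's theorem gives $\bar{\bm H}(\hat{\bm \beta}_s - \bm \beta_s) = \bm d_s - \E[\bm d_s]$ (no invertibility of $\bar{\bm H}$ is needed). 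Multiplying by $\bm \Sigma_{n, s}^{-1}$ and rearranging,
\[
\bm \Delta \ := \ \hat{\bm \beta}_s - \bm \beta_s - \bm \Sigma_{n, s}^{-1}\big(\bm d_s - \E[\bm d_s]\big) \ = \ \bm \Sigma_{n, s}^{-1}\big(\bm \Sigma_{n, s} - \bar{\bm H}\big)(\hat{\bm \beta}_s - \bm \beta_s),
\]
so the whole proof reduces to showing $\| \bm \Delta \|_\infty \lesssim_{s, M} \log n / n^{s-1}$; all estimates below are on the $1 - o(1)$ probability event on which Theorem \ref{thm:layermle} holds.

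First I would bound $\bm y := (\bm \Sigma_{n, s} - \bar{\bm H})(\hat{\bm \beta}_s - \bm \beta_s)$ in the $L_\infty$ norm. Each entry $(\bm \Sigma_{n, s} - \bar{\bm H})_{vu}$ is a sum, over the hyperedges containing $\{u, v\}$, of increments of the map $g(x) = e^x/(1+e^x)^2$ between $\bm 1^\top \bm \beta_{s, \bm e}$ and $\bm 1^\top\bm \beta_{s, \bm e}(t)$; since $g$ is Lipschitz on the $O_{s,M}(1)$-range to which these arguments are confined (by the $L_\infty$ consistency in Theorem \ref{thm:layermle}), each such increment is $O_{s,M}(\| \hat{\bm \beta}_s - \bm \beta_s \|_\infty)$. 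Using ${n - 1 \choose s-1} \asymp_s n^{s-1}$ (hyperedges through a vertex), ${n-2 \choose s-2} \asymp_s n^{s-2}$ (hyperedges through a pair), together with the rates $\| \hat{\bm \beta}_s - \bm \beta_s \|_\infty \lesssim_{s,M} \sqrt{\log n / n^{s-1}}$ and $\| \hat{\bm \beta}_s - \bm \beta_s \|_2 \lesssim_{s,M} n^{-(s-2)/2}$ of \eqref{eq:mlerlayer}, the diagonal contribution to $y_v$ is $\lesssim n^{s-1}\cdot (\log n / n^{s-1}) = \log n$, while the off-diagonal contribution, by Cauchy--Schwarz, is $\lesssim \big(n\cdot (n^{s-2}\sqrt{\log n/n^{s-1}})^2\big)^{1/2}\cdot n^{-(s-2)/2} = \sqrt{\log n}$. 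Hence $\| \bm y \|_\infty \lesssim_{s, M} \log n$.

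The naive conclusion $\| \bm \Delta \|_\infty \le \| \bm \Delta \|_2 \le \| \bm \Sigma_{n, s}^{-1} \|_{\mathrm{op}}\| \bm y \|_2 \lesssim n^{-(s-1)}\sqrt n\,\log n = n^{3/2 - s}\log n$ overshoots the target $\log n/n^{s-1}$ by a factor $\sqrt n$, and closing this gap is the main obstacle. I would resolve it by a one-step refinement exploiting the structure of $\bm \Sigma_{n, s}$: decompose $\bm \Sigma_{n, s} = \bm D_s^2 + \bm E$, where $\bm D_s^2 = \mathrm{diag}(\sigma_s(v)^2)$ has entries $\asymp_{s,M} n^{s-1}$ (since $g(\bm 1^\top \bm \beta_{s, \bm e}) \in [c_{s,M}, \tfrac14]$ on $\cB(M)$) and $\bm E$ has zero diagonal with $|E_{vu}| = \sigma_s(v, u) \le \tfrac14 {n-2 \choose s-2} \lesssim_s n^{s-2}$. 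From $\bm \Sigma_{n, s}\bm \Delta = \bm y$ one obtains the identity $\Delta_v = \sigma_s(v)^{-2}\big(y_v - [\bm E\bm \Delta]_v\big)$; plugging in the crude bound $\| \bm \Delta \|_2 \lesssim n^{3/2 - s}\log n$ just established and applying Cauchy--Schwarz to $[\bm E\bm \Delta]_v = \sum_{u \ne v}E_{vu}\Delta_u$ gives $|[\bm E\bm \Delta]_v| \le \big(\sum_{u\ne v}E_{vu}^2\big)^{1/2}\| \bm \Delta \|_2 \lesssim n^{s - 3/2}\cdot n^{3/2 - s}\log n = \log n$. Therefore $|\Delta_v| \lesssim_{s,M} \sigma_s(v)^{-2}\log n \lesssim n^{-(s-1)}\log n$ uniformly in $v$, which is exactly \eqref{eq:betadegreeexpansion}. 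The point that makes the refinement work is that, although $\| \bm E \|_{\mathrm{op}} \asymp n^{s-1}$, each row of $\bm E$ has Euclidean norm only $\asymp n^{s - 3/2}$, so $\bm E$ acts negligibly — at the $L_\infty$ scale $n^{-(s-1)}\log n$ — on the vector $\bm \Delta$, which is already small in $L_2$. A secondary technical point is the uniformity in $v$ for $\| \bm y \|_\infty$ and for the $L_\infty$ bound on $\hat{\bm \beta}_s - \bm \beta_s$, but both are already supplied by Theorem \ref{thm:layermle}, so no fresh concentration argument is required here.
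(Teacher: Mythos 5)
Your argument is correct: the exact identity $\bar{\bm H}(\hat{\bm \beta}_s-\bm \beta_s)=\bm d_s-\E[\bm d_s]$ holds since $\grad \ell_{n,s}(\hat{\bm \beta}_s)=\bm 0$ and $\grad \ell_{n,s}(\bm \beta_s)=\E[\bm d_s]-\bm d_s$, the identification $\grad^2\ell_{n,s}(\bm \beta_s)=\bm \Sigma_{n,s}$ and the operator bound $\lambda_{\min}(\bm \Sigma_{n,s})\gtrsim_{s,M} n^{s-1}$ follow from Lemma \ref{lem:lower_bound_hess}, your bound $\|\bm y\|_\infty\lesssim \log n$ checks out (for every $2\le s\le r$, including $s=2$), and the bootstrap $\Delta_v=\sigma_s(v)^{-2}(y_v-[\bm E\bm\Delta]_v)$ with $\|E_{v\cdot}\|_2\lesssim n^{s-3/2}$ and $\|\bm\Delta\|_2\lesssim n^{3/2-s}\log n$ delivers exactly the claimed $O(\log n/n^{s-1})$ rate, uniformly in $v$, on the event of Theorem \ref{thm:layermle}.

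Your route differs from the paper's at the crucial inversion step. The paper's linearization is essentially the same as yours (its remainder $\bm R_{n,s}$, with $\|\bm R_{n,s}\|_\infty\lesssim n^{s-1}\|\hat{\bm\beta}_s-\bm\beta_s\|_\infty^2$, is exactly your $-\bm y$, obtained by a pointwise second-order Taylor expansion rather than the averaged Hessian; your diagonal/off-diagonal split plus Cauchy--Schwarz for $\|\bm y\|_\infty$ is a bit more elaborate than necessary but fine). Where the proofs diverge is in controlling $\bm\Sigma_{n,s}^{-1}\bm R_{n,s}$ in the sup norm: the paper invokes Lemma \ref{lem:approxvariance}, the entrywise approximation $\|\bm\Gamma_{n,s}-\bm\Sigma_{n,s}^{-1}\|_\infty=O(n^{-s})$ of the inverse Fisher information by the diagonal matrix $\mathrm{diag}(1/\sigma_s(v)^2)$, whose proof (the $\overline m,\vec m$ argument) is the most delicate ingredient; you instead bootstrap from the crude $L_2$ bound via $\lambda_{\min}$ together with the observation that each row of the off-diagonal part $\bm E$ has Euclidean norm only $\asymp n^{s-3/2}$ even though $\|\bm E\|_{\mathrm{op}}\asymp n^{s-1}$. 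Your version is more elementary and self-contained for this proposition; what it does not buy is the reusable entrywise control of $\bm\Sigma_{n,s}^{-1}$, which the paper exploits again later (e.g., \eqref{eq:deltacovariance}, the bound on $\bm R_{n,s}^\top\bm\Delta_{n,s}\bm R_{n,s}$ in \eqref{eq:Rdelta}, and Lemma \ref{lm:deltasigma} in the likelihood-ratio analysis), so within the paper's overall architecture Lemma \ref{lem:approxvariance} would still be needed elsewhere.
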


Next, define the matrix \revsag{$\bm \Gamma_{n, s}=(\gamma_s(u, v))_{u, v \in [n]}$} as follows: 
\begin{equation}
\label{eq:s_n}
\gamma_s(u, v) := \frac{\mathbbm 1\{ u = v \}}{\sigma_{s}(u)^2} . 
\end{equation} 
The following lemma shows that it is possible to replace the matrix $\bm\Sigma_{n, s}^{-1}$ in \eqref{eq:betadegreeexpansion} with the matrix $\bm \Gamma_{n, s}$ asymptotically. The proof of the lemma is given in Appendix \ref{sec:approxvariancepf}.

\begin{lem}
\label{lem:approxvariance}  
Suppose $\bm \Sigma_{n, s}$ and $\bm \Gamma_{n, s}$ be as defined in \eqref{eq:v_s} and \eqref{eq:s_n}, respectively. Then under the assumptions of Theorem \ref{thm:central_lim_thm}, 
\begin{align}\label{eq:delta}
\|\bm \Gamma_{n, s} - \bm\Sigma_{n, s}^{-1}\|_\infty \le O \left(\frac{1}{n^s}\right) , 
\end{align} 
where $\|\bm A\|_\infty = \max_{u, v \in [n]} |a_{u, v}|$ for a matrix $\bm A = ((a_{u, v}))_{u, v \in [n]}$. 
Furthermore, 
\begin{align}\label{eq:deltacovariance}
\|\Cov[(\bm \Gamma_{n, s} - \bm\Sigma_{n, s}^{-1})(\bm d_s-\mathbb{E}[\bm d_s])]\|_\infty \le \|\bm \Gamma_{n, s} - \bm\Sigma_{n, s}^{-1}\|_\infty + O \left(\frac{1}{n^s}\right).
\end{align} 
\end{lem}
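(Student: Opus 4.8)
The plan is to exploit the near-diagonal structure of $\bm\Sigma_{n,s}$. First I would observe that by \eqref{eq:exponentialH}-type bounds (valid since $\bm\beta_s\in\cB(M)$), the diagonal entries satisfy $\sigma_s(u)^2 = \Theta_{s,M}(n^{s-1})$, while for $u\neq v$ the off-diagonal entries satisfy $\sigma_s(u,v)=\Theta_{s,M}(n^{s-2})$, since the number of hyperedges of size $s$ containing a fixed pair $\{u,v\}$ is $\binom{n-2}{s-2}=\Theta(n^{s-2})$. Thus $\bm\Sigma_{n,s} = \bm D_s^{(0)} + \bm R_{n,s}$, where $\bm D_s^{(0)} = \mathrm{diag}(\sigma_s(u)^2)_{u\in[n]}$ is $\Theta(n^{s-1})\bm I_n$ up to constants and $\bm R_{n,s}$ is the off-diagonal remainder with entries of order $n^{s-2}$ and hence operator norm $O(n^{s-1})$ (row sums are $O(n\cdot n^{s-2})=O(n^{s-1})$). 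The key quantitative point, which I would track carefully, is that $\bm R_{n,s}$ has small entries \emph{relative to the diagonal}: $\|(\bm D_s^{(0)})^{-1}\bm R_{n,s}\|_\infty = O(1/n)$ entrywise and in operator norm the row sums give $\|(\bm D_s^{(0)})^{-1}\bm R_{n,s}\|_{\mathrm{op}} = O(1)$ — so I actually want to be more careful and use the structure $\bm R_{n,s} \approx c_s n^{s-2}(\bm 1\bm 1^\top - \bm I_n)$ up to lower-order fluctuations, mirroring the circulant matrix $\bm L$ from Lemma \ref{lem:lower_bound_hess}.

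Next I would write the Neumann-type / Woodbury expansion. Setting $\bm\Gamma_{n,s} = (\bm D_s^{(0)})^{-1}$ by \eqref{eq:s_n}, I have
\begin{align*}
\bm\Sigma_{n,s}^{-1} - \bm\Gamma_{n,s} = -\bm\Sigma_{n,s}^{-1}\,\bm R_{n,s}\,(\bm D_s^{(0)})^{-1}.
\end{align*}
To bound $\|\bm\Sigma_{n,s}^{-1}\bm R_{n,s}(\bm D_s^{(0)})^{-1}\|_\infty$ entrywise, I would use $\|\bm\Sigma_{n,s}^{-1}\|_{\mathrm{op}} = O(1/n^{s-1})$ (which follows from the lower bound on $\lambda_{\min}$, itself a consequence of the analysis behind Lemma \ref{lem:lower_bound_hess} applied to the Hessian at the true parameter, noting $\bm\Sigma_{n,s}=\nabla^2\ell_{n,s}(\bm\beta_s)$), combined with the fact that each column of $\bm R_{n,s}(\bm D_s^{(0)})^{-1}$ has $\ell_2$ norm $O(\sqrt{n}\cdot n^{s-2}/n^{s-1}) = O(1/\sqrt n)$. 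By Cauchy--Schwarz, each entry of the product is at most $\|\bm\Sigma_{n,s}^{-1}\|_{\mathrm{op}}$ times the $\ell_2$ norm of a row of the identity-like factor times the column norm — more simply, entry $(u,v)$ equals $(\bm e_u^\top\bm\Sigma_{n,s}^{-1})(\bm R_{n,s}(\bm D_s^{(0)})^{-1}\bm e_v)$, bounded by $\|\bm\Sigma_{n,s}^{-1}\bm e_u\|_2 \cdot \|\bm R_{n,s}(\bm D_s^{(0)})^{-1}\bm e_v\|_2 = O(1/n^{s-1})\cdot O(1/\sqrt n)$. This is $O(n^{-s+1/2})$, which is already $o(1/n^s)$ only if... wait, $n^{-s+1/2}$ is \emph{larger} than $n^{-s}$, so this crude bound is not enough. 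The main obstacle is precisely here: one must use a \emph{second} step of the expansion, writing $\bm\Sigma_{n,s}^{-1} = \bm\Gamma_{n,s} - \bm\Gamma_{n,s}\bm R_{n,s}\bm\Gamma_{n,s} + \bm\Gamma_{n,s}\bm R_{n,s}\bm\Sigma_{n,s}^{-1}\bm R_{n,s}\bm\Gamma_{n,s}$, and then exploit that the leading correction term $\bm\Gamma_{n,s}\bm R_{n,s}\bm\Gamma_{n,s}$ has entries of exact order $n^{s-2}/n^{2(s-1)} = n^{-s}$, matching the claimed bound $O(1/n^s)$, while the quadratic-in-$\bm R$ remainder is genuinely smaller, of order $n^{s-2}\cdot n^{s-2}\cdot n^{-(s-1)}\cdot n^{-2(s-1)}\cdot n = O(n^{-s-1})$ after accounting for the row-sum structure. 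So \eqref{eq:delta} holds with the correction term being the dominant contribution.

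Finally, for \eqref{eq:deltacovariance} I would write $\bm\Delta_{n,s} := \bm\Gamma_{n,s} - \bm\Sigma_{n,s}^{-1}$ and note that, since $\bm d_s - \E[\bm d_s]$ has covariance $\bm\Sigma_{n,s}$ (the $s$-degrees being sums of independent Bernoullis with exactly this covariance structure),
\begin{align*}
\Cov[\bm\Delta_{n,s}(\bm d_s - \E[\bm d_s])] = \bm\Delta_{n,s}\,\bm\Sigma_{n,s}\,\bm\Delta_{n,s}^\top.
\end{align*}
Writing $\bm\Sigma_{n,s}\bm\Delta_{n,s}^\top = (\bm\Gamma_{n,s}^{-1} + \bm\Gamma_{n,s}^{-1}\bm\Delta_{n,s}\bm\Gamma_{n,s}^{-1}\cdots)$ — more directly, using $\bm\Sigma_{n,s}\bm\Delta_{n,s}^\top = \bm\Sigma_{n,s}\bm\Gamma_{n,s} - \bm I_n$ and that $\bm\Sigma_{n,s}\bm\Gamma_{n,s} = \bm I_n + \bm R_{n,s}(\bm D_s^{(0)})^{-1}$ has rows summing to $O(1)$ — one gets $\Cov[\bm\Delta_{n,s}(\bm d_s - \E[\bm d_s])] = \bm\Delta_{n,s}(\bm I_n + \bm R_{n,s}\bm\Gamma_{n,s}^{\top})$, and expanding, each entry is bounded by $\|\bm\Delta_{n,s}\|_\infty$ plus $\|\bm\Delta_{n,s}\|_\infty$ times the row sums of $\bm R_{n,s}\bm\Gamma_{n,s}$ which are $O(1)$, plus a term that I would absorb into the $O(1/n^s)$ by the same entrywise/row-sum bookkeeping as above. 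This yields \eqref{eq:deltacovariance}. The only real subtlety throughout is the two-term expansion needed to get the exponent $n^{-s}$ rather than $n^{-s+1/2}$; everything else is routine bookkeeping with the $\Theta(n^{s-1})$ diagonal versus $\Theta(n^{s-2})$ off-diagonal scales.
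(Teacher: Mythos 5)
Your plan is sound and reaches both \eqref{eq:delta} and \eqref{eq:deltacovariance}, but by a genuinely different route than the paper. The paper never expands $\bm\Sigma_{n,s}^{-1}$ and uses no spectral information: setting $\bm\Delta_{n,s}=\bm\Gamma_{n,s}-\bm\Sigma_{n,s}^{-1}$, $\bm Z_{n,s}=\bm I_n-\bm\Sigma_{n,s}\bm\Gamma_{n,s}$, $\bm\Theta_{n,s}=\bm\Gamma_{n,s}\bm Z_{n,s}$, it works from the identity $\bm\Delta_{n,s}=\bm\Delta_{n,s}\bm Z_{n,s}-\bm\Theta_{n,s}$ and bounds, row by row, the spread $\max_w\delta_s(u,w)-\min_w\delta_s(u,w)$ through a balancing argument (the set $\Omega$ and the function $f(\lambda)$ in Appendix \ref{sec:approxvariancepf}), combined with $\|\bm\Theta_{n,s}\|_\infty\lesssim \sigma_{s,\max}/(\sigma_{s,\min}^2 n^2)$ (Lemma \ref{lm:gammauv}) and the fact that each row of $\bm\Delta_{n,s}$ is orthogonal to the corresponding row of $\bm\Sigma_{n,s}$ (Lemma \ref{lm:deltauv}), so the row maximum is nonnegative and the minimum nonpositive; only the entrywise scales $\sigma_{s,\min}\asymp\sigma_{s,\max}\asymp n^{s-2}$ and the identity $\sum_{w\neq v}\sigma_s(w,v)=\sigma_s(v)^2$ enter. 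Your two-step resolvent expansion $\bm\Sigma_{n,s}^{-1}=\bm\Gamma_{n,s}-\bm\Gamma_{n,s}\bm R_{n,s}\bm\Gamma_{n,s}+\bm\Gamma_{n,s}\bm R_{n,s}\bm\Sigma_{n,s}^{-1}\bm R_{n,s}\bm\Gamma_{n,s}$ instead recycles the spectral bound $\lambda_{\min}(\bm\Sigma_{n,s})\gtrsim n^{s-1}$ from Lemma \ref{lem:lower_bound_hess} (legitimate, since $\bm\Sigma_{n,s}=\nabla^2\ell_{n,s}(\bm\beta_s)$), and it does work: the first correction has entries $\sigma_s(u,v)/(\sigma_s(u)^2\sigma_s(v)^2)=O(n^{-s})$, and, writing $\bm r_u$ for the $u$-th column of $\bm R_{n,s}$, the remainder's $(u,v)$ entry is $\bm r_u^\top\bm\Sigma_{n,s}^{-1}\bm r_v/(\sigma_s(u)^2\sigma_s(v)^2)\lesssim n^{s-3/2}\cdot n^{-(s-1)}\cdot n^{s-3/2}\cdot n^{-2(s-1)}=O(n^{-s})$ by Cauchy--Schwarz. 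What your route buys is brevity and reuse of already-proved Hessian bounds; what the paper's buys is a purely entrywise argument that never needs to control $\bm\Sigma_{n,s}^{-1}$ in operator norm.

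Two small corrections to your bookkeeping, neither fatal. First, the quadratic remainder is $O(n^{-s})$, not the $O(n^{-s-1})$ you assert (your own string of exponents sums to $-s$); this is harmless because $O(n^{-s})$ is exactly what \eqref{eq:delta} claims, but you should not present the remainder as strictly lower order than the first correction. Second, in the covariance step $\bm\Sigma_{n,s}\bm\Delta_{n,s}^\top=\bm\Sigma_{n,s}\bm\Gamma_{n,s}-\bm I_n=\bm R_{n,s}\bm\Gamma_{n,s}$, so $\mathrm{Cov}[\bm\Delta_{n,s}(\bm d_s-\mathbb{E}[\bm d_s])]=\bm\Delta_{n,s}\bm R_{n,s}\bm\Gamma_{n,s}$, not $\bm\Delta_{n,s}(\bm I_n+\bm R_{n,s}\bm\Gamma_{n,s})$; with the correct identity the conclusion is immediate, since the (nonnegative) columns of $\bm R_{n,s}\bm\Gamma_{n,s}$ sum to one, giving $\|\bm\Delta_{n,s}\bm R_{n,s}\bm\Gamma_{n,s}\|_\infty\le\|\bm\Delta_{n,s}\|_\infty$, which implies \eqref{eq:deltacovariance} (the paper instead bounds $\bm\Delta_{n,s}\bm\Sigma_{n,s}\bm\Delta_{n,s}^\top$ by $\|\bm\Delta_{n,s}\|_\infty+\|\bm\Theta_{n,s}\|_\infty$).
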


To complete the proof of Theorem \ref{thm:central_lim_thm}, consider $J_s \in {[n] \choose a_s}$, for $a_s \geq 1$ fixed. Proposition \ref{ppn:betadegree} and Lemma \ref{lem:approxvariance} combined implies, 
\[
\| [ (\hat{\bm \beta}_{s} - \bm \beta_{s}) ]_{J_s} - [\bm\Gamma_{n, s}(\bm d_s-\mathbb{E}[\bm d_s])]_{J_s} \|_\infty  =  O \left(\revsag{ \frac{1}{\sqrt{n^{s}}}} \right) , 
\] 
with probability $1-o(1)$. Now, recall from the statement of Theorem \ref{thm:central_lim_thm} that $\bm D_s = \mathrm{diag}\,(\sigma_s(v))_{v \in [n]}$. From \eqref{eq:v_s} observe that $\max_{v \in [n]}\sigma_{s}(v)^2 \asymp  n^{s-1}$, since $\|\bm \beta_s\|_\infty \leq M=O(1)$. Hence, 
\[ \| [ \bm D_{s} (\hat{\bm \beta}_s - \bm \beta_{s})]_{J_s} - [\bm D_s(\bm\Gamma_{n, s}(\bm d_s-\mathbb{E}[\bm d_s])]_{J_s} \|_\infty = O \left( \revsag{\frac{1}{\sqrt{n}} } \right) . 
\]
Note that for $v \in J_s$, 
\begin{align}\label{eq:betaDs}
\revsag{\sigma_{s}(v)[\bm \Gamma_{n, s}(\bm d_s-\mathbb{E}[\bm d_s])_v]}&= \frac{d_s(v)-\mathbb{E}[d_s(v)]}{\sigma_{s}(v)} . 
\end{align} 
Therefore, from \eqref{eq:betaDs}, 
\begin{align}
[ \bm D_{s}]_{J_s} ( [(\hat{\bm \beta}_s - \bm \beta_{s})]_{J_s} ) & = \left( \left( \frac{d_s(v)-\mathbb{E}[d_s(v)]}{\sqrt{\Var[d_s(v)]}} \right) \right)_{v \in J_s} + \revsag{O \left(\frac{1}{\sqrt n} \right)} \nonumber \\ 
& \dto \cN_{a_s}(\bm 0, \bm I) , \nonumber 
\end{align} 
using the central limit theorem for sums of independent bounded random variables. Since $\hat{\bm \beta}_s$ are independent across $2 \leq s \leq r$, the result in \eqref{eq:central_lim} follows.

\subsubsection{Proof of Proposition \ref{ppn:betadegree}} 
\label{sec:betadegreepf}

For $2 \leq s \leq r$ and $\bm e = (u_1, u_2, \ldots, u_s) \in {[n] \choose s}$, let $\bm \beta_{s, \bm e} = (\beta_{s, u_1}, \beta_{s, u_2}, \ldots, \beta_{s, u_s})^\top$ and $\hat{\bm \beta}_{s, \bm e} = (\hat{\beta}_{s, u_1}, \hat{\beta}_{s, u_2}, \ldots, \hat{\beta}_{s, u_s})^\top$. Moreover, $\bm 1$ will denote the vector of ones in the appropriate dimension. To begin with, \eqref{eq:def_k_unif} and \eqref{eq:expecteddegree}  gives, for $v \in [n]$, 
\begin{align}\label{eq:dexpansion}
d_s(v) -\mathbb{E}[d_s(v) ] = \sum_{\bm e \in {[n] \choose s}: v \in \bm e} \left\{ \frac{e^{\bm 1^\top \hat{\bm \beta}_{s, \bm e}}}{1+e^{\bm 1^\top \hat{\bm \beta}_{s, \bm e}}}-\frac{e^{\bm 1^\top \bm \beta_{s, \bm e}}}{1+e^{\bm 1^\top \bm \beta_{s, \bm e}}} \right\} . 
\end{align} 
Note that for $\bm e \in {[n] \choose s}$, by a Taylor expansion, 
\[
\frac{e^{\bm 1^\top \hat{\bm \beta}_{s, \bm e}}}{1+e^{\bm 1^\top \hat{\bm \beta}_{s, \bm e}}}-\frac{e^{\bm 1^\top \bm \beta_{s, \bm e}}}{1+e^{\bm 1^\top \bm \beta_{s, \bm e}}} = \frac{e^{\bm 1^\top \bm \beta_{s, \bm e}}}{(1+e^{\bm 1^\top \bm \beta_{s, \bm e}})^2}\left(\bm 1^\top \hat{\bm \beta}_{s, \bm e}-\bm 1^\top \bm \beta_{s, \bm e}\right) + T_{s, \bm e} , 
\]
where 
\begin{equation}
\label{eq:h_bound}
|T_{s, \bm e}| \le \frac{1}{2}\left|\bm 1^\top \hat{\bm \beta}_{s, \bm e}-\bm 1^\top \bm \beta_{s, \bm e}\right|^2 \lesssim_r \|\hat{\bm \beta}_{s}-\bm \beta_{s}\|^2_\infty.
\end{equation} 
Then, from \eqref{eq:dexpansion}, 
\begin{equation}
\label{eq:degree} 
d_s(v) -\mathbb{E}[d_s(v) ]=\left[ \bm\Sigma_{n, s}(\hat{\bm \beta}_s-\bm \beta_{s})\right]_v + R_{v, s},
\end{equation}
where $R_{v, s}=\sum_{\bm e \in {[n] \choose s}: v \in \bm e} T_{s, \bm e}$. 
From \eqref{eq:degree}, we have
\begin{equation}
\label{eq:con_deg_beta}
\hat{\bm \beta}_{s}-\bm \beta_{s}=\bm\Sigma_{n, s}^{-1}(\bm d_s-\mathbb{E}[\bm d_s]) - \bm\Sigma_{n, s}^{-1}\bm R_{n, s} , 
\end{equation} 
where $\bm R_{n, s} = (R_{1, s}, R_{2, s}, \ldots, R_{n, s})^\top$. Note that from \eqref{eq:h_bound}, 
\begin{align}\label{eq:Rbeta}
|R_{v, s}| \le \sum_{\bm e \in {[n] \choose s}: v \in \bm e} |T_{s, \bm e}| \lesssim_r n^{s-1}\|\hat{\bm \beta}_{s}-\bm \beta_{s}\|^2_\infty.
\end{align}
To bound $\| \bm\Sigma_{n, s}^{-1}\bm R_{n, s} \|_\infty$, note that for $v \in [n]$, 
\begin{equation}
\label{eq:var_decom}
 |[\bm\Sigma_{n, s}^{-1}\bm R_{n, s}]_v| \le |[\bm \Gamma_{n, s}\bm R_{n, s}]_v| +| [ (\bm\Sigma_{n, s}^{-1}- \bm \Gamma_{n, s})\bm R_{n, s}]_v|.    
\end{equation}
Observe that 
\[
[\bm \Gamma_{n, s}\bm R_{n, s}]_v = \frac{R_{v, s}}{\sigma_{s}(v)^2} . 
\]
Using $\sigma_{s}(v)^2 \asymp  n^{s-1}$,  \eqref{eq:Rbeta}, and \eqref{eq:mlerlayer} gives, 
\begin{align*}
|[\bm \Gamma_{n, s}\bm R_{n, s}]_v| \lesssim  \|\hat{\bm \beta}_{s}-\bm \beta_{s}\|^2_\infty = O \left(\frac{\log n}{\;n^{s-1}}\right) , 
\end{align*}
with probability $1- o(1)$.  Further, by Lemma \ref{lem:approxvariance}, \eqref{eq:Rbeta}, and \eqref{eq:mlerlayer}, 
\begin{align*}
|[ (\bm\Sigma_{n, s}^{-1}- \bm \Gamma_{n, s}) \bm R_{n, s}]_v| \le \| (\bm\Sigma_{n, s}^{-1}- \bm \Gamma_{n, s}) \|_\infty \times n\|\bm R_{n, s}\|_\infty 
& \lesssim  \|\hat{\bm \beta}_{s}-\bm \beta_{s}\|^2_\infty \\
& \le O \left(\frac{\log n}{\;n^{s-1}}\right)  , 
\end{align*} 
with probability $1- o(1)$. Hence, by \eqref{eq:con_deg_beta} and \eqref{eq:var_decom} the result in \eqref{eq:betadegreeexpansion} follows. \hfill $\Box$

\subsubsection{Proof of Lemma \ref{lem:approxvariance} }
\label{sec:approxvariancepf} 

\begin{proof}[Proof of \eqref{eq:delta}]
Denote $\bm \Delta_{n, s}= \bm \Gamma_{n, s} - \bm\Sigma_{n, s}^{-1}  = (( \delta_s(u, v) ))_{u, v \in [n]}$, $\bm Z_{n, s}= \bm I_{n}- \bm\Sigma_{n, s} \bm \Gamma_{n, s} = (( z_s(u, v) ))_{u, v \in [n]}$, and $\bm \Theta_{n, s}=\bm \Gamma_{n, s} \bm Z_{n, s} = \revsag{( \theta_s(u, v) )_{u, v \in [n]}}$. Then
\[
\bm \Delta_{n, s}=(\bm \Gamma_{n, s} - \bm\Sigma_{n, s}^{-1})(\bm I_n- \bm\Sigma_{n, s} \bm \Gamma_{n, s}) - \bm \Gamma_{n, s}(\bm I_n - \bm\Sigma_{n, s} \bm \Gamma_{n, s})=\bm \Delta_{n, s}\bm Z_{n, s} - \bm \Theta_{n, s}.
\] 
Hence, for $u, v \in [n]$, 
\begin{align} 
\delta_s(u, v) & =\sum_{w=1}^{n} \delta_s(u, w) z_s(w, v) - \theta_s(u, v) \nonumber \\ 
& =\sum_{w=1}^{n} \delta_s(u, w)\left\{ \mathbbm 1\{ w = v \} - \sum_{b=1}^n \sigma_s(w, b) \gamma_s(b, v)  \right\} - \theta_s(u, v) \nonumber \\ 
& =\sum_{w=1}^{n} \delta_s(u, w)\left\{ \mathbbm 1\{ w = v \} - \sum_{b=1}^n \sigma_s(w, b)  \frac{\mathbbm 1\{v=b\}}{\sigma_s(v)^2}   \right\} - \theta_s(u, v) \tag*{(by \eqref{eq:s_n})} \nonumber \\ 
& =\sum_{w=1}^{n} \delta_s(u, w)\left\{ \mathbbm 1\{ w = v \} - \frac{\sigma_s(w, v) }{\sigma_s(v)^2} \right\} - \theta_s(u, v) \nonumber \\ 
\label{eq:deltathetauv} & = - \sum_{w=1}^{n} \delta_s(u, w)\left\{\mathbbm 1\{ w \ne v \} \frac{\sigma_s(w, v) }{\sigma_s(v)^2} \right\} - \theta_s(u, v) , 
\end{align} 
since $\sum_{b \in [n]\backslash \{w\}} \sigma_s(w, b) = \sigma_s(w, w) = \sigma_s(w)^2$. 
The following lemma bounds the maximum norm of $\bm \Theta_{n, s} = \bm \Gamma_{n, s} \bm Z_{n, s} = (( \theta_s(u, v) ))_{u, v \in [n]}$.  

\begin{lem}\label{lm:gammauv} For $u, v, w \in [n]$, 
\begin{align}\label{eq:sigmagammauv}
\max\left\{|\theta_s(u, v)|,|\theta_s(u, v)-\theta_s(v, w)|\right\} \lesssim \frac{\sigma_{s, \max}}{\sigma_{s, \min}^2n^2}  , 
\end{align}
where $\sigma_{s, \min} :=\min_{1 \leq u < v \leq n} \sigma_{s}(u, v)$ and $\sigma_{s, \max} :=\max_{1 \leq u < v \leq n} \sigma_{s}(u, v)$. 
\end{lem}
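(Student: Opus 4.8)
The plan is to exploit the fact that $\bm \Gamma_{n, s}$ is a \emph{diagonal} matrix, so that $\bm \Theta_{n, s} = \bm \Gamma_{n, s}\bm Z_{n, s}$ is simply $\bm Z_{n, s}$ with its $u$-th row rescaled by $\gamma_s(u, u) = 1/\sigma_s(u)^2$; explicitly, $\theta_s(u, v) = z_s(u, v)/\sigma_s(u)^2$. Thus it is enough to understand $\bm Z_{n, s} = \bm I_n - \bm \Sigma_{n, s}\bm \Gamma_{n, s}$. First I would compute, using \eqref{eq:v_s} and \eqref{eq:s_n}, that $[\bm \Sigma_{n, s}\bm \Gamma_{n, s}]_{uv} = \sigma_s(u, v)\gamma_s(v, v) = \sigma_s(u, v)/\sigma_s(v)^2$, hence $z_s(u, v) = \bm 1\{u = v\} - \sigma_s(u, v)/\sigma_s(v)^2$. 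The key observation --- and the reason $\bm \Gamma_{n, s}$ was defined with this particular normalization --- is that the diagonal of $\bm Z_{n, s}$ vanishes: $z_s(v, v) = 1 - \sigma_s(v, v)/\sigma_s(v)^2 = 0$, because $\sigma_s(v, v)$ is by definition equal to $\sigma_s(v)^2$. Consequently $\theta_s(u, v) = 0$ when $u = v$, and $\theta_s(u, v) = -\sigma_s(u, v)/\big(\sigma_s(u)^2\,\sigma_s(v)^2\big)$ when $u \neq v$.

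It then remains to bound the three factors in this expression. The numerator satisfies $|\sigma_s(u, v)| \le \sigma_{s, \max}$ by definition. For each diagonal factor I would use the double-counting identity $\sum_{v \in [n]\setminus\{u\}} \sigma_s(u, v) = (s-1)\,\sigma_s(u)^2$, which follows upon interchanging the order of summation in \eqref{eq:v_s}: for each hyperedge $\bm e$ through $u$, its weight $e^{\bm 1^\top \bm \beta_{s, \bm e}}/(1 + e^{\bm 1^\top \bm \beta_{s, \bm e}})^2$ is counted in $\sigma_s(u, v)$ for exactly the $s-1$ vertices $v \in \bm e \setminus \{u\}$. This gives $\sigma_s(u)^2 \ge \frac{n-1}{s-1}\,\sigma_{s, \min}$, and likewise for $\sigma_s(v)^2$. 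Plugging these in yields $|\theta_s(u, v)| \le \frac{(s-1)^2}{(n-1)^2}\cdot\frac{\sigma_{s, \max}}{\sigma_{s, \min}^2} \lesssim \frac{\sigma_{s, \max}}{n^2\sigma_{s, \min}^2}$ (the hidden constant depending only on $r$, since $s \le r$), which is the first claimed bound. The second bound, for $|\theta_s(u, v) - \theta_s(v, w)|$, is then immediate from the triangle inequality, since each of $|\theta_s(u, v)|$ and $|\theta_s(v, w)|$ already obeys the bound just established.

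There is essentially no serious obstacle here: the argument is a short direct computation once one notes that the diagonal of $\bm Z_{n, s}$ is identically zero. The only points that deserve a little care are the combinatorial identity relating $\sigma_s(u)^2$ to the row sum of the off-diagonal entries of $\bm \Sigma_{n, s}$ (equivalently, the fact that $\bm \Gamma_{n, s}$ is chosen precisely so that $\mathrm{diag}(\bm \Sigma_{n, s}\bm \Gamma_{n, s}) = \bm I_n$), and keeping track of the $s$-dependent constants absorbed into $\lesssim$, all of which are harmless because $s \le r$ is fixed.
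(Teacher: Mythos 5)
Your proof is correct and follows essentially the same route as the paper: both compute $\bm \Theta_{n,s} = \bm \Gamma_{n,s} - \bm \Gamma_{n,s}\bm \Sigma_{n,s}\bm \Gamma_{n,s}$ entrywise, observe that the diagonal vanishes and the off-diagonal entry equals $-\sigma_s(u,v)/(\sigma_s(u)^2\sigma_s(v)^2)$, and then bound the numerator by $\sigma_{s,\max}$ and each diagonal factor from below by $\Theta(n\,\sigma_{s,\min})$. Your explicit double-counting identity $\sum_{v\neq u}\sigma_s(u,v)=(s-1)\,\sigma_s(u)^2$ makes the last step (which the paper leaves implicit) precise, and the triangle-inequality treatment of $|\theta_s(u,v)-\theta_s(v,w)|$ is exactly what is needed.
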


\begin{proof} Note that $\bm \Theta_{n, s}=\bm \Gamma_{n, s} \bm Z_{n, s} = \bm \Gamma_{n, s} - \bm \Gamma_{n, s} \bm\Sigma_{n, s} \bm \Gamma_{n, s}$. This means for $u, v \in [n]$, 
\begin{align}\label{eq:gammauv}
\theta_s(u, v) = \gamma_s(u, v) - \sum_{x, y \in [n]} \gamma_s(u, x) \sigma_s(x, y) \gamma_s(y, v) . 
\end{align}
Then recalling the definition of $\gamma_s(u, v)$ from \eqref{eq:s_n} gives, 
\begin{align}
\sum_{x, y \in [n]} \gamma_s(u, x) \sigma_s(x, y) \gamma_s(y, v) 
& = \sum_{x, y \in [n]} \frac{\revsag{\mathbbm 1\{ u = x \} \mathbbm 1\{ y = v \}} \sigma_s(x, y) }{\sigma_{s}(u)^2 \sigma_{s}(v)^2}  \nonumber \\ 
& = \frac{ \sigma_s(u, v) }{\sigma_{s}(u)^2 \sigma_{s}(v)^2}  . \nonumber 
\end{align}
Hence, from \eqref{eq:s_n} and \eqref{eq:gammauv}, 
$$|\theta_s(u, v)| =  \left| \frac{ \sigma_s(u, v)\mathbbm 1\{ u \ne v \} }{\sigma_{s}(u)^2 \sigma_{s}(v)^2} \right|  \lesssim \frac{\sigma_{s, \max}}{\sigma_{s, \min}^2n^2} . $$
This completes the proof of \eqref{eq:sigmagammauv}. 
\end{proof}

Now, for $u \in [n]$, let $\overline m, \vec m \in [n]$ be such that 
$$\delta_s(u, \overline m)=\max_{w \in [n]}\delta_s(u, w) \quad \text{ and } \quad \delta_s(u, \vec m)=\min_{w \in [n]}\delta_s(u, w). $$ The following lemma gives bounds on $\delta_s(u, \vec m)$ and $\delta_s(u, \overline m)$.

\begin{lem} \label{lm:deltauv} For $u \in [n]$,  
\begin{align*}
\sum_{w=1}^{n}\delta_s(u, w) \sigma_s(w, u) = 0. 
\end{align*} 
This implies, $\delta_s(u, \overline m) \geq 0$ and $\delta_s(u, \vec m)  \leq 0$. 
\end{lem}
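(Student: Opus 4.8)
The plan is to read the identity off the relation $\bm\Sigma_{n,s}^{-1}\bm\Sigma_{n,s}=\bm I_n$. First I would record that $\bm\Sigma_{n,s}$ is symmetric, so $\sigma_s(w,u)=[\bm\Sigma_{n,s}]_{w,u}$, and expand $\delta_s(u,w)=\gamma_s(u,w)-[\bm\Sigma_{n,s}^{-1}]_{u,w}$ from the definition of $\bm\Delta_{n,s}$. Then
\[
\sum_{w=1}^{n}\delta_s(u,w)\,\sigma_s(w,u)=\sum_{w=1}^{n}\gamma_s(u,w)\,[\bm\Sigma_{n,s}]_{w,u}-\sum_{w=1}^{n}[\bm\Sigma_{n,s}^{-1}]_{u,w}\,[\bm\Sigma_{n,s}]_{w,u}.
\]
The second sum is the $(u,u)$ entry of $\bm\Sigma_{n,s}^{-1}\bm\Sigma_{n,s}=\bm I_n$, hence equals $1$. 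For the first sum, \eqref{eq:s_n} gives $\gamma_s(u,w)=\bm 1\{u=w\}/\sigma_s(u)^2$, so only the $w=u$ term survives and contributes $\sigma_s(u,u)/\sigma_s(u)^2=1$ by \eqref{eq:v_s}. Subtracting yields $\sum_{w}\delta_s(u,w)\sigma_s(w,u)=0$.

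To deduce the sign statements I would next note that the weights $\sigma_s(w,u)$ are all strictly positive: for $w=u$ this is $\sigma_s(u)^2>0$, and for $w\neq u$ the sum in \eqref{eq:v_s} runs over all $s$-subsets of $[n]$ containing both $u$ and $w$ — a nonempty collection for $n$ large enough — each summand being strictly positive. Thus $0=\sum_{w}\delta_s(u,w)\sigma_s(w,u)$ exhibits a vanishing positive-weighted combination of the numbers $\{\delta_s(u,w)\}_{w\in[n]}$. Were $\delta_s(u,\overline m)=\max_{w}\delta_s(u,w)<0$, every term would be strictly negative and the sum strictly negative, a contradiction; hence $\delta_s(u,\overline m)\ge 0$. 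The same argument with the signs reversed gives $\delta_s(u,\vec m)=\min_{w}\delta_s(u,w)\le 0$.

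There is no real obstacle here: the computation is a one-line consequence of $\bm\Sigma_{n,s}^{-1}\bm\Sigma_{n,s}=\bm I_n$ together with the diagonal structure of $\bm\Gamma_{n,s}$, and the only point worth stating explicitly is the strict positivity of the $\sigma_s(w,u)$, which is exactly what forces the maximum and the minimum of $\delta_s(u,\cdot)$ to straddle zero.
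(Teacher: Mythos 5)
Your proof is correct and follows essentially the same route as the paper: both identify $\sum_{w}\delta_s(u,w)\sigma_s(w,u)$ with the $u$-th diagonal entry of $\bm\Delta_{n,s}\bm\Sigma_{n,s}=\bm\Gamma_{n,s}\bm\Sigma_{n,s}-\bm I_n$ and use the diagonal structure of $\bm\Gamma_{n,s}$ together with $\sigma_s(u,u)=\sigma_s(u)^2$ to see that this entry vanishes. Your explicit remark that the weights $\sigma_s(w,u)$ are strictly positive (so the maximum and minimum of $\delta_s(u,\cdot)$ must straddle zero) is exactly the implicit step the paper relies on for the sign conclusions.
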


\begin{proof}
Note that $\sum_{w=1}^{n}\delta_s(u, w) \sigma_s(w, u)$ is the $u$-th diagonal element of the matrix 
$\bm \Delta_{n, s} \bm \Sigma_{n, s} =   \bm \Gamma_{n, s} \bm\Sigma_{n, s} - \bm I_{n}$ (recall that $\bm \Delta_{n, s}=\bm \Gamma_{n, s} - \bm\Sigma_{n, s}^{-1}$). Note that the $u$-th diagonal element of $\bm \Gamma_{n, s} \bm\Sigma_{n, s}$ is given by 
\begin{align*} 
 \sum_{w \in [n]}  \gamma_s(u, w)  \sigma_s(w, u) & = \sum_{w \in [n]} \frac{\mathbbm 1\{u=w\}}{\sigma_s(u)^2} \sigma_s(w, u)  = 1 , 
\end{align*} 
since $\sigma_s(u, u) =  \sigma_s(u)^2$. Hence, $u$-th diagonal element of $\bm \Delta_{n, s} \bm \Sigma_{n, s}$ is zero. 
\end{proof}

Now, recalling \eqref{eq:deltathetauv} note that 
\begin{align} 
\label{eq:w_f} 
& \delta_s(u, \overline m)-\delta_s(u, \vec m) + (\theta_s(u, \overline m)-\theta_s(u, \vec m)) \nonumber \\ 
& =\sum_{w=1}^{n} \delta_s(u, w) \left\{   \frac{\mathbbm 1\{ w \ne \vec m\} \sigma_s(w, \vec m)}{\sigma_s(\vec m)^2} -  \frac{\mathbbm 1\{ w \ne \overline m \} \sigma_s(w, \overline m)}{\sigma_s( \overline m )^2} \right\}  \nonumber \\ 
& = \sum_{w=1}^{n}(\delta_s(u, w)-\delta_s(u, \vec m))\left\{  \frac{\mathbbm 1\{ w \ne \vec m\}  \sigma_s(w, \vec m)}{\sigma_s(\vec m)^2} -  \frac{ \mathbbm 1\{ w \ne \overline m \} \sigma_s(w, \overline m)}{\sigma_s( \overline m )^2} \right\} ,  
\end{align} 
since $\sum_{w \in [n]\backslash \{\vec m\}} \sigma_s(w, \vec m) = \sigma_s(\vec m)^2$ and $\sum_{w \in [n]\backslash \{\overline m\} } \sigma_s(w, \overline m) = \sigma_s(\overline m)^2$. Define 
\[
\Omega:=\left\{ w \in [n] :  \frac{\mathbbm 1\{ w \ne \vec m \} \sigma_s(w, \vec m)}{\sigma_s(\vec m)^2} \geq  \frac{\mathbbm 1\{ w \ne \overline m\} \sigma_s(w, \overline m)}{\sigma_s(\overline m)^2}\right\},
\]
and $\lambda := |\Omega|$.  Then, we have
\begin{align}\label{eq:omega} 
&\sum_{w \in \Omega} (\delta_s(u, w)-\delta_s(u, \vec m))\left\{  \frac{\mathbbm 1\{ w \ne \vec m\}  \sigma_s(w, \vec m)}{\sigma_s(\vec m)^2} -  \frac{\mathbbm 1\{ w \ne \overline m\} \sigma_s(w, \overline m)}{\sigma_s(\overline m)^2} \right\}  \nonumber \\
&\le (\delta_s(u, \overline m)-\delta_s(u, \vec m))\left\{ \frac{ \sum_{w \in \Omega}\sigma_s(w, \vec m)}{\sigma_s(\vec m)^2} - \frac{\sum_{w \in \Omega}\mathbbm 1\{ w \ne \overline m \}\sigma_s(w, \overline m)}{\sigma_s(\overline m)^2}\right\}. 
\end{align} 
Note that 
$$\frac{ \sum_{w \in \Omega}\sigma_s(w, \vec m)}{\sigma_s(\vec m)^2}= \frac{\sum_{w \in \Omega}\sigma_s(w, \vec m)}{ \sum_{w \in \Omega}\sigma_s(w, \vec m) + \sum_{w \in [n]\backslash(\Omega \bigcup \vec m) }\sigma_s(w, \vec m)} = \frac{1}{ 1 + \frac{ \sum_{w \in [n]\backslash(\Omega \bigcup \vec m) }\sigma_s(w, \vec m)}{\sum_{w \in \Omega}\sigma_s(w, \vec m)}} , $$ 
since $\vec m \notin \Omega$. Now, observe that  
$$\frac{ \sum_{w \in [n]\backslash(\Omega \bigcup \vec m) }\sigma_s(w, \vec m)}{\sum_{w \in \Omega}\sigma_s(w, \vec m)} \geq \frac{(n-\lambda-1) \sigma_{s, \min}}{ \lambda \sigma_{s, \max}}$$
This implies, 
\begin{align}\label{eq:omegaM}
\frac{ \sum_{w \in \Omega}\sigma_s(w, \vec m)}{\sigma_s(\vec m)^2} \leq \frac{\lambda \sigma_{s, \max}}{ \lambda \sigma_{s, \max} + (n-\lambda-1) \sigma_{s, \min}} . 
\end{align}
Similarly, 
$$\frac{ \sum_{w \in \Omega} \mathbbm 1\{ w \ne \overline m \}\sigma_s(w, \overline m)}{\sigma_s(\overline m)^2} =   \frac{\sum_{w \in \Omega} \mathbbm 1\{ w \ne \overline m \} \sigma_s(w, \overline{m} )}{ \sum_{w \in [n]}\mathbbm 1\{ w \ne \overline m \} \sigma_s(w, \overline{m}) } = \frac{1}{1 + \frac{\sum_{w \in [n]\backslash \Omega} \mathbbm 1\{ w \ne \overline{m} \} \sigma_s(w, \overline{\vec m} ) }{\sum_{w \in \Omega}\mathbbm 1\{ w \ne \overline m \} \sigma_s(w, \overline{\vec m} ) }}  $$ 
Therefore, since $\overline m \in \Omega$, 
$$\frac{\sum_{w \in [n]\backslash \Omega}\mathbbm 1\{ w \ne \overline m \} \sigma_s(w, \overline m) }{\sum_{w \in \Omega}\mathbbm 1\{ w \ne \overline m \} \sigma_s(w, \overline m) } \leq \frac{(n-\lambda) \sigma_{s, \max}}{ (\lambda-1) \sigma_{s, \min}}.$$ 
Hence, 
\begin{align}\label{eq:omegam}
\frac{ \sum_{w \in \Omega} \mathbbm 1\{ w \ne \overline m \}\sigma_s(w, \overline m)}{\sigma_s(\overline m)^2} \geq \frac{(\lambda-1) \sigma_{s, \min} }{(\lambda-1) \sigma_{s, \min} + (n-\lambda) \sigma_{s, \max}} . 
\end{align}
Applying \eqref{eq:omegaM} and \eqref{eq:omegam} in \eqref{eq:omega} gives, 
\begin{align}\label{eq:deltamMpf} 
&\sum_{w \in \Omega} (\delta_s(u, w)-\delta_s(u, \vec m))\left\{  \frac{\mathbbm 1\{ w \ne \vec m\}  \sigma_s(w, \vec m)}{\sigma_s(\vec m)^2} -  \frac{ \mathbbm 1\{ w \ne \overline m\} \sigma_s(w, \overline m)}{\sigma_s(\overline m)^2} \right\}  \nonumber \\ 
& \le (\delta_s(u, \overline m)-\delta_s(u, \vec m))f(\lambda) , 
\end{align} 
where 
\[
f(\lambda)=\frac{\lambda \sigma_{s, \max}}{\lambda \sigma_{s, \max}+(n-1-\lambda)\sigma_{s, \min}}-\frac{(\lambda-1)\sigma_{s, \min}}{(\lambda-1)\sigma_{s, \min}+(n-\lambda)\sigma_{s, \max}}.
\] 
Note that $f(\lambda)$ attains maximum at $\lambda=n/2$ over $\lambda \in (1,n-1)$ and
\[
f(n/2)=\frac{n\sigma_{s, \max}-(n-2)\sigma_{s, \min}}{n\sigma_{s, \max} + (n-2)\sigma_{s, \min}}.
\] 
Therefore, from Lemma \ref{lm:gammauv}, \eqref{eq:w_f}, there exists a constant $C> 0$ such that \eqref{eq:deltamMpf}, 
\begin{align}
\delta_s(u, \overline m)-\delta_s(u, \vec m) &\le \frac{n\sigma_{s, \max}-(n-2)\sigma_{s, \min}}{n\sigma_{s, \max}+(n-2)\sigma_{s, \min}}(\delta_s(u, \overline m)-\delta_s(u, \vec m))+ \frac{C \sigma_{s, \max}}{\sigma_{s, \min}^2n^2} . \nonumber 
\end{align}
This implies, 
\begin{align*}
\delta_s(u, \overline m)-\delta_s(u, \vec m) & \leq \frac{C \sigma_{s, \max}(n \sigma_{s, \max} + (n-2)\sigma_{s, \min})}{ 2 (n-2) \sigma_{s, \min} ^3 n^2} \lesssim \frac{\sigma_{s, \max}^2}{\sigma_{s, \min}^3 n^2} . 
\end{align*}
Hence, from Lemma \ref{lm:deltauv}, 
\begin{align*}
\max_{1 \le w \le n}|\delta_s(u, w)| & \le \delta_s(u, \overline m)-\delta_s(u, \vec m)  
\le \frac{\sigma_{s, \max}^2}{\sigma_{s, \min}^3 n^2} \lesssim  \frac{1}{n^s} ,  
\end{align*} 
since $\sigma_{s, \min} \asymp  n^{s-2}$ and $\sigma_{s, \max} \asymp  n^{s-2}$, using $\|\bm \beta_s\|_\infty \leq M=O(1)$. This completes the proof of \eqref{eq:delta}.  
\end{proof}

\begin{proof}[Proof of \eqref{eq:deltacovariance}]

Define 
$$\bm U_{n, s}= \Cov[(\bm \Gamma_{n, s} - \bm\Sigma_{n, s}^{-1})(\bm d_s-\mathbb{E}[\bm d_s] ) ] = \Cov[ \bm \Delta_{n, s}(\bm d_s-\mathbb{E}[\bm d_s])] ,$$ 
since $\bm \Delta_{n, s}=\bm \Gamma_{n, s} - \bm\Sigma_{n, s}^{-1} $. Observe that 
\begin{align}\label{eq:Uns}
\bm U_{n, s} & = \bm \Delta_{n, s} \E[(\bm d_s-\mathbb{E}[\bm d_s]) (\bm d_s-\mathbb{E}[\bm d_s])^\top ] \bm \Delta_{n, s}^\top \nonumber \\ 
& = \bm \Delta_{n, s} \bm \Sigma_{n, s} \bm \Delta_{n, s}^\top \nonumber \\ 
& = (\bm \Gamma_{n, s} - \bm\Sigma_{n, s}^{-1})-\bm \Gamma_{n, s}(\bm I_n-\bm \Sigma_{n, s} \bm \Gamma_{n, s}) \nonumber \\ 
& = (\bm \Gamma_{n, s} - \bm\Sigma_{n, s}^{-1})-\bm \Theta_{n, s} , 
\end{align} 
since $\bm \Theta_{n, s} = \bm \Gamma_{n, s} \bm Z_{n, s}$ and $\bm Z_{n, s} = \bm I_n-\bm \Sigma_{n, s} \bm \Gamma_{n, s}$. By Lemma \ref{lm:gammauv}, 
\begin{equation}
\label{eq:maximumsigma}
\| \bm \Theta_{n, s}\|_\infty \lesssim \frac{\sigma_{s, \max}}{\sigma_{s, \min}^2 n^2} \lesssim  \frac{1}{n^s} , 
\end{equation} 
since $\sigma_{s, \min} \asymp  n^{s-2}$ and $\sigma_{s, \max} \asymp  n^{s-2}$, using $\|\bm \beta_s\|_\infty \leq M=O(1)$. By \eqref{eq:delta}, \eqref{eq:Uns}, and \eqref{eq:maximumsigma} the result in \eqref{eq:deltacovariance} follows. 
\end{proof}

\subsection{Proof of Theorem \ref{thm:conf_int_thm}} 
\label{sec:distributionestimationpf}

For $\bm x = (x_1, x_2, \ldots, x_n) \in \R^n$ and $u \in [n]$ define the function 
$$g_u(\bm x) = \sum_{ \bm e \in {[n] \choose s}: u \in \bm e} \frac{e^{\bm 1^\top \bm x_{\bm e}}}{(1+e^{\bm 1^\top \bm x_{\bm e}})^2} , 
$$
where $\bm x_{\bm e} = (x_{u_1}, x_{u_2}, \ldots, x_{u_s})$ for $\bm e = (u_1, u_2, \ldots, u_s)$. Then recalling \eqref{eq:sigmadegree} and \eqref{eq:sigmadegreeestimate},  
$\sigma_{s}(v)^2 = g_v(\bm \beta_s)$ and $\hat \sigma_{s}(v)^2 = g_v(\hat{\bm \beta}_s)$ \revsag{for all $v \in [n]$}. Hence, by a Taylor expansion, 
\begin{align}\label{eq:Dsestimate} 
|\hat \sigma_{s}(v)^2-\sigma_{s}(v)^2| =  |g_v(\hat{\bm \beta}_s) - g_v(\bm \beta_s)| & 
= \revsag{\left| \sum_{ \bm e \in {[n] \choose s}: v \in \bm e}  \left\{ \frac{e^{\bm 1^\top \hat{\bm \beta}_{s, \bm e}}}{(1+e^{\bm 1^\top \hat{\bm \beta}_{s, \bm e}})^2}-\frac{e^{\bm 1^\top \bm \beta_{s, \bm e}}}{(1+e^{\bm 1^\top \bm \beta_{s, \bm e}})^2} \right\} \right| }\nonumber \\ 
& \lesssim_r \revsag{\|\hat{\bm \beta}_{s}-\bm \beta_{s}\|_\infty n^{s-1} }. 
\end{align} 
Recalling the definition of $J_s = \{ v_{s,1}, \ldots, v_{s,a_s} \}$ from Theorem \ref{thm:conf_int_thm}, this implies 
\begin{align} 
& \sum_{s=2}^r ([(\hat{\bm \beta}_s - \bm \beta_{s})]_{J_s})^\top [ \hat{\bm D}_{s}^2]_{J_s} ([(\hat{\bm \beta}_s - \bm \beta_{s})]_{J_s}) \nonumber \\ 
& = \sum_{s=2}^r \sum_{j = 1}^{a_s} \hat \sigma_s(v_{a_j})^2 ( \hat \beta_{s, v_{a_j}} - \beta_{s, v_{a_j}})^2 \nonumber \\ 
& = \sum_{s=2}^r \sum_{j = 1}^{a_s} \sigma_s(v_{a_j})^2 ( \hat \beta_{s, v_{a_j}} - \beta_{s, v_{a_j}})^2 + \sum_{s=2}^r \sum_{j = 1}^{a_s} (\hat \sigma_s(v_{a_j})^2 - \sigma_s(v_{a_j})^2) ( \hat \beta_{s, v_{a_j}} - \beta_{s, v_{a_j}})^2  \nonumber \\ 
& \dto \chi^2_{\sum_{s=2}^r a_s} + o_P(1) , \nonumber 
\end{align}
by Theorem \ref{thm:central_lim_thm}, \eqref{eq:Dsestimate} and \eqref{eq:mlerlayer}. This completes the proof of \eqref{eq:confidenceinterval}.

\section{Proofs of Theorems \ref{thm:testing_inhom_level} and \ref{thm:testing_inhom_power}}
\label{sec:hypothesistestingpf}

\subsection{Proof of Theorem \ref{thm:testing_inhom_level}}
\label{sec:hypothesisdistributionpf}

Suppose $H_n \sim \mathsf{H}_{n, s}(n, \bm \gamma)$ for $\bm \gamma$ as in \eqref{eq:H0gamma}. 
Let $\bm \Sigma_{n, s}$ be as defined in \eqref{eq:v_s} with $\bm \beta_s$ replaced by $\bm \gamma = (\gamma_1, \gamma_2, \ldots, \gamma_n)^\top$. Then $\grad^2 \ell_{n, s}(\bm \gamma) = \bm \Sigma_{n, s}$. \revsag{By} a Taylor expansion, 
\begin{align}\label{eq:likelihoodexpansionH0}
    \ell_{n, s}(\bm \gamma)-\ell_{n, s}(\hat{\bm \beta}_{s}) & = (\hat{\bm \beta}_{s} - \bm \gamma)^\top \grad \ell_{n, s}(\bm \gamma) + \frac{1}{2}(\hat{\bm \beta}_{s} - \bm \gamma)^\top \bm \Sigma_{n, s} (\hat{\bm \beta}_{s} - \bm \gamma) + \cT_{n, s}, 
\end{align}
where 
\begin{align}\label{eq:gradient3}
\cT_{n, s} = \cT_{n, s}^{(1)} + \cT_{n, s}^{(2)} + \cT_{n, s}^{(3)} , 
\end{align} 
with 
\begin{align}
   \cT_{n, s}^{(1)} & :=\frac{1}{6} \sum_{u=1}^{n}\frac{\partial^3\ell_{n, s}(\bm \gamma+\theta(\hat{\bm \beta}_{s}-\bm \gamma))}{\partial(\beta_{s, u})^3}(\hat{\beta}_{s, u}-\gamma_{u})^3 , \nonumber \\ 
    \cT_{n, s}^{(2)} & :=   \frac{1}{3} \sum_{1 \leq u \ne v \leq n} \frac{\partial^3\ell_{n, s}(\bm \gamma+\theta(\hat{\bm \beta}_{s}-\bm \gamma))}{\partial(\beta_{s, u})^2\partial\beta_{s, v}}(\hat{\beta}_{s, u}-\gamma_{u})^2(\hat{\beta}_{s, v}-\gamma_{v}) , \nonumber \\ 
       \cT_{n, s}^{(3)} & :=   \frac{1}{6} \sum_{1 \leq u \ne v \ne w \leq n} \frac{\partial^3\ell_{n, s}(\bm \gamma+\theta(\hat{\bm \beta}_{s}-\bm \gamma))}{\partial \beta_{s, u} \partial\beta_{s, v} \partial \beta_{s, w} }(\hat{\beta}_{s, u}-\gamma_{u}) (\hat{\beta}_{s, v}-\gamma_{v}) (\hat{\beta}_{s, w}-\gamma_{w}) , \nonumber  
\end{align} 
for some $\theta \in (0, 1)$.

Now, by arguments as in \eqref{eq:con_deg_beta} it follows that 
\begin{equation}
\label{eq:degreegamma}
\hat{\bm \beta}_{s}-\bm \gamma = \bm\Sigma_{n, s}^{-1}(\bm d_s-\mathbb{E}_{\bm \gamma}[\bm d_s]) + \bm\Sigma_{n, s}^{-1}\bm R_{n, s} , 
\end{equation}  
where $\bm R_{n, s}$ is as defined in \eqref{eq:degree} and \eqref{eq:con_deg_beta} with $\bm \beta_s$ replaced by $\bm \gamma$. Using this and noting that $-\grad \ell_{n, s}(\bm \gamma)  = \bm d_s-\mathbb E_{\bm \gamma}[\bm d_s]$, 
\begin{align}\label{eq:gradientH0}
(\hat{\bm \beta}_{s} - \bm \gamma)^\top \grad \ell_{n, s}(\bm \gamma) & = (\bm d_s-\mathbb{E}_{\bm \gamma}[\bm d_s])^\top \bm\Sigma_{n, s}^{-1} \grad \ell_{n, s}(\bm \gamma) + \bm R_{n, s}^\top \bm\Sigma_{n, s}^{-1} \grad \ell_{n, s}(\bm \gamma) \nonumber \\ 
& = - (\bm d_s-\mathbb{E}_{\bm \gamma}[\bm d_s])^\top \bm\Sigma_{n, s}^{-1} (\bm d_s-\mathbb E_{\bm \gamma}[\bm d_s]) - \bm R_{n, s}^\top \bm\Sigma_{n, s}^{-1} (\bm d_s-\mathbb E_{\bm \gamma}[\bm d_s]) .  
\end{align}
Similarly, using \eqref{eq:degreegamma}, 
\begin{align}\label{eq:gradient2H0}
& (\hat{\bm \beta}_{s} - \bm \gamma)^\top \bm \Sigma_{n, s} (\hat{\bm \beta}_{s} - \bm \gamma) \nonumber \\  
& = (\bm d_s-\mathbb E_{\bm \gamma}[\bm d_s])^\top \bm\Sigma_{n, s}^{-1}(\bm d_s-\mathbb E_{\bm \gamma}[\bm d_s]) + 2 \bm R_{n, s}^\top \bm\Sigma_{n, s}^{-1} (\bm d_s-\mathbb E_{\bm \gamma}[\bm d_s]) + \bm R_{n, s}^\top \bm\Sigma_{n, s}^{-1}\bm R_{n, s} . 
\end{align}
Combining \eqref{eq:likelihoodexpansionH0}, \eqref{eq:gradientH0}, and \eqref{eq:gradient2H0} gives, 
\begin{align}\label{eq:lgammaexpansionH0}
    \ell_{n, s}(\hat{\bm \beta}_{s})-\ell_{n, s}(\bm \gamma) 
        &= - \frac{1}{2}(\bm d_s-\mathbb E_{\bm \gamma}[\bm d_s])^\top \bm\Sigma_{n, s}^{-1}(\bm d_s-\mathbb E_{\bm \gamma}[\bm d_s]) + \frac{1}{2} \bm R_{n, s}^\top \bm\Sigma_{n, s}^{-1}\bm R_{n, s} + \cT_{n, s} . 
\end{align}

We begin by showing that $\bm R_{n, s}^\top \bm\Sigma_{n, s}^{-1}\bm R_{n, s} = o_P(\sqrt n)$. 
To this end, \eqref{eq:Rbeta} and $\sigma_s(u)^2 \asymp  n^{s-1}$ gives, 
\begin{align}\label{eq:Rsigmagamma} 
\left|\bm R_{n, s}^\top \bm \Gamma_{n, s} \bm R_{n, s}\right| = \left|\sum_{u=1}^{n}\frac{R_{s, u}^2}{\sigma_{s}(u)^2} \right|  \lesssim  n^{s} \|\hat{\bm \beta}_{s}-\bm \beta_{s}\|^4_\infty  \lesssim  \frac{\log^2n}{n^{s-2} } , 
\end{align}
with probability $1-o(1)$ by \eqref{eq:mlerlayer}. Next, observe that
\begin{align} 
\left|\bm R_{n, s}^\top \bm \Delta_{n, s}\bm R_{n, s}\right| \le n \| \bm \Delta_{n, s}\bm R_{n, s} \|_\infty \cdot \| \bm R_{n, s}\|_\infty 
& \leq n^2 \| \bm R_{n, s}\|_\infty^2 \cdot \|\bm \Delta_{n, s}\|_\infty  \nonumber \\ 
& \lesssim  n^{s} \|\hat{\bm \beta}_{s}-\bm \beta_{s}\|^4_\infty \tag*{(by \eqref{eq:delta} and \eqref{eq:Rbeta})} \nonumber \\ 
\label{eq:Rdelta} & \lesssim  \frac{\log^2n}{n^{s-2} } , 
\end{align} 
with probability $1-o(1)$ by \eqref{eq:mlerlayer}. Combining \eqref{eq:Rsigmagamma} and \eqref{eq:Rdelta} it follows that with probability $1-o(1)$, 
\begin{align}\label{eq:Rgamma} 
\left|\bm R_{n, s}^\top \bm \Sigma_{n, s}^{-1} \bm R_{n, s}\right| \leq \left|\bm R_{n, s}^\top \bm \Gamma_{n, s} \bm R_{n, s}\right| + \left|\bm R_{n, s}^\top \bm \Delta_{n, s} \bm R_{n, s}\right| \lesssim  \frac{\log^2n}{n^{s-2} } = o_P(\sqrt n) .
\end{align}
This implies, the second term in the RHS of \eqref{eq:lgammaexpansionH0} does not contribute to the CLT of the log-likelihood ratio $\log \Lambda_{n, s}$.

Next, we show that the third term in the RHS of \eqref{eq:lgammaexpansionH0} is $o_P(\sqrt n)$, hence, it also does not contribute to the CLT of $\log \Lambda_{n, s}$.

\begin{lem}
\label{lm:conditiongradient}
Suppose $s \geq 3$ and $\bm \gamma \in \mathcal{B}(M)$. Then $\cT_{n, s} = o_P(\sqrt n)$. 
\end{lem}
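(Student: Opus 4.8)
\textbf{Proof proposal for Lemma \ref{lm:conditiongradient}.}

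The plan is to bound each of the three pieces $\cT_{n, s}^{(1)}, \cT_{n, s}^{(2)}, \cT_{n, s}^{(3)}$ separately, using uniform bounds on the third-order partial derivatives of $\ell_{n, s}$ together with the $L_\infty$ and $L_2$ convergence rates of $\hat{\bm \beta}_s$ from Theorem \ref{thm:layermle}. First I would record that the third-order partials of $\ell_{n, s}$ have a clean combinatorial form: $\frac{\partial^3 \ell_{n, s}}{\partial \beta_{u}\partial\beta_{v}\partial\beta_{w}}(\bm \beta)$ equals a sum over hyperedges $\bm e \in {[n] \choose s}$ containing $\{u,v,w\}$ of $\psi'''(\bm 1^\top \bm \beta_{\bm e})$-type terms, where $\psi(x) = e^x/(1+e^x)$; in particular each such derivative is bounded in absolute value by a constant (depending only on $M$ and $s$) times the number of hyperedges of size $s$ through the relevant vertex set. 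Concretely, the number of $\bm e \in {[n]\choose s}$ containing a fixed vertex is $\Theta(n^{s-1})$, containing a fixed pair is $\Theta(n^{s-2})$, and containing a fixed triple is $\Theta(n^{s-3})$. Since $\hat{\bm \beta}_s$ and $\bm \gamma$ both lie (eventually, with probability $1-o(1)$) in a bounded $L_\infty$-ball, the convex combination $\bm \gamma + \theta(\hat{\bm \beta}_s - \bm \gamma)$ does too, so these derivative bounds apply uniformly.

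Next I would combine these with the rates $\|\hat{\bm \beta}_s - \bm \gamma\|_\infty \lesssim \sqrt{\log n / n^{s-1}}$ and $\|\hat{\bm \beta}_s - \bm \gamma\|_2 \lesssim n^{-(s-2)/2}$ (valid under $H_0$, where $\bm \beta_s = \bm \gamma$). For $\cT_{n, s}^{(1)}$: it is at most a constant times $n^{s-1}\sum_u |\hat\beta_{s,u}-\gamma_u|^3 \le n^{s-1}\|\hat{\bm \beta}_s-\bm \gamma\|_\infty \|\hat{\bm \beta}_s-\bm \gamma\|_2^2 \lesssim n^{s-1}\cdot\sqrt{\log n/n^{s-1}}\cdot n^{-(s-2)} = \sqrt{\log n}\, n^{-(s-4)/2}$; for $s \ge 4$ this already is $o(\sqrt n)$, and for $s=3$ it gives $\sqrt{n\log n}$, which is not quite enough, so for $s=3$ one must be slightly more careful — using $\sum_u|\cdot|^3 \le \|\cdot\|_\infty^3 \cdot n$ with the $L_\infty$ rate directly gives $n^{2}\cdot(\log n/n^2)^{3/2} = (\log n)^{3/2}/n$, which is $o(\sqrt n)$. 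For $\cT_{n, s}^{(2)}$: bound $\sum_{u \ne v}|\partial^3|\cdot|\hat\beta_{s,u}-\gamma_u|^2|\hat\beta_{s,v}-\gamma_v|$ by $n^{s-2}\|\hat{\bm \beta}_s-\bm \gamma\|_\infty^2 \sum_{u,v}|\hat\beta_{s,v}-\gamma_v| \le n^{s-2}\|\hat{\bm \beta}_s-\bm \gamma\|_\infty^2 \cdot n\cdot\sqrt n\|\hat{\bm \beta}_s-\bm \gamma\|_2$, and plug in the rates. For $\cT_{n, s}^{(3)}$: bound by $n^{s-3}$ times $\left(\sum_u |\hat\beta_{s,u}-\gamma_u|\right)^3 \le n^{s-3}(\sqrt n \|\hat{\bm \beta}_s-\bm \gamma\|_2)^3$, and plug in. In each case I would track powers of $n$ carefully and verify the exponent is strictly below $1/2$ for all $s \ge 3$ (absorbing $\log$ factors), which is the point where the hypothesis $s \ge 3$ is used — the $s=2$ case is genuinely different since there $\|\hat{\bm \beta}_2-\bm \gamma\|_2 = O(1)$ only.

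The main obstacle I anticipate is the bookkeeping for $\cT_{n, s}^{(2)}$ and $\cT_{n, s}^{(3)}$: a naive application of Cauchy--Schwarz or of the crude $L_\infty$ bound on each factor loses too much, and one needs to interleave the $L_\infty$ rate (for the repeated factors, which are squared or cubed) with the $L_2$ rate (for the ``free'' sum over the remaining index), exploiting that $\sum_u |x_u| \le \sqrt n \|\bm x\|_2$. Getting each of the three bounds to come out as $o(\sqrt n)$ — rather than merely $O(\sqrt n)$ — for every $s \ge 3$ is where the argument is tightest, and it is worth double-checking the worst case, which turns out to be the diagonal term $\cT_{n, s}^{(1)}$ at $s = 3$. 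One subtlety: all of the derivative bounds and rate bounds hold only on an event of probability $1-o(1)$ (the event that the ML estimate exists, is bounded, and satisfies \eqref{eq:mlerlayer}), so the conclusion $\cT_{n, s} = o_P(\sqrt n)$ should be stated and proved on that event; this is already the convention in the surrounding argument.
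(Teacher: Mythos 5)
Your proposal is correct in substance and follows the same skeleton as the paper's argument: bound the third-order partials of $\ell_{n,s}$ at the intermediate point $\bm \gamma + \theta(\hat{\bm \beta}_s - \bm \gamma)$ uniformly (using that $\hat{\bm \beta}_s \in \cB_{2M}$ with probability $1-o(1)$), count hyperedges through the relevant vertex sets, and plug in the rates \eqref{eq:mlerlayer}, all on the high-probability event where the ML estimate exists and satisfies those rates. The one genuine difference is that the paper does not need your interleaving of the $L_2$ and $L_\infty$ rates: it bounds all three terms $\cT_{n,s}^{(1)}, \cT_{n,s}^{(2)}, \cT_{n,s}^{(3)}$ by the single crude estimate $n^s \|\hat{\bm \beta}_s - \bm \gamma\|_\infty^3 \lesssim_{M,s} \sqrt{(\log n)^3/n^{s-3}}$ (see \eqref{eq:Tns}), which is already $o_P(\sqrt n)$ for every $s \geq 3$ — so your concern that ``the crude $L_\infty$ bound on each factor loses too much'' for $\cT_{n,s}^{(2)}$ and $\cT_{n,s}^{(3)}$ is unfounded, and your mixed bounds, while valid, are an unnecessary refinement. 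There are also two small arithmetic slips that happen not to affect the conclusion: in your first bound for $\cT_{n,s}^{(1)}$ the exponent should be $n^{-(s-3)/2}$ rather than $n^{-(s-4)/2}$ (so even that route gives $\sqrt{\log n}$ at $s=3$, not $\sqrt{n\log n}$), and in your fallback $s=3$ computation the correct value is $n^{3}(\log n/n^{2})^{3/2} = (\log n)^{3/2}$, not $(\log n)^{3/2}/n$; both corrected quantities remain $o(\sqrt n)$, so the lemma follows either way.
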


\begin{proof}
Define $\tilde{\bm \beta}_{ s}=\bm \gamma+\theta(\hat{\bm \beta}_{s}-\bm \gamma)$, for $\theta \in (0, 1)$. Then recalling \eqref{eq:gradient3} observe that 
\begin{align*}
   \cT_{n, s}^{(1)} & =\frac{1}{6} \sum_{u=1}^{n}\left[\sum_{\bm e \in {[n] \choose s}: u \in \bm e}\frac{e^{\bm 1^\top \tilde{\bm \beta}_{ s, \bm e}}(1-e^{\bm 1^\top \tilde{\bm \beta}_{ s, \bm e}})}{(1+e^{\bm 1^\top \tilde{\bm \beta}_{ s, \bm e}})^3}\right](\hat{\beta}_{s, u}-\gamma_{u})^3 , \nonumber \\ 
    \cT_{n, s}^{(2)} & =   \frac{1}{3} \sum_{1 \leq u \ne v \leq n} \left[\sum_{\bm e \in {[n] \choose s}: u, v \in \bm e}\frac{e^{\bm 1^\top \tilde{\bm \beta}_{ s, \bm e}}(1-e^{\bm 1^\top \tilde{\bm \beta}_{ s, \bm e}})}{(1+e^{\bm 1^\top\tilde{\bm \beta}_{ s, \bm e}})^3}\right](\hat{\beta}_{s, u}-\gamma_{u})^2(\hat{\beta}_{s, v}-\gamma_{v}) , \nonumber \\ 
       \cT_{n, s}^{(3)} & :=   \frac{1}{6} \sum_{1 \leq u \ne v \ne w \leq n}  \left[\sum_{\bm e \in {[n] \choose s}: u, v, w \in \bm e}\frac{e^{\bm 1^\top \tilde{\bm \beta}_{ s, \bm e}}(1-e^{\bm 1^\top \tilde{\bm \beta}_{ s, \bm e}})}{(1+e^{\bm 1^\top \tilde{\bm \beta}_{ s, \bm e}})^3}\right](\hat{\beta}_{s, u}-\gamma_{u}) (\hat{\beta}_{s, v}-\gamma_{v}) (\hat{\beta}_{s, w}-\gamma_{w}) ,   
\end{align*} 
where $\tilde{\bm \beta}_{ s, \bm e} = (\tilde{\beta}_{s, u_1}, \tilde{\beta}_{s, u_2}, \ldots, \tilde{\beta}_{s, u_s})^\top$, for $\bm e = (u_1, u_2, \ldots, u_s)$.  
Since $\bm \gamma \in \mathcal{B}_M$ and $\| \hat{\bm \beta}_s - \bm \gamma  \|_\infty \lesssim_{s, M} \sqrt{\log n/n^{s-1}}$ with probability $1-o(1)$, $\hat{\bm \beta}_s \in \mathcal{B}_{2M}$ for large $n$ with probability $1-o(1)$. This implies, 
\begin{equation}\label{eq:Tns}
\cT_{n, s}^{(1)} \lesssim_{M} n^s \| \hat{\bm \beta}_s - \bm \gamma  \|^{3}_\infty  \lesssim_{M, s}  \sqrt{\frac{(\log n)^3}{n^{s-3}}} = o_P(\sqrt n) , 
\end{equation} 
for $s \geq 3$. Similarly, we can show that for $s \geq 3$, $\cT_{n, s}^{(2)} = o_P(\sqrt n)$ and $\cT_{n, s}^{(3)} = o_P(\sqrt n)$. This completes the proof of the Lemma \ref{lm:conditiongradient}. 
\end{proof}

\begin{remark} Note that Lemma \ref{lm:conditiongradient} assumes that $s \geq 3$. This is because when $s=2$ (that is, the graph case), the proof of Lemma \ref{lm:conditiongradient} gives the bound $\cT_{n, 2} = O(\mathrm{polygon}(n)/\sqrt n)$ which is not $o_P(\sqrt n)$ (see \eqref{eq:Tns}). Nevertheless, it follows from the proof of Theorem 1 (a) in \citet{yan2022wilks}, where the asymptotic null distribution of the LR test for the graph $\bm \beta$-model was derived, that the result in Lemma \ref{lm:conditiongradient} also holds when $s=2$, that is, $\cT_{n, 2} = o_P(\sqrt n)$.
For this one has to expand $\ell_{n, s}(\hat{\bm \beta}_{s})-\ell_{n, s}(\bm \gamma)$ up to the fourth order term, and show that the third order term is $o_P(\sqrt n)$ at the true parameter value and the fourth order term is $o_P(\sqrt{n})$ at an intermediate point. 
For $s \geq 3$, the third order term at an intermediate point is $o_P(\sqrt n)$, hence, we do not have to consider the fourth order term. 
\end{remark}

Now, recall the definition of $\log \Lambda_{n, s}$ from \eqref{eq:lambda}. Then by Lemma \ref{lm:conditiongradient} and \eqref{eq:lgammaexpansionH0}  
\begin{align}\label{eq:loglambdaexpansion} 
 \frac{ 2 \log \Lambda_{n, s} - n}{\sqrt{2n}} = \frac{(\bm d_s-\mathbb E_{\bm \gamma}[\bm d_s])^\top \bm\Sigma_{n, s}^{-1}(\bm d_s-\mathbb E_{\bm \gamma}[\bm d_s]) - n }{\sqrt{2n}} + o_P(1) . 
\end{align} 
By the following lemma we can replace $\bm\Sigma_{n, s}^{-1}$ with $\bm \Gamma_{n, s}$ in the RHS above. The proof of the lemma is given in Appendix \ref{sec:deltasigmapf}. 

\begin{lem}\label{lm:deltasigma} For $L > 0$, 
$$\P\left(\bm d_s-\mathbb E_{\bm \gamma}[\bm d_s])^\top (\bm\Sigma_{n, s}^{-1} - \bm \Gamma_{n, s}) (\bm d_s-\mathbb E_{\bm \gamma}[\bm d_s]) > L \right) \lesssim  \frac{1}{L^2}.$$ 
This implies, $(\bm d_s-\mathbb E_{\bm \gamma}[\bm d_s])^\top (\bm\Sigma_{n, s}^{-1} - \bm \Gamma_{n, s}) (\bm d_s-\mathbb E_{\bm \gamma}[\bm d_s])$ is bounded in probability.  
\end{lem}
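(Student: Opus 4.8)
The plan is to bound the second moment of the quadratic form $Q_{n,s} := (\bm d_s-\mathbb E_{\bm \gamma}[\bm d_s])^\top (\bm\Sigma_{n, s}^{-1} - \bm \Gamma_{n, s}) (\bm d_s-\mathbb E_{\bm \gamma}[\bm d_s])$ and then apply Markov's inequality. Writing $\bm \xi := \bm d_s - \mathbb E_{\bm \gamma}[\bm d_s]$ and $\bm \Delta_{n, s} := \bm\Sigma_{n, s}^{-1} - \bm \Gamma_{n, s}$ (consistent with the notation in Appendix \ref{sec:approxvariancepf}, up to sign), we have $Q_{n,s} = \bm \xi^\top \bm \Delta_{n, s} \bm \xi$, so it suffices to show $\mathbb E_{\bm \gamma}[Q_{n,s}^2] = O(1)$. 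First I would record that $\mathbb E_{\bm \gamma}[Q_{n,s}] = \mathrm{tr}(\bm \Delta_{n, s} \bm \Sigma_{n, s})$, which, by Lemma \ref{lm:deltauv} applied with $\bm \gamma$ in place of $\bm \beta_s$, is exactly zero (each diagonal entry of $\bm \Delta_{n, s} \bm \Sigma_{n, s}$ vanishes). Hence $Q_{n,s}$ is already centered, and controlling $\mathbb E[Q_{n,s}^2] = \mathrm{Var}(Q_{n,s})$ is the whole task.

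The key step is the standard variance formula for a quadratic form in independent (but not identically distributed) mean-zero random variables. Note that each coordinate $\xi_v = d_s(v) - \mathbb E_{\bm \gamma}[d_s(v)]$ is a sum of independent centered Bernoulli-type terms over hyperedges through $v$, but the $\xi_v$'s themselves are correlated (two vertices share hyperedges). It is cleaner to expand $Q_{n,s}$ directly in terms of the underlying independent edge indicators $X_{\bm e} - \psi(\bm 1^\top \bm \gamma_{\bm e})$, $\bm e \in \binom{[n]}{s}$: writing $\xi_v = \sum_{\bm e \ni v}(X_{\bm e} - p_{\bm e})$ with $p_{\bm e} = \psi(\bm 1^\top \bm \gamma_{\bm e})$, we get $Q_{n,s} = \sum_{\bm e, \bm f} c_{\bm e, \bm f}\,(X_{\bm e}-p_{\bm e})(X_{\bm f}-p_{\bm f})$ where $c_{\bm e, \bm f} = \sum_{u \in \bm e, v \in \bm f} \delta_s(u,v)$. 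Since the $X_{\bm e}$ are independent and bounded in $[0,1]$, a routine computation gives $\mathrm{Var}(Q_{n,s}) \lesssim \sum_{\bm e} c_{\bm e, \bm e}^2 + \sum_{\bm e \ne \bm f} c_{\bm e, \bm f}^2 \lesssim \sum_{\bm e, \bm f} c_{\bm e, \bm f}^2 = \| \bm \Delta_{n, s} \bm L \|_F^2$-type quantity, where $\bm L$ is the combinatorial matrix from Lemma \ref{lem:lower_bound_hess}. Using $|\delta_s(u,v)| \lesssim n^{-s}$ from \eqref{eq:delta} and counting: there are $\binom{n}{s}^2 = O(n^{2s})$ pairs $(\bm e, \bm f)$, and for each pair $|c_{\bm e, \bm f}| \le s^2 \max|\delta_s(u,v)| \lesssim n^{-s}$, so $\sum_{\bm e, \bm f} c_{\bm e, \bm f}^2 \lesssim n^{2s} \cdot n^{-2s} = O(1)$.

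Once $\mathbb E_{\bm \gamma}[Q_{n,s}^2] = \mathrm{Var}(Q_{n,s}) = O(1)$ is established, Markov's inequality gives $\P(|Q_{n,s}| > L) \le \mathbb E[Q_{n,s}^2]/L^2 \lesssim 1/L^2$, which is the claimed bound; boundedness in probability follows immediately since $\sup_n \mathbb E_{\bm \gamma}[Q_{n,s}^2] < \infty$ (indeed, combined with Lemma \ref{lem:approxvariance} one even sees $Q_{n,s} = o_P(1)$, but the stated $O_P(1)$ conclusion is all that is needed for \eqref{eq:loglambdaexpansion}). The main obstacle is the bookkeeping in the variance expansion — being careful that the cross terms from shared hyperedges between the "diagonal" contribution and "off-diagonal" contribution are correctly accounted for, and that the bound $|\delta_s(u,v)| \lesssim n^{-s}$ from \eqref{eq:delta} is applied uniformly over all $u,v$ including the diagonal entries $\delta_s(u,u)$ (for which the same bound holds, since $\sigma_s(u)^2 = \sigma_s(u,u) \asymp n^{s-1}$ and the corresponding entry of $\bm\Sigma_{n,s}^{-1}$ is within $O(n^{-s})$ of $\sigma_s(u)^{-2}$ by the argument already given). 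No delicate concentration is required — a second-moment bound suffices, exactly as the statement of the lemma (with its $1/L^2$ tail) signals.
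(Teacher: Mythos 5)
Your proposal is correct, and it follows the paper's overall strategy — show the quadratic form has mean zero (the trace identity, which is precisely the content of Lemma \ref{lm:deltauv} applied under $\P_{\bm \gamma}$), bound its second moment by $O(1)$, and conclude by Chebyshev — but the variance step is carried out by a genuinely different decomposition. The paper stays at the level of the degree residuals $\bar{d}_s(u)$ and bounds $\Cov_{\bm \gamma}[\bar{d}_{s}(u)\bar{d}_{s}(v), \bar{d}_{s}(u')\bar{d}_{s}(v')]$ through a four-case analysis over the coincidence pattern of $(u,v,u',v')$ (see \eqref{eq:var_quad_form}), whereas you expand directly in the independent centered hyperedge indicators, writing the form as $\sum_{\bm e, \bm f} c_{\bm e,\bm f}\,(X_{\bm e}-p_{\bm e})(X_{\bm f}-p_{\bm f})$ with $c_{\bm e,\bm f}=\sum_{u \in \bm e, v \in \bm f}\delta_s(u,v)$, and use the standard variance identity for quadratic forms in independent bounded variables; the diagonal/off-diagonal cross terms do vanish, as you anticipate, so $\Var(Q_{n,s}) \lesssim \sum_{\bm e,\bm f} c_{\bm e,\bm f}^2$. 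Your final count $\sum_{\bm e,\bm f} c_{\bm e,\bm f}^2 \lesssim n^{2s}\,\max_{u,v}|\delta_s(u,v)|^2 = O(1)$ invokes \eqref{eq:delta} exactly as the paper does in its last display, and the uniformity over diagonal entries $\delta_s(u,u)$ that you flag is indeed covered by Lemma \ref{lem:approxvariance}, whose max-norm is over all entries. Your route buys cleaner, case-free bookkeeping (the correlations among the $\bar d_s(u)$ never need to be tracked); the paper's route keeps everything in terms of degree covariances, which matches the notation of the surrounding appendix but requires the explicit four cases. One small caveat: your parenthetical claim that one could even conclude $Q_{n,s}=o_P(1)$ is not justified by the max-norm bound \eqref{eq:delta} alone — the off-diagonal contribution to the variance is only $O(1)$ under that bound — but since the lemma (and \eqref{eq:loglambdaexpansion}) only needs $O_P(1)$, this aside does not affect your proof.
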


By Lemma \ref{lm:deltasigma} and recalling \eqref{eq:s_n}, 
 \begin{align}\label{eq:degreequadratic}
\frac{(\bm d_s-\mathbb E_{\bm \gamma}[\bm d_s])^\top \bm\Sigma_{n, s}^{-1}(\bm d_s-\mathbb E_{\bm \gamma}[\bm d_s])}{\sqrt{2 n}} & = \frac{(\bm d_s-\mathbb E_{\bm \gamma}[\bm d_s])^\top \bm\Gamma_{n, s}(\bm d_s-\mathbb E_{\bm \gamma}[\bm d_s])}{\sqrt{2 n}} + o_P\left(1\right)  \nonumber \\ 
& = \frac{1}{\sqrt{2n}}\sum_{u=1}^{n}\frac{(d_s(u)  - \mathbb{E}_{\bm \gamma}[d_s(u) ])^2}{\sigma_{s}(u)^2 }  + o_P\left(1\right) . 
 \end{align} 
Proposition \ref{ppn:lambdaH0} establishes the asymptotic normality of the leading term in the RHS above. The proof is given in Appendix \ref{sec:lambdaH0pf}. \hfill $\Box$ 
  
\begin{prop}\label{ppn:lambdaH0} Under the assumption of Theorem \ref{thm:testing_inhom_level}, 
\begin{align}\label{eq:lambdacltdiagonal}
\frac{1}{\sqrt{2n}} \left\{ \sum_{u=1}^{n} \frac{(d_s(u)  - \mathbb{E}_{\bm \gamma}[d_s(u) ])^2}{\sigma_{s}(u)^2 } -n \right\} \dto \cN(0, 1) . 
\end{align}  
\end{prop}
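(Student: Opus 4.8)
The plan is to establish \eqref{eq:lambdacltdiagonal} via a central limit theorem for a sum of independent, but non-identically distributed, random variables, after writing each degree $d_s(u)$ as such a sum. First I would recall that, under $H_0$, for each $u \in [n]$,
\[
d_s(u) = \sum_{\bm e \in {[n] \choose s} : u \in \bm e} X_{\bm e}, \qquad X_{\bm e} := \bm 1\{\bm e \in E(H_n)\},
\]
where the $X_{\bm e}$ are independent Bernoulli random variables with $\E_{\bm \gamma}[X_{\bm e}] = \psi(\bm 1^\top \bm \gamma_{\bm e})$, $\psi(x) = e^x/(1+e^x)$, and $\Var_{\bm \gamma}[X_{\bm e}] = \psi(\bm 1^\top\bm\gamma_{\bm e})(1-\psi(\bm 1^\top\bm\gamma_{\bm e}))$. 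In particular $\sigma_s(u)^2 = \sum_{\bm e : u \in \bm e}\Var_{\bm\gamma}[X_{\bm e}] = \Var_{\bm\gamma}[d_s(u)]$. The issue is that the centered variables $(d_s(u) - \E_{\bm\gamma}[d_s(u)])^2/\sigma_s(u)^2$ are \emph{not} independent across $u$, since two distinct vertices $u \ne v$ share all hyperedges $\bm e$ with $\{u,v\}\subseteq\bm e$. However, by \eqref{eq:v_s} the covariance $\sigma_s(u,v) = \sum_{\bm e : u,v\in\bm e}\Var_{\bm\gamma}[X_{\bm e}] \asymp n^{s-2}$, which is of smaller order than $\sigma_s(u)^2 \asymp n^{s-1}$, so this dependence is weak.

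The cleanest route, which mirrors the linearization idea already used for Theorem \ref{thm:central_lim_thm}, is to re-express the quadratic form directly in terms of the independent $X_{\bm e}$. Write
\[
\sum_{u=1}^n \frac{(d_s(u)-\E_{\bm\gamma}[d_s(u)])^2}{\sigma_s(u)^2}
= \sum_{u=1}^n \frac{1}{\sigma_s(u)^2}\left(\sum_{\bm e : u\in\bm e}(X_{\bm e}-\E_{\bm\gamma}[X_{\bm e}])\right)^2.
\]
Expanding the square, the diagonal terms $\sum_u \sigma_s(u)^{-2}\sum_{\bm e : u\in\bm e}(X_{\bm e}-\E X_{\bm e})^2$ have expectation exactly $n$ (this is why the centering is by $n$) and fluctuation $O_P(\sqrt{n}/n^{s-1}\cdot n^{s-1}) = O_P(\sqrt n)$ — more precisely its variance is $\Theta(n)$, contributing at the $\sqrt n$ scale; and the off-diagonal terms $\sum_u\sigma_s(u)^{-2}\sum_{\bm e\ne\bm e' : u\in\bm e\cap\bm e'}(X_{\bm e}-\E X_{\bm e})(X_{\bm e'}-\E X_{\bm e'})$ form a degenerate U-statistic of the $X_{\bm e}$. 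I would compute the second moment of this entire centered statistic, verify it is $\Theta(n)$, and identify the dominant contribution, so that \eqref{eq:lambdacltdiagonal} has the right scaling. Then I would apply a martingale CLT (ordering the hyperedges $\bm e$ arbitrarily and taking the natural filtration $\mathcal F_k = \sigma(X_{\bm e_1},\dots,X_{\bm e_k})$, with the martingale difference being the conditional increment of the statistic) — or, alternatively, a CLT for quadratic forms in independent bounded random variables such as the one in de Jong's theorem — checking the Lindeberg-type condition and that the conditional variance converges in probability to the limiting variance. Boundedness of all the $X_{\bm e}$ and the uniform bounds $\sigma_s(u)^2\asymp n^{s-1}$, $\sigma_s(u,v)\asymp n^{s-2}$ (from $\|\bm\gamma\|_\infty \le M$) make all moment estimates routine.

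An even simpler alternative, and probably the one I would actually write up, exploits \eqref{eq:betaDs} and Theorem \ref{thm:central_lim_thm}: the variables $T_u := (d_s(u)-\E_{\bm\gamma}[d_s(u)])/\sigma_s(u)$ satisfy, for any fixed finite set of indices, joint asymptotic normality with identity covariance, but here we need a CLT for the sum of all $n$ of the weakly dependent variables $T_u^2 - 1$. I would decompose $T_u = W_u + V_u$ where $W_u$ collects the contributions of hyperedges containing $u$ and no other "special" vertex in a sense that makes the $W_u$'s genuinely independent — concretely, one can bound $\E[(\sum_u (T_u^2-1))^2]$ by first-moment and second-moment computations and show the cross terms are $o(n)$ because $\sum_{u\ne v}\sigma_s(u,v)^2/(\sigma_s(u)^2\sigma_s(v)^2) = O(n^2\cdot n^{2(s-2)}/n^{2(s-1)}) = O(1) = o(n)$. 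The main obstacle is precisely controlling this weak dependence rigorously: showing that the off-diagonal/cross contributions to both the mean-shift and the variance are negligible at the $\sqrt n$ normalization, and then invoking a suitable CLT (martingale-difference or U-statistic) for the remaining approximately-independent sum. Once the second-moment bookkeeping confirms that the statistic equals $\sqrt{2n}$ times an asymptotically standard normal quantity plus $o_P(1)$, the conclusion \eqref{eq:lambdacltdiagonal} follows. I would present the martingale CLT version since it handles the dependence structure most transparently.
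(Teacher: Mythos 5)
Your overall route is the same as the paper's: write $\bar d_s(u)=\sum_{\bm e: u\in\bm e}\bar X_{\bm e}$, expand the square into a ``diagonal'' part $\sum_{\bm e}(\sum_{u\in\bm e}\sigma_s(u)^{-2})(\bar X_{\bm e}^2-\E\bar X_{\bm e}^2)$ and an ``off-diagonal'' degenerate quadratic form $\sum_{\bm e\ne\bm e'}w_{\bm e,\bm e'}\bar X_{\bm e}\bar X_{\bm e'}$, and prove a CLT for the latter by a martingale argument (the paper filters by vertices, you filter by an arbitrary ordering of the hyperedges, which is equally fine; de Jong's CLT for clean quadratic forms would also close the argument and is arguably tidier than the paper's omitted martingale details). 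Your cross-term estimate $\sum_{u\ne v}\sigma_s(u,v)^2/(\sigma_s(u)^2\sigma_s(v)^2)=O(n^2\cdot n^{2(s-2)}/n^{2(s-1)})=O(1)$ is also correct and is exactly what makes the total variance $\sim 2n$.

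There is, however, a concrete error in your accounting for the diagonal part. You assert that it has variance $\Theta(n)$ and ``contributes at the $\sqrt n$ scale.'' In fact, writing it as $\sum_{\bm e}w_{\bm e}(\bar X_{\bm e}^2-\E\bar X_{\bm e}^2)$ with $w_{\bm e}=\sum_{u\in\bm e}\sigma_s(u)^{-2}\asymp n^{-(s-1)}$, independence of the $\bar X_{\bm e}$ gives variance $\asymp \binom{n}{s}\,n^{-2(s-1)}\asymp n^{2-s}$, i.e.\ $O(1)$ for $s=2$ and $o(1)$ for $s\ge 3$; after the $1/\sqrt{2n}$ normalization this term is $o_P(1)$ (it is the term $T_1$ in the paper's proof). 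This negligibility is not a side remark but is needed for the stated limit: the entire limiting variance $2n$ must come from the off-diagonal degenerate form, and if the diagonal part really fluctuated at order $\sqrt n$ the $\cN(0,1)$ limit with the $\sqrt{2n}$ normalization could not be deduced from the CLT for the off-diagonal part alone (the variance constant would be wrong, and the diagonal would have to be incorporated into the limit). Your own plan to ``compute the second moment of the entire centered statistic and identify the dominant contribution'' would presumably catch this, but as written the claim is off by a factor of $n^{s-1}$, and the write-up must explicitly show the diagonal piece is $o_P(\sqrt n)$ before applying the martingale (or de Jong) CLT to the remaining degenerate quadratic form.
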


The result in \eqref{eq:lambdaH0} now follows from \eqref{eq:loglambdaexpansion}, \eqref{eq:degreequadratic}, and Proposition \ref{ppn:lambdaH0}.

\subsubsection{Proof of Lemma \ref{lm:deltasigma}} 
\label{sec:deltasigmapf}
To begin with note that  
\begin{align*} 
\E_{\bm \gamma}[(\bm d_s-\mathbb E_{\bm \gamma}[\bm d_s])^\top (\bm\Sigma_{n, s}^{-1} - \bm \Gamma_{n, s}) (\bm d_s-\mathbb E_{\bm \gamma}[\bm d_s])] 
& = \tr (\E_{\bm \gamma}[(\bm d_s-\mathbb E_{\bm \gamma}[\bm d_s]) (\bm d_s-\mathbb E_{\bm \gamma}[\bm d_s])^\top ]  (\bm\Sigma_{n, s}^{-1} - \bm \Gamma_{n, s}) )\nonumber \\ 
& = \tr ( \bm I_{n} - \bm \Sigma_{n, s} \bm \Gamma_{n, s} ) \nonumber \\  
& = n- \sum_{u, v \in [n]} \sigma_s(u, v) \gamma_s(u, v) \nonumber \\ 
& = n- \sum_{u, v \in [n]} \sigma_s(u, v) \frac{\mathbbm 1\{u=v\}}{\sigma_{s}(u)^2} = 0 . 
 \end{align*} 
 
Next, we will show that $\Var_{\bm \gamma}[(\bm d_s-\mathbb E_{\bm \gamma}[\bm d_s])^\top (\bm\Sigma_{n, s}^{-1} - \bm \Gamma_{n, s}) (\bm d_s-\mathbb E_{\bm \gamma}[\bm d_s])] = O (1)$. The result in Lemma \ref{lm:deltasigma} then follows by Chebyshev's inequality. Recall that $\bm \Delta_{n, s}:=\bm\Sigma_{n, s}^{-1} - \bm \Gamma_{n, s}$. We shall denote the entries of $\bm \Delta_{n, s}$ by $(( \delta_{u,v} )) $ for $u, v \in [n]$. Then 
\[
(\bm d_s-\mathbb E_{\bm \gamma}[\bm d_s])^\top (\bm\Sigma_{n, s}^{-1} - \bm \Gamma_{n, s}) (\bm d_s-\mathbb E_{\bm \gamma}[\bm d_s]) = \sum_{u,v \in [n]} \delta_{u, v} (d_s(u)-\mathbb E_{\bm \gamma}[d_s(u)])(d_s(v)-\mathbb E_{\bm \gamma}[d_s(v)]).
\]
Define $\bar{d}_{s}(u) := d_s(u)-\mathbb E_{\bm \gamma}[d_s(u)]$, for $u \in [n]$. Then 
\begin{align}
\label{eq:var_quad_form}
& \Var_{\bm \gamma}[(\bm d_s-\mathbb E_{\bm \gamma}[\bm d_s])^\top (\bm\Sigma_{n, s}^{-1} - \bm \Gamma_{n, s}) (\bm d_s-\mathbb E_{\bm \gamma}[\bm d_s])] \nonumber \\ 
& = \sum_{ u,v,u',v' \in [n] } \delta_{u, v} \delta_{u', v'}\Cov_{\bm \gamma}[\bar{d}_{s}(u)\bar{d}_{s}(v), \bar{d}_{s}(u')\bar{d}_{s}(v')] . 
\end{align} 
To analyze the RHS of \eqref{eq:var_quad_form} we consider the following 4 cases. 

\begin{itemize} 

\item[{\it Case} 1:] $u=v=u'=v'$.  In this case we have
\[
\Cov_{\bm \gamma}[\bar{d}_{s}(u)\bar{d}_{s}(v), \bar{d}_{s}(u')\bar{d}_{s}(v')] = \Var_{\bm \gamma}[\bar{d}_{s}(u)^2].
\]
For $\bm e \in {[n] \choose s}$, denote \revsag{$X_{\bm e}: =\mathbbm 1\{ \bm e \in E(H_n) \}$} 
and \revsag{$\bar X_{\bm e} :=\mathbbm 1\{ \bm e \in E(H_n) \} - \E[\mathbbm 1\{ \bm e \in E(H_n) \}]$}. Since $\{\bar X_{\bm e}: \bm e \in {[n] \choose s} \}$ are independent and have zero mean, $\{\bar X_{\bm e} \bar X_{\bm e'}: \bm e, \bm e' \in {[n] \choose s} \}$ are pairwise uncorrelated. 
Hence, 
\begin{align}\label{eq:degreesecond}
\Var_{\bm \gamma}[\bar{d}_{s}(u)^2] & = \Var_{\bm \gamma} \left[ \sum_{\bm e, \bm e' \in {[n] \choose s} : u \in \bm e \bigcap \bm e' } \bar X_{\bm e} \bar X_{\bm e'} \right] \nonumber \\ 
& = \sum_{\bm e, \bm e' \in {[n] \choose s} : u \in \bm e \bigcap \bm e' } 
\Var_{\bm \gamma}\left[\bar X_{\bm e} \bar X_{\bm e'} \right] \nonumber \\ 
& = \sum_{\bm e \in {[n] \choose s}: u \in \bm e}\Var_{\bm \gamma}[\bar{X}^2_{\bm e}] + \sum_{\bm e \ne \bm e' \in {[n] \choose s} : u \in \bm e \cap \bm e' }  \Var_{\bm \gamma}[\bar X_{\bm e}] \Var_{\bm \gamma}[ \bar X_{\bm e'} ] . 
\end{align} 
Since $\|\bm \gamma\|_\infty \le M$, 
$$\Var_{\bm \gamma}[\bar X_{\bm e}] = \Var_{\bm \gamma}[ X_{\bm e}]= \frac{e^{\bm 1^\top \gamma_{\bm e}}}{(1 + e^{\bm 1^\top \bm \gamma_{\bm e}}) }  \asymp_M 1 , $$ 
where $\gamma_{\bm e} = (\gamma_{u_1}, \gamma_{u_2}, \ldots, \gamma_{u_s})^\top$, for $\bm e = (u_1, u_2, \ldots, u_s)$. Similarly, $\Var_{\bm \gamma}[\bar{X}^2_{\bm e}] \asymp_M 1$. Hence, \eqref{eq:degreesecond} implies that 
\[
\Var_{\bm \gamma}[\bar{d}_{s}(u)^2] \lesssim_{M} n^{2s-2}.
\] 

\item [{\it Case} 2:] $u \neq v=u'=v'$. Observe that 
\begin{align*}
& \Cov_{\bm \gamma}[\bar{d}_{s}(u)\bar{d}_{s}(v), \bar{d}_{s}(u')\bar{d}_{s}(v')] \nonumber \\ 
& = \Cov_{\bm \gamma}[\bar{d}_{s}(u)\bar{d}_{s}(v), \bar{d}_{s}(v)^2 ] \nonumber \\ 
& = \sum_{\substack{\bm e_1,  \bm e_2, \bm e_3, \bm e_4 \in {[n] \choose s} \\ u \in \bm e_1, v \in \bm e_1 \cap \bm e_2 \cap \bm e_3} } \left\{\mathbb E_{\bm \gamma}[\bar{X}_{\bm e_1}\bar{X}_{\bm e_2}\bar{X}_{\bm e_3}\bar{X}_{\bm e_4}]-\mathbb E_{\bm \gamma}[\bar{X}_{\bm e_1}\bar{X}_{\bm e_2}]\mathbb E_{\bm \gamma}[\bar{X}_{\bm e_3}\bar{X}_{\bm e_4}]\right\}.
\end{align*}  
Note that the non-zero contributions in the RHS above come from the terms when $\bm e_i = \bm e_j$ and $\bm e_k = \bm e_\ell$ for $i,j,k,\ell \in \{1,\ldots,4\}$. Hence, 
\begin{align*}
& \Cov_{\bm \gamma}[\bar{d}_{s}(u)\bar{d}_{s}(v), \bar{d}_{s}(v)^2 ] \nonumber \\ 
& = \sum_{\bm e \in {[n] \choose s}, u, v \in \bm e} \left( \mathbb E_{\bm \gamma}[\bar{X}^4_{\bm e}] - (\mathbb E_{\bm \gamma}[\bar{X}_{\bm e}^2])^2 \right) + 2\sum_{\substack{\bm e_1 \ne \bm e_2 \in {[n] \choose s} \\ u, v \in \bm e_1, v \in \bm e_2}}\mathbb{E}_{\bm \gamma}[\bar{X}^2_{\bm e_1}]\mathbb{E}[\bar{X}^2_{\bm e_2}] \nonumber \\ 
 & \lesssim_M n^{2s-3} , 
\end{align*} 
since $\mathbb E_{\bm \gamma}[\bar{X}^4_{\bm e}] \asymp_M 1$ and $\mathbb E_{\bm \gamma}[\bar{X}_{\bm e}^2]) \asymp_M 1$.  

\item[{\it Case} 3:] $u \neq v \neq u'=v'$:  By similar reasoning as the previous two cases it can be shown that 
\[
\Cov_{\bm \gamma}[\bar{d}_{s}(u)\bar{d}_{s}(v),\bar{d}_{s}(u')\bar{d}_{s}(v')] = \Cov_{\bm \gamma}[\bar{d}^2_{s}(u),\bar{d}_{s}(u')\bar{d}_{s}(u')] \lesssim_{M} n^{2s-3}.
\]

\item[{\it Case} 4:] $u \neq v \neq u' \neq v'$. In this case, it can be shown that 
\[
\Cov_{\bm \gamma}[\bar{d}_{s}(u)\bar{d}_{s}(v),\bar{d}_{s}(u')\bar{d}_{s}(v')] \lesssim_{M} n^{2s-4}.
\]

\end{itemize} 
Combining the 4 cases and using \eqref{eq:var_quad_form}, 
\begin{align*}
\Var_{\bm \gamma}[(\bm d_s-\mathbb E_{\bm \gamma}[\bm d_s])^\top (\bm\Sigma_{n, s}^{-1} - \bm \Gamma_{n, s}) (\bm d_s-\mathbb E_{\bm \gamma}[\bm d_s])] 
& \lesssim_{M} \max_{u, v \in [n]}|\delta_{u, v}|^2  n^{2s} = O(1) , 
\end{align*} 
where the last step uses \eqref{eq:delta}.

\subsubsection{Proof of Proposition \ref{ppn:lambdaH0}} 
\label{sec:lambdaH0pf}
Suppose $H_n = (V(H_n), E(H_n)) \sim \mathsf{H}_{n, s}(n, \bm \gamma)$ for $\bm \gamma$ as in \eqref{eq:H0gamma}. For $\bm e= \{v_1, v_2, \ldots, v_s\} \in {[n] \choose s}$, denote 
\[
X_{\bm e} := X_{\{v_1, v_2, \ldots, v_s\}} : =\mathbbm 1\{ \bm e \in E(H_n) \} . 
\] 
and $\revsag{\bar X_{\bm e} :=\mathbbm 1\{ \bm e \in E(H_n) \} - \E_{\bm \gamma}[\mathbbm 1\{ \bm e \in E(H_n) \}]}$. Also, for $u \in [n]$ denote 
$$\bar d_s(u) = d_s(u) - \E_{\bm \gamma}[d_s(u)] = \sum_{\bm e \in {[n] \choose s} : u \in \bm e} \bar X_{\bm e}.$$
Observe that 
\begin{align}\label{eq:dsu} 
\bar d_s(u)^2 =  \sum_{\bm e \in {[n] \choose s} : u \in \bm e} \bar X_{\bm e}^2 +  \sum_{\bm e \ne \bm e' \in {[n] \choose s} : u \in \bm e \cap \bm e'} \bar X_{\bm e} \bar X_{\bm e'} . 
\end{align} 
This implies, 
$$\E_{\bm \gamma}[\bar d_s(u)^2] = \sum_{\bm e \in {[n] \choose s} : u \in \bm e} \E_{\bm \gamma}[\bar X_{\bm e}^2] = \sum_{\bm e \in {[n] \choose s} : u \in \bm e} \Var_{\bm \gamma}[\bar X_{\bm e}^2] = \Var_{\bm \gamma}[d_s(u)] = \sigma_s(u)^2 . $$
Hence, 
\begin{align} 
& \frac{1}{\sqrt{2n}} \left\{ \sum_{u=1}^{n} \frac{(d_s(u)  - \mathbb{E}_{\bm \gamma}[d_s(u) ])^2}{ \sigma_{s}(u)^2 } -n \right\} \nonumber \\ 
& = \frac{1}{\sqrt{2n}} \sum_{u=1}^{n}  \frac{ \bar d_s(u)^2 - \E_{\bm \gamma}[\bar d_s(u)^2]  }{ \sigma_s(u)^2 } \nonumber \\ 
& = \frac{1}{\sqrt{2n}} \sum_{u=1}^{n} \sum_{\bm e \in {[n] \choose s} : u \in \bm e} \frac{ \bar X_{\bm e}^2 - \E_{\bm \gamma}[\bar X_{\bm e}^2] }{\sigma_{s}(u)^2  } +  \frac{1}{\sqrt{2n}} \sum_{u=1}^{n} \sum_{\bm e \ne \bm e' \in {[n] \choose s} : u \in \bm e \cap \bm e'} \frac{ \bar X_{\bm e} \bar X_{\bm e'} }{ \sigma_{s}(u)^2 }  \tag*{(by \eqref{eq:dsu})} \nonumber \\ 
& := T_1 + T_2 . 
\label{eq:cltT} 
\end{align} 

We will first show that $T_1 = o_P(1)$. Towards this note that 
$$T_1 = \frac{s}{\sqrt{2n}} \sum_{\bm e \in { [n] \choose s} } \frac{ \bar X_{\bm e}^2 - \E_{\bm \gamma}[\bar X_{\bm e}^2] }{\sigma_{s}(u)^2  }.$$
Since $\{ \bar X_{\bm e } : \bm e \in  { [n] \choose s} \}$ are independent, 
\begin{align*} 
\Var_{\bm \gamma}[T_1] =  \frac{s^2}{ 2 n } \sum_{\bm e \in { [n] \choose s} } \frac{ \Var_{\bm \gamma} [ \bar X_{\bm e}^2 ] }{\sigma_{s}(u)^4  }  \lesssim_M  \frac{1}{n^{s-1}} , 
\end{align*} 
using $\Var_{\bm \gamma}[\bar X_{\bm e}^2] \asymp_M 1$ and $\sigma_{s}(u)^2 \asymp_M n^{s-1}$. This implies, $T_1 = o_P(1)$. 

Therefore, from \eqref{eq:cltT}, to prove \eqref{eq:lambdacltdiagonal} it remains to show $T_2 \dto N(0, 1)$. For this we will express $T_2$ as a sum of a martingale difference sequence. To this end, define the following sequence of sigma-fields: For $u \in [n]$, 
$$\cF_u := \sigma\left(\bigcup_{v=1}^u\{ \bar X_{\bm e}: v \in \bm e \}\right), $$
is the sigma algebra generated by the collection of random variables $\bigcup_{v=1}^u\{ \bar X_{\bm e}: v \in \bm e \}$. Clearly, $\cF_1 \subseteq \cF_2 \cdots \subseteq \cF_n$, hence $\{\cF_u\}_{u \in [n]}$ is a filtration. Now, for $u \in [n]$, define 
%
%
\begin{align*}
T_{2, u} :=   \sum_{ \substack{ \bm e, \bm e' \in {[n] \choose s}: \bm e \ne \bm e', u \in \bm e \cap \bm e',  \\ 
 \bm e \cap \{1,\ldots, u\} \neq \emptyset \\ 
\text{ and }  \bm e' \cap \{1,\ldots,u-1\} = \emptyset} } w_{\bm e, \bm e'}  \bar X_{\bm e} \bar X_{\bm e'} 
\end{align*} 
where $w_{\bm e, \bm e'}:= \sum_{z \in \bm e \cap \bm e'} \frac{1}{\sigma_s(z)^2}$. Note that $T_{2, u}$ is $\cF_u$ measurable and $\E_{\bm \gamma}[T_{2, u} | \cF_{u-1} ] = 0$, that is, $T_{2, u}$, for $u \in [n]$, is a martingale difference sequence. Also, recalling the definition of $T_2$ from \eqref{eq:cltT} observe that 
\begin{align*} 
T_2  = \frac{1}{\sqrt{2n}} \sum_{u=1}^{n} \sum_{\bm e \ne \bm e' \in {[n] \choose s} : u \in \bm e \cap \bm e'} \frac{ \bar X_{\bm e} \bar X_{\bm e'} }{ \sigma_{s}(u)^2 } 
& = \frac{1}{\sqrt{2n}}  \sum_{\bm e \ne \bm e' \in {[n] \choose s}, \bm e \cap \bm e' \ne \emptyset} w_{\bm e, \bm e'} \bar X_{\bm e} \bar X_{\bm e'} 
\nonumber \\ 
& = \frac{1}{\sqrt{2n}}\sum_{u=1}^n T_{2, u} , 
\end{align*} 
that is, $T_2 $ is the sum of a martingale difference sequence. Now, invoking the martingale central theorem \cite{brownclt} it can be shown that $T_2 \dto N(0, 1)$. The details are omitted.

\subsection{Proof of Theorem \ref{thm:testing_inhom_power}}
\label{sec:asymptoticH1pf} 

Suppose $H_n \sim \mathsf{H}_{n, s}(n, \bm \gamma')$ for $\bm \gamma'$ as in \eqref{eq:H1gamma}. Then by arguments as in \eqref{eq:lgammaexpansionH0},  
\begin{align*} 
    \ell_{n, s}(\hat{\bm \beta}_{s})-\ell_{n, s}(\bm \gamma') &= - \frac{1}{2}(\bm d_s-\mathbb E_{\bm \gamma'}[\bm d_s])^\top \overline{\bm\Sigma}_{n, s}^{-1}(\bm d_s-\mathbb E_{\bm \gamma'}[\bm d_s]) + \frac{1}{2} \bm R_{n, s}^\top \overline{\bm\Sigma}_{n, s}^{-1}\bm R_{n, s} + \cT_{n, s} , 
\end{align*} 
where $\overline{\bm\Sigma}_{n, s}$ and $\bm R_{n, s}$ are as defined in \eqref{eq:v_s} and \eqref{eq:degree}, respectively, with $\bm \beta_s$ replaced by $\bm \gamma'$ and  $\cT_{n, s}$ as defined in \eqref{eq:gradient3} with $\bm \gamma$ replaced by $\bm \gamma'$. Therefore, 
\begin{align}\label{eq:lgammaexpansionH1}
    \ell_{n, s}(\hat{\bm \beta}_{s})-\ell_{n, s}(\bm \gamma) = - \frac{1}{2}(\bm d_s-\mathbb E_{\bm \gamma'}[\bm d_s])^\top & \overline{\bm\Sigma}_{n, s}^{-1}(\bm d_s-\mathbb E_{\bm \gamma'}[\bm d_s]) \nonumber \\
    & + \frac{1}{2}\bm R_{n, s}^\top\overline{\bm\Sigma}_{n, s}^{-1}\bm R_{n, s}+ \cT_{n, s} +\ell_{n, s}(\bm \gamma')-\ell_{n, s}(\bm \gamma),
\end{align}
 By Taylor expansion, 
\[
\ell_{n, s}(\bm \gamma')-\ell_{n, s}(\bm \gamma)=(\bm d_s-\mathbb E_{\bm \gamma'}[\bm d_s])^\top(\bm \gamma'-\bm \gamma) + \frac{1}{2}(\bm \gamma'-\bm \gamma)^\top \tilde{\bm \Sigma}_{n, s}(\bm \gamma'-\bm \gamma),
\]
where $\tilde{\bm \Sigma}_{n, s}$ is the covariance matrix defined in \eqref{eq:v_s} with $\bm \beta_s$ replaced by $\tilde{\bm \gamma}=\bm \gamma'+\theta(\bm \gamma'-\bm \gamma)$ for some $0<\theta<1$. Then by arguments as in \eqref{eq:Rgamma} and Lemma \ref{lm:conditiongradient}, Lemma \ref{lm:deltasigma}, \eqref{eq:lgammaexpansionH1} can be written as: 
\begin{align}\label{eq:lgammaH1} 
    \ell_{n, s}(\hat{\bm \beta}_{s})-\ell_{n, s}(\bm \gamma)  = - \frac{1}{2}(\bm d_s-\mathbb E_{\bm \gamma'}[\bm d_s])^\top & \overline{\bm\Gamma}_{n, s} (\bm d_s-\mathbb E_{\bm \gamma'}[\bm d_s]) + (\bm d_s-\mathbb E_{\bm \gamma'}[\bm d_s])^\top(\bm \gamma'-\bm \gamma) \nonumber \\ 
    & + \frac{1}{2}(\bm \gamma'-\bm \gamma)^\top \tilde{\bm \Sigma}_{n, s}(\bm \gamma'-\bm \gamma) + o_P(\sqrt n) , 
\end{align}  
where $\overline{\bm \Gamma}_{n, s}$ is as defined in \eqref{eq:s_n} with the parameter $\bm \beta_s$ replaced by $\bm \gamma'$.

We begin with the case $\|\bm \gamma'-\bm \gamma\|_2 \ll n^{-\frac{2s-3}{4}}$. In this case, since $\grad^2 \ell_{n, s}(\bm \gamma') = \overline{\bm \Sigma}_{n, s}$, by Lemma \ref{lem:lower_bound_hess} 
\begin{equation}
\label{eq:first_approx}
(\bm \gamma'-\bm \gamma)^\top \overline{{\bm \Sigma}}_{n, s} (\bm \gamma'-\bm \gamma)  \asymp  n^{s -1}\|\bm \gamma'-\bm \gamma\|_2^2 \ll \sqrt n .  
\end{equation} 
Similarly, 
\begin{equation}
\label{eq:second_approx}
(\bm \gamma'-\bm \gamma)^\top \tilde {\bm \Sigma}_{n, s} (\bm \gamma'-\bm \gamma)  \asymp  n^{s -1}\|\bm \gamma'-\bm \gamma\|_2^2 \ll \sqrt n .  
\end{equation}  
Hence, 
$$\Var[(\bm d_s-\mathbb E_{\bm \gamma'}[\bm d_s])^\top(\bm \gamma'-\bm \gamma)] = (\bm \gamma'-\bm \gamma)^\top \overline{{\bm \Sigma}}_{n, s}(\bm \gamma'-\bm \gamma) \ll n,$$
which implies, $(\bm d_s-\mathbb E_{\bm \gamma'}[\bm d_s])^\top(\bm \gamma'-\bm \gamma) = o_P(\sqrt n)$, since $\E[(\bm d_s-\mathbb E_{\bm \gamma'}[\bm d_s])^\top(\bm \gamma'-\bm \gamma)] = 0$. 
Therefore, under $H_1$ as in \eqref{eq:H1gamma}, 
\begin{align}
\frac{2\log \Lambda_{n, s}-n}{\sqrt{2n}} & = \frac{2(\ell_{n, s}(\bm \gamma)-\ell_{n, s}(\hat{\bm \beta}_{s})) -n }{\sqrt{2n}} \nonumber \\ 
& = \frac{(\bm d_s-\mathbb E_{\bm \gamma'}[\bm d_s])^\top \overline{\bm \Gamma}_{n, s}(\bm d_s-\mathbb E_{\bm \gamma'}[\bm d_s]) - n }{\sqrt{2 n}} + o_P(1) \tag*{(by \eqref{eq:lgammaH1}, \eqref{eq:first_approx}, and \eqref{eq:second_approx})} \nonumber \\ 
& \dto \cN(0,1) , \nonumber 
\end{align} 
by Proposition \ref{ppn:lambdaH0}. This proves the first assertion in \eqref{eq:asymptoticH1}. 

Next, suppose $\|\bm \gamma'-\bm \gamma\|_2 \gg n^{-\frac{2s-3}{4}}$. In this case, by Lemma \ref{lem:lower_bound_hess}, $(\bm \gamma'-\bm \gamma)^\top \overline{\bm \Sigma}_{n, s} (\bm \gamma'-\bm \gamma)  \asymp  n^{s -1}\|\bm \gamma'-\bm \gamma\|_2^2 \gg \sqrt n$. We will first assume: 
\begin{align}\label{eq:gammaseparationl}
 \sqrt n \ll (\bm \gamma'-\bm \gamma)^\top \overline{\bm \Sigma}_{n, s}(\bm \gamma'-\bm \gamma) \lesssim n . 
 \end{align}
Then we have $\Var[(\bm d_s-\mathbb E_{\bm \gamma'}[\bm d_s])^\top(\bm \gamma'-\bm \gamma)] = (\bm \gamma'-\bm \gamma)^\top \overline{{\bm \Sigma}}_{n, s}(\bm \gamma'-\bm \gamma) = O(n)$. Using this and Proposition \ref{ppn:lambdaH0} it follows that 
$$
\frac{1}{\sqrt{n}}\left[\frac{1}{2}(\bm d_s-\mathbb E_{\bm \gamma'}[\bm d_s])^\top \overline{\bm\Gamma}_{n, s} (\bm d_s-\mathbb E_{\bm \gamma'}[\bm d_s])+(\bm d_s-\mathbb E_{\bm \gamma'}[\bm d_s])^\top(\bm \gamma'-\bm \gamma)\right] 
$$ 
is bounded in probability. Hence, from \eqref{eq:lgammaH1}, 
\begin{align} 
\frac{2\log \Lambda_{n, s}-n}{\sqrt{2n}} & = \frac{(\ell_{n, s}(\bm \gamma)-\ell_{n, s}(\hat{\bm \beta}_{s})) -n }{\sqrt{2n}} \rightarrow \infty , \nonumber 
\end{align} 
in probability, since by Lemma \ref{lem:lower_bound_hess}, $(\bm \gamma'-\bm \gamma)^\top \tilde{\bm \Sigma}_{n, s} (\bm \gamma'-\bm \gamma)  \asymp  n^{s -1}\|\bm \gamma'-\bm \gamma\|_2^2 \gg \sqrt n$. This implies, $\E_{\bm \gamma'}[\phi_{n, s}] \rightarrow 1$, whenever \eqref{eq:gammaseparationl} holds.  Next, we assume 
\begin{align}\label{eq:gammaseparationg}
(\bm \gamma'-\bm \gamma)^\top \overline{\bm \Sigma}_{n, s}(\bm \gamma'-\bm \gamma) \gg n .  
\end{align}
For notational convenience denote $\vartheta_{n, s} :=(\bm \gamma'-\bm \gamma)^\top \overline{\bm \Sigma}_{n, s}(\bm \gamma'-\bm \gamma)$. Then Proposition \ref{ppn:lambdaH0} and \eqref{eq:gammaseparationg} imply that 
\[
\frac{1}{\sqrt{\vartheta_{n, s} }}\left[\frac{1}{2}(\bm d_s-\mathbb E_{\bm \gamma'}[\bm d_s])^\top \overline{\bm \Gamma}_{n, s} (\bm d_s-\mathbb E_{\bm \gamma'}[\bm d_s])+(\bm d_s-\mathbb E_{\bm \gamma'}[\bm d_s])^\top(\bm \gamma'-\bm \gamma)\right] 
\] 
is bounded in probability. 
Using \eqref{eq:first_approx} and \eqref{eq:second_approx} we also get
\[
\frac{(\bm \gamma'-\bm \gamma)^\top \tilde{\bm \Sigma}_{n, s}(\bm \gamma'-\bm \gamma)}{\sqrt{\vartheta_{n, s} }} \asymp  n^{\frac{s-1}{2}} \| \bm \gamma' - \bm \gamma \|_2 \rightarrow \infty.
\]
This implies, from from \eqref{eq:lgammaH1}, 
\[
\E_{ \bm \gamma' }[\phi_{n, s}] =  \mathbb P_{\bm \gamma' } \left(\left|\frac{2\log \Lambda_{n, s} - n}{\sqrt{\vartheta_{n, s} }}\right| \ge z_{\alpha/2}\sqrt{\frac{2n}{\vartheta_{n, s} }}\right) \rightarrow 1. 
\]
This concludes the proof. This completes the proof of the third assertion in \eqref{eq:asymptoticH1}.

Now, we consider the case $n^{\frac{2s-3}{4}}\|\bm \gamma'-\bm \gamma\|_2 \rightarrow \tau \in (0, \infty)$. By Taylor expansion, 
\[
\ell_{n, s}(\bm \gamma')-\ell_{n, s}(\bm \gamma)=(\bm d_s-\mathbb E_{\bm \gamma'}[\bm d_s])^\top(\bm \gamma'-\bm \gamma) + \frac{1}{2}(\bm \gamma'-\bm \gamma)^\top {\bm \Sigma}_{n, s}(\bm \gamma'-\bm \gamma) + \tilde \cT_{n, s},
\] 
where ${\bm \Sigma}_{n, s}$ is as defined in \eqref{eq:v_s} with $\bm \beta_s$ replaced by $\bm \gamma$ and $\tilde \cT_{n, s}$ is as defined in \eqref{eq:gradient3} with the parameter $\tilde{\bm \gamma}=\bm \gamma'+\theta(\bm \gamma'-\bm \gamma)$ for some $0<\theta<1$. By arguments as in Lemma \ref{lm:conditiongradient}, $\tilde \cT_{n, s} = o_P(\sqrt n)$. Then \eqref{eq:Rgamma} and Lemma \ref{lm:conditiongradient}, Lemma \ref{lm:deltasigma}, \eqref{eq:lgammaexpansionH1} can be written as: 
\begin{align}\label{eq:lgammaasymptoticH1} 
    \ell_{n, s}(\hat{\bm \beta}_{s})-\ell_{n, s}(\bm \gamma)  = - \frac{1}{2}(\bm d_s-\mathbb E_{\bm \gamma'}[\bm d_s])^\top & \bm\Gamma_{n, s} (\bm d_s-\mathbb E_{\bm \gamma'}[\bm d_s]) + (\bm d_s-\mathbb E_{\bm \gamma'}[\bm d_s])^\top(\bm \gamma'-\bm \gamma) \nonumber \\ 
    & + \frac{1}{2}(\bm \gamma'-\bm \gamma)^\top \bm \Sigma_{n, s}(\bm \gamma'-\bm \gamma) + o_P(\sqrt n) . 
\end{align}
Note that  $\E[(\bm d_s-\mathbb E_{\bm \gamma'}[\bm d_s])^\top(\bm \gamma'-\bm \gamma)] = 0$ and by Lemma \ref{lem:lower_bound_hess}, 
$$\Var[(\bm d_s-\mathbb E_{\bm \gamma'}[\bm d_s])^\top(\bm \gamma'-\bm \gamma)] = (\bm \gamma'-\bm \gamma)^\top {\bm \Sigma}_{n, s}(\bm \gamma'-\bm \gamma) \asymp_{n, r} \sqrt n , $$ 
when $\|\bm \gamma'-\bm \gamma\|_2 \asymp n^{-\frac{2s-3}{4}}$. Hence, in this case, $(\bm d_s-\mathbb E_{\bm \gamma'}[\bm d_s])^\top(\bm \gamma'-\bm \gamma) = o_P(\sqrt n)$.  This also implies that $$\eta := \lim_{n \rightarrow \infty} \frac{(\bm \gamma'-\bm \gamma)^\top \bm \Sigma_{n, s} (\bm \gamma'-\bm \gamma)}{\sqrt n}$$ exists along a subsequence. (Note that $\Cov_{\bm \gamma}[\bm d_s] = \bm \Sigma_{n, s}$.) 
Hence, from \eqref{eq:lgammaasymptoticH1}, 
\begin{align} 
\frac{2\log \Lambda_{n, s}-n}{\sqrt{2n}} & = \frac{2(\ell_{n, s}(\bm \gamma)-\ell_{n, s}(\hat{\bm \beta}_{s})) -n }{\sqrt{2n}} \nonumber \\ 
& = \frac{(\bm d_s-\mathbb E_{\bm \gamma'}[\bm d_s])^\top \bm \Gamma_{n, s}(\bm d_s-\mathbb E_{\bm \gamma'}[\bm d_s]) -n }{\sqrt{2 n}} - \frac{(\bm \gamma'-\bm \gamma)^\top \bm \Sigma_{n, s}(\bm \gamma'-\bm \gamma)}{\sqrt{2 n}} + o_P(1) \nonumber \\ 
& \dto \cN(-\tfrac{\eta}{\sqrt 2},1) . \nonumber 
\end{align} 
This completes the proof of \eqref{eq:asymptoticgamma}.

\section{Testing Lower Bounds} 
In this section we prove the lower bounds for the goodness-of-fit problem in the $L_2$ and $L_\infty$ norms, that is, Theorem \ref{thm:H0l2} (b) and Theorem \ref{thm:H0lmax} (b), respectively. For this, suppose $\pi_{n}$ be a prior probability distribution on the alternative $H_1$ (as in \eqref{eq:gammal2} or \eqref{eq:H0maximum}). Then the Bayes risk of a test function $\psi_{n}$ is defined as 
\begin{align}\label{eq:Rpsipi}
\cR(\psi_n, \bm \gamma, \pi_{n})= \P_{H_0}( \psi_n=1) + \E_{\bm \gamma' \sim \pi_{n}} \left[\P_{\bm \gamma'}( \psi_n=0) \right].
\end{align} 
For any prior $\pi_{n}$ the worst-case risk of test function $ \psi_n$, as defined in \eqref{eq:Rpsi}, can be bounded below as: 
\begin{lem}\label{lm:lb}
Let $\cH_{n, s}$ denote the collection of $s$-uniform hypergraphs on $n$ vertices. Then 
\begin{align}\label{eq:Rpsi_lb_I}
\cR(\psi_n, \bm \gamma) \geq \cR(\psi_n, \bm \gamma, \pi_{n}) \geq 1- \tfrac{1}{2} \sqrt{\E_{H_0}[L_{\pi_{n}}^2]-1}, 
\end{align} 
where $L_{\pi_{n}}=\frac{\E_{\bm \gamma' \sim \pi_{n}} \left[\P_{\bm \gamma'}(\omega) \right]}{ \P_{H_0}(\omega)},\,\omega\in\cH_{n, s},$ is the $\pi_{n}$-integrated likelihood ratio. 
\end{lem}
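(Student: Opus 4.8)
The plan is to establish the two inequalities in \eqref{eq:Rpsi_lb_I} separately. The first inequality $\cR(\psi_n, \bm \gamma) \geq \cR(\psi_n, \bm \gamma, \pi_{n})$ is immediate from the definitions: comparing \eqref{eq:Rpsi} and \eqref{eq:Rpsipi}, both have the same Type I error term $\P_{H_0}(\psi_n = 1)$, while the Type II term of $\cR(\psi_n, \bm \gamma)$ is a supremum over $\bm \gamma' \in \cB(M)$ with $\|\bm \gamma' - \bm \gamma\|_2 \geq \varepsilon$ (or the $L_\infty$ analogue), and the Type II term of $\cR(\psi_n, \bm \gamma, \pi_n)$ is an average over $\bm \gamma' \sim \pi_n$ of the same quantity. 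Since $\pi_n$ is supported on the alternative, the average is at most the supremum, so $\cR(\psi_n, \bm \gamma, \pi_n) \leq \cR(\psi_n, \bm \gamma)$.

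The substance is the second inequality, which is the standard Bayes-risk lower bound via the $\chi^2$-divergence (or second moment of the likelihood ratio). First I would write, for any test $\psi_n : \cH_{n,s} \to \{0,1\}$,
\[
\cR(\psi_n, \bm \gamma, \pi_n) = \sum_{\omega : \psi_n(\omega) = 1} \P_{H_0}(\omega) + \sum_{\omega : \psi_n(\omega) = 0} \E_{\bm \gamma' \sim \pi_n}[\P_{\bm \gamma'}(\omega)] = \sum_{\omega \in \cH_{n,s}} \min\{ \P_{H_0}(\omega), \E_{\bm \gamma' \sim \pi_n}[\P_{\bm \gamma'}(\omega)] \} + (\text{nonneg. slack}),
\]
so that $\cR(\psi_n, \bm \gamma, \pi_n)$ is minimized by the likelihood-ratio test and the minimum equals $1 - \tfrac12 \| \E_{\bm \gamma' \sim \pi_n}[\P_{\bm \gamma'}] - \P_{H_0} \|_{\mathrm{TV}}$, using the identity $\sum_\omega \min\{p(\omega), q(\omega)\} = 1 - \tfrac12 \sum_\omega |p(\omega) - q(\omega)|$ for probability mass functions $p, q$. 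Hence $\cR(\psi_n, \bm \gamma, \pi_n) \geq 1 - \tfrac12 \| \bar{\P}_{\pi_n} - \P_{H_0} \|_{\mathrm{TV}}$, where $\bar{\P}_{\pi_n} := \E_{\bm \gamma' \sim \pi_n}[\P_{\bm \gamma'}]$ is the mixture.

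It then remains to bound the total variation distance by the $\chi^2$-divergence: by Cauchy--Schwarz,
\[
\| \bar{\P}_{\pi_n} - \P_{H_0} \|_{\mathrm{TV}} = \tfrac12 \sum_{\omega} \P_{H_0}(\omega) \left| \frac{\bar{\P}_{\pi_n}(\omega)}{\P_{H_0}(\omega)} - 1 \right| = \tfrac12 \E_{H_0}\left[ |L_{\pi_n} - 1| \right] \leq \tfrac12 \sqrt{ \E_{H_0}[(L_{\pi_n} - 1)^2] } = \tfrac12 \sqrt{ \E_{H_0}[L_{\pi_n}^2] - 1 },
\]
where the last equality uses $\E_{H_0}[L_{\pi_n}] = 1$ (since $\sum_\omega \bar{\P}_{\pi_n}(\omega) = 1$). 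Combining this with the previous display gives $\cR(\psi_n, \bm \gamma, \pi_n) \geq 1 - \tfrac12 \sqrt{\E_{H_0}[L_{\pi_n}^2] - 1}$, completing the proof. There is no real obstacle here — the lemma is a textbook ingredient of the minimax testing framework (Ingster--Suslina style), and the only mild care needed is to note that all sums are finite since $\cH_{n,s}$ is a finite set, so interchanging sums and expectations and applying Cauchy--Schwarz are all justified without integrability concerns. The genuinely hard work — bounding $\E_{H_0}[L_{\pi_n}^2]$ for the specific perturbation prior — is deferred to the subsequent proofs of Theorem \ref{thm:H0l2}(b) and Theorem \ref{thm:H0lmax}(b).
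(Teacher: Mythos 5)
Your proposal is correct and follows essentially the same route as the paper: both reduce the Bayes risk to $1-\tfrac12\E_{H_0}|L_{\pi_n}-1|$ (the paper via a supremum over tests, you via the $\sum_\omega\min\{p,q\}$ identity, which is the same total-variation bound) and then apply Cauchy--Schwarz together with $\E_{H_0}[L_{\pi_n}]=1$. No gaps.
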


\begin{proof} Clearly, $\cR(\psi_n, \bm \gamma) \geq \cR(\psi_n, \bm \gamma,  \pi_{n})$. To show the second inequality in \eqref{eq:Rpsi_lb_I} observe that,
\begin{align}
\cR(\psi_n, \bm \gamma,  \pi_{n}) & \geq \inf_{ \psi_n} \left\{ \P_{H_0}( \psi_n=1) + \E_{\bm \gamma' \sim \pi_{n}} \left(\P_{\bm \gamma'}(\psi_{n} = 0) \right) \right\} \nonumber \\ 
& \geq 1- \sup_{ \psi_n} \left| \P_{H_0}( \psi_n=1) - \E_{\bm \gamma' \sim \pi_{n}} \left(\P_{\bm \gamma'}( \psi_n=1) \right) \right| \nonumber \\ 
& \geq 1- \sup_{\omega \in \cH_{n, s}} \left| \P_{H_0}(\omega) - \E_{\bm \gamma' \sim \pi_{n}} \left[\P_{\bm \gamma'}(\omega) \right] \right| \nonumber \\ 
& \geq 1- \frac{1}{2}\sum_{\omega \in \cH_{n, s}} \left|\frac{\E_{\bm \gamma' \sim \pi_{n}} \left[\P_{\bm \gamma'}(\omega) \right]}{ \P_{H_0}(\omega) } -1 \right|  \P_{H_0}(\omega)  \nonumber \\ 
& = 1-\tfrac{1}{2} \E_{H_0}|L_{\pi_{n}}-1| \nonumber \\
& \geq 1- \tfrac{1}{2} \sqrt{\E_{H_0}[L_{\pi_{n}}^2] - 1}, \nonumber 
\end{align} 
where the last step uses the Cauchy-Schwarz inequality. 
\end{proof}

Therefore, to show all tests are powerless it suffices to construct a prior $\pi_{n}$ on $H_1$ such that $\E_{H_0}[L_{\pi_{n}}^2] \rightarrow 1$. We show this for the $L_2$ norm in Appendix \ref{sec:H0l2pf} and for the $L_\infty$ norm in Appendix \ref{sec:H0lmaxpf}. 

\subsection{Testing Lower Bound in $L_2$ Norm: Proof of Theorem \ref{thm:H0l2} (b)} 
\label{sec:H0l2pf} 

We choose $\bm \gamma = \bm 0$, $\varepsilon \ll n^{-\frac{2s-3}{4}}$, and construct a prior $\pi_n$ on $H_1$ as in \eqref{eq:gammal2} as follows:  Suppose $\bm \gamma' = (\gamma'_1, \gamma'_2, \ldots, \gamma'_n)^\top \in \R^n$ with  
$$\gamma'_{u} = \eta_u \cdot \frac{\varepsilon}{\sqrt{n}},$$ 
for $u \in [n]$, where $\eta_1, \ldots, \eta_n$ are i.i.d Rademacher random variables, taking values $\{ \pm 1 \}$ with probability $\frac{1}{2}$. Clearly, $\| \bm \gamma - \bm \gamma' \|_2 = \varepsilon$. Then, for $H \in \cH_{n, s}$, the $\pi_n$ integrated likelihood ratio is given by 
\[
L_{\pi_n} = \E_{\bm \eta} \left[ \frac{\P_{\bm \gamma'} (H)}{\P_{\bm 0} (H)} \right] = \E_{\bm \eta} \left[ \prod_{\bm e \in {[n] \choose s}} \frac{ 2 e^{w_{\bm \eta}(\bm e) X_{\bm e}}}{  1 + e^{w_{\bm \eta}(\bm e) }  } \right] , 
\] 
where $\revsag{X_{\bm e} := \mathbbm 1\{ \bm e \in E(H) \}}$, $\bm \eta := (\eta_1, \ldots, \eta_n)$, and $w_{\bm \eta}(\bm e) := \frac{\varepsilon}{\sqrt{n}}\sum_{u \in \bm e}\eta_u$, for $\bm e \in {[n] \choose s}$. 
Then 
\[
L_{\pi_n}^2 = \E_{\bm \eta, \bm \eta'} \left[ \prod_{\bm e \in {[n] \choose s}} \frac{ 4 e^{ ( w_{\bm \eta}(\bm e) + w_{\bm \eta'}(\bm e) ) X_{\bm e}}}{ ( 1 + e^{w_{\bm \eta}(\bm e) } ) ( 1 + e^{w_{\bm \eta'}(\bm e) } ) }  \right] , 
\] 
where $\eta_1', \ldots, \eta_n'$ are i.i.d Rademacher random variables which are independent of $\eta_1, \ldots, \eta_n$, $\bm \eta' := (\eta_1', \ldots, \eta_n')$, and $w_{\bm \eta'}(\bm e) := \frac{\varepsilon}{\sqrt{n}}\sum_{u \in \bm e}\eta_u'$, for $\bm e \in {[n] \choose s}$. Taking expectation with respect to $H_0$ gives, 

\begin{align}\label{eq:secondmomentpf1}
    \mathbb{E}_{H_0}[L^2_{\pi_n}] & = \E_{\bm \eta, \bm \eta'} \left[ \prod_{ \bm e \in {[n] \choose s} } \frac{2\, (e^{ ( w_{\bm \eta}(\bm e) + w_{\bm \eta'}(\bm e) ) } + 1)}{ ( 1 + e^{w_{\bm \eta}(\bm e) } ) ( 1 + e^{w_{\bm \eta'}(\bm e) } ) }  \right] \nonumber \\ 
    & = \mathbb E_{\bm \eta, \bm \eta'}\left[\prod_{ \bm e \in {[n] \choose s} }2\,\left\{ \psi(w_{\bm \eta}(\bm e)) \psi(w_{\bm \eta'}(\bm e))+(1- \psi(w_{\bm \eta}(\bm e)))(1- \psi(w_{\bm \eta'}(\bm e)))\right\}\right] , 
\end{align} 
where $\psi(x)$ is the logistic function as defined in Lemma \ref{lm:leave1mlel2}. Using the Taylor expansions of $\psi(x)$ and $1-\psi(x)$ around $0$, we can show that for all $x \in \mathbb{R}$,
\begin{equation*}
\psi(x)  \le \frac{1}{2}+\frac{x}{4}+\frac{x^3}{48} \text{ and } 1-\psi(x) \le \frac{1}{2}-\frac{x}{4}+\frac{x^3}{48}.
\end{equation*} 
As a consequence, for $\bm e \in {[n] \choose s}$, 
\begin{align*}
& 2\,\left\{ \psi(w_{\bm \eta}(\bm e)) \psi(w_{\bm \eta'}(\bm e))+(1- \psi(w_{\bm \eta}(\bm e)))(1- \psi(w_{\bm \eta'}(\bm e)))\right\}\nonumber\\
 & \le 1+\frac{1}{4}w_{\bm \eta}(\bm e)w_{\bm \eta'}(\bm e)+\frac{1}{24}(w_{\bm \eta}(\bm e)^3 +w_{\bm \eta'}(\bm e)^3 )+ \frac{1}{24^2}w_{\bm \eta}(\bm e)^3 w_{\bm \eta'}(\bm e)^3.
\end{align*}  
Using this bound in \eqref{eq:secondmomentpf1} gives, 
\begin{align}\label{eq:secondmomentpf2}
    & \mathbb{E}_{H_0}[L^2_{\pi_n}] \nonumber \\ 
    & \leq \E_{\bm \eta, \bm \eta'} \left[ \prod_{ \bm e \in {[n] \choose s} } \left( 1+\frac{1}{4}w_{\bm \eta}(\bm e)w_{\bm \eta'}(\bm e)+\frac{1}{24}(w_{\bm \eta}(\bm e)^3 +w_{\bm \eta'}(\bm e)^3 )+ \frac{1}{24^2}w_{\bm \eta}(\bm e)^3 w_{\bm \eta'}(\bm e)^3 \right) \right]  \nonumber \\ 
    & \leq \E_{\bm \eta, \bm \eta'} \left[ e^{ \sum_{ \bm e \in {[n] \choose s}} \left\{ \frac{1}{4}w_{\bm \eta}(\bm e)w_{\bm \eta'}(\bm e)+\frac{1}{24}(w_{\bm \eta}(\bm e)^3 +w_{\bm \eta'}(\bm e)^3 )+ \frac{1}{24^2}w_{\bm \eta}(\bm e)^3 w_{\bm \eta'}(\bm e)^3 \right\} } \right] , 
\end{align} 
since $1+x \leq e^x$.  

Recalling the definition of $w_{\bm \eta}(\bm e)$ observe that 
\begin{align*}
\left|\sum_{\bm e \in {[n] \choose s}}w_{\bm \eta}(\bm e)^3 \right|  \leq \frac{\varepsilon^{3}}{n^{\frac{3}{2}}}\sum_{\bm e \in {[n] \choose s}}\left(\sum_{u \in \bm e} |\eta_u| \right)^3  \leq  \varepsilon^{3} n^{s- \frac{3}{2}} . 
\end{align*}  
Hence, 
\begin{equation}\label{eq:T1}
\mathbb{E}\left[ e^{ 2\sum_{\bm e \in {[n] \choose s}}w_{\bm \eta}(\bm e)^3} \right] \leq e^{2 \varepsilon^{3} n^{s- \frac{3}{2}}} \rightarrow 1, 
\end{equation}
since $\varepsilon \ll n^{-\frac{2s-3}{4}}$ and, for $s \ge 2$, $-\frac{s}{2}+\frac{3}{4} > 0$.
Similarly, it can be shown that 
\begin{equation}\label{eq:T2}
\lim_{n \rightarrow \infty}\mathbb{E}\left[ e^{ 2\sum_{\bm e \in {[n] \choose s}}w_{\bm \eta}(\bm e)^3w_{\bm \eta'}(\bm e)^3 } \right] = 1.
\end{equation} 
Then H\"older's inequality applied to \eqref{eq:secondmomentpf2} followed by  \eqref{eq:T1} and \eqref{eq:T2} gives 
\begin{align}\label{eq:eta}
\mathbb E_{H_0}[L^2_\pi] & \le \left\{\mathbb E_{\bm \eta,\bm \eta'}\left[ e^{\frac{3}{4}\sum_{e \in {[n] \choose s}}w_{\bm \eta}(\bm e)w_{\bm \eta'}(\bm e) } \right]\right\}^{1/3}(1+o(1)) . 
\end{align} 
Next, observe that
\begin{align}\label{eq:wn}
 \sum_{\bm e \in {[n] \choose s}}w_{\bm \eta}(\bm e)w_{\bm \eta'}(\bm e) & = \frac{\varepsilon^2}{n}\left\{\sum_{\bm e \in {[n] \choose s}}\left(\sum_{u \in \bm e}\eta_u\right)\left(\sum_{v \in \bm e}\eta'_v\right)\right\}\nonumber\\
& = \frac{\varepsilon^2}{n}\left\{{n-1 \choose s-1}\sum_{u=1}^{n}\eta_u\eta'_u + {n-2 \choose s-2}\sum_{1 \leq u \neq v \leq n} \eta_u\eta'_v\right\}\nonumber\\
& \leq \varepsilon^2 n^{s-2} \sum_{u=1}^{n}\eta_u\eta'_u +  \varepsilon^2 n^{s-3} \sum_{1 \leq u \neq v \leq n} \eta_u\eta'_v  \nonumber \\ 
& = \varepsilon^2 n^{s-2} \sum_{u=1}^{n}\eta_u\eta'_u +    \varepsilon^2 n^{s-3} \left\{ \left(\sum_{u=1}^{n}\eta_u\right)\left(\sum_{v=1}^{n}\eta'_v\right) - \sum_{u=1}^{n}\eta_u\eta'_u \right\} . 
\end{align}  
Note that $ \varepsilon^2 n^{s-3} \left| \sum_{u=1}^{n}\eta_u\eta'_u \right| \leq \varepsilon^2 n^{s-2}$. Hence, 
\begin{align}\label{eq:wnexpansion}
\E\left[ e^{\frac{9}{4} \varepsilon^2 n^{s-3} \sum_{u=1}^{n}\eta_u\eta'_u  } \right] \lesssim e^{\varepsilon^2 n^{s-2} } \rightarrow 1, 
\end{align}
since $\varepsilon \ll n^{-\frac{2s-3}{4}}$. 
From \eqref{eq:eta}, by H\"older's inequality followed by  \eqref{eq:wn} and \eqref{eq:wnexpansion} gives 
\begin{align}\label{eq:Lpi}
\mathbb E_{H_0}[L^2_\pi] 
& \lesssim_{s} \left\{ \mathbb E_{\bm \eta,\bm \eta'}\left[e^{ \frac{9}{4} \varepsilon^2 n^{s-2} \sum_{u=1}^{n}\eta_u\eta'_u }  \right] \right\}^{1/9}\left\{ 
\mathbb E_{\bm \eta, \bm \eta'}\left[ e^{ \frac{9}{4} \varepsilon^2 n^{s-3}  \left(\sum_{u=1}^{n}\eta_u\right) \left(\sum_{v=1}^{n}\eta'_v\right) } \right] \right\}^{1/9}(1+o(1)) . 
\end{align} 
Denote $X_n:= \sum_{u=1}^{n}\eta_u$ and $Y_n:=\sum_{v=1}^{n}\eta'_u$. Since $X_n$ and $Y_n$ are independent and each of them is a sum of i.i.d. Rademacher random variables, 
\begin{align*} 
\mathbb E_{\bm \eta,\bm \eta'}\left[ e^{ \frac{9}{4} \varepsilon^2 n^{s-3} X_nY_n} \right] = \mathbb \E\left [ \mathbb E \left[ e^{ \frac{9}{4} \varepsilon^2 n^{s-3} X_nY_n} | Y_n \right] \right] 
& = \mathbb E\left[ \left( \cosh \left( \frac{9}{4} \varepsilon^2 n^{s-3} Y_n\right) \right)^n \right] \nonumber \\ 
& \leq \mathbb E\left[  e^{\frac{81}{16} \varepsilon^4 n^{2s-5} Y_n^2 }  \right] , 
\end{align*}
where last step uses $\cosh(x) \le e^{x^2}$, for all $x \in \mathbb{R}$. 
Since $|Y_n| \le n $, this implies, 
\begin{align}\label{eq:XY}
\mathbb E_{\bm \eta,\bm \eta'}\left[ e^{ \frac{9}{4} \varepsilon^2 n^{s-3} X_nY_n} \right] \leq e^{\frac{81}{16} \varepsilon^4 n^{2s-5} Y_n^2 }  \leq e^{\frac{81}{16} \varepsilon^4 n^{2s-3}} \rightarrow 1, 
\end{align} 
since $\varepsilon \ll n^{-\frac{2s-3}{4}}$. 
Next, observe that $\eta_u\eta'_u$, for $u=1,\cdots,n$, are i.i.d. Rademacher random variables. Again using $\cosh(x) \le e^{x^2}$ for all $x \in \mathbb{R}$, we can show that
\begin{align}\label{eq:ueta}
\mathbb E_{\bm \eta,\bm \eta'}\left[ e^{ \frac{9}{4} \varepsilon^2 n^{s-2} \sum_{u=1}^{n}\eta_u\eta'_u } \right] & = \left( \cosh \left( \frac{9}{4} \varepsilon^2 n^{s-2}  \right) \right) ^n \leq e^{\frac{81}{16} \varepsilon^4 n^{2s-3}} \rightarrow 1 , 
\end{align} 
since $\varepsilon \ll n^{-\frac{2s-3}{4}}$. 
Hence, using \eqref{eq:XY} and \eqref{eq:ueta} in \eqref{eq:Lpi} gives, 
$$\lim_{n \rightarrow \infty}E_{H_0}[L^2_\pi] =1 . $$ 
By Lemma \ref{lm:lb}, this completes the proof of Theorem \ref{thm:H0l2} (b).

\subsection{Testing Lower Bound in $L_\infty$ Norm: Proof of Theorem \ref{thm:H0lmax} (b)}  
\label{sec:H0lmaxpf}

\revmark{
We choose $\bm \gamma = \bm 0$, $\varepsilon \ll (\log n/n^{s-1})^{1/2}$, and construct a prior $\pi_n$ on $H_1$ as in \eqref{eq:gammal2} as follows:  Suppose $\bm \gamma_u \in \R^n$ with  
$$\bm \gamma_{u} = \varepsilon \bm e_u,$$ 
for $u \in [n]$, where $\bm e_1, \ldots, \bm e_n$ being the canonical basis vectors in $\R^n$. Then $\pi_n$ assigns probability $1/n$ to each $\bm \gamma_{u}$. Clearly, $\| \bm \gamma - \bm \gamma_u \|_\infty = \varepsilon$ for all $u \in [n]$. Then, for $H \in \cH_{n, s}$, the $\pi_n$ integrated likelihood ratio is given by 
\[
L_{\pi_n} = \frac{1}{n}\sum_{u \in [n]}\prod_{\bm e \in {[n] \choose s} : u \in \bm e} \frac{ 2 e^{ \varepsilon X_{\bm e}}}{  1 + e^{  \varepsilon  }  }  , 
\] 
where $X_{\bm e} := \bm 1\{ \bm e \in E(H) \}$.  
Then 
\[
L_{\pi_n}^2 = \frac{1}{n^2}\sum_{u \in [n]}\prod_{\bm e \in {[n] \choose s} : u \in \bm e}  \frac{ 4 e^{ 2 \varepsilon  X_{\bm e}}}{  ( 1 + e^{  \varepsilon   } )^2 }+ \frac{1}{n^2}\sum_{u \ne v \in [n]}\prod_{\bm e \in {[n] \choose s} : u, v \in \bm e}  \frac{ 4 e^{ 2 \varepsilon  X_{\bm e}}}{  ( 1 + e^{  \varepsilon   } )^2 } .
\] 
Observe that 
\begin{align}\label{eq:Lpimaximum}
\mathbb E_{H_0} \left[ \prod_{\bm e \in {[n] \choose s} : u \in \bm e} \frac{ 4 e^{ 2 \varepsilon  X_{\bm e}}}{  ( 1 + e^{  \varepsilon   } )^2 } \right] = \left ( 2\, \psi(\varepsilon)^2 + 2\,(1- \psi(\varepsilon))^2\right )^{ {n \choose s-1} } , 
\end{align}
where $\psi(x)= \frac{e^x}{1+e^x}$. Similarly,
\begin{align}\label{eq:Lpimaximum}
\mathbb E_{H_0} \left[ \prod_{\bm e \in {[n] \choose s} : u ,v \in \bm e} \frac{ 4 e^{ 2 \varepsilon  X_{\bm e}}}{  ( 1 + e^{  \varepsilon   } )^2 } \right] = \left ( 2\, \psi(\varepsilon)^2 + 2\,(1- \psi(\varepsilon))^2\right )^{ {n \choose s-2} } . 
\end{align}
Since $\varepsilon \ll (\log n/n^{s-1})^{1/2}$, a Taylor expansion around zero gives $\psi(\varepsilon) = \frac{1}{2}+\frac{1}{4} \varepsilon + O(\varepsilon^2)$. Hence, 
\[
2 \psi(\varepsilon)^2 + 2 (1-\psi(\varepsilon))^2  = 1 + O(\varepsilon^2) . 
\]
Therefore, by \eqref{eq:Lpimaximum} and using $1+x \leq e^x$ gives, 
\begin{align*}
    \mathbb{E}_{H_0}[L_{\pi_n}^2] 
    & \le \frac{1}{n}e^{O(\varepsilon^2 n^{s-1})}+e^{O(\varepsilon^2 n^{s-2})} \rightarrow 1,
\end{align*}
since $\varepsilon \ll (\log n/n^{s-1})^{1/2}$. 
By Lemma \ref{lm:lb}, this completes the proof of Theorem \ref{thm:H0lmax} (b). }


\section{Proof of Proposition \ref{ppn:conv_hull}}
\label{sec:lemmapf}

Define $g = (g_1, g_2, \ldots, g_n): \mathbb{R}^n\rightarrow\mathbb{R}^n$ where $g_u: \mathbb{R}^n\rightarrow\mathbb{R}$, 
for $u \in [n]$, as follows: 
\[
g_u(\bm x) = \sum_{\bm e \in {[n] \choose s} : u \in \bm e }\frac{e^{\bm x^\top_e\bm 1}}{1+e^{\bm x^\top_e\bm 1}} ,
\] 
where $\bm x = (x_1, x_2, \ldots, x_n)^\top$ and $\bm x_{\bm e} = (x_{u_1}, x_{u_2}, \ldots, x_{u_s})^\top$ for $\bm e = (u_1, u_2, \ldots, u_s)$. 
Observe that $\mathcal{R}_s$ is the range of $g$. Since the expected degree of a vertex is a weighted combination of all the possible degrees in $s$-uniform hypergraphs on $n$ vertices, this implies $\bar{\mathcal{R}}_s \subseteq \mathrm{conv}\,(\mathcal{D}_s)$.

To show the other side, for every $\bm y \in \mathbb{R}^n$ we define,
\[
f_{\bm y}(\bm x)=\sum_{i=1}^{n}x_iy_i-\sum_{ \{v_1, v_2, \ldots, v_s \} \in {[n] \choose s}}\log(1+e^{x_{v_1}+\ldots+x_{v_s}}).
\]
Since the probability of observing an $s$-uniform hypergraph with parameter $\bm x$ and $s$-degree sequence $\bm d_s=(d_s(1), \ldots, d_s(n))$ is 
\[
\frac{e^{\sum_{v=1}^n d_s(v) x_{v}}}{\prod_{ \{v_1, v_2, \ldots, v_s \} \in {[n] \choose s}}(1+e^{x_{v_1}+\ldots+x_{v_s}})}.
\]
and is less than $1$, taking logarithm on both sides we get $f_{\bm d_s}(\bm x) \le 0$. Further as $f_{\bm y}(\bm x)$ depends linearly on $\bm y$, we have $
f_{\bm y}(\bm x) \le 0$ for all $\bm y \in \mathrm{conv}\,(\mathcal{D}_s)$ and $\bm x \in \mathbb{R}^n$. Now, let us fix $\bm y \in \mathrm{conv}\,(\mathcal{D}_s)$. It can be shown that the Hessian $\nabla^2f_{\bm y}(\bm x)$ is uniformly bounded, hence, by \cite[Lemma 3.1]{chatterjee2011random} there exists a sequence $\{\bm x_k\}_{k \geq 1}$ such that $\nabla f_{\bm y}(\bm x_k) \rightarrow 0$. Observing that $\nabla f_{\bm y}(\bm x_k)=\bm y-g(\bm x)$, we get $g(\bm x_k) \rightarrow \bm y$. As $\bm y\in \mathrm{conv}\,(\mathcal{D}_s)$ is arbitrary, this implies $\mathrm{conv}\,(\mathcal{D}_s)\subseteq \bar{\mathcal{R}}_s$. \hfill $\Box$


\end{document}